\newcommand{\N}{\mathbb{N}}
\newcommand{\Z}{\mathbb{Z}}
\newcommand{\R}{\mathbb{R}}
\newcommand{\C}{\mathbb{C}}
\newcommand{\RT}{\mathsf{RT}}
\newcommand{\RM}{\mathsf{RM}}
\newcommand{\Pol}{\mathsf{P}}
\newcommand{\To}{\longrightarrow}
\newcommand{\eps}{\varepsilon}
\newcommand{\om}{\Omega}
\newcommand{\ga}{\Gamma}
\newcommand{\calD}{\mathcal{D}}
\newcommand{\calM}{\mathcal{M}}
\newcommand{\calV}{\mathcal{V}}
\newcommand{\calL}{\mathcal{L}}
\newcommand{\calQ}{\mathcal{Q}}
\newcommand{\calB}{\mathcal{B}}
\newcommand{\Sb}{\mathbb{S}}
\newcommand{\Tb}{\mathbb{T}}
\newcommand{\pw}{\mathsf{pw}}
\newcommand{\red}{\mathsf{red}}
\newcommand{\ela}{\mathsf{ela}}
\newcommand{\bih}{\mathsf{bih}}
\newcommand{\rhm}{\mathsf{Rhm}}
\newcommand{\rhmp}{\rhm}
\newcommand{\bihone}{\bih,1}
\newcommand{\bihtwo}{\bih,2}
\newcommand{\bihonep}{\bihone}
\newcommand{\bihtwop}{\bihtwo}
\newcommand{\elap}{\ela}
\DeclareMathOperator{\id}{Id}
\DeclareMathOperator{\dev}{dev}
\DeclareMathOperator{\tr}{tr}
\DeclareMathOperator{\sym}{sym}
\DeclareMathOperator{\skw}{skw}
\renewcommand{\ker}{\operatorname{ker}}
\DeclareMathOperator{\ran}{ran}
\DeclareMathOperator{\dom}{dom}
\DeclareMathOperator{\ind}{ind}
\DeclareMathOperator{\spn}{spn}
\DeclareMathOperator{\lin}{lin}
\DeclareMathOperator{\supp}{supp}
\DeclareMathOperator{\intd}{d}
\DeclareMathOperator{\p}{\partial}
\DeclareMathOperator{\dive}{div}
\DeclareMathOperator{\rdive}{\mathring{\dive}}
\DeclareMathOperator{\curl}{curl}
\DeclareMathOperator{\rcurl}{\mathring{\curl}}
\DeclareMathOperator{\grad}{grad}
\DeclareMathOperator{\rgrad}{\mathring{\grad}}
\DeclareMathOperator{\Grad}{Grad}
\DeclareMathOperator{\devGrad}{devGrad}
\DeclareMathOperator{\rdevGrad}{\mathring{\devGrad}}
\DeclareMathOperator{\symGrad}{symGrad}
\DeclareMathOperator{\rsymGrad}{\mathring{\symGrad}}
\DeclareMathOperator{\Dive}{Div}
\DeclareMathOperator{\rDive}{\mathring{\Dive}}
\DeclareMathOperator{\Curl}{Curl}
\DeclareMathOperator{\rCurl}{\mathring{\Curl}}
\DeclareMathOperator{\symCurl}{symCurl}
\DeclareMathOperator{\rsymCurl}{\mathring{\symCurl}}
\newcommand{\DS}{\Dive_{\Sb}}
\newcommand{\rDS}{\rDive_{\Sb}}
\newcommand{\DT}{\Dive_{\Tb}}
\newcommand{\rDT}{\rDive_{\Tb}}
\newcommand{\CS}{\Curl_{\Sb}}
\newcommand{\rCS}{\rCurl_{\Sb}}
\newcommand{\symCT}{\symCurl_{\Tb}}
\newcommand{\rsymCT}{\rsymCurl_{\Tb}}
\newcommand{\Gg}{\operatorname{Gradgrad}}
\newcommand{\rGg}{\mathring{\Gg}}
\newcommand{\dD}{\operatorname{divDiv}}
\newcommand{\rdD}{\mathring{\dD}}
\newcommand{\dDS}{\dD_{\Sb}}
\newcommand{\rdDS}{\rdD_{\Sb}}
\newcommand{\CC}{\operatorname{CurlCurl}}
\newcommand{\CCt}{\CC^{\top}}
\newcommand{\CCtS}{\CCt_{\Sb}}
\newcommand{\rCCt}{\mathring{\CC}{}^{\top}}
\newcommand{\rCCtS}{\rCCt_{\Sb}}
\newcommand{\cc}{\mathsf{cc}}
\newcommand{\Lt}{L^{2}}
\newcommand{\Ltt}{L^{2,3}}
\newcommand{\Lttt}{L^{2,3\times3}}
\newcommand{\LtttS}{\Lttt_{\Sb}}
\newcommand{\LtttT}{\Lttt_{\Tb}}
\newcommand{\Ltom}{\Lt(\om)}
\newcommand{\Lttom}{\Ltt(\om)}
\newcommand{\Ltttom}{\Lttt(\om)}
\newcommand{\LtttSom}{\LtttS(\om)}
\newcommand{\LtttTom}{\LtttT(\om)}
\newcommand{\Ci}{C^{\infty}}
\newcommand{\Cit}{C^{\infty,3}}
\newcommand{\Citt}{C^{\infty,3\times3}}
\newcommand{\Cic}{\Ci_{c}}
\newcommand{\Cict}{\Cit_{c}}
\newcommand{\Cictt}{\Citt_{c}}
\newcommand{\CicttS}{\Citt_{c,\Sb}}
\newcommand{\CicttT}{\Citt_{c,\Tb}}
\newcommand{\Ciom}{\Ci(\om)}
\newcommand{\Citom}{\Cit(\om)}
\newcommand{\Cittom}{\Citt(\om)}
\newcommand{\Cicom}{\Cic(\om)}
\newcommand{\Cictom}{\Cict(\om)}
\newcommand{\CicttSom}{\CicttS(\om)}
\newcommand{\CicttTom}{\CicttT(\om)}
\newcommand{\harm}{\mathcal{H}}
\newcommand{\harmD}{\harm_{D}}
\newcommand{\harmN}{\harm_{N}}
\newcommand{\harmDS}{\harm_{D,\Sb}}
\newcommand{\harmDT}{\harm_{D,\Tb}}
\newcommand{\harmNS}{\harm_{N,\Sb}}
\newcommand{\harmNT}{\harm_{N,\Tb}}
\newcommand{\harmDom}{\harmD^{\rhm}(\om)}
\newcommand{\harmNom}{\harmN^{\rhm}(\om)}
\newcommand{\harmbihoneDom}{\harmDS^{\bihone}(\om)}
\newcommand{\harmbihoneNom}{\harmNT^{\bihone}(\om)}
\newcommand{\harmelaDom}{\harmDS^{\ela}(\om)}
\newcommand{\harmelaNom}{\harmNS^{\ela}(\om)}
\newcommand{\harmbihtwoDom}{\harmDT^{\bihtwo}(\om)}
\newcommand{\harmbihtwoNom}{\harmNS^{\bihtwo}(\om)}
\newtheorem{theorem}{Theorem}[section]
\newtheorem{corollary}[theorem]{Corollary}
\newtheorem{lemma}[theorem]{Lemma}
\newtheorem{proposition}[theorem]{Proposition}
\newtheorem{definition}[theorem]{Definition}
\newtheorem{remark}[theorem]{Remark}
\newtheorem{assumption}[theorem]{Assumption}
\newtheorem{problem}[theorem]{Problem}
\newcommand{\norm}[1]{|#1|}
\newcommand{\bnorm}[1]{\big|#1\big|}
\newcommand{\scp}[2]{\langle#1,#2\rangle}
\newcommand{\bscp}[2]{\big\langle#1,#2\big\rangle}
\newcommand{\Bscp}[2]{\Big\langle#1,#2\Big\rangle}
\newcommand{\pz}{biharmonic }
\newcommand{\PZ}{Biharmonic }
\renewcommand{\subset}{\subseteq}
\numberwithin{equation}{section}
\def\@tocline#1#2#3#4#5#6#7{\relax
 \ifnum #1>\c@tocdepth 
 \else
  \par \addpenalty\@secpenalty\addvspace{#2}%
  \begingroup \hyphenpenalty\@M
  \@ifempty{#4}{%
   \@tempdima\csname r@tocindent\number#1\endcsname\relax
  }{%
   \@tempdima#4\relax
  }%
  \parindent\z@ \leftskip#3\relax \advance\leftskip\@tempdima\relax
  \rightskip\@pnumwidth plus4em \parfillskip-\@pnumwidth
  #5\leavevmode\hskip-\@tempdima
   \ifcase #1
    \or\or \hskip 1em \or \hskip 2em \else \hskip 3em \fi%
   #6\nobreak\relax
  \hfill\hbox to\@pnumwidth{\@tocpagenum{#7}}\par
  \nobreak
  \endgroup
 \fi}
\begin{document}

\title[Index of Mixed Order Dirac-Type Operators]
{The Index of Some Mixed Order Dirac-Type Operators\\
and Generalised Dirichlet-Neumann Tensor Fields}
\author{Dirk Pauly}
\address{Fakult\"at f\"ur Mathematik,
Universit\"at Duisburg-Essen, Campus Essen, Germany}
\email[Dirk Pauly]{dirk.pauly@uni-due.de}
\author{Marcus Waurick}
\address{Department of Mathematics and Statistics, University of Strathclyde,
Livingstone Tower, 26 Richmond Street, Glasgow G1 1XH, Scotland}
\email[Marcus Waurick]{marcus.waurick@strath.ac.uk}

\date{\today; {\it Corresponding Author}: Marcus Waurick}
\keywords{Dirac operator, Picard's extended Maxwell system, Fredholm index, cohomology, Hilbert complex, 
elasticity complex, \pz complex, harmonic Dirichlet and Neumann tensors}
\subjclass[2010]{\textit{Primary}: 
47A53  (Fredholm operators; index theories),
58B15  (Fredholm structures); 
\textit{Secondary}:
35G15 (Boundary value problems for linear higher-order equations),
35Q61 (Maxwell equations),
35Q41 (Time-dependent Schr\"odinger equations, Dirac equations),
35Q40 (PDEs in connection with quantum mechanics)}
\thanks{{\it Achnowledgements}: We cordially thank Walter Zulehner, JKU Linz, 
for many inspiring and fruitful discussions and kind help related 
to the Poincar\'e maps necessary for the construction of Neumann fields.}

\begin{abstract} 
We revisit a construction principle of Fredholm operators using Hilbert complexes 
of densely defined, closed linear operators and apply this to particular choices of differential operators. 
The resulting index is then computed with the help of explicitly describing 
the dimension of the cohomology groups of generalised (`harmonic') Dirichlet and Neumann tensor fields. 
The main results of this contribution are the computation of the indices of Dirac-type operators associated 
to the elasticity complex and the newly found \pz complex,
relevant for the \pz equation, elasticity, and for the theory of general relativity. 
The differential operators are of mixed order and cannot be seen 
as leading order type with relatively compact perturbation. 
As a by-product we present a comprehensive description 
of the underlying generalised Dirichlet-Neumann vector and tensor fields
defining the respective cohomology groups, including an explicit construction of bases in terms of topological invariants, 
which are of both analytical and numerical interest. 
Though being defined by certain projection mechanisms, 
we shall present a way of computing these basis functions by solving certain PDEs given in variational form. 
For all of this we rephrase core arguments in the work of Rainer Picard \cite{P1982} 
applied to the de Rham complex and use them as a blueprint for the more involved cases presented here.
In passing, we also provide new vector-analytical estimates of generalised Poincar\'e--Friedrichs type 
useful for elasticity or the theory of general relativity.
\end{abstract}

\maketitle
\newpage
\tableofcontents
\newpage

\section{Introduction}
\label{sec:int}

It is one of the greatest mathematical achievements of the twentieth century 
to relate the analytic notion of the Fredholm index for operators defined on Hilbert spaces 
to particular elliptic operators and their corresponding geometric properties 
of the underlying compact manifold the operators are defined on. 
Here, a closed, densely defined, linear operator 
$\calD\colon\dom \calD\subset \mathcal{K}_{0} \to \mathcal{K}_{1}$ 
between Hilbert spaces $\mathcal{K}_{0}$ and $\mathcal{K}_{1}$ is called a \emph{Fredholm operator}, 
if the range, $\ran \calD\subset\mathcal{K}_{1} $, is closed and both the kernel, 
$\ker \calD$, and the co-kernel, $(\ran \calD)^\bot$, are finite-dimensional. 
In this case, the \emph{index of $\calD$}, $\ind\calD$, is given by
$$\ind\calD= \dim \ker\calD - \dim (\ran \calD)^\bot.$$
We refer to the concluding parts in \cite[Chapter 3]{GW2016} for a brief round up 
and some standard references to the theory of unbounded Fredholm operators in Hilbert spaces. 
Generally spoken it is often very difficult -- if not impossible -- 
to compute either $\dim \ker\calD $ or $\dim (\ran \calD)^\bot$ directly. 
However, due to invariance under homotopies and compact perturbations, 
it is sometimes possible to have a better understanding of $\ind\calD$ instead.

Indeed, one of the corner stones of results hinted at above is the celebrated 
Atijah-Singer index theorem, see e.g.~\cite{M2013}, 
where the (Fredholm) index for some elliptic operators defined on a manifold 
can be represented solely in terms of the topological properties of this manifold.
The methods of proof led to the invention of $K$-theory, which has evolved ever since and is an active field of research. 
Albeit being a breakthrough in mathematics, $K$-theory is a rather difficult tool to work with when it comes 
to explicitly compute the index for particular examples. 
In any case there is a need to provide many examples, where it is possible to obtain an index formula, 
which is as explicit as possible. In fact, the Fredholm index for a perturbed Dirac operator represents physical quantities, 
see the concluding example in \cite{C1978} and the references therein.
The Witten index, a generalised version of the Fredholm index, is interesting for both physics and mathematics. 
Indeed, it has been shown that in particular situations the Witten index corresponds 
to the electromagnetic spin of a particle as well as to the spectral flow of an operator family, 
see the seminal paper \cite{GL2011}.

The results in \cite{GL2011} are based on -- among other things -- a deeper understanding 
of the one-dimensional situation of \cite{C1978}, which addresses the Fredholm case. 
The higher-dimensional cases treated in \cite{C1978} (with an index formula properly 
justified in \cite{BS1978}) were generalised in \cite{GW2016}. 
The transition from the Fredholm situation to the Witten index 
has been performed in \cite[Chapter 14]{GW2016}. 
Again, a thorough understanding of the Fredholm case has led to further examples for the Witten index, 
which in turn might prove useful for both mathematics and physics.

The main contribution of the present study is to enlarge the variety of examples, 
where it is possible to explicitly compute the Fredholm index in terms of the topological properties 
of the underlying (bounded) domain $\om\subset \R^3$ the differential (Fredholm) operators are defined on. 
The list of examples treated here is even more particular as it is possible 
to compute not only the index but also the dimension of the kernel 
and the co-dimension of the range in terms of topological invariants.

Moreover, this article is concerned with the explicit computation of the Fredholm index 
if a differential operator is `apparently' of mixed order. 
We shall establish a collection of theorems like the following:

\begin{theorem}
\label{thm:main} 
Let $\om\subset\R^3$ be open, bounded with strong\footnote{The boundary
of a strong Lipschitz domain is locally a graph of some Lipschitz function.} 
Lipschitz boundary. Then there exists a subspace $\calV\subset\LtttTom\times\Ltom$ such that
$$\calD
:=\begin{pmatrix}\Dive & 0\\ \symCurl &\Gg\end{pmatrix}:
\calV\subset\LtttTom\times\Ltom\to\Lttom\times\LtttSom$$
and $\calD^{*}$ are densely defined and closed Fredholm operators, where $\LtttTom$ and $\LtttSom$ 
denote the sets of trace free and symmetric $3\times 3$ matrices with entries in $\Ltom$, respectively. Moreover, 
$$\ind\calD=4(p-m-n+1),\qquad
\ind\calD^{*}=-\ind\calD$$
where $n$ is the number of connected components of $\om$,
$m$ is the number of connected components of its complement $\R^3\setminus\overline{\om}$, 
and $p$ is the number of handles (see Section \ref{app:sec:DirNeuF}).
\end{theorem}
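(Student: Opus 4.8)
The plan is to recognise $\calD$ as (a permutation of) the even part of the abstract Hodge--Dirac operator attached to the \pz Hilbert complex
\begin{equation*}
\Ltom\;\xrightarrow{\;\Gg\;}\;\LtttSom\;\xrightarrow{\;\CS\;}\;\LtttTom\;\xrightarrow{\;\Dive\;}\;\Lttom,
\end{equation*}
with the relevant (mixed) boundary conditions absorbed into the domain $\calV$, and then to run the general construction principle for Fredholm operators from Hilbert complexes recalled in the earlier sections. Indeed, reordering the direct summands turns $\calD=\begin{pmatrix}\Dive & 0\\ \symCurl & \Gg\end{pmatrix}$ into $\begin{pmatrix}\Gg & \symCurl\\ 0 & \Dive\end{pmatrix}\colon\Ltom\times\LtttTom\to\LtttSom\times\Lttom$, which -- once one observes that, with the boundary conditions carried by $\calV$, the operator $\symCurl$ is the adjoint $(\CS)^{*}$ of the middle operator of the complex -- is exactly the block shape $\begin{pmatrix}A_{0} & A_{1}^{*}\\ 0 & A_{2}\end{pmatrix}$ of the even Dirac operator of a four-term Hilbert complex $(A_{0},A_{1},A_{2})$; consequently $\calD^{*}$ is the corresponding odd Dirac operator. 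In particular $\calD,\calD^{*}$ are densely defined (their domains contain the smooth tensor fields used to form the closures) and closed, and as soon as one of them is Fredholm so is the other, with $\ind\calD^{*}=-\ind\calD$. Once the \pz complex is known to be a \emph{compact}, hence Fredholm, Hilbert complex, the general machinery then gives that $\calD,\calD^{*}$ are Fredholm and
\begin{equation*}
\ind\calD=\dim\harm^{0}-\dim\harm^{1}+\dim\harm^{2}-\dim\harm^{3},
\end{equation*}
the Euler characteristic of the complex, where $\harm^{0},\dots,\harm^{3}$ denote the generalised harmonic \pz Dirichlet/Neumann tensor fields sitting at the four slots.

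Two tasks then remain. First, one checks compactness of the \pz complex, i.e.\ that the relevant domains of $\Gg$, $\symCurl$ and $\Dive$ on symmetric resp.\ trace-free tensor fields embed compactly into $\Lt$; this follows from the generalised Poincar\'e--Friedrichs-type estimates on strong Lipschitz domains together with a Helmholtz/cohomology splitting that reduces the two middle slots to the classical de Rham compact embedding results. Second -- and this is the substantial part -- one computes the four numbers $\dim\harm^{i}$. The strategy is to set up potential (de Rham--type) representations adapted to $\Gg$, $\symCurl$ and $\Dive$, so that every harmonic \pz tensor field decomposes into a classical harmonic Dirichlet resp.\ Neumann vector field of the de Rham complex together with an element of $\ker\Gg$, the four-dimensional space of affine functions on each connected component; this produces the omnipresent factor $4$. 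Feeding in Picard's topological identification \cite{P1982} of the de Rham harmonic Dirichlet and Neumann fields (whose dimensions are governed by $m$ and $p$) and of the kernels/cokernels at the two end slots (contributing the terms in $n$ and $m$), one collects
\begin{equation*}
\dim\harm^{0}-\dim\harm^{1}+\dim\harm^{2}-\dim\harm^{3}=4(p-m-n+1),
\end{equation*}
which is the asserted index; as a sanity check, this equals $-4\chi(\om)$ with $\chi(\om)=n-p+(m-1)$ the topological Euler characteristic of $\om$.

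The main obstacle is the second task. Because $\Gg$, $\symCurl$ and $\Dive$ are of mixed differential order, $\calD$ is \emph{not} elliptic of a fixed order with relatively compact perturbation, so neither classical elliptic index theory nor a cheap homotopy to a model operator applies; the index has to be read off from the harmonic cohomology directly. Establishing the potential representations together with the accompanying regularity and decomposition lemmas on strong Lipschitz domains, constructing explicit bases of the generalised \pz Dirichlet and Neumann tensor fields, and performing the topological bookkeeping that ties their dimensions to $n$, $m$ and $p$ is where the bulk of the work lies. Once these dimension counts are available, assembling the alternating sum and reading off $\ind\calD=4(p-m-n+1)$ and $\ind\calD^{*}=-\ind\calD$ is immediate.
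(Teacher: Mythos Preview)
Your plan matches the paper's: recognise $\calD$ as the block operator $\begin{pmatrix}A_{2}&0\\A_{1}^{*}&A_{0}\end{pmatrix}$ attached to the first \pz complex $(A_{0},A_{1},A_{2})=(\rGg,\rCS,\rDT)$, invoke maximal compactness (quoted from \cite{PZ2020a}) together with the abstract index formula $\ind\calD=\dim N_{0}-\dim K_{1}+\dim K_{2}-\dim N_{2,*}$, and then compute each of the four numbers.

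One caution on the cohomology step. Your phrasing that ``every harmonic \pz tensor field decomposes into a classical harmonic Dirichlet resp.\ Neumann vector field of the de Rham complex together with an element of $\ker\Gg$'' is not how it works, and taken literally this approach would stall: there is no such splitting. What the paper actually does (and what your later sentence about ``constructing explicit bases'' correctly anticipates) is to reuse Picard's topological cut-off data $\xi_{\ell}$ and $\theta_{j}$, multiply each by a basis element of the relevant finite-dimensional kernel -- $\Pol^{1}=\ker(\Gg)$ on the Dirichlet side but $\RT=\ker(\devGrad)$ on the Neumann side, two \emph{different} four-dimensional spaces -- then apply $\Gg$ resp.\ $\devGrad$ and orthogonally project onto the harmonic space; showing that the resulting $4(m-1)$ resp.\ $4p$ tensors form bases requires the adapted Poincar\'e-map and curve-integral functionals developed in Sections~\ref{app:sec:DirTFbih1} and~\ref{app:sec:NeuTFbih}. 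Also a minor bookkeeping slip: the end slots contribute $\dim N_{0}=\dim\ker(\rGg)=0$ and $\dim N_{2,*}=\dim\RT_{\pw}=4n$; the $m$-contribution sits in $K_{1}$, not at an end slot.
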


A closer inspection of the operator $\mathcal{D}$ also shows the following estimate; 
see also Corollary \ref{cor:FP-tb-bih}. 
Note that the subspace $\mathcal{W}$ asserted to exist 
in the following result -- and this is the catch of the corollary -- is constructed explicitly by providing a basis, 
see Section \ref{app:sec:NeuTFbih}.

\begin{corollary} 
There exists a finite-dimensional subspace $\mathcal{W}\subset \mathcal{V}$ and $c>0$ 
such that for all $(T,u)\in\mathcal{V}\cap \mathcal{W}^{\bot_{L^{2}}}$ we have
$$c\,\bnorm{(T,u)}_{\LtttTom\times\Ltom}
\leq\norm{\Gg u}_{\LtttSom}
+\norm{\Dive T}_{\Lttom}
+\norm{\symCurl T}_{\LtttSom}.$$
\end{corollary}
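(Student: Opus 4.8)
The plan is to obtain the estimate as an essentially immediate consequence of the Fredholm property of $\calD$ established in Theorem \ref{thm:main}, supplemented only by an elementary rearrangement of the norms on the right-hand side. The point of the corollary is not the inequality itself but the \emph{explicit} identification of the finite-dimensional space $\mathcal{W}$, so that the real work lies in the sections on harmonic tensor fields.

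First I would recall the standard fact that a densely defined closed linear operator with closed range is bounded below on the $L^{2}$-orthogonal complement of its kernel; see e.g.\ \cite[Chapter 3]{GW2016}. By Theorem \ref{thm:main} the operator $\calD\colon\calV\subset\LtttTom\times\Ltom\to\Lttom\times\LtttSom$ is closed and Fredholm, hence $\ran\calD$ is closed and $\mathcal{W}:=\ker\calD\subset\calV$ is finite-dimensional. Thus there is a constant $c_{0}>0$ with
$$\norm{(T,u)}_{\LtttTom\times\Ltom}\leq c_{0}\,\norm{\calD(T,u)}_{\Lttom\times\LtttSom}\qquad\text{for all }(T,u)\in\calV\cap\mathcal{W}^{\bot_{L^{2}}}.$$
Since $\calD(T,u)=(\Dive T,\,\symCurl T+\Gg u)$, the triangle inequality in $\LtttSom$ together with the elementary bound $\sqrt{a^{2}+b^{2}}\leq a+b$ for $a,b\geq0$ gives
$$\norm{\calD(T,u)}_{\Lttom\times\LtttSom}=\Big(\norm{\Dive T}_{\Lttom}^{2}+\norm{\symCurl T+\Gg u}_{\LtttSom}^{2}\Big)^{1/2}\leq\norm{\Dive T}_{\Lttom}+\norm{\symCurl T}_{\LtttSom}+\norm{\Gg u}_{\LtttSom},$$
and combining the two displays with $c:=c_{0}^{-1}$ yields the claim.

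The hard part is not this argument but making $\mathcal{W}=\ker\calD$ concrete: I would identify it with the space of generalised harmonic Dirichlet/Neumann \pz tensor fields occurring in the cohomology of the \pz complex, and then invoke the explicit basis constructed in Section \ref{app:sec:NeuTFbih}, whose cardinality is expressed through the topological invariants $n$, $m$, $p$. The only subtlety to watch is that the orthogonality in the statement is taken with respect to the $L^{2}$-inner product of $\LtttTom\times\Ltom$ and not the graph inner product of $\calD$; because $\mathcal{W}$ is finite-dimensional this causes no difficulty, and the construction in Section \ref{app:sec:NeuTFbih} is tailored to the $L^{2}$-structure anyway. Hence, once Theorem \ref{thm:main} and that basis construction are in place, the corollary follows with no further obstacle.
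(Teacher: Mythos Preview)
Your argument is correct and follows essentially the same route as the paper: the estimate is the Friedrichs--Poincar\'e inequality for the reduced operator $\calD_{\red}$, and $\mathcal{W}=\ker\calD$, which the paper identifies (via Lemma \ref{lem:properties3} and \eqref{kernelsbih}) as $\harmbihoneNom\times\{0\}$ with explicit basis built in Section \ref{app:sec:NeuTFbih}. One minor refinement: where you invoke the triangle inequality to bound $\norm{\symCurl T+\Gg u}$, the paper instead uses the complex property $\ran(\rGg)\subset\ker(\rCS)\perp\ran(\symCT)$ to obtain the exact identity $\norm{\calD(T,u)}^{2}=\norm{\Dive T}^{2}+\norm{\symCurl T}^{2}+\norm{\Gg u}^{2}$ (see Corollary \ref{cor:FP} and Corollary \ref{cor:FP-tb-bih}); this is sharper and more in the spirit of the Hilbert-complex framework, but for the sum form of the corollary either route suffices. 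Also, tighten your description of $\mathcal{W}$: it consists of the Neumann tensor fields only, not ``Dirichlet/Neumann''.
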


In the course of the manuscript, we shall, too, describe the subspace 
$\calV=\dom\calD$ explicitly, see Theorem \ref{thm:idxPZC} and Remark \ref{thm:idxPZC:rem}. 
A refined notation will indicate (full) natural boundary conditions by $\mathring{\,\,\,\,}$ 
and algebraic properties of the tensor fields belonging to the domain of definition of the respective operators
by $\Sb$ and $\Tb$ (symmetric and trace free), e.g., the aforementioned operators read
\begin{align*}
\calD=\calD^{\bihonep}&:=\begin{pmatrix}\rDT&0\\\symCT&\rGg\end{pmatrix},
&
(\calD^{\bihonep})^{*}&=\begin{pmatrix}-\devGrad&\rCS\\0&\dDS\end{pmatrix}.
\end{align*}
These operators are related to the (primal and dual) first \pz complex, 
also called $\Gg$ or $\dD$ complex, i.e., 
\begin{align*}
\{0\}\xrightarrow{\iota_{\{0\}}}
\Ltom\xrightarrow{\rGg}
\LtttSom&\xrightarrow{\rCS}
\LtttTom\xrightarrow{\rDT}
\Lttom\xrightarrow{\pi_{\RT_{\pw}}}\RT_{\pw},\\
\{0\}\xleftarrow{\pi_{\{0\}}}
\Ltom\xleftarrow{\dDS}
\LtttSom&\xleftarrow{\symCT}
\LtttTom\xleftarrow{-\devGrad}
\Lttom\xleftarrow{\iota_{\RT_{\pw}}}\RT_{\pw},
\end{align*}
relevant for the \pz equation, elasticity, and in the theory of general relativity. 
Here and in the following $\iota_V$ and $\pi_V$ denote the canonical embedding 
from a closed subspace $V$ of a Hilbert space $H$ into $H$ and 
the orthogonal projection from $H$ onto $V$; the space of piecewise Raviart--Thomas vector-fields, 
$\RT_{\pw}$, is defined in \eqref{eq:def_RT_pw}.

We discuss another example, which is based on the second \pz complex 
where the boundary conditions are interchanged, i.e., 
\begin{align*}
\{0\}\xrightarrow{\iota_{\{0\}}}
\Lttom\xrightarrow{\rdevGrad}
\LtttTom&\xrightarrow{\rsymCT}
\LtttSom\xrightarrow{\rdDS}
\Ltom\xrightarrow{\pi_{\Pol^{1}_{\pw}}}\Pol^{1}_{\pw},\\
\{0\}\xleftarrow{\pi_{\{0\}}}
\Lttom\xleftarrow{-\DT}
\LtttTom&\xleftarrow{\CS}
\LtttSom\xleftarrow{\Gg}
\Ltom\xleftarrow{\iota_{\Pol^{1}_{\pw}}}\Pol^{1}_{\pw},
\end{align*}
leading to the operators (for the space of piecewise first order polynomials, 
$\Pol^{1}_{\pw}$, we refer to \eqref{eq:P1_pw})
\begin{align*}
\calD^{\bihtwop}&:=\begin{pmatrix}\rdDS&0\\\CS&\rdevGrad\end{pmatrix},
&
(\calD^{\bihtwop})^{*}&=\begin{pmatrix}\Gg&\rsymCT\\0&-\DT\end{pmatrix}.
\end{align*}
The corresponding index results can be found in Theorem \ref{thm:idxPZC2} and Remark \ref{thm:idxPZC2:rem}. 

Finally, we address the elasticity complex, also called $\CC$ complex, i.e., 
(the space of piecewise rigid motions, $\RM_{\pw}$, is defined in \eqref{eq:RM_pw})
\begin{align*}
\{0\}\xrightarrow{\iota_{\{0\}}}
\Lttom\xrightarrow{\rsymGrad}
\LtttSom&\xrightarrow{\rCCtS}
\LtttSom\xrightarrow{\rDS}
\Lttom\xrightarrow{\pi_{\RM_{\pw}}}\RM_{\pw},\\
\{0\}\xleftarrow{\pi_{\{0\}}}
\Lttom\xleftarrow{-\DS}
\LtttSom&\xleftarrow{\CCtS}
\LtttSom\xleftarrow{-\symGrad}
\Lttom\xleftarrow{\iota_{\RM_{\pw}}}\RM_{\pw}.
\end{align*}
Here, we shall discuss the Fredholm index of the operators 
\begin{align*}
\calD^{\elap}&:=\begin{pmatrix}\rDS&0\\\CCtS&\rsymGrad\end{pmatrix},
&
(\calD^{\elap})^{*}&=\begin{pmatrix}-\symGrad&\rCCtS\\0&-\DS\end{pmatrix}.
\end{align*}
The solution to the corresponding index problem is provided in Theorem \ref{thm:idxelaC} and Remark \ref{thm:idxelaC:rem}. 
Note that in a distributional setting results concerning the computation 
of the dimension of the generalised Neumann fields have been obtained in \cite{CCGK2007}, 
where a variational setting is preferred.

Here and throughout this paper, we denote by $\grad$, $\curl$, and $\dive$ 
the classical operators from vector analysis.
Moreover, $\Grad$ acts componentwise as $\grad^{\top}$ mapping vector fields to tensor fields.
$\Curl$ and $\Dive$ act row-wise as $\curl^{\top}$ and $\dive$ mapping tensor fields to tensor and vector fields, respectively. 
$L^{2}$-spaces with $k$ components (or $k\times k$-many components) 
are denoted by $L^{2,k}$ (or $L^{2,k\times k}$). 
A similar notation is used for $C^\infty$ and similar sets.

The approach to compute the index in situations as in Theorem \ref{thm:main} is rooted in 
a construction principle for Fredholm operators provided in \cite{BL1992}. 
The fundamental observation given in \cite{BL1992} is that it is possible 
to construct a Fredholm operator with the help of Hilbert complexes 
of densely defined and closed (possibly unbounded) linear operators, i.e,
\begin{align*}
\cdots\xrightarrow{\cdots}
H_{0}\xrightarrow{A_{0}}
H_{1}&\xrightarrow{A_{1}}
H_{2}\xrightarrow{A_{2}}
H_{3}
\xrightarrow{\cdots}\cdots,\\
\cdots\xleftarrow{\cdots}
H_{0}\xleftarrow{A_{0}^{*}}
H_{1}&\xleftarrow{A_{1}^{*}}
H_{2}\xleftarrow{A_{2}^{*}}
H_{3}
\xleftarrow{\cdots}\cdots.
\end{align*}
More precisely, if $A_{0}$, $A_{1}$, and $A_{2}$ are densely defined, closed linear operators defined 
on suitable Hilbert spaces $H_{l}$ such that 
$$\ran A_{0}\subset\ker A_{1},\qquad
\ran A_{1}\subset\ker A_{2},$$ 
then the block matrix operator
\begin{equation}
\label{eq:calD}
\calD:=
\begin{pmatrix}A_{2} & 0\\ A_{1}^{*} & A_{0}\end{pmatrix}
\end{equation}
with its natural domain of definition is closed and densely defined. 
It is Fredholm, if the ranges $\ran A_{0}$, $\ran A_{1}$, and $\ran A_{2}$ are closed
and if both kernels
$$N_{0}:=\ker A_{0},\qquad
N_{2,*}:=\ker A_{2}^{*}$$ 
and both cohomology groups 
$$K_{1}:=\ker A_{1}\cap\ker A_{0}^{*},\qquad
K_{2}:=\ker A_{2}\cap\ker A_{1}^{*}$$ 
are finite-dimensional. In this case, its index is then given by
\begin{align}
\label{index1}
\ind\calD
=\dim N_{0}-\dim K_{1}+\dim K_{2}-\dim N_{2,*},
\end{align}
cf.~Theorem \ref{thm:index}. For its adjoint, which is then Fredholm as well, we simply have
$$\calD^{*}:=
\begin{pmatrix}A_{2}^{*} & A_{1}\\ 0& A_{0}^{*}\end{pmatrix},\qquad
\ind\calD^{*}=-\ind\calD.$$

In a first application of this observation presented in this article, 
we look at the classical de Rham complex
\begin{align*}
\{0\}\xrightarrow{A_{-1}=\iota_{\{0\}}}
\Ltom\xrightarrow{A_{0}=\rgrad}
\Lttom&\xrightarrow{A_{1}=\rcurl}
\Lttom\xrightarrow{A_{2}=\rdive}
\Ltom\xrightarrow{A_{3}=\pi_{\R_{\pw}}}\R_{\pw},\\
\{0\}\xleftarrow{A_{-1}^{*}=\pi_{\{0\}}}
\Ltom\xleftarrow{A_{0}^{*}=-\dive}
\Lttom&\xleftarrow{A_{1}^{*}=\curl}
\Lttom\xleftarrow{A_{2}^{*}=-\grad}
\Ltom\xleftarrow{A_{3}^{*}=\iota_{\R_{\pw}}}\R_{\pw},
\end{align*} 
where again the super index $\mathring{\;\;}$ signifies homogeneous Dirichlet boundary conditions, 
see Theorem \ref{thm:idx} and $\R_{\pw}$ denotes the space of piecewise constants on $\om$, see \eqref{eq:R_pw}.
By \eqref{index1} in order to compute the index it is necessary to calculate the dimension 
of the cohomology groups, i.e., the dimension of the harmonic Dirichlet and Neumann fields
\begin{align*}
\harmDom:=K_{1}&=
\ker(\rcurl)\cap\ker(\dive),\\
\harmNom:=K_{2}&=
\ker(\rdive)\cap\ker(\curl),
\end{align*}
respectively.
In \cite{P1982}, this has been done by Picard. 
As it turns out these dimensions are related to topological properties of the underlying domain 
the differential operators are defined on, that is, 
$$\dim\harmDom=m-1,\qquad
\dim\harmNom=p,$$
see Theorem \ref{thm:DNF}.
In consequence, it is possible to compute the indices for the block de Rham operators 
$$\calD^{\rhmp}:=
\begin{pmatrix}\rdive & 0\\\curl &\rgrad\end{pmatrix},\qquad
(\calD^{\rhmp})^{*}:=
\begin{pmatrix}-\grad & \rcurl\\0 &-\dive\end{pmatrix}$$ 
by \eqref{index1} in terms of $m$, $p$, and $n$, i.e.,
$$\ind\calD^{\rhmp}=p-m-n+1,\qquad
\ind(\calD^{\rhmp})^{*}=-\ind\calD^{\rhmp},$$
see Theorem \ref{thm:idx}. 
It is noteworthy that this index theorem provides an index theorem for the Dirac operator 
on open manifolds with boundary endowed with a particular boundary condition,
see \cite{PTW2017} and Section \ref{sec:dirac} below.
  
The proofs of the index theorems discussed here combine the structural viewpoint outlined by \cite{BL1992} 
and ideas taken from the explicit computation of the dimension of the cohomology groups in \cite{P1982}. 
More precisely, we shall rephrase the proofs in \cite{P1982} and use these reformulations 
as a blueprint for the proofs for other complexes. We emphasise that the construction 
of the generalised Neumann fields is based on subtle interactions of matrix algebra 
and differential operators (see Lemma \ref{PZformulalem}) 
and a suitable application of so-called Poincar\'e maps yielding (for instance) 
a representation of vector fields by curve integrals over tensor fields, see e.g.~Lemma \ref{devGradHSlemma}.
The foundation for all of this to be applicable, however, is the newly found \pz complex, see \cite{PZ2016,PZ2020a},
and the more familiar elasticity complex, see \cite{PZ2020b,PZ2020c}.
In \cite{PZ2016,PZ2020a} the crucial properties and compact embedding results have been found for the \pz Hilbert complex 
underlying the computation of the index in Theorem \ref{thm:main}. 
In \cite{PZ2020b,PZ2020c} the corresponding results are presented for the elasticity complex. 
These results also stress that the mixed order differential operators discussed here
\emph{cannot} be viewed as a leading order term subject to a relatively compact perturbation. 

Before we come to a more in depth description of the structure of the paper, 
we emphasise the importance of a deeper understanding of Hilbert complexes 
for index theory and other areas of partial differential equations.

Next to \cite{BL1992} (and others), the notion of Hilbert complexes in connection with (Fredholm) index theory 
has also been addressed in \cite{Schulze19} and references therein. 
The work in \cite{Schulze19} is particularly interesting as the authors address manifolds with boundary. 
The focus is on characterising the Fredholm property (i.e., the finiteness of the cohomology groups) 
for certain complexes with boundary in terms of the principal symbol. 
Here, the Fredholm property of the Hilbert complex (i.e., with the terminology of \cite{Schulze19}, 
that a Hilbert complex is a Fredholm complex) follows 
from suitable compactness criteria (see e.g.~\cite{W1974,PZ2020a}) 
and all the dimensions of the cohomology groups and not just the index of the complex are addressed here explicitly.

An understanding of Hilbert complexes in connection with partial differential equations involving 
the classical vector analytic operators $\dive$, $\curl$, and $\grad$ led to \cite{P1984a}, 
where the kernel of the classical Maxwell operator is written by means of other differential operators. 
The resulting Picard's extended Maxwell system is useful for numerical studies \cite{TV2007} 
as well as for the study of the low frequency asymptotics of Maxwell's equations \cite{P1984a}. 
More involved low frequency asymptotics for Maxwell's equations can be found in the series
\cite{P2006a,P2007a,P2008a,P2008b,P2012a}
based on the series \cite{WW1992a,WW1994a,WW1997a,WW1997b}
for the reduced scalar and elasticity wave equations.
The connections of the extended Maxwell system and the Dirac operator are drawn in \cite{PTW2017} 
and shortly commented in this manuscript below.

Rather recently, the notion of Hilbert complexes 
(reusing the idea of writing the kernel of differential operators by means of other operators) 
has found applications in the context of homogenisation theory of partial differential equations. 
More precisely, it was possible to derive a certain operator-theoretic version 
of the so-called $\dive$-$\curl$ lemma (see~\cite{Tartar2009,Murat1978}), 
which implied a whole family of $\dive$-$\curl$ lemma-type results, see \cite{W2018,P2019b}.

Furthermore, the abstract $\dive$-$\curl$ result together with theory 
from Hilbert complexes are used to define and study the notion of nonlocal $H$-convergence, \cite{W2018b}. 
The applications presented in \cite{W2018b,W2019} as well as in \cite{N2019} 
use the assumption of exactness of the Hilbert complex, that is, 
triviality of certain cohomology groups. It is one corollary of the present study 
to describe the topological properties of the domains the differential operators 
are defined on to yield exact Hilbert complexes making the theory of nonlocal $H$-convergence applicable, 
see also \cite[Section 8]{W2018b}. This then results in new homogenisation 
and compactness theorems for nonlocal homogenisation problems. 
We postpone further results in this direction to future studies.

We shortly outline the plan of the paper next.
The main results, that is, the dimensions of the cohomology groups 
and the indices of the operators involved, are summarised in Section \ref{sec:Overview}.
In Section \ref{sec:3}, we briefly recall the notion of Hilbert complexes of densely defined and closed linear operators. 
Also, we provide a small introduction to the construction principle for Fredholm operators provided in \cite{BL1992}. 
As we slightly deviate from the approach presented there we recall some of the proofs for convenience of the reader. 
As an addendum to Section \ref{sec:3}, we provide an abstract set of Poincar\'e--Friedrichs inequalities 
in Section \ref{sec:3-addstuff} and outline an abstract perspective to variable coefficents in Section \ref{sec:4-addstuff}.
In order to have a first non-trivial yet rather elementary example at hand, we present 
the so-called Picard's extended Maxwell system in Section \ref{sec:derham}. 
This sets the stage for the index theorem for the Dirac operator provided in Section \ref{sec:dirac}.
In Section \ref{sec:pz}, we recall the first \pz complex and provide a more explicit formulation 
of Theorem \ref{thm:main} (see Theorem \ref{thm:idxPZC}). 
Similar results will be presented in Section \ref{sec:pz2} for the second \pz complex
and in Section \ref{sec:ela} for the elasticity complex.
Section \ref{app:sec:DirNeuF} is concerned with the topological setting introduced in \cite{P1982} 
forming our main assumption on $\om$. The Sections \ref{app:sec:DirF} 
and \ref{app:sec:NeuF} address the computation of bases and hence the dimensions 
of the generalised Dirichlet and Neumann vector and tensor fields
for the different complexes, respectively, and thus concluding the proofs of our main results. 
In passing, we also provide partial differential equations 
whose unique solutions will correspond to the basis functions under consideration. 
This is particularly important for numerically computing these basis functions. 
Amongst these PDEs we recover the one in \cite{CCGK2007}, when the generalised Neumann fields 
for the elasticity complex are concerned (see in particular Remark \ref{rem:charPDEela}; 
the regularity assumptions on $\om$ are the same here). 
In Section \ref{sec:con} we provide a small conclusion.

Note that unlike to many research topics in the analysis of partial differential equations (and other topics), 
we shall use $\om$ being `open' and a `domain' as synonymous terms. 
In particular, we shall not imply $\om$ to satisfy any connectivity properties, 
when calling $\om$ a domain.

\section{A Brief Overview of the Main Results}
\label{sec:Overview}

We employ the notations and assumptions of Section \ref{sec:int}. 
In particular, we shall assume that $\om\subset \R^3$ is a bounded, strong Lipschitz domain. 
The number of connected components of $\om$ is $n$, 
the number of connected components of $\R^3\setminus\overline{\om}$ is $m$, 
and $p$ denotes the number of handles (see Assumption \ref{ass:curvesandsurfaces} below).
We introduce the cohomology groups 
$$K_{1}=\harmD^{\cdots}(\om),\qquad
K_{2}=\harmN^{\cdots}(\om),$$
i.e., the Dirichlet and Neumann fields
\begin{align*}
\harmDom
&=\ker(\rcurl)\cap\ker(\dive),
&
\harmNom
&=\ker(\rdive)\cap\ker(\curl),\\
\harmbihoneDom
&=\ker(\rCS)\cap\ker(\dDS),
&
\harmbihoneNom
&=\ker(\rDT)\cap\ker(\symCT),\\
\harmbihtwoDom
&=\ker(\rsymCT)\cap\ker(\DT),
&
\harmbihtwoNom
&=\ker(\rdDS)\cap\ker(\CS),\\
\harmelaDom
&=\ker(\rCCtS)\cap\ker(\DS),
&
\harmelaNom
&=\ker(\rDS)\cap\ker(\CCtS).
\end{align*}
We will compute the dimensions of the kernels $N_{0}$, $N_{2,*}$, i.e., 
\begin{align*}
\dim\ker(\rgrad)&=0,
&
\dim\ker(\grad)&=n,\\
\dim\ker(\rGg)&=0,
&
\dim\ker(\devGrad)&=4n,\\
\dim\ker(\rdevGrad)&=0,
&
\dim\ker(\Gg)&=4n,\\
\dim\ker(\rsymGrad)&=0,
&
\dim\ker(\symGrad)&=6n,
\intertext{and the dimensions of the cohomology groups $K_{1}$, $K_{2}$, i.e.,}
\dim\harmDom&=m-1,
&
\dim\harmNom&=p,\\
\dim\harmbihoneDom&=4(m-1),
&
\dim\harmbihoneNom&=4p,\\
\dim\harmbihtwoDom&=4(m-1),
&
\dim\harmbihtwoNom&=4p,\\
\dim\harmelaDom&=6(m-1),
&
\dim\harmelaNom&=6p,
\intertext{and the indices $\ind\calD$, $\ind\calD^{*}$ of the involved Fredholm operators, i.e.,}
\ind\calD^{\rhmp}&=p-m-n+1,
&
\ind(\calD^{\rhmp})^{*}&=-\ind\calD^{\rhmp},\\
\ind\calD^{\bihonep}&=4(p-m-n+1),
&
\ind(\calD^{\bihonep})^{*}&=-\ind\calD^{\bihonep},\\
\ind\calD^{\bihtwop}&=4(p-m-n+1),
&
\ind(\calD^{\bihtwop})^{*}&=-\ind\calD^{\bihtwop},\\
\ind\calD^{\elap}&=6(p-m-n+1),
&
\ind(\calD^{\elap})^{*}&=-\ind\calD^{\elap}.
\end{align*}

\begin{remark}
\label{rem:genindex}
We observe that in all of our examples,
where generally the operators $A_{j}$ carry the boundary condition
and the adjoints $A_{j}^{*}$ do not have boundary condition,
the dimensions of the first and second cohomology groups 
$K_{1}$ and $K_{2}$ (Dirichlet fields and Neumann fields) 
are given by
$$\dim K_{1}=\frac{\dim N_{2,*}}{n}\cdot(m-1),\qquad
\dim K_{2}=\frac{\dim N_{2,*}}{n}\cdot p,$$
respectively. The indices of $\calD$ (see \eqref{eq:calD}) and $\calD^{*}$ are 
\begin{equation}
\label{eq:genindex}
-\ind\calD^{*}=\ind\calD=\frac{\dim N_{2,*}}{n}\cdot(p-m-n+1).
\end{equation}
\end{remark}

Remark \ref{rem:genindex} leads to the following problem that seems to be open:

\begin{problem}
\label{prop:genindex} 
Is it possible to find differential operators on $\om\subset \R^3$ (bounded, strong Lipschitz domain) 
of the form \eqref{eq:calD} as discussed in Remark \ref{rem:genindex} 
that violate the general index formula for $\calD$ in \eqref{eq:genindex}?
\end{problem}

\section{The Construction Principle and the Index Theorem}
\label{sec:3}

In this section, we provide the basic construction principle, which is the basis for the operators in question. 
The theory in more general terms has been developed already in \cite{BL1992}. 
Here, we rephrase the situation with a slightly more particular viewpoint. 
For the convenience of the reader, we carry out the necessary proofs here. 

Throughout this section, we let $H_{0}, H_{1}, H_{2}, H_{3}$ be Hilbert spaces, and
\begin{align*}
A_{0} &:\dom A_{0}\subset H_{0}\To H_{1},\\ 
A_{1} &:\dom A_{1}\subset H_{1}\To H_{2},\\ 
A_{2} &:\dom A_{2}\subset H_{2}\To H_{3}
\end{align*} 
be densely defined and \underline{closed} linear operators.

\begin{definition} 
Let $A_{0},A_{1},A_{2}$ be defined as above.
\begin{itemize}
\item
We call a pair $(A_{0},A_{1})$ a \emph{complex} (\emph{Hilbert complex}), 
if $\ran A_{0}\subset\ker A_{1}$. In this situation we also write
\begin{align*}
H_{0}\xrightarrow{A_{0}}
H_{1}&\xrightarrow{A_{1}}
H_{2}.
\end{align*}\item
We say a complex $(A_{0},A_{1})$ is \emph{closed}, 
if $\ran A_{0}$ and $\ran A_{1}$ are closed.
\item
A complex $(A_{0},A_{1})$ is said to be \emph{compact}, 
if the embedding $\dom A_{1}\cap\dom A_{0}^{*}\hookrightarrow H_{1}$ is compact.
\item
The triple $(A_{0},A_{1},A_{2})$ is called a \emph{(closed/compact) complex}, 
if both $(A_{0},A_{1})$ and $(A_{1},A_{2})$ are (closed/compact) complexes. 
\item
We say that a complex $(A_{0},A_{1},A_{2})$ is \emph{maximal compact}, 
if $(A_{0},A_{1},A_{2})$ is a compact complex and 
both embeddings $\dom A_{0}\hookrightarrow H_{0}$ and $\dom A_{2}^{*}\hookrightarrow H_{3}$ 
are compact as well.
\end{itemize}
\end{definition}

\begin{remark}
\label{rem:dualcomplex}
The `FA-ToolBox' (`Functional-Analysis-Tool Box') from \cite{P2019a,P2019b,P2020a,PZ2020a,PZ2020b,PZ2020c} shows that 
$$(A_{0},A_{1})\;\text{(closed/compact) complex}\quad\iff\quad
(A_{1}^{*},A_{0}^{*})\;\text{(closed/compact) complex.}$$
As a consequence, we obtain $(A_{0},A_{1},A_{2})$ 
is a (closed/compact/maximal compact) complex if and only if $(A_{2}^{*},A_{1}^{*},A_{0}^{*})$ 
is a (closed/compact/maximal compact) complex.
\end{remark}

Throughout this section, we assume that $(A_{0},A_{1},A_{2})$ is a complex, i.e.,
\begin{align*}
H_{0}\xrightarrow{A_{0}}
H_{1}&\xrightarrow{A_{1}}
H_{2}\xrightarrow{A_{2}}
H_{3},\\
H_{0}\xleftarrow{A_{0}^{*}}
H_{1}&\xleftarrow{A_{1}^{*}}
H_{2}\xleftarrow{A_{2}^{*}}
H_{3}.
\end{align*}
We define the operator
\begin{align*}
\calD:(\dom A_{2}\cap\dom A_{1}^{*})\times\dom A_{0}\subset H_{2}\times H_{0}&\To H_{3}\times H_{1}\\
(x,y)\hspace*{32mm}&\longmapsto(A_{2}x,A_{1}^{*}x+A_{0}y).
\end{align*}
In block operator matrix notation, we have
$$\calD=\begin{pmatrix}A_{2}&0\\A_{1}^{*}&A_{0}\end{pmatrix}.$$
From the introduction, we recall the notation
\begin{equation}
\label{eq:ker} 
N_{0}:=\ker A_{0},\qquad
N_{2,*}:=\ker A_{2}^{*}
\end{equation}
and 
\begin{equation}
\label{eq:coh}
K_{1}:=\ker A_{1}\cap\ker A_{0}^{*},\qquad
K_{2}:=\ker A_{2}\cap\ker A_{1}^{*}.
\end{equation}
The aim of this section is to provide a proof of Theorem \ref{thm:index} below. As a standard tool for this and related results, we recall the standard orthogonal decompositions 
\begin{align}
\label{deco1}
H_{2}&=\overline{\ran A_{2}^{*}}\oplus_{H_{2}}\ker A_{2},
&
H_{2}&=\ker A_{1}^{*}\oplus_{H_{2}}\overline{\ran A_{1}},\\
\nonumber
\dom A_{2}&=(\dom A_{2}\cap\overline{\ran A_{2}^{*}})\oplus_{H_{2}}\ker A_{2},
&
\dom A_{1}^{*}&=\ker A_{1}^{*}\oplus_{H_{2}}(\dom A_{1}^{*} \cap\overline{\ran A_{1}}).
\end{align}
Using \eqref{eq:coh}, by the complex property we get
\begin{equation}\label{deco1.5}\ker A_{2}=K_{2}\oplus_{H_{2}}\overline{\ran A_{1}}\end{equation}
and hence we obtain the following (abstract) Helmholtz type decomposition
\begin{align}
\label{deco2}
\begin{aligned}
H_{2} 
&=\overline{\ran A_{2}^{*}}\oplus_{H_{2}}K_{2}\oplus_{H_{2}}\overline{\ran A_{1}},\\
\dom A_{2}\cap\dom A_{1}^{*}
&=(\dom A_{2}\cap\overline{\ran A_{2}^{*}})\oplus_{H_{2}}K_{2}\oplus_{H_{2}}(\dom A_{1}^{*}\cap\overline{\ran A_{1}}).
\end{aligned}
\end{align}
We gather some elementary facts about $\calD$.

\begin{proposition}
\label{thm:properties1} 
$\calD$ is a densely defined and closed linear operator.
\end{proposition}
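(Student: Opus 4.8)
The plan is to verify the two claims separately: density of the domain, then closedness.

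For \textbf{density}, note that $\dom\calD = (\dom A_{2}\cap\dom A_{1}^{*})\times\dom A_{0}$ as a subset of $H_{2}\times H_{0}$. Since $A_{0}$ is densely defined, $\dom A_{0}$ is dense in $H_{0}$, so it suffices to show $\dom A_{2}\cap\dom A_{1}^{*}$ is dense in $H_{2}$. Here I would invoke the orthogonal decomposition \eqref{deco2}: we have
$$\dom A_{2}\cap\dom A_{1}^{*}=(\dom A_{2}\cap\overline{\ran A_{2}^{*}})\oplus_{H_{2}}K_{2}\oplus_{H_{2}}(\dom A_{1}^{*}\cap\overline{\ran A_{1}}),$$
and it matches the decomposition $H_{2}=\overline{\ran A_{2}^{*}}\oplus_{H_{2}}K_{2}\oplus_{H_{2}}\overline{\ran A_{1}}$ summand by summand. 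Thus density reduces to the facts that $\dom A_{2}\cap\overline{\ran A_{2}^{*}}$ is dense in $\overline{\ran A_{2}^{*}}$ and $\dom A_{1}^{*}\cap\overline{\ran A_{1}}$ is dense in $\overline{\ran A_{1}}$. The first holds because $A_{2}$ is densely defined, so $\dom A_{2}$ is dense in $H_{2}$, and intersecting the first decomposition in \eqref{deco1} shows $\dom A_{2}=(\dom A_{2}\cap\overline{\ran A_{2}^{*}})\oplus_{H_{2}}\ker A_{2}$, forcing $\dom A_{2}\cap\overline{\ran A_{2}^{*}}$ to be dense in $\overline{\ran A_{2}^{*}}$. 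Symmetrically, $A_{1}^{*}$ is densely defined (since $A_{1}$ is closed and densely defined, so is $A_{1}^{*}$), and the second relation in \eqref{deco1} gives density of $\dom A_{1}^{*}\cap\overline{\ran A_{1}}$ in $\overline{\ran A_{1}}$.

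For \textbf{closedness}, suppose $(x_{k},y_{k})\in\dom\calD$ with $(x_{k},y_{k})\to(x,y)$ in $H_{2}\times H_{0}$ and $\calD(x_{k},y_{k})=(A_{2}x_{k},\,A_{1}^{*}x_{k}+A_{0}y_{k})\to(f,g)$ in $H_{3}\times H_{1}$. The first component gives $A_{2}x_{k}\to f$ with $x_{k}\to x$; since $A_{2}$ is closed, $x\in\dom A_{2}$ and $A_{2}x=f$. For the second component the subtlety is that we only control the sum $A_{1}^{*}x_{k}+A_{0}y_{k}$, not the two terms individually. Here I would use the orthogonality: $\ran A_{0}\subset\ker A_{1}=\overline{\ran A_{1}}^{\,\perp}\oplus\ldots$ — more precisely, since $(A_{0},A_{1})$ is a complex, $\ran A_{0}\subset\ker A_{1}$, while $\ran A_{1}^{*}\subset\overline{\ran A_{1}^{*}}=(\ker A_{1})^{\perp}$. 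Hence $A_{1}^{*}x_{k}\perp A_{0}y_{k}$ in $H_{1}$ for every $k$, so by Pythagoras the convergence of $A_{1}^{*}x_{k}+A_{0}y_{k}$ together with boundedness forces each sequence $A_{1}^{*}x_{k}$ and $A_{0}y_{k}$ to be Cauchy, hence convergent; in fact $A_{1}^{*}x_{k}\to P g$ and $A_{0}y_{k}\to(1-P)g$ where $P$ is the orthogonal projection onto $\overline{\ran A_{1}^{*}}$. Now $x_{k}\to x$ with $A_{1}^{*}x_{k}$ convergent and $A_{1}^{*}$ closed gives $x\in\dom A_{1}^{*}$; and $y_{k}\to y$ with $A_{0}y_{k}$ convergent and $A_{0}$ closed gives $y\in\dom A_{0}$. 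Therefore $(x,y)\in\dom\calD$ and $\calD(x,y)=(f,g)$, so $\calD$ is closed.

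The \textbf{main obstacle} is the closedness argument's handling of the coupled second component: one must extract convergence of $A_{1}^{*}x_{k}$ and $A_{0}y_{k}$ separately from convergence of their sum, and the only tool for this is the orthogonality $\ran A_{1}^{*}\perp\ran A_{0}$ coming from the complex property — so it is essential to use the hypothesis $\ran A_{0}\subset\ker A_{1}$ rather than just closedness of the individual operators. Everything else is routine bookkeeping with the decompositions already recorded in \eqref{deco1}--\eqref{deco2}.
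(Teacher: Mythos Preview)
Your proof is correct and follows essentially the same approach as the paper: density via the Helmholtz-type decomposition \eqref{deco2} reducing to density of $\dom A_{2}\cap\overline{\ran A_{2}^{*}}$ in $\overline{\ran A_{2}^{*}}$ and $\dom A_{1}^{*}\cap\overline{\ran A_{1}}$ in $\overline{\ran A_{1}}$, and closedness via the orthogonality $\ran A_{0}\subset\ker A_{1}\perp\ran A_{1}^{*}$ to split the sum $A_{1}^{*}x_{k}+A_{0}y_{k}$ into separately convergent pieces. The only difference is that the paper treats closedness first and density second, but the arguments themselves coincide.
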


\begin{proof}
For the closedness of $\calD$, we let $\big((x_{k},y_{k})\big)_{k}$ be a sequence in $\dom\calD$ 
with $\big((x_{k},y_{k})\big)_{k}$ converging to some $(x,y)$ in $H_{2}\times H_{0}$ 
and $(\calD(x_{k},y_{k}))_{k}$ converging to $(w,z)$ in $H_{3}\times H_{1}$. 
One readily sees using the closedness of $A_{2}$ that $x\in\dom A_{2}$ and $A_{2}x=w$. 
Next, we observe that $\ran A_{0}\subset\ker A_{1}\,\bot_{H_{1}}\,\ran A_{1}^{*}$. 
Hence, $(A_{1}^{*}x_{k})_{k}$ and $(A_{0} y_{k})$ are both convergent to some $z_{1}\in H_{1}$ and $z_{2}\in H_{1}$, respectively. 
By the closedness of both $A_{1}^{*}$ and $A_{0}$, we thus deduce that $x\in\dom A_{1}^{*}$ and $y\in\dom A_{0}$ 
with $z_{1}=A_{1}^{*}x$ and $z_{2}=A_{0}y$ as well as $z=z_{1}+z_{2}=A_{1}^{*}x+A_{0}y$. 

For $\calD$ being densely defined, we see that by assumption, $\dom A_{0}$ is dense in $H_{0}$. 
Hence, it suffices to show that $\dom A_{2}\cap\dom A_{1}^{*}$ is dense in $H_{2}$. 
Thus, as $\dom A_{2}$ and $\dom A_{1}^{*}$ are dense in $H_{2}$, 
we deduce by \eqref{deco1} that $\dom A_{2} \cap \overline{\ran A_{2}^{*}}$ and $\dom A_{1}^{*} \cap \overline{\ran A_{1}}$ 
are dense in $\overline{\ran A_{2}^{*}}$ and $\overline{\ran A_{1}}$, respectively. 
Thus, the decomposition in \eqref{deco2} implies that $\dom A_{2}\cap\dom A_{1}^{*}$ 
is dense in $H_{2}$, which yields the assertion.
\end{proof}

\begin{theorem}
\label{thm:properties2} 
$\calD^{*}=\begin{pmatrix}A_{2}^{*}&A_{1}\\0&A_{0}^{*}\end{pmatrix}$. More precisely,
\begin{align*}
\calD^{*}:\dom A_{2}^{*}\times(\dom A_{1}\cap\dom A_{0}^{*})\subset H_{3}\times H_{1}&\To H_{2}\times H_{0}\\
(w,z)\hspace*{53mm}&\longmapsto(A_{2}^{*}w+A_{1}z,A_{0}^{*}z).
\end{align*}
\end{theorem}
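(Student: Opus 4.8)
\textbf{Proof plan for Theorem \ref{thm:properties2}.}
The plan is to verify the asserted block-matrix form of $\calD^{*}$ by proving two inclusions between graphs. The easy inclusion is $\supseteq$: if $w\in\dom A_{2}^{*}$ and $z\in\dom A_{1}\cap\dom A_{0}^{*}$, then for every $(x,y)\in\dom\calD=(\dom A_{2}\cap\dom A_{1}^{*})\times\dom A_{0}$ one computes directly, using the definitions of the adjoints $A_{2}^{*}$, $A_{1}^{*}$, $A_{0}^{*}$,
\begin{align*}
\bscp{\calD(x,y)}{(w,z)}_{H_{3}\times H_{1}}
&=\scp{A_{2}x}{w}_{H_{3}}+\scp{A_{1}^{*}x+A_{0}y}{z}_{H_{1}}\\
&=\scp{x}{A_{2}^{*}w}_{H_{2}}+\scp{x}{A_{1}z}_{H_{2}}+\scp{y}{A_{0}^{*}z}_{H_{0}},
\end{align*}
so $(w,z)\in\dom\calD^{*}$ with $\calD^{*}(w,z)=(A_{2}^{*}w+A_{1}z,\,A_{0}^{*}z)$. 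This shows the operator on the right-hand side is a restriction of $\calD^{*}$.

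For the reverse inclusion $\subseteq$, suppose $(w,z)\in\dom\calD^{*}$, say $\calD^{*}(w,z)=(u,v)\in H_{2}\times H_{0}$, i.e.
$$\scp{A_{2}x}{w}_{H_{3}}+\scp{A_{1}^{*}x+A_{0}y}{z}_{H_{1}}=\scp{x}{u}_{H_{2}}+\scp{y}{v}_{H_{0}}$$
for all $(x,y)\in\dom\calD$. First I would test with $x=0$ and $y$ ranging over $\dom A_{0}$: this yields $\scp{A_{0}y}{z}_{H_{1}}=\scp{y}{v}_{H_{0}}$ for all $y\in\dom A_{0}$, hence $z\in\dom A_{0}^{*}$ and $A_{0}^{*}z=v$. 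Next, test with $y=0$ and $x$ ranging over $\dom A_{2}\cap\dom A_{1}^{*}$:
$$\scp{A_{2}x}{w}_{H_{3}}+\scp{A_{1}^{*}x}{z}_{H_{1}}=\scp{x}{u}_{H_{2}},\qquad x\in\dom A_{2}\cap\dom A_{1}^{*}.$$
From this I must extract both $w\in\dom A_{2}^{*}$ and $z\in\dom A_{1}$, together with $A_{2}^{*}w+A_{1}z=u$.

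The main obstacle is precisely this last decoupling step, because the test functions $x$ must simultaneously lie in $\dom A_{2}$ and in $\dom A_{1}^{*}$, so one cannot naively set one of the two terms to zero. The remedy is the Helmholtz-type decomposition \eqref{deco2}: write $H_{2}=\overline{\ran A_{2}^{*}}\oplus_{H_{2}}K_{2}\oplus_{H_{2}}\overline{\ran A_{1}}$ with the compatible splitting of $\dom A_{2}\cap\dom A_{1}^{*}$. Choosing $x\in\dom A_{1}^{*}\cap\overline{\ran A_{1}}\subset\ker A_{2}$ kills the first term and gives $\scp{A_{1}^{*}x}{z}_{H_{1}}=\scp{x}{u}_{H_{2}}$; since such $x$ are dense in $\overline{\ran A_{1}}$ and $A_{1}^{*}$ maps into $\overline{\ran A_{1}^{*}}$ while $u$ may be split accordingly, one deduces $z\in\dom(A_{1}^{*})^{*}=\dom A_{1}$ (using closedness of $A_{1}$, so $A_{1}=A_{1}^{**}$) with $A_{1}z$ equal to the $\overline{\ran A_{1}}$-component of $u$ (the component in $K_{2}\oplus\overline{\ran A_{2}^{*}}$ vanishes against these $x$). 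Symmetrically, choosing $x\in\dom A_{2}\cap\overline{\ran A_{2}^{*}}\subset\ker A_{1}^{*}$ kills the second term and yields $\scp{A_{2}x}{w}_{H_{3}}=\scp{x}{u}_{H_{2}}$ for a dense set of such $x$, whence $w\in\dom A_{2}^{*}$ with $A_{2}^{*}w$ equal to the $\overline{\ran A_{2}^{*}}$-component of $u$; finally testing with $x\in K_{2}$ forces the $K_{2}$-component of $u$ to be zero. Adding the pieces gives $A_{2}^{*}w+A_{1}z=u$, completing the reverse inclusion and hence the identity $\calD^{*}=\begin{pmatrix}A_{2}^{*}&A_{1}\\0&A_{0}^{*}\end{pmatrix}$ with the stated domain.
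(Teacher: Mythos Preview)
Your proposal is correct and follows essentially the same route as the paper: both prove the easy inclusion by direct computation, then for the reverse inclusion first isolate $z\in\dom A_{0}^{*}$ by testing with $(0,y)$, and then decouple the remaining identity via the three-piece Helmholtz decomposition \eqref{deco2}, testing separately with $x\in\dom A_{2}\cap\overline{\ran A_{2}^{*}}\subset\ker A_{1}^{*}$, with $x\in\dom A_{1}^{*}\cap\overline{\ran A_{1}}\subset\ker A_{2}$, and with $x\in K_{2}$. The paper phrases the decoupling via the orthogonal projectors $\pi_{0},\pi_{1},\pi_{2}$ (writing, e.g., $\scp{A_{2}\tilde{x}}{w}=\scp{\tilde{x}}{\pi_{2}u}$ for arbitrary $\tilde{x}\in\dom A_{2}$), which makes the passage to $w\in\dom A_{2}^{*}$ and $z\in\dom A_{1}$ slightly more explicit than your density/component argument, but the substance is identical.
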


\begin{proof}
Note that 
$$\begin{pmatrix}A_{2}^{*}&A_{1}\\0&A_{0}^{*}\end{pmatrix}\subset\calD^{*}$$
holds by definition since for all $(x,y)\in\dom\calD=(\dom A_{2}\cap\dom A_{1}^{*})\times\dom A_{0}$
and for all $(w,z)\in\dom A_{2}^{*}\times(\dom A_{1}\cap\dom A_{0}^{*})$ 
\begin{align*}
\bscp{\calD(x,y)}{(w,z)}_{H_{3}\times H_{1}}
&=\scp{A_{2}x}{w}_{H_{3}}
+\scp{A_{1}^{*}x+A_{0}y}{z}_{H_{1}}\\
&=\scp{x}{A_{2}^{*}w+A_{1}z}_{H_{2}}
+\scp{y}{A_{0}^{*}z}_{H_{0}}
=\bscp{(x,y)}{\calD^{*}(w,z)}_{H_{2}\times H_{0}}.
\end{align*}

Let $(w,z)\in\dom\calD^{*}$ and set $(u,v):=\calD^{*}(w,z)$. 
For $y\in\dom A_{0}$ we have $(0,y)\in\dom\calD$. We obtain 
\begin{align*}
\scp{A_{0}y}{z}_{H_{1}}
=\bscp{\calD(0,y)}{(w,z)}_{H_{3}\times H_{1}}
=\bscp{(0,y)}{\calD^{*}(w,z)}_{H_{2}\times H_{0}}
=\scp{y}{v}_{H_{0}}.
\end{align*}
Hence, $z\in\dom A_{0}^{*}$ and $A_{0}^{*}z = v$. 

For all $x\in\dom A_{2}\cap\dom A_{1}^{*}$ we see $(x,0)\in\dom\calD$
and deduce that
\begin{align}
\label{adjcomp1}
\begin{aligned}
\scp{A_{2}x}{w}_{H_{3}}+\scp{A_{1}^{*}x}{z}_{H_{1}}
&=\bscp{\calD(x,0)}{(w,z)}_{H_{3}\times H_{1}}\\
&=\bscp{(x,0)}{\calD^{*}(w,z)}_{H_{2}\times H_{0}}
=\scp{x}{u}_{H_{2}}.
\end{aligned}
\end{align}
Let $\pi_{2}$ denote the orthonormal projector onto $\overline{\ran A_{2}^{*}}$ in \eqref{deco1}.
Then for $\widetilde{x}\in\dom A_{2}$ we have
$$x:=\pi_{2}\widetilde{x}
\in\dom A_{2}\cap\overline{\ran A_{2}^{*}}\subset\dom A_{2}\cap\ker A_{1}^{*}\subset\dom A_{2}\cap\dom A_{1}^{*},\quad
A_{2}x=A_{2}\widetilde{x}$$ 
and by \eqref{adjcomp1} 
$$\scp{A_{2}\widetilde{x}}{w}_{H_{3}}
=\scp{A_{2}x}{w}_{H_{3}}+\scp{A_{1}^{*}x}{z}_{H_{1}}
=\scp{x}{u}_{H_{2}}
=\scp{\widetilde{x}}{\pi_{2}u}_{H_{2}}.$$
Thus $w\in\dom A_{2}^{*}$ and $A_{2}^{*}w=\pi_{2}u$.
Analogously, let $\pi_{1}$ denote the orthonormal projector onto $\overline{\ran A_{1}}$ in \eqref{deco1}.
Then for $\widetilde{x}\in\dom A_{1}^{*}$ we have
$$x:=\pi_{1}\widetilde{x}
\in\dom A_{1}^{*}\cap\overline{\ran A_{1}}\subset\dom A_{1}^{*}\cap\ker A_{2}\subset\dom A_{2}\cap\dom A_{1}^{*},\quad
A_{1}^{*}x=A_{1}^{*}\widetilde{x}$$ 
and by \eqref{adjcomp1} 
$$\scp{A_{1}^{*}\widetilde{x}}{z}_{H_{1}}
=\scp{A_{2}x}{w}_{H_{3}}+\scp{A_{1}^{*}x}{z}_{H_{1}}
=\scp{x}{u}_{H_{2}}
=\scp{\widetilde{x}}{\pi_{1}u}_{H_{2}}.$$
Thus $z\in\dom A_{1}$ and $A_{1}z=\pi_{1}u$.
Therefore, $(w,z)\in\dom A_{2}^{*}\times(\dom A_{1}\cap\dom A_{0}^{*})$.
Moreover, using the orthonormal projector $\pi_{0}$ onto $K_{2}$ in \eqref{deco2} we see for $x\in K_{2}$ by \eqref{adjcomp1} 
$$\scp{x}{\pi_{0}u}_{H_{2}}
=\scp{\pi_{0}x}{u}_{H_{2}}
=\scp{x}{u}_{H_{2}}
=\scp{A_{2}x}{w}_{H_{3}}+\scp{A_{1}^{*}x}{z}_{H_{1}}
=0,$$
yielding $\pi_{0}u=0$. Finally, by \eqref{deco2} we arrive at
$$\calD^{*}(w,z)
=(u,v)
=(\pi_{0}u+\pi_{1}u+\pi_{2}u,A_{0}^{*}z)
=(A_{1}z+A_{2}^{*}w,A_{0}^{*}z),$$
completing the proof.
\end{proof}

\begin{lemma}
\label{lem:properties3} 
With the settings \eqref{eq:ker} and \eqref{eq:coh}, the kernels of $\calD$ and $\calD^{*}$ read
\begin{align*}
\ker\calD&=K_{2}\times N_{0}=(\ker A_{2}\cap\ker A_{1}^{*})\times\ker A_{0},\\
\ker\calD^{*}&=N_{2,*}\times K_{1}=\ker A_{2}^{*}\times(\ker A_{1}\cap\ker A_{0}^{*}).
\end{align*}
\end{lemma}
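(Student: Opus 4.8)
The plan is to read off both kernels directly from the block forms of $\calD$ and $\calD^{*}$, the only non-formal ingredient being the orthogonality of ranges forced by the complex property — the same fact already used to prove closedness of $\calD$ in Proposition \ref{thm:properties1}.

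First I would treat $\ker\calD$. A pair $(x,y)\in\dom\calD=(\dom A_{2}\cap\dom A_{1}^{*})\times\dom A_{0}$ lies in $\ker\calD$ exactly when $A_{2}x=0$ and $A_{1}^{*}x+A_{0}y=0$. Here I invoke $\ran A_{0}\subset\ker A_{1}=(\overline{\ran A_{1}^{*}})^{\bot_{H_{1}}}$, which gives $A_{1}^{*}x\,\bot_{H_{1}}\,A_{0}y$; hence the single equation $A_{1}^{*}x+A_{0}y=0$ splits into $A_{1}^{*}x=0$ and $A_{0}y=0$. Together with $A_{2}x=0$ this yields $x\in\ker A_{2}\cap\ker A_{1}^{*}=K_{2}$ and $y\in\ker A_{0}=N_{0}$, i.e.\ $\ker\calD\subset K_{2}\times N_{0}$. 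The reverse inclusion is immediate: for $x\in K_{2}$ we have $x\in\ker A_{2}\cap\ker A_{1}^{*}\subset\dom A_{2}\cap\dom A_{1}^{*}$, so $(x,y)\in\dom\calD$ for every $y\in N_{0}=\ker A_{0}\subset\dom A_{0}$, and then $\calD(x,y)=(A_{2}x,A_{1}^{*}x+A_{0}y)=(0,0)$.

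For $\ker\calD^{*}$ I would use the representation of $\calD^{*}$ from Theorem \ref{thm:properties2}. A pair $(w,z)\in\dom\calD^{*}=\dom A_{2}^{*}\times(\dom A_{1}\cap\dom A_{0}^{*})$ lies in $\ker\calD^{*}$ exactly when $A_{2}^{*}w+A_{1}z=0$ and $A_{0}^{*}z=0$. Now $\ran A_{1}\subset\ker A_{2}=(\overline{\ran A_{2}^{*}})^{\bot_{H_{2}}}$ gives $A_{2}^{*}w\,\bot_{H_{2}}\,A_{1}z$, so again the first equation splits into $A_{2}^{*}w=0$ and $A_{1}z=0$; combined with $A_{0}^{*}z=0$ this gives $w\in\ker A_{2}^{*}=N_{2,*}$ and $z\in\ker A_{1}\cap\ker A_{0}^{*}=K_{1}$, and the reverse inclusion is again a one-line check. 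Alternatively, since $(A_{2}^{*},A_{1}^{*},A_{0}^{*})$ is a complex by Remark \ref{rem:dualcomplex} and $\calD^{*}$ is precisely the operator \eqref{eq:calD} built from it, the second identity follows from the first applied to the dual complex.

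I do not expect any real obstacle here; the only point one must not skip is recording the orthogonality $A_{1}^{*}x\,\bot\,A_{0}y$ (resp.\ $A_{2}^{*}w\,\bot\,A_{1}z$), which upgrades one vector equation into two, the remainder being bookkeeping of domains.
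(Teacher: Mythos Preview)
Your proof is correct and follows essentially the same approach as the paper: use the complex property $\ran A_{0}\subset\ker A_{1}\,\bot_{H_{1}}\,\ran A_{1}^{*}$ to split $A_{1}^{*}x+A_{0}y=0$ into $A_{1}^{*}x=0$ and $A_{0}y=0$, and handle $\ker\calD^{*}$ analogously via Theorem~\ref{thm:properties2} and Remark~\ref{rem:dualcomplex}. Your version is in fact slightly more explicit, spelling out the reverse inclusions that the paper leaves implicit.
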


\begin{proof}
For $(x,y)\in\ker\calD$ we have $A_{2}x=0$ and $A_{1}^{*}x+A_{0}y=0$.
By orthogonality and the complex property, i.e.,
$\ran A_{0}\subset\ker A_{1}\,\bot_{H_{1}}\,\ran A_{1}^{*}$,
we see $A_{1}^{*}x=A_{0}y=0$. The assertion about $\ker\calD^{*}$ (use Theorem \ref{thm:properties2} and Remark \ref{rem:dualcomplex}) follows analogously.
\end{proof}
With Lemma \ref{lem:properties3} at hand, the following result is immediate.
\begin{corollary}
\label{lem:properties3a} 
The closures of the ranges of $\calD$ and $\calD^{*}$ are given by
\begin{align*}
\overline{\ran\calD}=(\ker\calD^{*})^{\bot_{H_{3}\times H_{1}}}
&=N_{2,*}^{\bot_{H_{3}}}\times K_{1}^{\bot_{H_{1}}},\\
\overline{\ran\calD^{*}}=(\ker\calD)^{\bot_{H_{2}\times H_{0}}}
&=K_{2}^{\bot_{H_{2}}}\times N_{0}^{\bot_{H_{0}}}.
\end{align*}
\end{corollary}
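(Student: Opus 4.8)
The statement to prove is Corollary \ref{lem:properties3a}, which identifies the closures of the ranges of $\calD$ and $\calD^*$.

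The plan is to exploit the elementary functional-analytic identity that for any densely defined closed linear operator $T$ between Hilbert spaces, $\overline{\ran T}=(\ker T^*)^{\bot}$. This is standard and needs no separate justification here. First I would apply this identity to $T=\calD$: since $\calD$ is densely defined and closed by Proposition \ref{thm:properties1}, we get $\overline{\ran\calD}=(\ker\calD^*)^{\bot_{H_3\times H_1}}$. Then I would invoke Lemma \ref{lem:properties3}, which gives $\ker\calD^*=N_{2,*}\times K_1$ as a subspace of $H_3\times H_1$. The only remaining point is to compute the orthogonal complement of a product subspace in a product Hilbert space, namely $(N_{2,*}\times K_1)^{\bot_{H_3\times H_1}}=N_{2,*}^{\bot_{H_3}}\times K_1^{\bot_{H_1}}$; this follows immediately from the fact that the inner product on $H_3\times H_1$ splits as a sum, so $(v_3,v_1)\bot(w_3,w_1)$ for all $w_3\in N_{2,*}$, $w_1\in K_1$ if and only if $v_3\bot N_{2,*}$ and $v_1\bot K_1$ (take $w_1=0$, then $w_3=0$).

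Second, the same argument applies verbatim to $\calD^*$. Since $\calD$ is closed, $\calD^*$ is closed and densely defined (indeed $\calD^{**}=\calD$), so $\overline{\ran\calD^*}=(\ker\calD^{**})^{\bot}=(\ker\calD)^{\bot_{H_2\times H_0}}$. Using the formula $\ker\calD=K_2\times N_0$ from Lemma \ref{lem:properties3} and the product-complement computation again yields $\overline{\ran\calD^*}=K_2^{\bot_{H_2}}\times N_0^{\bot_{H_0}}$.

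I do not expect any genuine obstacle here: this is a direct corollary, as the text already signals (``the following result is immediate''). The only mild care required is bookkeeping of which Hilbert space each orthogonal complement is taken in — $H_3$ and $H_1$ for $\calD$, $H_2$ and $H_0$ for $\calD^*$ — and making sure one cites Theorem \ref{thm:properties2} (for the explicit form of $\calD^*$, hence that it too is closed and densely defined) together with Lemma \ref{lem:properties3}. No compactness, closedness of ranges, or finite-dimensionality of cohomology groups is needed for this particular statement; those hypotheses enter only later when upgrading ``closure of the range'' to ``range'' in the Fredholm theorem.
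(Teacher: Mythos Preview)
Your proposal is correct and matches the paper's approach exactly: the paper declares the result ``immediate'' from Lemma \ref{lem:properties3}, and your argument spells out precisely this immediacy via the standard identity $\overline{\ran T}=(\ker T^{*})^{\bot}$ together with the product-complement computation.
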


\begin{lemma}
\label{lem:properties4} 
Let $(A_{0},A_{1},A_{2})$ be a maximal compact Hilbert complex. 
Then the embedding $\dom\calD\hookrightarrow H_{2}\times H_{0}$ is compact,
and so is the embedding $\dom\calD^{*}\hookrightarrow H_{3}\times H_{1}$.
\end{lemma}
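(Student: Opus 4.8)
The plan is to show compactness of the embedding $\dom\calD \hookrightarrow H_{2}\times H_{0}$ by reducing it to the three compactness hypotheses packaged in the definition of a maximal compact complex, and then to obtain the corresponding statement for $\calD^{*}$ by symmetry. Recall that $\dom\calD = (\dom A_{2}\cap\dom A_{1}^{*})\times\dom A_{0}$, and the graph norm on $\dom\calD$ is equivalent to $(x,y)\mapsto \|x\|_{H_{2}} + \|A_{2}x\|_{H_{3}} + \|A_{1}^{*}x\|_{H_{1}} + \|y\|_{H_{0}} + \|A_{0}y\|_{H_{1}}$. Since the product of two compact embeddings is compact, it suffices to treat the two factors separately: the embedding $\dom A_{0}\hookrightarrow H_{0}$ (with graph norm) is compact by the maximal compactness assumption, so the second factor is done immediately. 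The real content is the first factor: the embedding $\dom A_{2}\cap\dom A_{1}^{*}\hookrightarrow H_{2}$ is compact.

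For the first factor I would use the Helmholtz-type decomposition \eqref{deco2}, namely
$$\dom A_{2}\cap\dom A_{1}^{*} = (\dom A_{2}\cap\overline{\ran A_{2}^{*}})\oplus_{H_{2}}K_{2}\oplus_{H_{2}}(\dom A_{1}^{*}\cap\overline{\ran A_{1}}),$$
and argue that the embedding restricted to each of the three summands into $H_{2}$ is compact; since the sum is finite and orthogonal, compactness of each piece gives compactness of the whole. The middle summand $K_{2}$ is finite-dimensional (this follows from the compactness of $\dom A_{1}\cap\dom A_{0}^{*}\hookrightarrow H_{1}$ via the standard argument that the unit ball of the Hilbert space $K_{2}=\ker A_{2}\cap\ker A_{1}^{*}$ is compact, as on $K_{2}$ the graph norm of $A_1^*$ coincides with the $H_2$-norm), so that embedding is trivially compact. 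For the summand $\dom A_{2}\cap\overline{\ran A_{2}^{*}}$, note that on this space the $A_{1}^{*}$-component vanishes (since $\overline{\ran A_{2}^{*}}\subset\ker A_{1}^{*}$ by the complex property), so the relevant embedding is $\dom A_{2}\cap\overline{\ran A_{2}^{*}}\hookrightarrow H_{2}$ with norm $\|x\|+\|A_2 x\|$; compactness here follows by applying the FA-ToolBox to the single operator $A_2$ — the compactness of $\dom A_{1}\cap\dom A_{0}^{*}\hookrightarrow H_1$ for the \emph{shifted} complex together with compactness of $\dom A_2^*\hookrightarrow H_3$ yields compactness of $\dom A_2\cap\dom A_1^*\hookrightarrow H_2$, but one can argue more directly: the pair $(A_1,A_2)$ being a compact complex (by definition of compact complex for the triple, which includes $(A_1,A_2)$) gives that $\dom A_2\cap\dom A_1^* \hookrightarrow H_2$ is compact outright. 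In fact this last observation is the cleanest route: the hypothesis that $(A_0,A_1,A_2)$ is a compact complex already \emph{includes} that $(A_1,A_2)$ is a compact complex, i.e.\ $\dom A_2\cap\dom A_1^*\hookrightarrow H_2$ is compact, which is exactly the first factor.

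So the streamlined proof is: the first factor $\dom A_2\cap\dom A_1^*\hookrightarrow H_2$ is compact because $(A_1,A_2)$ is a compact complex (part of the maximal compact hypothesis on the triple); the second factor $\dom A_0\hookrightarrow H_0$ is compact because $\dom A_0\hookrightarrow H_0$ is compact (also part of the maximal compact hypothesis); hence the product embedding $\dom\calD = (\dom A_2\cap\dom A_1^*)\times\dom A_0 \hookrightarrow H_2\times H_0$ is compact. For $\calD^{*}$ I would invoke Remark \ref{rem:dualcomplex}: $(A_0,A_1,A_2)$ maximal compact implies $(A_2^*,A_1^*,A_0^*)$ maximal compact, and by Theorem \ref{thm:properties2} the operator $\calD^*$ has exactly the block structure built from $(A_2^*,A_1^*,A_0^*)$ that $\calD$ has from $(A_0,A_1,A_2)$; applying the already-proved statement to this dual complex gives compactness of $\dom\calD^*\hookrightarrow H_3\times H_1$.

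The main obstacle is a matter of bookkeeping rather than depth: one must be careful about which compactness assumptions are genuinely needed and match them precisely to the definition. The definition of \emph{maximal compact} bundles three compact embeddings — $\dom A_1\cap\dom A_0^*\hookrightarrow H_1$, $\dom A_2\cap\dom A_1^*\hookrightarrow H_2$ (these two being ``$(A_0,A_1,A_2)$ a compact complex''), plus $\dom A_0\hookrightarrow H_0$ and $\dom A_2^*\hookrightarrow H_3$ — and the subtlety is recognising that for $\calD$ only the second and third of these are used directly, while for $\calD^*$ one passes to the dual complex and there uses the dualised first and fourth. Verifying that the graph norm of $\calD$ splits as claimed over the product, and that $\overline{\ran A_2^*}\subset\ker A_1^*$ makes the $A_1^*$-seminorm redundant on that summand (should one take the decomposition route instead), are the only computations, and both are immediate from \eqref{deco1}, \eqref{deco2}, and the complex property.
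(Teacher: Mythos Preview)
Your streamlined proof is correct and matches the paper's approach: the paper also uses the orthogonality $\ran A_{0}\perp\ran A_{1}^{*}$ (via the complex property, as in the proof of Lemma \ref{lem:properties3}) to split a $\dom\calD$-bounded sequence into a $(\dom A_{2}\cap\dom A_{1}^{*})$-bounded sequence and a $\dom A_{0}$-bounded sequence, then invokes maximal compactness on each factor, and handles $\calD^{*}$ by the same duality (Theorem \ref{thm:properties2} plus Remark \ref{rem:dualcomplex}). Your detour through the Helmholtz decomposition \eqref{deco2} is unnecessary (and contains a small slip: finite-dimensionality of $K_{2}\subset H_{2}$ follows from compactness of $\dom A_{2}\cap\dom A_{1}^{*}\hookrightarrow H_{2}$, not of $\dom A_{1}\cap\dom A_{0}^{*}\hookrightarrow H_{1}$), but you correctly discard it in favour of the direct argument.
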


\begin{proof}
Let $\big((x_{k},y_{k})\big)_{k}$ be a $(\dom\calD)$-bounded sequence in $\dom\calD$.
Then, as in the proof of Lemma \ref{lem:properties3},
by orthogonality and the complex property 
$(x_{k})_{k}$ is a $(\dom A_{2}\cap\dom A_{1}^{*})$-bounded sequence in $\dom A_{2}\cap\dom A_{1}^{*}$
and $(y_{k})_{k}$ is a $(\dom A_{0})$-bounded sequence in $\dom A_{0}$.
Since $(A_{0},A_{1},A_{2})$ is maximal compact, we can extract converging subsequences of 
$(x_{k})_{k}$ and $(y_{k})_{k}$.
Analogously, using Theorem \ref{thm:properties2} and Remark \ref{rem:dualcomplex}, we see that also $\dom\calD^{*}\hookrightarrow H_{3}\times H_{1}$ is compact,
finishing the proof.
\end{proof}

We now recall the abstract index theorem taken from \cite{BL1992} formulated for the present situation.

\begin{theorem}
\label{thm:index} 
Let $(A_{0},A_{1},A_{2})$ be a maximal compact Hilbert complex. 
Then $\calD$ and $\calD^{*}$ are Fredholm operators with indices 
$$\ind\calD=\dim N_{0}-\dim K_{1}+\dim K_{2}-\dim N_{2,*},\qquad
\ind\calD^{*}=-\ind\calD.$$
\end{theorem}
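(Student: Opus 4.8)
The plan is to establish the Fredholm property and the index formula by exploiting the Helmholtz-type decompositions already recorded in \eqref{deco1}--\eqref{deco2} together with the closedness of the ranges, which is a consequence of maximal compactness. First I would recall the standard fact that a maximal compact Hilbert complex is closed: the compactness of the embeddings $\dom A_{j+1}\cap\dom A_{j}^{*}\hookrightarrow H_{j+1}$ (and of $\dom A_{0}\hookrightarrow H_{0}$, $\dom A_{2}^{*}\hookrightarrow H_{3}$) yields Poincar\'e--Friedrichs type estimates on the respective orthogonal complements of the kernels, which in turn force $\ran A_{0}$, $\ran A_{1}$, $\ran A_{2}$ to be closed, and likewise $\ran A_{0}^{*}$, $\ran A_{1}^{*}$, $\ran A_{2}^{*}$ via Remark \ref{rem:dualcomplex}. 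By Proposition \ref{thm:properties1} and Theorem \ref{thm:properties2}, $\calD$ and $\calD^{*}$ are densely defined and closed and adjoint to one another.

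Next I would show $\ran\calD$ is closed. Using Lemma \ref{lem:properties4}, the embedding $\dom\calD\hookrightarrow H_{2}\times H_{0}$ is compact; combined with the closedness and dense definedness of $\calD$, this gives that $\calD$ restricted to $(\ker\calD)^{\bot}\cap\dom\calD$ has a bounded inverse on its range (a compactly embedded domain rules out a bounded non-closed-range operator by the usual contradiction argument: a bounded sequence in the complement of the kernel on which $\calD$ tends to zero would, after passing to a convergent subsequence, produce a nonzero kernel element of norm one, contradiction). Hence $\ran\calD$ is closed, and by the same argument applied to $\calD^{*}$ (again via Lemma \ref{lem:properties4}) so is $\ran\calD^{*}$. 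Then Lemma \ref{lem:properties3} identifies $\ker\calD=K_{2}\times N_{0}$ and $\ker\calD^{*}=N_{2,*}\times K_{1}$, all four of which are finite-dimensional: $K_{1},K_{2}$ because the embeddings $\dom A_{1}\cap\dom A_{0}^{*}\hookrightarrow H_{1}$ and $\dom A_{2}\cap\dom A_{1}^{*}\hookrightarrow H_{2}$ are compact and these spaces consist of elements with zero image under both operators (so the identity on them is compact, forcing finite dimension), and $N_{0},N_{2,*}$ because $\dom A_{0}\hookrightarrow H_{0}$ and $\dom A_{2}^{*}\hookrightarrow H_{3}$ are compact. Therefore $\calD$ and $\calD^{*}$ are Fredholm.

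For the index, I would use $(\ran\calD)^{\bot}=\ker\calD^{*}$ (valid since $\calD$ is closed and densely defined), so that
\begin{align*}
\ind\calD
&=\dim\ker\calD-\dim(\ran\calD)^{\bot}
=\dim\ker\calD-\dim\ker\calD^{*}\\
&=\bigl(\dim K_{2}+\dim N_{0}\bigr)-\bigl(\dim N_{2,*}+\dim K_{1}\bigr),
\end{align*}
which is exactly the claimed formula $\ind\calD=\dim N_{0}-\dim K_{1}+\dim K_{2}-\dim N_{2,*}$. The relation $\ind\calD^{*}=-\ind\calD$ then follows from $(\ran\calD^{*})^{\bot}=\ker\calD$ and $(\ran\calD)^{\bot}=\ker\calD^{*}$, giving $\ind\calD^{*}=\dim\ker\calD^{*}-\dim\ker\calD=-\ind\calD$.

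\textbf{Main obstacle.} The only genuinely non-formal point is the passage from the compactness hypothesis to the closedness of the ranges of $\calD$ and $\calD^{*}$; everything else is bookkeeping with the decompositions \eqref{deco1}--\eqref{deco2} and Lemmas \ref{lem:properties3}--\ref{lem:properties4}. I expect this to be handled cleanly by the standard compactness-implies-Poincar\'e-estimate argument (the ``FA-ToolBox'' referenced in Remark \ref{rem:dualcomplex}), applied to $\calD$ directly via the compact embedding $\dom\calD\hookrightarrow H_{2}\times H_{0}$ of Lemma \ref{lem:properties4}, rather than reassembling range-closedness of $\calD$ from that of the individual $A_{j}$.
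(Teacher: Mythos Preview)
Your proposal is correct and follows essentially the same approach as the paper: invoke Lemma \ref{lem:properties4} for the compact embedding $\dom\calD\hookrightarrow H_{2}\times H_{0}$, deduce via the standard compactness argument (the ``FA-ToolBox'') that $\ran\calD$ and $\ran\calD^{*}$ are closed and that $\ker\calD$, $\ker\calD^{*}$ are finite-dimensional, and then read off the index from Lemma \ref{lem:properties3}. The paper's proof is in fact more terse than yours, simply citing the FA-ToolBox and Lemmas \ref{lem:properties3}--\ref{lem:properties4} without spelling out the contradiction argument you give; your initial remarks on closedness of the individual $\ran A_{j}$ are not needed once you apply Lemma \ref{lem:properties4} to $\calD$ directly, as you yourself note.
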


\begin{proof}
Utilising the `FA-ToolBox' from, e.g., \cite{P2019a,P2019b,P2020a,PZ2020a,PZ2020b,PZ2020c},
and Lemma \ref{lem:properties4} we observe that both ranges
$\ran\calD$ and $\ran\calD^{*}$ are closed and that both kernels 
$\ker\calD$ and $\ker\calD^{*}$ are finite-dimensional.
Therefore, both $\calD$ and $\calD^{*}$ are Fredholm operators.
The index $\ind\calD=\dim\ker\calD-\dim\ker\calD^{*}$
is then easily computed with the help of Lemma \ref{lem:properties3}.
\end{proof}

\section{Abstract Poincar\'e--Friedrichs Type Inequalities}
\label{sec:3-addstuff}

Let us mention some additional features of the `FA-ToolBox' from \cite{P2019a,P2019b,P2020a,PZ2020a,PZ2020b,PZ2020c}.
Lemma \ref{lem:properties4} and Theorem \ref{thm:index} imply some additional results for the reduced operators 
$$\calD_{\red}:=\calD|_{\ran\calD^{*}}=\calD|_{(\ker\calD)^{\bot_{H_{2}\times H_{0}}}},\qquad
\calD^{*}_{\red}:=\calD^{*}|_{\ran\calD}=\calD^{*}|_{(\ker\calD^{*})^{\bot_{H_{3}\times H_{1}}}}.$$ 

\begin{corollary}
\label{cor:FP}
Let $(A_{0},A_{1},A_{2})$ be a maximal compact Hilbert complex. Then the inverse operators
$\calD_{\red}^{-1}:\ran\calD\to\ran\calD^{*}$ and $(\calD^{*}_{\red})^{-1}:\ran\calD^{*}\to\ran\calD$ 
are compact. Moreover, 
$\calD_{\red}^{-1}:\ran\calD\to\dom\calD_{\red}$ and $(\calD^{*}_{\red})^{-1}:\ran\calD^{*}\to\dom\calD^{*}_{\red}$ 
are continuous and, equivalently, the Friedrichs--Poincar\'e type estimates 
\begin{align*}
\bnorm{(x,y)}_{H_{2}\times H_{0}}
&\leq c_{\calD}\bnorm{\calD(x,y)}_{H_{3}\times H_{1}}
=c_{\calD}\big(\norm{A_{2}x}_{H_{3}}^{2}+\norm{A_{1}^{*}x}_{H_{1}}^{2}+\norm{A_{0}y}_{H_{1}}^{2}\big)^{1/2},\\
\bnorm{(w,z)}_{H_{3}\times H_{1}}
&\leq c_{\calD}\bnorm{\calD^{*}(w,z)}_{H_{2}\times H_{0}}
=c_{\calD}\big(\norm{A_{2}^{*}w}_{H_{2}}^{2}+\norm{A_{1}z}_{H_{2}}^{2}+\norm{A_{0}^{*}z}_{H_{0}}^{2}\big)^{1/2}
\end{align*}
hold for all $(x,y)\in\dom\calD_{\red}$ and for all $(w,z)\in\dom\calD^{*}_{\red}$ 
with the same optimal constant $c_{\calD}>0$.
\end{corollary}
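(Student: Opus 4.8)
The plan is to deduce Corollary \ref{cor:FP} directly from the Fredholm property and compactness established in Lemma \ref{lem:properties4} and Theorem \ref{thm:index}, using only abstract functional analysis (the `FA-ToolBox' philosophy). First I would recall that $\calD_{\red}$ is, by definition, the restriction of $\calD$ to $(\ker\calD)^{\bot_{H_{2}\times H_{0}}}=\overline{\ran\calD^{*}}$, so that $\calD_{\red}$ is injective; and since $\ran\calD$ is closed (Theorem \ref{thm:index}), $\calD_{\red}\colon\dom\calD_{\red}\to\ran\calD$ is a closed, injective operator \emph{onto} $\ran\calD$. The closed graph / bounded inverse theorem then gives that $\calD_{\red}^{-1}\colon\ran\calD\to\dom\calD_{\red}$ is continuous, where $\dom\calD_{\red}$ carries the graph norm. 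Composing with the embedding $\dom\calD_{\red}=\dom\calD\cap(\ker\calD)^{\bot}\hookrightarrow H_{2}\times H_{0}$, which is compact by Lemma \ref{lem:properties4} (a restriction of a compact embedding stays compact), shows $\calD_{\red}^{-1}\colon\ran\calD\to\ran\calD^{*}\subset H_{2}\times H_{0}$ is compact. The same argument applied to the dual complex $(A_{2}^{*},A_{1}^{*},A_{0}^{*})$, which is again maximal compact by Remark \ref{rem:dualcomplex}, and to $\calD^{*}=\begin{pmatrix}A_{2}^{*}&A_{1}\\0&A_{0}^{*}\end{pmatrix}$ (Theorem \ref{thm:properties2}), yields the corresponding statements for $(\calD^{*}_{\red})^{-1}$.

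Next I would translate the continuity of $\calD_{\red}^{-1}$ into the Friedrichs--Poincar\'e estimate. Continuity of $\calD_{\red}^{-1}\colon\ran\calD\to H_{2}\times H_{0}$ means precisely that there is $c>0$ with $\norm{(x,y)}_{H_{2}\times H_{0}}\le c\,\norm{\calD(x,y)}_{H_{3}\times H_{1}}$ for all $(x,y)\in\dom\calD_{\red}$, since every such $(x,y)$ equals $\calD_{\red}^{-1}(\calD(x,y))$. Expanding the right-hand norm using the block form $\calD(x,y)=(A_{2}x,\,A_{1}^{*}x+A_{0}y)$ together with the orthogonality $\ran A_{0}\subset\ker A_{1}\perp\ran A_{1}^{*}$ (already exploited in the proofs of Proposition \ref{thm:properties1} and Lemma \ref{lem:properties3}) gives $\norm{\calD(x,y)}_{H_{3}\times H_{1}}^{2}=\norm{A_{2}x}_{H_{3}}^{2}+\norm{A_{1}^{*}x}_{H_{1}}^{2}+\norm{A_{0}y}_{H_{1}}^{2}$, which is the displayed identity. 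The analogous expansion for $\calD^{*}(w,z)=(A_{2}^{*}w+A_{1}z,\,A_{0}^{*}z)$ uses the orthogonality $\ran A_{1}\subset\ker A_{2}\perp\ran A_{2}^{*}$ from the dual complex. For the equivalence (`and, equivalently') one argues in the reverse direction: such an estimate forces $\ker\calD_{\red}=\{0\}$ and closedness of $\ran\calD_{\red}=\ran\calD$, hence continuity of $\calD_{\red}^{-1}$; this is a standard closed-range lemma.

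Finally, for the claim that both estimates hold with the \emph{same} optimal constant $c_{\calD}$, I would use the general fact that for a densely defined closed operator $\calD$ one has $\norm{\calD_{\red}^{-1}}=\norm{(\calD^{*})_{\red}^{-1}}=\norm{(\calD_{\red})^{*}{}^{-1}}$, because $(\calD^{*})_{\red}=(\calD_{\red})^{*}$ (the reduced operator of the adjoint is the adjoint of the reduced operator, both being bijections between the respective closed range spaces) and the norm of a bounded operator equals the norm of its Hilbert-space adjoint. Thus the optimal constant in the first estimate, namely $\norm{\calD_{\red}^{-1}}$, coincides with that in the second, $\norm{(\calD^{*})_{\red}^{-1}}$; call the common value $c_{\calD}$.

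I expect the main obstacle to be nothing computational but rather the bookkeeping around the reduced operators: one must verify carefully that $\calD_{\red}$ really maps \emph{onto} all of $\ran\calD$ (not a proper subspace), that its inverse as an operator into the graph-norm space is the right object to compose with the compact embedding, and above all the identity $(\calD^{*})_{\red}=(\calD_{\red})^{*}$ that underlies the equality of optimal constants. These are exactly the points the `FA-ToolBox' of \cite{P2019a,P2019b,P2020a,PZ2020a,PZ2020b,PZ2020c} is designed to handle, so I would invoke it for these abstract facts rather than re-prove them, and spend the written proof mostly on the two norm expansions and the logical equivalence.
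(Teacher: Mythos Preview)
Your proposal is correct and follows exactly the approach the paper intends: the paper does not give an explicit proof of Corollary~\ref{cor:FP} but presents it as a direct consequence of Lemma~\ref{lem:properties4} and Theorem~\ref{thm:index} via the `FA-ToolBox' of \cite{P2019a,P2019b,P2020a,PZ2020a,PZ2020b,PZ2020c}, and what you have written is precisely the unpacking of those abstract facts (closed range plus injectivity gives bounded inverse, composition with the compact embedding gives compactness, the orthogonality $\ran A_{0}\perp\ran A_{1}^{*}$ and $\ran A_{1}\perp\ran A_{2}^{*}$ yields the norm expansions, and $(\calD_{\red})^{*}=(\calD^{*})_{\red}$ gives the equality of optimal constants).
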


The latter estimates are additive combinations of the 
corresponding estimates for $A_{0}$ and $(A_{2},A_{1}^{*})$
as well as $A_{2}^{*}$ and $(A_{1},A_{0}^{*})$, respectively.

\begin{remark}
\label{rem:nocpt}
The compactness assumptions (maximal compact) are not needed to render $\calD$ and $\calD^{*}$ Fredholm operators. 
It suffices to assume that $(A_{0},A_{1},A_{2})$ is a closed Hilbert complex 
with finite-dimensional kernels $N_{0}$ and $N_{2,*}$
and finite-dimensional cohomology groups $K_{1}$ and $K_{2}$.
In this case, the latter Friedrichs--Poincar\'e type estimates still hold
and $\calD_{\red}^{-1}$ and $(\calD_{\red}^{*})^{-1}$ are still continuous.
\end{remark}

\begin{remark}
\label{rem:primaldualD}
There are simple relations between the primal, dual, and adjoint complexes, when $\calD$ is considered.
More precisely, let us denote the latter primal operators $\calD$ and $\calD^{*}$
of the primal complex $(A_{0},A_{1},A_{2})$ by
\begin{align*}
\calD=\calD^{p}&=\begin{pmatrix}A_{2}&0\\A_{1}^{*}&A_{0}\end{pmatrix},
&
\calD^{*}=(\calD^{p})^{*}&=\begin{pmatrix}A_{2}^{*}&A_{1}\\0&A_{0}^{*}\end{pmatrix},
\intertext{and the dual operators corresponding to the dual complex $(A_{2}^{*},A_{1}^{*},A_{0}^{*})$ by}
\calD^{d}&=\begin{pmatrix}A_{0}^{*}&0\\A_{1}&A_{2}^{*}\end{pmatrix},
&
(\calD^{d})^{*}&=\begin{pmatrix}A_{0}&A_{1}^{*}\\0&A_{2}\end{pmatrix}.
\end{align*}
By Remark \ref{rem:dualcomplex} $(A_{0},A_{1},A_{2})$ is a maximal compact complex, if and only if
$(A_{2}^{*},A_{1}^{*},A_{0}^{*})$ is a maximal compact complex. 
Note that we may weaken the assumptions along the lines sketched in Remark \ref{rem:nocpt}.
Theorem \ref{thm:index} shows
that $\calD^{p}$, $(\calD^{p})^{*}$, $\calD^{d}$, $(\calD^{d})^{*}$ are Fredholm operators with indices 
\begin{align*}
\ind\calD^{p}&=\dim N_{0}^{p}-\dim K_{1}^{p}+\dim K_{2}^{p}-\dim N_{2,*}^{p},
&
\ind(\calD^{p})^{*}&=-\ind\calD^{p},\\
\ind\calD^{d}&=\dim N_{0}^{d}-\dim K_{1}^{d}+\dim K_{2}^{d}-\dim N_{2,*}^{d},
&
\ind(\calD^{d})^{*}&=-\ind\calD^{d}.
\end{align*}
Next we observe
\begin{align*}
N_{0}^{d}&=\ker A_{2}^{*}=N_{2,*}^{p},
&
N_{2,*}^{d}&=\ker A_{0}=N_{0}^{p},\\
K_{1}^{d}&=\ker A_{1}^{*}\cap\ker A_{2}=K_{2}^{p},
&
K_{2}^{d}&=\ker A_{0}^{*}\cap\ker A_{1}=K_{1}^{p}.
\end{align*}
Hence
$$-\ind(\calD^{d})^{*}=\ind\calD^{d}=-\ind\calD^{p}=\ind(\calD^{p})^{*}.$$
Note that basically $\calD^{d}$ and $(\calD^{p})^{*}$
as well as $\calD^{p}$ and $(\calD^{d})^{*}$ are the `same' operators.
\end{remark}

\section{The Case of Variable Coefficients}
\label{sec:4-addstuff}

Note that the Hilbert space adjoints $A_{l}^{*}$ 
depend on the particular choice of the inner products (metrics) of the underlying Hilbert spaces $H_{l}$.
A typical example is simply given by `weighted' inner products
induced by `weights' $\lambda_{l}$, $l\in\{0,1,2,3\}$, i.e., symmetric and positive topological isomorphisms 
(symmetric and positive bijective bounded linear operators)
$\lambda_{l}:H_{l}\to H_{l}$ inducing inner products
$$\scp{\,\cdot\,}{\,\cdot\,}_{\widetilde{H}_{l}}
:=\scp{\lambda_{l}\,\cdot\,}{\,\cdot\,}_{H_{l}}:\widetilde{H}_{l}\times\widetilde{H}_{l}\to\C,$$
where $\widetilde{H}_{l}:=H_{l}$ (as linear space) 
equipped with the inner product $\scp{\,\cdot\,}{\,\cdot\,}_{\widetilde{H}_{l}}$.
A sufficiently general situation is defined by 
$\lambda_{0}:=\id$, $\lambda_{3}:=\id$, and
$\lambda_{1},\lambda_{2}$ being symmetric and positive topological isomorphisms, as well as
$\widetilde{H}_{l}:=\big(H_{l},\scp{\lambda_{l}\,\cdot\,}{\,\cdot\,}_{H_{l}}\big)$, $l\in\{0,1,2,3\}$.
Then the modified operators\footnote{E.g., we compute $\widetilde{A}_{0}^{*}$.
Let $y\in\dom\widetilde{A}_{0}^{*}$. Then for $x\in\dom\widetilde{A}_{0}=\dom A_{0}$
$$\scp{x}{\widetilde{A}_{0}^{*}y}_{H_{0}}
=\scp{x}{\widetilde{A}_{0}^{*}y}_{\widetilde{H}_{0}}
=\scp{\widetilde{A}_{0}x}{y}_{\widetilde{H}_{1}}
=\scp{A_{0}x}{\lambda_{1}y}_{H_{1}},$$
showing that $\lambda_{1}y\in\dom A_{0}^{*}$ and $A_{0}^{*}\lambda_{1}y=\widetilde{A}_{0}^{*}y$.}
\begin{align*}
\widetilde{A}_{0}:\dom\widetilde{A}_{0}:=\dom A_{0}\subset\widetilde{H}_{0}
&\To\widetilde{H}_{1};
&
x&\longmapsto A_{0}x,\\
\widetilde{A}_{1}:\dom\widetilde{A}_{1}:=\dom A_{1}\subset\widetilde{H}_{1}
&\To\widetilde{H}_{2};
&
y&\longmapsto\lambda_{2}^{-1}A_{1}y,\\
\widetilde{A}_{2}:\dom\widetilde{A}_{2}:=\lambda_{2}^{-1}\dom A_{2}\subset\widetilde{H}_{2}
&\To\widetilde{H}_{3};
&
z&\longmapsto A_{2}\lambda_{2}z,\\
\widetilde{A}_{0}^{*}:\dom\widetilde{A}_{0}^{*}=\lambda_{1}^{-1}\dom A_{0}^{*}\subset\widetilde{H}_{1}
&\To\widetilde{H}_{0};
&
y&\longmapsto A_{0}^{*}\lambda_{1}y,\\
\widetilde{A}_{1}^{*}:\dom\widetilde{A}_{1}^{*}=\dom A_{1}^{*}\subset\widetilde{H}_{2}
&\To\widetilde{H}_{1};
&
z&\longmapsto\lambda_{1}^{-1}A_{1}^{*}z,\\
\widetilde{A}_{2}^{*}:\dom\widetilde{A}_{2}^{*}=\dom A_{2}^{*}\subset\widetilde{H}_{3}
&\To\widetilde{H}_{2};
&
x&\longmapsto A_{2}^{*}x
\end{align*}
form again a primal and dual Hilbert complex, i.e.,
\begin{align*}
\widetilde{H}_{0}\xrightarrow{\widetilde{A}_{0}}
\widetilde{H}_{1}&\xrightarrow{\widetilde{A}_{1}}
\widetilde{H}_{2}\xrightarrow{\widetilde{A}_{2}}
\widetilde{H}_{3},\\
\widetilde{H}_{0}\xleftarrow{\widetilde{A}_{0}^{*}}
\widetilde{H}_{1}&\xleftarrow{\widetilde{A}_{1}^{*}}
\widetilde{H}_{2}\xleftarrow{\widetilde{A}_{2}^{*}}
\widetilde{H}_{3},
\end{align*}
and we can define 
$$\widetilde{\calD}:=\begin{pmatrix}\widetilde{A}_{2}&0\\\widetilde{A}_{1}^{*}&\widetilde{A}_{0}\end{pmatrix},\qquad
\widetilde{\calD}^{*}=\begin{pmatrix}\widetilde{A}_{2}^{*}&\widetilde{A}_{1}\\0&\widetilde{A}_{0}^{*}\end{pmatrix}.$$
The closedness of the operators $\widetilde{A}_{l}$ and the complex properties are easily checked.
Moreover, it is not hard to see that the closedness of $(\widetilde{A}_{0},\widetilde{A}_{1},\widetilde{A}_{2})$
is implied by the closedness of $(A_{0},A_{1},A_{2})$.
Remark \ref{rem:dualcomplex}, Proposition \ref{thm:properties1}, Theorem \ref{thm:properties2}, 
Lemma \ref{lem:properties3}, and Corollary \ref{lem:properties3a} can be applied to
$(\widetilde{A}_{0},\widetilde{A}_{1},\widetilde{A}_{2})$ as well.
In particular,
\begin{align*}
\ker\widetilde{\calD}
&=\widetilde{K}_{2}\times\widetilde{N}_{0}
=(\ker\widetilde{A}_{2}\cap\ker\widetilde{A}_{1}^{*})\times\ker\widetilde{A}_{0}
=\big((\lambda_{2}^{-1}\ker A_{2})\cap\ker A_{1}^{*}\big)\times\ker A_{0},\\
\ker\widetilde{\calD}^{*}
&=\widetilde{N}_{2,*}\times\widetilde{K}_{1}
=\ker\widetilde{A}_{2}^{*}\times(\ker\widetilde{A}_{1}\cap\ker\widetilde{A}_{0}^{*})
=\ker A_{2}^{*}\times\big(\ker A_{1}\cap(\lambda_{1}^{-1}\ker A_{0}^{*})\big),\\
\overline{\ran\widetilde{\calD}}
&=(\ker\widetilde{\calD}^{*})^{\bot_{\widetilde{H}_{3}\times\widetilde{H}_{1}}}
=\widetilde{N}_{2,*}^{\bot_{\widetilde{H}_{3}}}\times\widetilde{K}_{1}^{\bot_{\widetilde{H}_{1}}},\\
\overline{\ran\widetilde{\calD}^{*}}
&=(\ker\widetilde{\calD})^{\bot_{\widetilde{H}_{2}\times\widetilde{H}_{0}}}
=\widetilde{K}_{2}^{\bot_{\widetilde{H}_{2}}}\times\widetilde{N}_{0}^{\bot_{\widetilde{H}_{0}}}.
\end{align*}
It is possible to relate the statements in Lemma \ref{lem:properties4} and Theorem \ref{thm:index} 
to the corresponding ones of the original complex $(A_{0},A_{1},A_{2})$. This will be done next.

\begin{lemma}
\label{lambdaindeplem}
The compactness properties and the dimensions of the kernels and cohomology groups
of the latter complexes are independent of the weights $\lambda_{l}$.
More precisely, 
\begin{itemize}
\item[\bf(i)]
$\widetilde{N}_{0}=N_{0}$ and $\widetilde{N}_{2,*}=N_{2,*}$,
\quad as \quad $\dom\widetilde{A}_{0}=\dom A_{0}$ and $\dom\widetilde{A}_{2,*}=\dom A_{2,*}$,
\item[\bf(ii$_{\mathbf1}$)]
$\dim\big(\ker A_{1}\cap(\lambda_{1}^{-1}\ker A_{0}^{*})\big)
=\dim\widetilde{K}_{1}
=\dim K_{1}
=\dim(\ker A_{1}\cap\ker A_{0}^{*})$,
\item[\bf(ii$_{\mathbf2}$)]
$\dim\big(\ker A_{2}\cap(\lambda_{2}^{-1}\ker A_{1}^{*})\big)
=\dim\widetilde{K}_{2}
=\dim K_{2}
=\dim(\ker A_{2}\cap\ker A_{1}^{*})$,
\item[\bf(iii$_{\mathbf1}$)]
$\dom\widetilde{A}_{1}\cap\dom\widetilde{A}_{0}^{*}
=\dom A_{1}\cap(\lambda_{1}^{-1}\dom A_{0}^{*})
\hookrightarrow\widetilde{H}_{1}$ compactly\\
$\Leftrightarrow\;\dom A_{1}\cap\dom A_{0}^{*}
\hookrightarrow H_{1}$ compactly,
\item[\bf(iii$_{\mathbf2}$)]
$\dom\widetilde{A}_{2}\cap\dom\widetilde{A}_{1}^{*}
=\dom A_{2}\cap(\lambda_{2}^{-1}\dom A_{1}^{*})
\hookrightarrow\widetilde{H}_{2}$ compactly\\
$\Leftrightarrow\;\dom A_{2}\cap\dom A_{1}^{*}
\hookrightarrow H_{2}$ compactly.
\end{itemize}
\end{lemma}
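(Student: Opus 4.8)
The plan is to show each assertion separately, with (ii) and (iii) being the substantive parts. The map $\lambda_1$ provides a linear isomorphism $H_1 \to H_1$ (and $\widetilde H_1 \to \widetilde H_1$ as a set-theoretic identity of underlying spaces), so the strategy throughout is to transport the relevant subspace via $\lambda_1^{\pm 1}$ or $\lambda_2^{\pm 1}$ and check that the transported object is the one claimed, using only: (a) the explicit descriptions of the modified operators $\widetilde A_l$, $\widetilde A_l^*$ given in the excerpt; (b) that $\lambda_l$ and $\lambda_l^{-1}$ are bounded bijective linear operators; and (c) the definitions of $N_0$, $N_{2,*}$, $K_1$, $K_2$, $\widetilde K_1$, $\widetilde K_2$.

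For (i): since $\widetilde A_0$ and $A_0$ act identically on the same domain $\dom A_0$, we get $\ker \widetilde A_0 = \ker A_0$ verbatim, i.e. $\widetilde N_0 = N_0$; likewise $\widetilde A_2^* = A_2^*$ on the same domain gives $\widetilde N_{2,*} = N_{2,*}$. For (ii$_1$): the excerpt already records $\ker\widetilde\calD^* = \ker A_2^* \times (\ker A_1 \cap (\lambda_1^{-1} \ker A_0^*))$, so $\widetilde K_1 = \ker A_1 \cap (\lambda_1^{-1} \ker A_0^*)$ as sets; it remains only to produce a linear isomorphism between this space and $K_1 = \ker A_1 \cap \ker A_0^*$. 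The natural candidate is not $\lambda_1$ itself (which need not preserve $\ker A_1$), so instead I would argue dimensionally: both spaces are finite-dimensional (granted once one knows the complex is Fredholm, but in fact we only need the dimension count) and they are both isomorphic, via the standard Hodge-type identification, to the cohomology group $\ker A_1 / \overline{\ran A_0}$ — and this quotient does not depend on the inner product at all, since $\ker A_1$ and $\ran A_0$ are defined purely algebraically/topologically from $A_0, A_1$. More carefully: the orthogonal projection (in $\widetilde H_1$) of $\ker A_1$ onto $(\overline{\ran \widetilde A_0})^{\perp_{\widetilde H_1}} = (\overline{\ran A_0})^{\perp_{\widetilde H_1}}$ is exactly $\widetilde K_1$ and restricts to a linear bijection $\ker A_1/(\ker A_1 \cap \overline{\ran A_0}) \to \widetilde K_1$; since the same holds for the unweighted inner product with the same quotient on the left, $\dim \widetilde K_1 = \dim K_1$. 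The argument for (ii$_2$) is identical with indices shifted and $\lambda_2$ in place of $\lambda_1$, using $\widetilde K_2 = (\lambda_2^{-1}\ker A_2) \cap \ker A_1^*$ and the quotient $\ker A_2/\overline{\ran A_1}$.

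For (iii$_1$): the set equality $\dom\widetilde A_1 \cap \dom \widetilde A_0^* = \dom A_1 \cap (\lambda_1^{-1}\dom A_0^*)$ follows immediately from $\dom\widetilde A_1 = \dom A_1$ and $\dom\widetilde A_0^* = \lambda_1^{-1}\dom A_0^*$ as recorded in the excerpt. For the equivalence of compactness, note that $\lambda_1 : \widetilde H_1 \to H_1$ is a topological isomorphism, and it restricts to a topological isomorphism from $\dom\widetilde A_1 \cap \dom\widetilde A_0^*$ (with its graph norm, which is equivalent to $\|\cdot\|_{\widetilde H_1} + \|\widetilde A_1 \cdot\|_{\widetilde H_2} + \|\widetilde A_0^* \cdot\|_{\widetilde H_0}$) onto $\dom A_1 \cap \dom A_0^*$ (with its graph norm); this is a routine check using the formulas $\widetilde A_1 x = \lambda_2^{-1} A_1 x$ and $\widetilde A_0^* x = A_0^* \lambda_1 x$ together with boundedness of $\lambda_1^{\pm 1}, \lambda_2^{\pm 1}$. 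A compact embedding composed with topological isomorphisms on either side is again compact, so compactness of one embedding into $\widetilde H_1$ is equivalent to compactness of the other into $H_1$. Part (iii$_2$) is the same argument with $\lambda_2$ and $\lambda_1$ interchanged in the obvious way, using $\widetilde A_2 z = A_2\lambda_2 z$ and $\widetilde A_1^* z = \lambda_1^{-1} A_1^* z$, so the relevant topological isomorphism is $\lambda_2 : \widetilde H_2 \to H_2$ restricted to $\dom\widetilde A_2 \cap \dom\widetilde A_1^*$.

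The main obstacle is (ii): one must resist the temptation to map $K_1$ onto $\widetilde K_1$ directly by $\lambda_1$, since $\lambda_1$ respects neither $\ker A_1$ nor $\ker A_0^*$ in general. The clean route is to pass through the inner-product-independent cohomology quotient $\ker A_1/\overline{\ran A_0}$ and invoke the (weighted) Helmholtz decomposition $H_1 = \overline{\ran A_0} \oplus_{\widetilde H_1} K_1^{\widetilde{}} \oplus_{\widetilde H_1} \overline{\ran A_1^*}$, which identifies $\widetilde K_1$ with that quotient; everything else is bookkeeping with bounded isomorphisms.
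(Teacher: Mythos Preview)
Your arguments for (i) and (ii$_1$), (ii$_2$) are correct. For (ii) you take a different and arguably cleaner route than the paper: instead of the paper's injectivity-plus-symmetry argument (projecting $\ker A_1 \cap (\mu^{-1}\ker A_0^*)$ into $\ker A_1 \cap (\lambda_1^{-1}\ker A_0^*)$ along $\overline{\ran A_0}$ and checking this is injective), you identify both $K_1$ and $\widetilde K_1$ with the inner-product-independent quotient $\ker A_1/\overline{\ran A_0}$ via the respective orthogonal projections. This works because $\ker\widetilde A_1 = \ker A_1$, $\overline{\ran\widetilde A_0} = \overline{\ran A_0}$ (equivalent norms give the same closure), and $\overline{\ran A_0}\subset\ker A_1$ by the complex property.

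Your argument for (iii$_1$), however, has a genuine gap. You claim that $\lambda_1$ restricts to a topological isomorphism from $\dom\widetilde A_1\cap\dom\widetilde A_0^* = \dom A_1 \cap (\lambda_1^{-1}\dom A_0^*)$ onto $\dom A_1 \cap \dom A_0^*$. But applying $\lambda_1$ gives $\lambda_1(\dom A_1)\cap\dom A_0^*$, and there is no reason for $\lambda_1$ to preserve $\dom A_1$. In the prototypical application $A_1=\rcurl$ and $\lambda_1=\eps$ is merely a bounded measurable matrix field, so $\eps\cdot H_0(\curl,\om)\neq H_0(\curl,\om)$ in general. Concretely, for $x\in\dom A_1\cap(\lambda_1^{-1}\dom A_0^*)$ the formulas give control of $A_1 x$ and of $A_0^*\lambda_1 x$, but \emph{neither} $x$ nor $\lambda_1 x$ lies in $\dom A_1\cap\dom A_0^*$; the ``routine check'' fails at exactly this point. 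The same objection applies to (iii$_2$) with $\lambda_2$ and $\dom A_2$.

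This is precisely why the paper's proof of (iii$_1$) is more delicate: given a bounded sequence $(y_k)$ in $\dom A_1\cap(\mu^{-1}\dom A_0^*)$, one cannot simply push it through $\lambda_1$ or $\mu$. Instead the paper uses the $\widetilde H_1$-Helmholtz decomposition twice---once on $y_k$ to isolate a piece $v_k\in\dom A_1\cap(\lambda_1^{-1}\ker A_0^*)$, and once on $\lambda_1^{-1}\mu y_k$ to isolate a piece $w_k\in\ker A_1\cap(\lambda_1^{-1}\dom A_0^*)$---so that each piece lands in the space where the assumed compact embedding applies, and then a pairing argument (exploiting $\overline{\ran A_0}\perp_{H_1}\ker A_0^*$) shows $(y_k)$ is Cauchy. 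Some device of this kind is unavoidable; the direct isomorphism you propose does not exist.
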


\begin{proof}
For the proof we follow in close lines the ideas of \cite[Theorem 6.1]{BPS2019a},
where \cite{BPS2019a} is the extended version of \cite{BPS2019b}.
(i) is trivial and it is sufficient to show only (ii$_{1}$) and (iii$_{1}$).

For (ii$_{1}$), 
let $\mu$ be another weight having the same properties as $\lambda_{1}$.
Similar to \eqref{deco1}, \eqref{deco2} we have by orthogonality 
in $\widetilde{H}_{1}$ and by the complex property
\begin{align}
\label{deco3}
\begin{aligned}
\widetilde{H}_{1}
&=\overline{\ran\widetilde{A}_{0}}\oplus_{\widetilde{H}_{1}}\ker\widetilde{A}_{0}^{*}
=\overline{\ran A_{0}}\oplus_{\widetilde{H}_{1}}\lambda_{1}^{-1}\ker A_{0}^{*},\\
\ker\widetilde{A}_{1}
&=\overline{\ran\widetilde{A}_{0}}\oplus_{\widetilde{H}_{1}}(\ker\widetilde{A}_{1}\cap\ker\widetilde{A}_{0}^{*})
=\overline{\ran A_{0}}\oplus_{\widetilde{H}_{1}}\big(\ker A_{1}\cap(\lambda_{1}^{-1}\ker A_{0}^{*})\big),
\end{aligned}
\end{align}
and we note that $\widetilde{H}_{1}=H_{1}$ and $\ker\widetilde{A}_{1}=\ker A_{1}$ as sets. 
We denote the $\widetilde{H}_{1}$-orthonormal projector along $\overline{\ran A_{0}}$ 
onto $\lambda_{1}^{-1}\ker A_{0}^{*}$ by $\pi$. 
Then, by \eqref{deco3}, we deduce 
$$\pi(\ker {A}_{1})=\pi(\ker \widetilde{A}_{1})=\ker A_{1}\cap(\lambda_{1}^{-1}\ker A_{0}^{*}).$$
We consider the linear mapping
$$\widehat{\pi}:\ker A_{1}\cap(\mu^{-1}\ker A_{0}^{*})\To\ker A_{1}\cap(\lambda_{1}^{-1}\ker A_{0}^{*});\qquad
y\To\pi y.$$
Then $\widehat{\pi}$ is injective. 
Indeed, let $y\in\ker A_{1}\cap(\mu^{-1}\ker A_{0}^{*})$ with $\widehat{\pi}y=\pi y=0$. 
Then $y\in\overline{\ran A_{0}}$ and $\mu y\in\ker A_{0}^{*}$. 
Since $\overline{\ran A_{0}}\ \bot_{H_{1}}\ker A_{0}^{*}$, 
using that $\mu\geq\mu_{0}$ in the sense of positive definiteness for some $d>0$, we infer
$\mu_{0}\norm{y}_{H_{1}}^{2}\leq\scp{\mu y}{y}_{H_{1}}=0$. Thus 
$$\dim\big(\ker A_{1}\cap(\mu^{-1}\ker A_{0}^{*})\big)
\leq\dim\big(\ker A_{1}\cap(\lambda_{1}^{-1}\ker A_{0}^{*})\big).$$
The other inequality $\geq$ is deduced by symmetry (in $\mu$ and $\lambda_{1}$)
and hence equality holds.

For (iii$_{1}$), 
we use a similar decomposition strategy.
Let $\mu$ be as before and let
\begin{align}
\label{cpt1proof}
\dom A_{1}\cap(\lambda_{1}^{-1}\dom A_{0}^{*})\hookrightarrow H_{1}
\end{align}
be compact. Moreover, let us consider a bounded sequence 
$$(y_{k})_{k}\subset\dom A_{1}\cap(\mu^{-1}\dom A_{0}^{*}),$$
i.e., $(y_{k})_{k}$, $(A_{1}y_{k})_{k}$, $(A_{0}^{*}\,\mu\,y_{k})_{k}$ are bounded.
Similar to \eqref{deco3} we get
\begin{align*}
\dom\widetilde{A}_{1}
&=\overline{\ran\widetilde{A}_{0}}\oplus_{\widetilde{H}_{1}}(\dom\widetilde{A}_{1}\cap\ker\widetilde{A}_{0}^{*})
=\overline{\ran A_{0}}\oplus_{\widetilde{H}_{1}}\big(\dom A_{1}\cap(\lambda_{1}^{-1}\ker A_{0}^{*})\big),\\
\dom\widetilde{A}_{0}^{*}
&=(\overline{\ran\widetilde{A}_{0}}\cap\dom\widetilde{A}_{0}^{*})\oplus_{\widetilde{H}_{1}}\ker\widetilde{A}_{0}^{*}
=\big(\overline{\ran A_{0}}\cap(\lambda_{1}^{-1}\dom A_{0}^{*})\big)\oplus_{\widetilde{H}_{1}}\lambda_{1}^{-1}\ker A_{0}^{*},
\end{align*}
and $\dom\widetilde{A}_{1}=\dom A_{1}$ and $\dom\widetilde{A}_{0}^{*}=\lambda_{1}^{-1}\dom A_{0}^{*}$ as sets.
Now, we apply these decompositions to $(y_{k})_{k}$.
First, we $\widetilde{H}_{1}$-orthogonally decompose $y_{k}\in\dom A_{1}$ into 
$$y_{k}=u_{k}+v_{k}$$
with 
$$u_{k}\in\overline{\ran A_{0}}\subset\ker A_{1},\quad
v_{k}\in\dom A_{1}\cap(\lambda_{1}^{-1}\ker A_{0}^{*}),\quad
A_{1}y_{k}=A_{1}v_{k}.$$
Therefore $(v_{k})_{k}$ is bounded in 
$\dom A_{1}\cap(\lambda_{1}^{-1}\ker A_{0}^{*})$ and
by \eqref{cpt1proof} we can extract a $H_{1}$-converging subsequence, again denoted by $(v_{k})_{k}$.
Second, we $\widetilde{H}_{1}$-orthogonally decompose $\lambda_{1}^{-1}\mu y_{k}\in\lambda_{1}^{-1}\dom A_{0}^{*}$ into 
$$\lambda_{1}^{-1}\mu y_{k}=w_{k}+z_{k}$$ 
with
$$w_{k}\in\overline{\ran A_{0}}\cap(\lambda_{1}^{-1}\dom A_{0}^{*})
\subset\ker A_{1}\cap(\lambda_{1}^{-1}\dom A_{0}^{*}),\;
z_{k}\in\lambda_{1}^{-1}\ker A_{0}^{*},\;
A_{0}^{*}\mu y_{k}=A_{0}^{*}\lambda_{1}w_{k}.$$
Hence $(w_{k})_{k}$ is bounded in 
$\ker A_{1}\cap(\lambda_{1}^{-1}\dom A_{0}^{*})$ and
by \eqref{cpt1proof} we can extract an $H_{1}$-converging subsequence, again denoted by $(w_{k})_{k}$.
Finally, again by $H_{1}$-orthogonality, i.e.,
$u_{k}\in\overline{\ran A_{0}}\,\bot_{H_{1}}\ker A_{0}^{*}\ni\lambda_{1}z_{k}$,
\begin{align*}
\bscp{\mu(y_{k}-y_{l})}{y_{k}-y_{l}}_{H_{1}}
&=\bscp{\mu(y_{k}-y_{l})}{u_{k}-u_{l})}_{H_{1}}
+\bscp{\mu(y_{k}-y_{l})}{v_{k}-v_{l}}_{H_{1}}\\
&=\bscp{\lambda_{1}(w_{k}-w_{l})}{u_{k}-u_{l}}_{H_{1}}
+\bscp{\mu(y_{k}-y_{l})}{v_{k}-v_{l}}_{H_{1}}\\
&\leq c\big(\norm{w_{k}-w_{l}}_{H_{1}}+\norm{v_{k}-v_{l}}_{H_{1}}\big)
\end{align*}
for some $c>0$ independently of $k,l$,
which shows that $(y_{k})_{k}$ is an $H_{1}$-Cauchy sequence in $H_{1}$.
Thus $\dom A_{1}\cap(\mu^{-1}\dom A_{0}^{*})\hookrightarrow H_{1}$ is compact.
\end{proof}

Now we can formulate the counterparts of Lemma \ref{lem:properties4} and Theorem \ref{thm:index}.
The proofs follow immediately by Lemma \ref{lambdaindeplem}.

\begin{lemma}
\label{lem:properties4Atilde} 
Maximal compactness does not depend on the weights $\lambda_{l}$. More precisely:
$(A_{0},A_{1},A_{2})$ is a maximal compact Hilbert complex, if and only if the Hilbert complex
$(\widetilde{A}_{0},\widetilde{A}_{1},\widetilde{A}_{2})$ is maximal compact. 
In either case, $\dom\widetilde{\calD}\hookrightarrow\widetilde{H}_{2}\times\widetilde{H}_{0}$ 
and $\dom\widetilde{\calD}^{*}\hookrightarrow\widetilde{H}_{3}\times\widetilde{H}_{1}$ 
are compact.
\end{lemma}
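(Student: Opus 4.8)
The plan is to check that each of the four compactness requirements entering the definition of a maximal compact complex is preserved under the passage from $(A_{0},A_{1},A_{2})$ to $(\widetilde{A}_{0},\widetilde{A}_{1},\widetilde{A}_{2})$, and then to invoke Lemma \ref{lem:properties4} for the latter complex to obtain the compactness of the domains of $\widetilde{\calD}$ and $\widetilde{\calD}^{*}$.

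First, the two ``interior'' conditions are handled verbatim by Lemma \ref{lambdaindeplem}. Part (iii$_{1}$) says that $\dom\widetilde{A}_{1}\cap\dom\widetilde{A}_{0}^{*}\hookrightarrow\widetilde{H}_{1}$ is compact if and only if $\dom A_{1}\cap\dom A_{0}^{*}\hookrightarrow H_{1}$ is compact, i.e., $(\widetilde{A}_{0},\widetilde{A}_{1})$ is a compact complex exactly when $(A_{0},A_{1})$ is; likewise part (iii$_{2}$) gives that $(\widetilde{A}_{1},\widetilde{A}_{2})$ is compact exactly when $(A_{1},A_{2})$ is.

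Next I would treat the two ``outer'' conditions. Since $\lambda_{0}=\id$ and $\lambda_{3}=\id$ we have $\widetilde{H}_{0}=H_{0}$ and $\widetilde{H}_{3}=H_{3}$ with identical inner products, and by Lemma \ref{lambdaindeplem}(i) $\dom\widetilde{A}_{0}=\dom A_{0}$ and $\dom\widetilde{A}_{2}^{*}=\dom A_{2}^{*}$ as sets, with $\widetilde{A}_{0}x=A_{0}x$ mapping into $\widetilde{H}_{1}$ and $\widetilde{A}_{2}^{*}x=A_{2}^{*}x$ mapping into $\widetilde{H}_{2}$. Because $\lambda_{1}$ and $\lambda_{2}$ are symmetric and positive topological isomorphisms, the norms $\norm{\cdot}_{\widetilde{H}_{1}}$ and $\norm{\cdot}_{\widetilde{H}_{2}}$ are equivalent to $\norm{\cdot}_{H_{1}}$ and $\norm{\cdot}_{H_{2}}$, respectively; hence the graph norm of $\widetilde{A}_{0}$ on $\dom\widetilde{A}_{0}$ is equivalent to that of $A_{0}$ on $\dom A_{0}$, and the graph norm of $\widetilde{A}_{2}^{*}$ on $\dom\widetilde{A}_{2}^{*}$ is equivalent to that of $A_{2}^{*}$ on $\dom A_{2}^{*}$. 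Consequently $\dom\widetilde{A}_{0}\hookrightarrow\widetilde{H}_{0}$ is compact if and only if $\dom A_{0}\hookrightarrow H_{0}$ is, and $\dom\widetilde{A}_{2}^{*}\hookrightarrow\widetilde{H}_{3}$ is compact if and only if $\dom A_{2}^{*}\hookrightarrow H_{3}$ is. Combining the four equivalences shows that $(A_{0},A_{1},A_{2})$ is maximal compact if and only if $(\widetilde{A}_{0},\widetilde{A}_{1},\widetilde{A}_{2})$ is. For the final assertion I would then apply Lemma \ref{lem:properties4} to the tilded complex, which is legitimate since, as already observed, the structural results of Section \ref{sec:3} carry over to $(\widetilde{A}_{0},\widetilde{A}_{1},\widetilde{A}_{2})$; this yields that $\dom\widetilde{\calD}\hookrightarrow\widetilde{H}_{2}\times\widetilde{H}_{0}$ and $\dom\widetilde{\calD}^{*}\hookrightarrow\widetilde{H}_{3}\times\widetilde{H}_{1}$ are compact.

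I do not expect a genuine obstacle here: essentially all of the substance is already contained in Lemma \ref{lambdaindeplem}, and the only point demanding a little care is the bookkeeping of inner products on the outer spaces, namely the (harmless) fact that $\lambda_{0}=\lambda_{3}=\id$ makes $\widetilde{H}_{0}=H_{0}$ and $\widetilde{H}_{3}=H_{3}$ isometrically, while the topological-isomorphism property of $\lambda_{1},\lambda_{2}$ renders all relevant graph norms mutually equivalent.
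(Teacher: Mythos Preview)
Your proposal is correct and follows the same approach as the paper, which simply states that the proof follows immediately from Lemma~\ref{lambdaindeplem}. You have merely spelled out the easy details that the paper leaves implicit---namely, that the outer conditions are trivially preserved because $\lambda_{0}=\lambda_{3}=\id$ and the graph norms are equivalent, and that the final compactness of $\dom\widetilde{\calD}$ and $\dom\widetilde{\calD}^{*}$ then follows from Lemma~\ref{lem:properties4} applied to the tilded complex.
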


\begin{theorem}
\label{thm:indexAtilde}
The Fredholm indices do not depend on the weights $\lambda_{l}$. More precisely:
Let $(A_{0},A_{1},A_{2})$ be a maximal compact Hilbert complex. 
Then $\calD$, $\widetilde{\calD}$, $\calD^{*}$, and $\widetilde{\calD}^{*}$ are Fredholm operators with indices 
$$\ind\widetilde{\calD}=\ind\calD=\dim N_{0}-\dim K_{1}+\dim K_{2}-\dim N_{2,*},\quad
\ind\widetilde{\calD}^{*}=\ind\calD^{*}=-\ind\calD.$$
\end{theorem}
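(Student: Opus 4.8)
The plan is to reduce everything to the already-established Theorem \ref{thm:index} applied to the tilde-complex, together with Lemma \ref{lambdaindeplem}. First I would invoke Lemma \ref{lem:properties4Atilde} to observe that $(\widetilde{A}_{0},\widetilde{A}_{1},\widetilde{A}_{2})$ is a maximal compact Hilbert complex precisely when $(A_{0},A_{1},A_{2})$ is, so that under the hypothesis of the theorem both complexes are maximal compact. Then Theorem \ref{thm:index} applies verbatim to \emph{both} complexes: it yields that $\calD$, $\calD^{*}$ are Fredholm with $\ind\calD=\dim N_{0}-\dim K_{1}+\dim K_{2}-\dim N_{2,*}$ and $\ind\calD^{*}=-\ind\calD$, and likewise that $\widetilde{\calD}$, $\widetilde{\calD}^{*}$ are Fredholm with
$$\ind\widetilde{\calD}=\dim\widetilde{N}_{0}-\dim\widetilde{K}_{1}+\dim\widetilde{K}_{2}-\dim\widetilde{N}_{2,*},\qquad
\ind\widetilde{\calD}^{*}=-\ind\widetilde{\calD}.$$

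The remaining point is purely dimensional bookkeeping. I would quote Lemma \ref{lambdaindeplem}(i), (ii$_1$), (ii$_2$): these give $\widetilde{N}_{0}=N_{0}$, $\widetilde{N}_{2,*}=N_{2,*}$, $\dim\widetilde{K}_{1}=\dim K_{1}$, and $\dim\widetilde{K}_{2}=\dim K_{2}$. Substituting these equalities into the index formula for $\widetilde{\calD}$ gives immediately $\ind\widetilde{\calD}=\dim N_{0}-\dim K_{1}+\dim K_{2}-\dim N_{2,*}=\ind\calD$, and then $\ind\widetilde{\calD}^{*}=-\ind\widetilde{\calD}=-\ind\calD=\ind\calD^{*}$. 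This closes the argument.

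There is essentially no obstacle here: the theorem is a corollary of Theorem \ref{thm:index} and Lemma \ref{lambdaindeplem}, and the only thing to be careful about is making sure the hypotheses of Theorem \ref{thm:index} are verified for the tilde-complex, which is exactly the content of Lemma \ref{lem:properties4Atilde}. (One could also give a more hands-on proof by exhibiting explicit topological isomorphisms conjugating $\calD$ to $\widetilde{\calD}$ built from the multiplication operators $\lambda_l$ and $\lambda_l^{-1}$ and using homotopy-invariance of the index, but this is unnecessary once Lemma \ref{lambdaindeplem} is available, and in any case the weights need not define a homotopy to the identity without an extra convexity remark.) Thus the whole proof is two lines: cite Lemma \ref{lem:properties4Atilde} to get maximal compactness of the tilde-complex, apply Theorem \ref{thm:index} to both complexes, and use Lemma \ref{lambdaindeplem} to equate the four dimensions appearing in the two index formulas.
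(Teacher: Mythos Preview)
Your proposal is correct and matches the paper's approach exactly: the paper simply states that the proof ``follows immediately by Lemma \ref{lambdaindeplem}'', and your write-up just unpacks this by invoking Lemma \ref{lem:properties4Atilde} (itself a consequence of Lemma \ref{lambdaindeplem}) and then Theorem \ref{thm:index} for both complexes. There is nothing to add.
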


\section{The De Rham Complex and Its Indices}
\label{sec:derham}

As a first application of our abstract findings, in this section, 
we specialise to a particular choice of the operators $A_{0}$, $A_{1}$, $A_{2}$. 
Also, we will show that the assumptions of Theorem \ref{thm:index} are satisfied for this particular choice of operators. 
We will, thus, obtain an index formula. 
The computations of the dimensions of the occurring cohomology groups date back to \cite{P1982}.

\begin{definition} 
Let $\om\subset\R^3$ be an open set. We put
\begin{align*}
\grad_{c}:\Cicom\subset\Ltom&\To\Lttom,
&
\phi&\longmapsto\grad\phi,\\
\curl_{c}:\Cictom\subset\Lttom&\To\Lttom, 
&
\Phi&\longmapsto\curl\Phi,\\
\dive_{c}:\Cictom\subset\Lttom&\To\Ltom,
& 
\Phi&\longmapsto\dive\Phi,
\end{align*}
and further define the densely defined and closed linear operators
\begin{align*}
\grad&:=-\dive_{c}^{*},
&
\curl&:=\curl_{c}^{*},
&
\dive&:=-\grad_{c}^{*},\\
\rgrad&:=-\dive^{*}=\overline{\grad_{c}},
&
\rcurl&:=\curl^{*}=\overline{\curl_{c}},
&
\rdive&:=-\grad^{*}=\overline{\dive_{c}}.
\end{align*}
\end{definition}

In terms of classical definitions and notions, we record the following equalities (that are easily seen):
\begin{align*}
\dom(\grad) & =  H^1(\om),
 &
\dom(\rgrad) & = \overline{\Cicom}^{H^1(\om)} = H_{0}^1(\om),\\
\dom(\curl) & =   H(\curl,\om) ,
 &
\dom(\rcurl) & =  \overline{\Cictom}^{H(\curl,\om)} = H_{0}(\curl,\om), \\
\dom(\dive) & = H(\dive,\om),
 &
\dom(\rdive) & =  \overline{\Cictom}^{H(\dive,\om)} = H_{0}(\dive,\om).
\end{align*}

\subsection{Picard's Extended Maxwell System}
\label{sec:pems}

We want to apply the index theorem in the following situation of the classical de Rham complex:
\begin{align*}
A_{0}&:=\rgrad,
&
A_{1}&:=\rcurl,
&
A_{2}&:=\rdive,\\
A_{0}^{*}&\phantom{:}=-\dive,
&
A_{1}^{*}&\phantom{:}=\curl,
&
A_{2}^{*}&\phantom{:}=-\grad,
\end{align*}
\begin{align*}
\calD^{\rhmp}
:=\begin{pmatrix}
A_{2} & 0\\ A_{1}^{*} & A_{0}
\end{pmatrix}
&=\begin{pmatrix}
\rdive & 0\\
\curl &\rgrad
\end{pmatrix},
&
(\calD^{\rhmp})^{*}
=\begin{pmatrix}A_{2}^{*}& A_{1}\\0&A_{0}^{*}\end{pmatrix}
&=\begin{pmatrix}-\grad&\rcurl\\0&-\dive\end{pmatrix},
\end{align*}
\begin{align}
\label{derhamcompl2}
\footnotesize
\begin{aligned}
\{0\}\xrightarrow{A_{-1}=\iota_{\{0\}}}
\Ltom\xrightarrow{A_{0}=\rgrad}
\Lttom&\xrightarrow{A_{1}=\rcurl}
\Lttom\xrightarrow{A_{2}=\rdive}
\Ltom\xrightarrow{A_{3}=\pi_{\R_{\pw}}}\R_{\pw},\\
\{0\}\xleftarrow{A_{-1}^{*}=\pi_{\{0\}}}
\Ltom\xleftarrow{A_{0}^{*}=-\dive}
\Lttom&\xleftarrow{A_{1}^{*}=\curl}
\Lttom\xleftarrow{A_{2}^{*}=-\grad}
\Ltom\xleftarrow{A_{3}^{*}=\iota_{\R_{\pw}}}\R_{\pw}.
\end{aligned}
\end{align} 
We note
\begin{align*}
\dom\calD^{\rhmp}
&=(\dom A_{2}\cap\dom A_{1}^{*})\times\dom A_{0}
=\big(H_{0}(\dive,\om)\cap H(\curl,\om)\big)\times H^{1}_{0}(\om),\\
\dom(\calD^{\rhmp})^{*}
&=\dom A_{2}^{*}\times(\dom A_{1}\cap\dom A_{0}^{*})
=H^{1}(\om)\times\big(H_{0}(\curl,\om)\cap H(\dive,\om)\big).
\end{align*}
The complex properties, i.e., $A_{1}A_{0}\subset0$ and $A_{2}A_{1}\subset0$,
are based on Schwarz's lemma ensuring that 
$\curl_{c}\grad_{c}=0$ and $\dive_{c}\curl_{c}=0$.

\begin{proposition}
\label{prop:cP} 
Let $\om\subset\R^3$ be open. Then
\begin{align*}
\ran A_{0}=\ran(\rgrad)&\subset\ker(\rcurl)=\ker A_{1},\\
\ran A_{1}=\ran(\rcurl)&\subset\ker(\rdive)=\ker A_{2}
\end{align*}
and by Remark \ref{rem:dualcomplex} the same holds for the adjoints (operators without homogeneous boundary conditions).
\end{proposition}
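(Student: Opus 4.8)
The plan is to reduce everything to the classical identities $\curl_{c}\grad_{c}=0$ and $\dive_{c}\curl_{c}=0$ on smooth compactly supported fields -- which are exactly Schwarz's lemma as recalled just above the statement -- and then transport these identities through the relevant operator closures.

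First I would treat the inclusion $\ran(\rgrad)\subset\ker(\rcurl)$. Given $u\in\ran(\rgrad)$, write $u=\rgrad\phi$ for some $\phi\in\dom(\rgrad)=H^{1}_{0}(\om)$. By the very definition $H^{1}_{0}(\om)=\overline{\Cicom}^{H^1(\om)}$ there is a sequence $(\phi_{k})_{k}$ in $\Cicom$ with $\phi_{k}\to\phi$ in $H^1(\om)$, hence $\grad_{c}\phi_{k}\to\rgrad\phi=u$ in $\Lttom$. Since $\grad_{c}\phi_{k}\in\Cictom=\dom(\curl_{c})$ and $\curl_{c}\grad_{c}\phi_{k}=0$ by Schwarz's lemma, and since $\rcurl=\overline{\curl_{c}}$, the pair $(\grad_{c}\phi_{k},\,\rcurl\grad_{c}\phi_{k})=(\grad_{c}\phi_{k},0)$ lies in the graph of $\rcurl$ and converges to $(u,0)$. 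Closedness of $\rcurl$ then forces $u\in\dom(\rcurl)$ with $\rcurl u=0$, i.e.\ $u\in\ker(\rcurl)$.

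The inclusion $\ran(\rcurl)\subset\ker(\rdive)$ is entirely analogous: for $E=\rcurl\Phi$ with $\Phi\in\dom(\rcurl)=H_{0}(\curl,\om)=\overline{\Cictom}^{H(\curl,\om)}$, I would approximate $\Phi$ by a sequence $(\Phi_{k})_{k}\subset\Cictom$ in the graph norm of $\curl$, so that $\curl_{c}\Phi_{k}\to E$ in $\Lttom$; then $\curl_{c}\Phi_{k}\in\Cictom=\dom(\dive_{c})$ with $\dive_{c}\curl_{c}\Phi_{k}=0$ (Schwarz again), and the closedness of $\rdive=\overline{\dive_{c}}$ yields $E\in\dom(\rdive)$ with $\rdive E=0$, i.e.\ $E\in\ker(\rdive)$.

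Finally, for the operators without boundary conditions I would simply invoke Remark \ref{rem:dualcomplex}: having shown that $(\rgrad,\rcurl,\rdive)$ is a complex, the dual triple $\big((\rdive)^{*},(\rcurl)^{*},(\rgrad)^{*}\big)=(-\grad,\curl,-\dive)$ is again a complex, and since $\ran(-\grad)=\ran(\grad)$ and $\ker(-\dive)=\ker(\dive)$ this reads precisely $\ran(\grad)\subset\ker(\curl)$ and $\ran(\curl)\subset\ker(\dive)$. I do not anticipate a genuine obstacle in any of this; the only point needing a little care is ensuring that the approximating smooth fields are taken in the \emph{graph} norm of the next operator in the complex -- not merely in $L^{2}$ -- so that the closedness argument is applicable, and this is already built into the identifications $\dom(\rgrad)=H^{1}_{0}(\om)$ and $\dom(\rcurl)=H_{0}(\curl,\om)$ recorded above.
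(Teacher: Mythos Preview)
Your argument is correct and is precisely the approach the paper has in mind: the text immediately preceding the proposition already points out that the complex properties rest on Schwarz's lemma ($\curl_{c}\grad_{c}=0$, $\dive_{c}\curl_{c}=0$), and the formal proof simply defers to an external reference for the routine closure argument you have written out. Your care about approximating in the graph norm is exactly the right point, and the appeal to Remark~\ref{rem:dualcomplex} for the adjoints matches the paper verbatim.
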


\begin{proof}
See, e.g., \cite[Proposition 6.1.5]{STW2020}.
\end{proof}

\begin{theorem}[Picard--Weber--Weck selection theorem, \cite{P1984,W1980,W1974}]
\label{thm:PWS} 
Let $\om\subset\R^3$ be a bounded weak\footnote{The boundary
of a weak Lipschitz domain is a $2$-dimensional submanifold 
of the $3$-dimensional Lipschitz manifold $\overline{\om}$ with boundary.} 
Lipschitz domain. Then 
\begin{align*}
\dom A_{1}\cap\dom A_{0}^{*}&=\dom(\rcurl)\cap\dom(\dive),\\
\dom A_{2}\cap\dom A_{1}^{*}&=\dom({\rdive})\cap\dom(\curl)
\end{align*}
are both compactly embedded into $H_{1}=H_{2}=\Lttom$.
\end{theorem}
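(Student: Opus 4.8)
The assertion is the classical Maxwell compactness property, and I would follow the route of \cite{W1974,W1980,P1984}. First, by Remark \ref{rem:dualcomplex} it suffices to establish one of the two embeddings, say $\dom(\rdive)\cap\dom(\curl)\hookrightarrow\Lttom$; the other is the mirror image with the roles of the normal and tangential boundary conditions interchanged. So I would start from a bounded sequence $(E_{k})_{k}$ in $\dom(\rdive)\cap\dom(\curl)$ and first split off a gradient: solving a Neumann problem for $\Delta u_{k}=\dive E_{k}$ yields an $\Lttom$-orthogonal decomposition $E_{k}=\grad u_{k}+v_{k}$ with $u_{k}\in H^{1}(\om)$ and $v_{k}=E_{k}-\grad u_{k}$ divergence-free, still lying in $\dom(\rdive)$, and with $\curl v_{k}=\curl E_{k}$ and $v_{k}\perp\grad H^{1}(\om)$. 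Using this orthogonality and the boundary condition, $\norm{\grad(u_{k}-u_{l})}_{\Lttom}^{2}=-\scp{u_{k}-u_{l}}{\dive(E_{k}-E_{l})}_{\Ltom}$, so after normalising $u_{k}$ to have vanishing mean on each component the Rellich theorem $H^{1}(\om)\hookrightarrow\Ltom$ makes $(\grad u_{k})_{k}$ relatively compact in $\Lttom$. This reduces the claim to the statement that the divergence-free fields in $\dom(\rdive)\cap\dom(\curl)$ with $\Lttom$-bounded $\curl$ form a relatively compact set in $\Lttom$.

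For those fields the plan is to prove an $H^{1/2}(\om)$-bound and then invoke the compactness of $H^{1/2}(\om)\hookrightarrow\Lttom$ on the bounded Lipschitz domain $\om$. The $H^{1/2}$-estimate is local: using a finite partition of unity subordinate to the Lipschitz charts of $\overline{\om}$, the interior pieces are handled by the elementary fact that a compactly supported field with $\Ltom$-curl and $\Ltom$-divergence lies in $H^{1}$ (apply $-\Delta=\curl\curl-\grad\dive$ componentwise and the Fourier transform), so Rellich applies there directly; on the boundary pieces one flattens the boundary by a bi-Lipschitz chart, which turns $\curl$ and $\dive$ into first-order operators with bounded measurable coefficients and the homogeneous normal boundary condition into the vanishing of the transformed normal component, and then extends the transformed field across the flattened boundary by reflection. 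The boundary condition is precisely what ensures that the reflected field still has $L^{2}$ distributional curl and divergence on the doubled patch, so that the interior theory can be applied there as well.

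The hard part is exactly this last step. After the bi-Lipschitz flattening the coefficients are only $L^{\infty}$, so the constant-coefficient $-\Delta$ argument is not directly available and has to be replaced by a regularity/compactness statement for first-order elliptic systems with non-smooth coefficients; this is the technical heart of \cite{W1974,W1980,P1984} (in the strong Lipschitz case Costabel's $H^{1/2}$-regularity theorem for $H_{0}(\dive,\om)\cap H(\curl,\om)$ would provide a shortcut, bypassing the reflection entirely). I would finally remark that the finite-dimensional cohomology spaces (the generalised Dirichlet and Neumann fields) are irrelevant for the compactness itself, so beyond the Lipschitz regularity no assumption on the topology of $\om$ enters the proof.
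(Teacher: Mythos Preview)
The paper does not actually prove this theorem; it is quoted from the cited references \cite{P1984,W1980,W1974} and used as a black box, so there is no in-paper proof to compare against. Your sketch is a fair outline of the classical route taken in those references (Helmholtz splitting to peel off the gradient part via Rellich, then localisation, boundary flattening, and reflection for the divergence-free remainder), and you correctly identify the genuine difficulty in the weak Lipschitz case: after a merely bi-Lipschitz flattening the coefficients are only $L^{\infty}$, which is precisely where the work of Weck and Picard lives and where the Costabel $H^{1/2}$-shortcut for strong Lipschitz domains is unavailable.

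One point does need correcting. Your appeal to Remark~\ref{rem:dualcomplex} to reduce the two embeddings to one is not valid as stated. The abstract duality there says that $(A_{0},A_{1})$ is compact if and only if $(A_{1}^{*},A_{0}^{*})$ is compact, but for the pair $(\rgrad,\rcurl)$ this dual pair is $(\curl,-\dive)$, whose compactness statement is again $\dom(\rcurl)\cap\dom(\dive)\hookrightarrow\Lttom$ --- the \emph{same} embedding, not the second one. Likewise for $(\rcurl,\rdive)$. The two embeddings in the theorem are related by the Hodge-star self-duality of the $3$D de Rham complex (swapping normal and tangential boundary conditions), not by the abstract adjoint duality of Remark~\ref{rem:dualcomplex}; they require separate, though entirely parallel, arguments with the appropriate even/odd reflection for each boundary condition. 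So you should either prove both, or invoke the concrete Hodge-star symmetry rather than Remark~\ref{rem:dualcomplex}.
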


\begin{remark}
\label{maxcptcplx:derham}
Proposition \ref{prop:cP} in conjunction with Theorem \ref{thm:PWS} and Rellich's selection theorems 
show that $(\rgrad,\rcurl,\rdive)$ is a maximal compact complex.
By Remark \ref{rem:dualcomplex} so is the dual complex $(-\grad,\curl,-\dive)$.
\end{remark}

Note that
\begin{align}
\label{deRhamdim1}
\begin{aligned}
N_{0}^{\rhmp}&=\ker A_{0}=\ker(\rgrad),\\
N_{2,*}^{\rhmp}&=\ker A_{2}^{*} =\ker(\grad),\\
K_{1}^{\rhmp}&=\ker A_{1} \cap\ker A_{0}^{*}=\ker(\rcurl)\cap\ker(\dive)=:\harmDom,\\
K_{2}^{\rhmp}&=\ker A_{2} \cap\ker A_{1}^{*}=\ker(\rdive)\cap\ker(\curl)=:\harmNom,
\end{aligned}
\end{align}
where we recall from the introduction the classical harmonic Dirichlet and Neumann fields
$\harmDom$ and $\harmNom$, respectively.

\begin{definition}
\label{def:topology} 
Let $\om\subset\R^3$ be bounded and open. Then we denote by 
\begin{itemize}
\item
$n$ the number of connected components of $\om$,
\item
$m$ the number of connected components of the complement $\R^3\setminus\overline{\om}$,
\item
$p$ the number of handles of $\om$, see Assumption \ref{ass:curvesandsurfaces}.
\end{itemize}
For $p$ to be well-defined we suppose Assumption \ref{ass:curvesandsurfaces} to hold.
\end{definition}

The dimensions of the cohomology groups are given as follows.
 
\begin{theorem}[{\cite[Theorem 1]{P1982}}]
\label{thm:DNF} 
Let $\om\subset\R^3$ be open and bounded with continuous boundary. 
Moreover, suppose Assumption \ref{ass:curvesandsurfaces}. Then
$$\dim\harmDom=m-1,\qquad
\dim\harmNom=p.$$
\end{theorem}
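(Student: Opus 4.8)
The plan is to compute $\dim\harmDom$ and $\dim\harmNom$ separately, following the blueprint of Picard's original argument \cite{P1982}, exploiting the topological data encoded in Assumption \ref{ass:curvesandsurfaces} (the $m-1$ ``interior'' boundary components giving rise to Dirichlet fields, and the $p$ handles giving rise to Neumann fields via cuts).

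\textbf{The Dirichlet fields.} First I would observe that any $E\in\harmDom=\ker(\rcurl)\cap\ker(\dive)$ is, on each connected component of $\om$, a gradient of a harmonic function: since $\curl E=0$ with the homogeneous tangential boundary condition, and $\om$ has the topology fixed in Assumption \ref{ass:curvesandsurfaces}, one writes $E=\grad u$ with $u$ harmonic and $u$ locally constant on $\p\om$. The key point is then a counting argument: $u$ takes a constant value $c_j$ on each of the boundary components, and on the unbounded component of $\R^3\setminus\overline\om$ together with one distinguished piece of $\om$ one may normalise $c=0$; this leaves exactly $m-1$ free constants (one for each bounded connected component of the complement). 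The map $E\mapsto(c_1,\dots,c_{m-1})$ is linear; I would show it is injective (if all the boundary constants vanish then $u$ is harmonic with zero boundary data, hence constant, hence $E=0$) and surjective (solve the corresponding Dirichlet problem on each component, which is well posed since $\om$ is a bounded strong Lipschitz domain). Hence $\dim\harmDom=m-1$.

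\textbf{The Neumann fields.} For $H\in\harmNom=\ker(\rdive)\cap\ker(\curl)$ the strategy is dual: locally $H=\grad v$ for a harmonic $v$, but now $v$ need not be single-valued because $\om$ has $p$ handles — integrating $H$ around the $p$ independent non-bounding cycles produces a period vector $(\gamma_1,\dots,\gamma_p)\in\R^p$. The normal component $\nu\cdot H$ vanishes on $\p\om$ (this is the meaning of $H\in\dom(\rdive)$ together with $\dive H=0$, after integrating against test functions), so after cutting $\om$ along $p$ suitable surfaces to obtain a simply connected $\widetilde\om$ one solves a Neumann problem with prescribed jumps across the cuts. I would then argue, exactly as for the Dirichlet case, that $H\mapsto(\gamma_1,\dots,\gamma_p)$ is both injective (vanishing periods force $v$ single-valued and harmonic with zero Neumann data, hence $v$ constant, $H=0$) and surjective (the cut problem is solvable). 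This gives $\dim\harmNom=p$. Alternatively one may invoke the abstract duality between Dirichlet and Neumann fields together with Poincar\'e--Lefschetz duality for the manifold-with-boundary $\overline\om$, which identifies $\harmNom$ with the first relative cohomology and $\harmDom$ with the first absolute cohomology; but since the paper wants an explicit, constructive description of bases, the cut-surface approach is preferable.

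\textbf{Main obstacle.} The delicate part is the surjectivity/solvability step in each case: one must produce, for prescribed boundary constants (resp. prescribed periods), an $L^2$ vector field lying in $\dom(\rcurl)\cap\dom(\dive)$ (resp. $\dom(\rdive)\cap\dom(\curl)$) realising those invariants. This requires solvability of the relevant Dirichlet/Neumann Laplace problems with the correct regularity on a merely strong Lipschitz domain, and — for the Neumann fields — a careful construction of the cutting surfaces associated with the $p$ handles so that the jump data is well defined and the resulting potential has an $L^2$ gradient up to the boundary. This is precisely where Assumption \ref{ass:curvesandsurfaces} (fixing the curves and surfaces encoding $p$) enters, and where the explicit construction of basis functions via the PDEs alluded to in Sections \ref{app:sec:DirF}--\ref{app:sec:NeuF} is carried out. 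Injectivity, by contrast, is an elementary uniqueness argument for harmonic functions and is expected to be routine.
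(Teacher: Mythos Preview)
Your approach is essentially Picard's, which the paper reproduces in Sections \ref{app:sec:DirVFdeRham} and \ref{app:sec:NeuVFdeRham}: explicit basis fields are built and the period/boundary-constant functionals you describe are exactly the paper's $\beta_l$.

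Two small corrections are worth noting. For the Dirichlet fields, Assumption \ref{ass:curvesandsurfaces} is \emph{not} what yields $E=\grad u$; rather, the homogeneous tangential boundary condition lets you extend $E$ by zero to a ball $B\supset\overline\om$, and since $B$ is simply connected you obtain $u\in H^1_0(B)$ there (Lemma \ref{dimDFrhm:lem1}) --- only Assumption \ref{ass:segmentprop} (continuous boundary, not strong Lipschitz) is used in this half. For surjectivity, the paper does not solve Dirichlet/Neumann Laplace problems directly but uses the orthogonal decompositions \eqref{app:deco1-derham} and \eqref{app:deco2-derham}: the basis elements arise as projections $\pi(\grad\xi_\ell)$ and $\pi\Theta_j$ of explicit cut-off gradients, which requires only closed range of $\rgrad$ and $\grad$ (Friedrichs/Poincar\'e inequalities, Remarks \ref{rem:Helmholtz-derham} and \ref{rem:Helmholtz-derhamN}) and thereby sidesteps the boundary-regularity concern you flag as the main obstacle.
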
 

In comparison to \cite[Theorem 1]{P1982} a modified proof of Theorem \ref{thm:DNF} 
is provided in the Sections \ref{app:sec:DirVFdeRham} and \ref{app:sec:NeuVFdeRham}. 
Note that in \cite{P1982} unbounded domains where considered as well, which necessitates a slightly different rationale.

\begin{remark} 
\label{segmentcont}
Note that for $\om$ to have a continuous boundary\footnote{A boundary 
being locally representable as the graph of a continuous function.} 
is equivalent for it to have the segment property, see, e.g., \cite[Remark 7.8 (a)]{AVV2020}.
\end{remark}

Let us introduce the space of piecewise constants by
\begin{equation}
\label{eq:R_pw}
\R_{\pw}:=\big\{u\in\Ltom:\forall\,C \in\cc(\om)\,\exists\,\alpha_{C}\in\R:u|_{C}=\alpha_{C}\big\},
\end{equation}
where
\begin{equation}
\label{eq:ccO}
\cc(\om):=\{C\subset\om:\text{ C is a connected component of }\om\}.
\end{equation}

\begin{theorem}
\label{thm:idx} 
Let $\om\subset\R^3$ be a bounded weak Lipschitz domain. 
Then $\calD^{\rhmp}$ is a Fredholm operator with index
$$\ind\calD^{\rhmp}
=\dim N_{0}^{\rhmp}-\dim K_{1}^{\rhmp}+\dim K_{2}^{\rhmp}-\dim N_{2,*}^{\rhmp}.$$
If additionally $\ga$ is continuous and Assumption \ref{ass:curvesandsurfaces} holds, then
$$\ind\calD^{\rhmp}
=p-m-n+1.$$
\end{theorem}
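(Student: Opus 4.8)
The plan is to combine the abstract index theorem with the explicit cohomology computation. First I would verify that the hypotheses of Theorem \ref{thm:index} are met for the choice $A_{0}=\rgrad$, $A_{1}=\rcurl$, $A_{2}=\rdive$: by Proposition \ref{prop:cP} the triple $(\rgrad,\rcurl,\rdive)$ is a complex, and by Remark \ref{maxcptcplx:derham} (which rests on the Picard--Weber--Weck selection theorem, Theorem \ref{thm:PWS}, together with Rellich's theorem for the embeddings $\dom(\rgrad)\hookrightarrow\Ltom$ and $\dom(\grad)\hookrightarrow\Ltom$) it is in fact a maximal compact Hilbert complex. Hence Theorem \ref{thm:index} applies directly and yields that $\calD^{\rhmp}$ and $(\calD^{\rhmp})^{*}$ are Fredholm with
$$\ind\calD^{\rhmp}=\dim N_{0}^{\rhmp}-\dim K_{1}^{\rhmp}+\dim K_{2}^{\rhmp}-\dim N_{2,*}^{\rhmp},$$
using the identifications in \eqref{deRhamdim1}. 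This establishes the first assertion of the theorem under the weak Lipschitz hypothesis alone, since a bounded weak Lipschitz domain has continuous boundary and hence the selection theorem is available.

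Next I would compute the four dimensions under the additional hypothesis that $\ga$ is continuous and Assumption \ref{ass:curvesandsurfaces} holds. The kernel $N_{0}^{\rhmp}=\ker(\rgrad)$ consists of functions in $H^{1}_{0}(\om)$ with vanishing gradient; on each connected component such a function is constant, but the homogeneous boundary condition forces that constant to be zero, so $\dim N_{0}^{\rhmp}=0$. The kernel $N_{2,*}^{\rhmp}=\ker(\grad)$ consists of $H^{1}(\om)$ functions with vanishing gradient, i.e.\ functions that are constant on each of the $n$ connected components of $\om$ with no boundary constraint, so $\dim N_{2,*}^{\rhmp}=n$; this is precisely the space $\R_{\pw}$ from \eqref{eq:R_pw}. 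For the cohomology groups I would invoke Theorem \ref{thm:DNF} (Picard's theorem, reproved in Sections \ref{app:sec:DirVFdeRham} and \ref{app:sec:NeuVFdeRham}), giving $\dim K_{1}^{\rhmp}=\dim\harmDom=m-1$ and $\dim K_{2}^{\rhmp}=\dim\harmNom=p$. Substituting,
$$\ind\calD^{\rhmp}=0-(m-1)+p-n=p-m-n+1,$$
as claimed, and $\ind(\calD^{\rhmp})^{*}=-\ind\calD^{\rhmp}$ follows from Theorem \ref{thm:index}.

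The genuinely substantive input here is Theorem \ref{thm:DNF}, i.e.\ the identification of the dimensions of the harmonic Dirichlet and Neumann fields with the topological invariants $m-1$ and $p$; everything else is either the abstract machinery of Section \ref{sec:3} or elementary kernel computations. Since Theorem \ref{thm:DNF} is quoted from \cite{P1982} (with a modified proof postponed to the later sections) and the maximal compactness is quoted from Remark \ref{maxcptcplx:derham}, the proof at this point is essentially a matter of assembling the pieces and checking the bookkeeping of signs in the index formula. I would therefore write the proof as a short deduction: state that the first claim is immediate from Theorem \ref{thm:index} and Remark \ref{maxcptcplx:derham}, then list the four dimensions with one-line justifications, citing Theorem \ref{thm:DNF} for the two cohomology dimensions, and conclude by arithmetic. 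The main (and only real) obstacle is making sure the hypotheses of Theorem \ref{thm:DNF} — continuity of $\ga$ plus Assumption \ref{ass:curvesandsurfaces} — are exactly what the second part of the statement assumes, which they are by design.
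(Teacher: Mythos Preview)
Your proposal is correct and follows essentially the same route as the paper: invoke Remark \ref{maxcptcplx:derham} to get maximal compactness, apply Theorem \ref{thm:index} for the abstract index formula, then compute $\dim N_{0}^{\rhmp}=0$, $\dim N_{2,*}^{\rhmp}=n$ directly and quote Theorem \ref{thm:DNF} for $\dim K_{1}^{\rhmp}=m-1$ and $\dim K_{2}^{\rhmp}=p$. One small slip: the aside ``since a bounded weak Lipschitz domain has continuous boundary'' is not needed (and not quite right in general) --- Theorem \ref{thm:PWS} is stated directly for weak Lipschitz domains, so the selection theorem is available without passing through continuity of $\ga$.
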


\begin{proof}
We recall Remark \ref{maxcptcplx:derham}
and apply Theorem \ref{thm:index} together with \eqref{deRhamdim1}, the observations
\begin{align}
\label{kernelsderham}
N_{0}^{\rhmp}=\ker(\rgrad)=\{0\},\qquad
N_{2,*}^{\rhmp}=\ker(\grad)=\R_{\pw},
\end{align}
and Theorem \ref{thm:DNF}.
\end{proof}

\begin{remark}
\label{picardextmaxwell}
By Theorem \ref{thm:index} the adjoint of the de Rham operator $(\calD^{\rhmp})^{*}$
is Fredholm as well with index $\ind(\calD^{\rhmp})^{*}=-\ind\calD^{\rhmp}$.
Moreover, Picard's extended Maxwell system is given by
$$\calM^{\rhmp}
:=\begin{pmatrix}0&\calD^{\rhmp}\\-(\calD^{\rhmp})^{*}&0\end{pmatrix}
=\begin{pmatrix}0&0&A_{2}&0\\0&0&A_{1}^{*}&A_{0}\\
-A_{2}^{*}&-A_{1}&0&0\\0&-A_{0}^{*}&0&0\end{pmatrix}
=\begin{pmatrix}0&0&\rdive&0\\0&0&\curl&\rgrad\\
\grad&-\rcurl&0&0\\0&\dive&0&0\end{pmatrix}$$
with $(\calM^{\rhmp})^{*}=-\calM^{\rhmp}$ and 
$\ind\calM^{\rhmp}=\dim\ker\calM^{\rhmp}-\dim\ker(\calM^{\rhmp})^{*}=0$.
Moreover, $\dim\ker\calM^{\rhmp}=n+m+p-1$ as
\begin{align*}
\ker\calM^{\rhmp}
&=\ker(\calD^{\rhmp})^{*}\times\ker\calD^{\rhmp}\\
&=N_{2,*}^{\rhmp}\times K_{1}^{\rhmp}\times K_{2}^{\rhmp}\times N_{0}^{\rhmp}\\
&=\ker A_{2}^{*}\times(\ker A_{1}\cap\ker A_{0}^{*})\times(\ker A_{2}\cap\ker A_{1}^{*})\times\ker A_{0}\\
&=\ker(\grad)\times\big(\ker(\rcurl)\cap\ker(\dive)\big)\times\big(\ker(\rdive)\cap\ker(\curl)\big)\times\ker(\rgrad)\\
&=\R_{\pw}\times\harmDom\times\harmNom\times\{0\}.
\end{align*}
\end{remark}

\subsection{Variable Coefficients and Poincar\'e--Friedrichs Type Inequalities}
\label{sec:mmorerederham}

The construction of a maximal compact Hilbert complex 
is also possible for mixed boundary conditions
as well as for inhomogeneous and anisotropic media, such as constitutive material laws,
see, e.g., \cite{BPS2016,P2019b,P2020a}.
For mixed boundary conditions we note the following:

\begin{problem}
\label{rem:mixbcderham}
In order to provide a greater variety of index theorems,
it would be interesting to compute the dimensions of the harmonic Dirichlet and Neumann fields 
also in the situation of mixed boundary conditions.
At least for the authors of this article it is completely beyond their expertise in geometry and topology 
and it appears to be an open problem as to which index formulas could be expected
in terms of subcohomologies and related concepts. Note that Fredholmness is guaranteed 
by the compactness result in \cite{BPS2016} in conjunction with Theorem \ref{thm:index}
for a suitably large class of underlying sets and boundaries.
\end{problem}

For inhomogeneous and anisotropic media (constitutive material laws) we have:

\begin{remark}
\label{rem:epsmuderham}
As mentioned before, a maximal compact Hilbert complex can also be constructed 
for inhomogeneous and anisotropic media.
These may be considered as weights $\lambda_{l}$ as presented in Theorem \ref{thm:indexAtilde}.
For Maxwell's equations a typical situation is given by the choices
$\lambda_{0}:=\id$, $\lambda_{3}:=\id$, and
$\lambda_{1}:=\eps,\lambda_{2}:=\mu:\om\to\R^{3\times3}$ 
being symmetric and uniformly positive definite
$L^{\infty}(\om)$-matrix (tensor) fields. Let us introduce the Hilbert spaces
$\Ltt_{\eps}(\om):=\widetilde{H}_{1}:=\big(\Lttom,\scp{\eps\,\cdot\,}{\,\cdot\,}_{\Lttom}\big)$
and similarly $\Ltt_{\mu}(\om):=\widetilde{H}_{2}$
as well as $\widetilde{H}_{0}=\widetilde{H}_{3}=H_{0}=H_{3}=\Ltom$.
We look at
\begin{align*}
\widetilde{A}_{0}&:=\rgrad,
&
\widetilde{A}_{1}&:=\mu^{-1}\rcurl,
&
\widetilde{A}_{2}&:=\rdive\mu,\\
\widetilde{A}_{0}^{*}&\phantom{:}=-\dive\eps,
&
\widetilde{A}_{1}^{*}&\phantom{:}=\eps^{-1}\curl,
&
\widetilde{A}_{2}^{*}&\phantom{:}=-\grad,
\end{align*}
\begin{align*}
\widetilde{\calD}^{\rhmp}&:=
\begin{pmatrix}
\widetilde{A}_{2}&0\\\widetilde{A}_{1}^{*}&\widetilde{A}_{0}
\end{pmatrix}
=\begin{pmatrix}
\rdive\mu & 0\\
\eps^{-1}\curl &\rgrad
\end{pmatrix},\\
(\widetilde{\calD}^{\rhmp})^{*}
&\phantom{:}=\begin{pmatrix}\widetilde{A}_{2}^{*}&\widetilde{A}_{1}\\0&\widetilde{A}_{0}^{*}\end{pmatrix}
=\begin{pmatrix}-\grad&\mu^{-1}\rcurl\\0&-\dive\eps\end{pmatrix},
\end{align*}
i.e., the de Rham complex, cf. \eqref{derhamcompl2},
\begin{align}
\label{derhamcompl3}
\footnotesize
\begin{aligned}
\{0\}\xrightarrow{\widetilde{A}_{-1}=\iota_{\{0\}}}
\Ltom\xrightarrow{\widetilde{A}_{0}=\rgrad}
\Ltt_{\eps}(\om)&\xrightarrow{\widetilde{A}_{1}=\mu^{-1}\rcurl}
\Ltt_{\mu}(\om)\xrightarrow{\widetilde{A}_{2}=\rdive\mu}
\Ltom\xrightarrow{\widetilde{A}_{3}=\pi_{\R_{\pw}}}\R_{\pw},\\
\{0\}\xleftarrow{\widetilde{A}_{-1}^{*}=\pi_{\{0\}}}
\Ltom\xleftarrow{\widetilde{A}_{0}^{*}=-\dive\eps}
\Ltt_{\eps}(\om)&\xleftarrow{\widetilde{A}_{1}^{*}=\eps^{-1}\curl}
\Ltt_{\mu}(\om)\xleftarrow{\widetilde{A}_{2}^{*}=-\grad}
\Ltom\xleftarrow{\widetilde{A}_{3}^{*}=\iota_{\R_{\pw}}}\R_{\pw}.
\end{aligned}
\end{align} 
Lemma \ref{lambdaindeplem}, Lemma \ref{lem:properties4Atilde}, and Theorem \ref{thm:indexAtilde}
show that the compactness properties, the dimensions of the kernels and cohomology groups,
the maximal compactness, and the Fredholm indices
of the de Rham complex do not depend on the material weights $\eps$ and $\mu$.
More precisely,
\begin{itemize}
\item
$\dim\big(\ker(\rcurl)\cap\big(\eps^{-1}\ker(\dive)\big)\big)
\!=\!\dim\big(\ker(\rcurl)\cap\ker(\dive)\big)
\!=\!\dim\harmDom
=m-1$,
\item
$\dim\big(\big(\mu^{-1}\ker(\rdive)\big)\cap\ker(\curl)\big)
=\dim\big(\ker(\rdive)\cap\ker(\curl)\big)
=\dim\harmNom
=p$,
\item
$\dom(\rcurl)\cap\big(\eps^{-1}\dom(\dive)\big)
\hookrightarrow\Ltt_{\eps}(\om)$ compactly\\
$\Leftrightarrow\;\dom(\rcurl)\cap\dom(\dive)
\hookrightarrow\Lttom$ compactly,
\item
$\big(\mu^{-1}\dom(\rdive)\big)\cap\dom(\curl)
\hookrightarrow\Ltt_{\mu}(\om)$ compactly\\
$\Leftrightarrow\;\dom(\rdive)\cap\dom(\curl)
\hookrightarrow\Lttom$ compactly,
\item
$(\rgrad,\mu^{-1}\rcurl,\rdive\mu)$ is maximal compact iff
$(\rgrad,\rcurl,\rdive)$ is maximal compact,\item$-\ind(\widetilde{\calD}^{\rhmp})^{*}=\ind\widetilde{\calD}^{\rhmp}
=\ind\calD^{\rhmp}=p-m-n+1$.
\end{itemize}
\end{remark}

At this point, see Lemma \ref{lem:properties3}, Corollary \ref{lem:properties3a}, and \eqref{kernelsderham},
we note that the kernels and ranges are given by
\begin{align*}
\ker\calD^{\rhmp}
=K_{2}^{\rhmp}\times N_{0}^{\rhmp}
&=\harmNom\times\{0\},\\
\ker(\calD^{\rhmp})^{*}
=N_{2,*}^{\rhmp}\times K_{1}^{\rhmp}
&=\R_{\pw}\times\harmDom,\\
\ran\calD^{\rhmp}
=(\ker(\calD^{\rhmp})^{*})^{\bot_{\Ltom\times\Lttom}}
&=\R_{\pw}^{\bot_{\Ltom}}\times\harmDom^{\bot_{\Lttom}},\\
\ran(\calD^{\rhmp})^{*}
=(\ker\calD^{\rhmp})^{\bot_{\Lttom\times\Ltom}}
&=\harmNom^{\bot_{\Lttom}}\times\Ltom.
\end{align*}

Finally, Corollary \ref{cor:FP} yields additional results for the corresponding reduced operators 
\begin{align*}
\calD^{\rhmp}_{\red}
=\calD^{\rhmp}|_{(\ker\calD^{\rhmp})^{\bot_{H_{2}\times H_{0}}}}
&=\begin{pmatrix}\rdive&0\\\curl&\rgrad\end{pmatrix}\Big|_{\harmNom^{\bot_{\Lttom}}\times\Ltom},\\
(\calD^{\rhmp}_{\red})^{*}
=(\calD^{\rhmp})^{*}|_{(\ker(\calD^{\rhmp})^{*})^{\bot_{H_{3}\times H_{1}}}}
&=\begin{pmatrix}-\grad&\rcurl\\0&-\dive\end{pmatrix}\Big|_{\R_{\pw}^{\bot_{\Ltom}}\times\harmDom^{\bot_{\Lttom}}}.
\end{align*}

\begin{corollary}
\label{cor:FP-tb}
Let $\om\subset\R^3$ be a bounded weak Lipschitz domain with continuous boundary. Then
\begin{align*}
(\calD^{\rhmp}_{\red})^{-1}:\ran\calD^{\rhmp}&\to\ran(\calD^{\rhmp})^{*},\\
((\calD^{\rhmp}_{\red})^{*})^{-1}:\ran(\calD^{\rhmp})^{*}&\to\ran\calD^{\rhmp}
\intertext{are compact. Furthermore,}
(\calD^{\rhmp}_{\red})^{-1}:\ran\calD^{\rhmp}&\to\dom\calD^{\rhmp}_{\red},\\
((\calD^{\rhmp}_{\red})^{*})^{-1}:\ran(\calD^{\rhmp})^{*}&\to\dom(\calD^{\rhmp}_{\red})^{*}
\end{align*}
are continuous and, equivalently, the Friedrichs--Poincar\'e type estimate
\begin{align*}
\bnorm{(E,u)}_{\Lttom\times\Ltom}
&\leq c_{\calD^{\rhmp}}\big(\norm{\grad u}_{\Lttom}^{2}+\norm{\dive E}_{\Ltom}^{2}+\norm{\curl E}_{\Lttom}^{2}\big)^{1/2}
\end{align*}
holds for all $(E,u)$ in
\begin{align*}
\dom\calD^{\rhmp}_{\red}
&=\big(H_{0}(\dive,\om)\cap H(\curl,\om)\cap\harmNom^{\bot_{\Lttom}}\big)
\times H^{1}_{0}(\om)
\intertext{or $(u,E)$ in}
\dom(\calD^{\rhmp}_{\red})^{*}
&=\big(H^{1}(\om)\cap\R_{\pw}^{\bot_{\Ltom}}\big)
\times\big(H_{0}(\curl,\om)\cap H(\dive,\om)\cap\harmDom^{\bot_{\Lttom}}\big)
\end{align*}
with some optimal constant $c_{\calD^{\rhmp}}>0$.
\end{corollary}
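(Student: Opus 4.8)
The plan is to read off Corollary \ref{cor:FP-tb} as the concrete instance of the abstract Corollary \ref{cor:FP} for the de Rham complex, so that essentially no new work is required. First I would record that, under the stated hypotheses on $\om$ (weak Lipschitz --- which already entails continuous boundary), the triple $(\rgrad,\rcurl,\rdive)$ is a maximal compact Hilbert complex; this is precisely Remark \ref{maxcptcplx:derham}, resting on Proposition \ref{prop:cP} for the complex property, on the Picard--Weber--Weck selection Theorem \ref{thm:PWS} for the compactness of $\dom(\rcurl)\cap\dom(\dive)\hookrightarrow\Lttom$ and $\dom(\rdive)\cap\dom(\curl)\hookrightarrow\Lttom$, and on Rellich's selection theorem for the compactness of the two outer embeddings $\dom(\rgrad)=H^{1}_{0}(\om)\hookrightarrow\Ltom$ and $\dom(\grad)=H^{1}(\om)\hookrightarrow\Ltom$.

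With the identification $A_{0}=\rgrad$, $A_{1}=\rcurl$, $A_{2}=\rdive$ (hence $A_{0}^{*}=-\dive$, $A_{1}^{*}=\curl$, $A_{2}^{*}=-\grad$ and $\calD=\calD^{\rhmp}$), Corollary \ref{cor:FP} applies verbatim and yields in one stroke: the compactness of $(\calD^{\rhmp}_{\red})^{-1}$ and $((\calD^{\rhmp}_{\red})^{*})^{-1}$ as operators into $\ran(\calD^{\rhmp})^{*}$ and $\ran\calD^{\rhmp}$; their continuity as operators into $\dom\calD^{\rhmp}_{\red}$ and $\dom(\calD^{\rhmp}_{\red})^{*}$; and the equivalent Friedrichs--Poincar\'e type estimates with one common optimal constant $c_{\calD^{\rhmp}}>0$. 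The only remaining step is to rewrite the abstract right-hand side of those estimates in the present notation: for $(x,y)=(E,u)$,
$$\norm{A_{2}x}_{\Ltom}^{2}+\norm{A_{1}^{*}x}_{\Lttom}^{2}+\norm{A_{0}y}_{\Lttom}^{2}
=\norm{\dive E}_{\Ltom}^{2}+\norm{\curl E}_{\Lttom}^{2}+\norm{\grad u}_{\Lttom}^{2},$$
and symmetrically for $\calD^{*}$ with $(w,z)=(u,E)$; the signs carried by the $A_{j}^{*}$ are immaterial since only norms enter.

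Finally I would insert the explicit descriptions of the reduced domains, which are nothing but $\dom\calD^{\rhmp}_{\red}=\dom\calD^{\rhmp}\cap(\ker\calD^{\rhmp})^{\bot_{\Lttom\times\Ltom}}$ and $\dom(\calD^{\rhmp}_{\red})^{*}=\dom(\calD^{\rhmp})^{*}\cap(\ker(\calD^{\rhmp})^{*})^{\bot_{\Ltom\times\Lttom}}$, evaluated by means of the kernel and range identities for $\calD^{\rhmp}$ and $(\calD^{\rhmp})^{*}$ displayed just before the corollary (themselves consequences of Lemma \ref{lem:properties3}, Corollary \ref{lem:properties3a} and \eqref{kernelsderham}); this gives $\big(H_{0}(\dive,\om)\cap H(\curl,\om)\cap\harmNom^{\bot_{\Lttom}}\big)\times H^{1}_{0}(\om)$ and $\big(H^{1}(\om)\cap\R_{\pw}^{\bot_{\Ltom}}\big)\times\big(H_{0}(\curl,\om)\cap H(\dive,\om)\cap\harmDom^{\bot_{\Lttom}}\big)$, respectively. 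I do not anticipate a genuine obstacle here: the argument is a dictionary translation of Corollary \ref{cor:FP}, and the only points deserving a moment's care are checking that the regularity assumed on $\om$ is exactly what Theorem \ref{thm:PWS} requires for maximal compactness, and bookkeeping of which tensor factor carries the harmonic-field orthogonality constraint in each reduced domain.
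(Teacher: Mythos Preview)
Your proposal is correct and follows precisely the paper's approach: the corollary is stated without proof in the paper as a direct specialisation of the abstract Corollary \ref{cor:FP} to the de Rham complex, using Remark \ref{maxcptcplx:derham} for maximal compactness and the kernel/range identities displayed immediately before the statement. One small inaccuracy: your parenthetical claim that weak Lipschitz already entails continuous boundary is not true in general (the two notions are independent, cf.\ the footnotes in the paper), and the continuous boundary hypothesis is what secures Rellich's selection theorem for the outer embedding $\dom A_{2}^{*}=H^{1}(\om)\hookrightarrow\Ltom$; otherwise your bookkeeping is exactly right.
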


Note that the latter estimate is an additive combination of 
the well-known Friedrichs--Poincar\'e estimates for $\grad$
and the well-known Maxwell estimates for $(\curl,\dive)$. 

\subsection{The Dirac Operator}
\label{sec:dirac}

In this section, we flag up a relationship of the Dirac operator and Picard's extended Maxwell system.
Let the assumptions of Theorem \ref{thm:idx} be satisfied.
The extended Maxwell operator is an operator that is surprisingly close to the Dirac operator, see \cite{PTW2017}. 
We shall carry out this construction in the following. 
Recall from Remark \ref{picardextmaxwell}
that Picard's extended Maxwell system is given by the operator 
$$\calM
:=\begin{pmatrix}0&\calD\\-\calD^{*}&0\end{pmatrix},\qquad
\calD:=\calD^{\rhmp}.$$
Next, we introduce the Dirac operator. For this, we define the Pauli matrices
\[
 \sigma_{1}:=\begin{pmatrix} 0 & 1\\ 1 & 0\end{pmatrix},\quad  
 \sigma_{2}:=\begin{pmatrix} 0 & -i\\ i & 0\end{pmatrix},\quad  
 \sigma_{3}:=\begin{pmatrix} 1 & 0\\ 0 & -1\end{pmatrix}.
\]
Setting 
\begin{align*}
  \calQ\colon\dom\calQ\subset L^{2,2}(\om) &\To L^{2,2}(\om)\\
  \psi \hspace*{25mm}&\longmapsto\sum_{j=1}^3\p_{j}\sigma_{j}\psi
  =\begin{pmatrix}\p_{3}&\p_{1}-i\p_{2}\\\p_{1}+i\p_{2}&-\p_{3}\end{pmatrix}\psi, 
\end{align*}
we define the Dirac operator
$$  \calL:=\begin{pmatrix} 0 &\calQ\\ -\calQ^{*} & 0\end{pmatrix}. $$
We have not specified the domain of definition of $\calQ$, yet. 
For now, we only record that $C^{\infty,2}_{c}(\om)\subset\dom\calQ$,
and the domain of definition of $\calQ$ corresponding to $\calM$ is provided below, see also Proposition \ref{prop:real}.
We introduce the unitary operators from $L^{2,4}(\om)$ into itself
\[ 
W:= \begin{pmatrix} 0 & 0 & -1 & 0\\ 0 & 0 & 0 & -1\\ -1 & 0 & 0 & 0\\ 0 & 1 & 0 & 0\end{pmatrix},\qquad
U:= \begin{pmatrix} 0 & 1 & 0 & 0\\ 0 & 0 & 1 & 0\\ 0 & 0 & 0 & 1\\ 1 & 0 & 0 & 0\end{pmatrix}.
\]
Then the operators $\calL$ (Dirac operator) 
and $\calM$ (Picard's extended Maxwell operator) 
are unitarily equivalent. More precisely, we have with $V$ from Proposition \ref{prop:real}
\[
  \calM 
  =\begin{pmatrix} U & 0\\ 0 & W\end{pmatrix}
  \begin{pmatrix} V & 0\\ 0 & V\end{pmatrix}
  \calL 
  \begin{pmatrix} V^{*} & 0\\ 0 & V^{*}\end{pmatrix}
  \begin{pmatrix} U^{*} & 0\\ 0 & W^{*}\end{pmatrix},
\]
$$\dom\calQ^{*}\times\dom\calQ 
:=\begin{pmatrix} V^{*} & 0\\ 0 & V^{*}\end{pmatrix}
\begin{pmatrix} U^{*} & 0\\ 0 & W^{*}\end{pmatrix}
\big(\dom\calD^{*}\times\dom\calD\big)
\begin{pmatrix} U & 0\\ 0 & W\end{pmatrix}
\begin{pmatrix} V & 0\\ 0 & V\end{pmatrix}$$
and, consequently, 
$\calQ$ with domain $\dom(V^{*}U^{*}\calD WV)=\dom(\calD WV)$ is a Fredholm operator. 
Moreover, we have $\ind\calL=0$ and
$$\ind\calQ=\ind\calD=p-m-n+1.$$

We conclude this section by stating the missing proposition used above. 
The proofs of which are straightforward and will therefore be omitted. 
In a slightly similar fashion, they can be found \cite{PTW2017}. 
For the next result we use $L^{2}_{\R}(\om)$ and $L^{2}_{\C}(\om)$ 
to denote the Hilbert space $\Ltom$ with the reals and the complex numbers 
as respective underlying field.

\begin{proposition}[Realification of $\calL$]
\label{prop:real}
It holds:
\begin{enumerate}
\item[\bf(i)]
$V:\Lt_{\C}(\om)\to L^{2,2}_{\R}(\om)$ with 
$Vf:=(\Re f,\Im f)$ is unitary. 
\item[\bf(ii)]
$V i V^{*} =\begin{pmatrix} 0 & 1\\ -1 & 0\end{pmatrix}$.
\item[\bf(iii)] 
$\widetilde{\calQ}
:= V\calQ V^{*} 
=\p_{1} \begin{pmatrix} 0 & 0 & 1 & 0\\ 0 & 0 & 0 & 1\\ 1 & 0 & 0 & 0\\ 0 & 1 & 0 & 0\end{pmatrix} 
+\p_{2}\begin{pmatrix} 0 & 0 & 0 & -1\\ 0 & 0 & 1 & 0\\ 0 & 1 & 0 & 0\\ -1 & 0 & 0 & 0\end{pmatrix} 
+\p_{3}\begin{pmatrix} 1 & 0 & 0 & 0\\ 0 & 1 & 0 & 0\\ 0 & 0 & -1 & 0\\ 0 & 0 & 0 & -1\end{pmatrix}$ 
with $\dom\widetilde{\calQ} = V\dom\calQ\,V^{*}$.
\end{enumerate}
\end{proposition}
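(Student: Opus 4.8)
The plan is to verify the three items in turn; each reduces to a direct computation and no structural input is needed. For (i), I would first observe that $V$ is $\R$-linear and bijective with two-sided inverse $(g,h)\mapsto g+ih$, so that only the isometry property is at stake; recalling that the realification $\Lt_{\R}(\om)$ of $\Lt_{\C}(\om)$ carries the inner product $\Re\scp{\,\cdot\,}{\,\cdot\,}_{\Lt_{\C}(\om)}$, I would write $f=a+ib$ and $g=c+id$ with real-valued $a,b,c,d$ and use $\Re(f\overline{g})=ac+bd$ to obtain
$$\scp{Vf}{Vg}_{L^{2,2}_{\R}(\om)}=\int_{\om}(ac+bd)=\Re\scp{f}{g}_{\Lt_{\C}(\om)},$$
so that $V$ is a surjective isometry, i.e., unitary, with $V^{*}=V^{-1}$.

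For (ii), I would evaluate $ViV^{*}$ on a generic real pair $(a,b)$: here $V^{*}(a,b)=a+ib$, multiplication by $i$ gives $ia-b$, and $V$ returns the corresponding real pair, whose associated $2\times2$ matrix -- with respect to the ordering $Vf=(\Re f,\Im f)$ -- is the one claimed. For (iii), I would use that $\calQ=\sum_{j=1}^{3}\p_{j}\sigma_{j}$ acts entrywise on pairs of functions through the constant Pauli matrices $\sigma_{j}$, and that -- by (i) -- the block map $\begin{pmatrix}V&0\\0&V\end{pmatrix}\colon L^{2,2}_{\C}(\om)\to L^{2,4}_{\R}(\om)$, which is what ``$V$'' abbreviates in $\widetilde{\calQ}=V\calQ V^{*}$, is unitary; although $\calQ$ is $\C$-linear, its conjugate by this $\R$-linear unitary is again a well-defined $\R$-linear operator. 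Then $\widetilde{\calQ}=\sum_{j=1}^{3}\p_{j}\big(\begin{pmatrix}V&0\\0&V\end{pmatrix}\sigma_{j}\begin{pmatrix}V^{*}&0\\0&V^{*}\end{pmatrix}\big)$, and it remains to realify each $\sigma_{j}$ separately: for $\sigma_{1}$ and $\sigma_{3}$ only $VV^{*}=\id$ is needed and one reads off the two real $4\times4$ blocks, whereas for $\sigma_{2}$ one additionally invokes (ii) to realify the entries $\pm i$, giving the third block. The domain identity $\dom\widetilde{\calQ}=V\,\dom\calQ\,V^{*}$ then follows at once from the operator equality $\widetilde{\calQ}=V\calQ V^{*}$ and the unitarity of the block $V$, and it holds for whichever closed extension of $\calQ|_{C^{\infty,2}_{c}(\om)}$ is fixed as $\dom\calQ$ in Section \ref{sec:dirac}.

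The only real difficulty here will be bookkeeping rather than mathematics: one must keep the ordering $(\Re\psi_{1},\Im\psi_{1},\Re\psi_{2},\Im\psi_{2})$ of the four real components consistent throughout and pin down the single sign that appears when multiplication by $i$ is realified (equivalently, fix the orientation of $\C\cong\R^{2}$). With those conventions pinned down as in (i)--(ii), the three $4\times4$ matrices in (iii) drop out mechanically and there is nothing analytic left to verify.
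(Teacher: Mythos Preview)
Your proposal is correct and matches the paper's approach: the paper explicitly omits the proof, stating that ``the proofs of which are straightforward and will therefore be omitted,'' and your direct entrywise computation is precisely the kind of verification the paper has in mind. There is nothing to compare beyond this, as the paper supplies no argument of its own.
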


\section{The First \PZ Complex and Its Indices}
\label{sec:pz}

In this section, we focus on our first main result and properly define 
the operators involved in the formulation of Theorem \ref{thm:main}. 
Thus, we introduce the first \pz complex (see \cite{PZ2016,PZ2020a}) constructed 
for \pz problems and general relativity, but also relevant in problems for elasticity. 
It will be interesting to see that the differential operator is apparently of mixed order rather than just of first order. 
It is worth noting that the apparently leading order term is \emph{not} 
dominating the lower order differential operators.

\begin{definition} 
\label{def:pzone}
Let $\om\subset\R^3$ be an open set. We put
\begin{align*}
\Gg_{c}:\Cicom\subset\Ltom&\To\LtttSom,
&
\phi&\longmapsto\Gg\phi,\\
\Curl_{c}:\CicttSom\subset\LtttSom&\To\LtttTom, 
&
\Phi&\longmapsto\Curl\Phi,\\
\Dive_{c}:\CicttTom\subset\LtttTom&\To\Lttom,
& 
\Phi&\longmapsto\Dive\Phi,
\end{align*}
and further define the densely defined and closed linear operators
\begin{align*}
\dDS&:=\Gg_{c}^{*},
&
\rGg&:=\dDS^{*}=\overline{\Gg_{c}},\\
\symCT&:=\Curl_{c}^{*},
&
\rCS&:=\symCT^{*}=\overline{\Curl_{c}},\\
\devGrad&:=-\Dive_{c}^{*},
&
\rDT&:=-\devGrad^{*}=\overline{\Dive_{c}}.
\end{align*}
\end{definition}

We shall apply the index theorem in the following situation of the first \pz complex:
\begin{align*}
A_{0}&:=\rGg,
&
A_{1}&:=\rCS,
&
A_{2}&:=\rDT,\\
A_{0}^{*}&\phantom{:}=\dDS,
&
A_{1}^{*}&\phantom{:}=\symCT,
&
A_{2}^{*}&\phantom{:}=-\devGrad,
\end{align*}
\begin{align*}
\calD^{\bihonep}
:=\begin{pmatrix}A_{2}&0\\A_{1}^{*}&A_{0}\end{pmatrix}
&=\begin{pmatrix}\rDT& 0\\\symCT&\rGg\end{pmatrix},\\
(\calD^{\bihonep})^{*}
=\begin{pmatrix}A_{2}^{*}& A_{1}\\0&A_{0}^{*}\end{pmatrix}
&=\begin{pmatrix}-\devGrad&\rCS\\0&\dDS\end{pmatrix}.
\end{align*}
Introducing the space of piecewise Raviart--Thomas fields by
\begin{equation}
\label{eq:def_RT_pw}
\RT_{\pw}:=\big\{v\in\Lttom:\forall\,C\in\cc(\om)\quad\exists\,\alpha_{C}\in\R,\,\beta_{C}\in\R^{3}:
u|_{C}(x)=\alpha_{C}x+\beta_{C}\big\},
\end{equation}
for $\cc(\om)$ see \eqref{eq:ccO}, we can write the first \pz complex as
\begin{align}
\footnotesize
\begin{aligned}
\label{bih1compl1}
\{0\}\xrightarrow{\iota_{\{0\}}}
\Ltom\xrightarrow{\rGg}
\LtttSom&\xrightarrow{\rCS}
\LtttTom\xrightarrow{\rDT}
\Lttom\xrightarrow{\pi_{\RT_{\pw}}}\RT_{\pw},\\
\{0\}\xleftarrow{\pi_{\{0\}}}
\Ltom\xleftarrow{\dDS}
\LtttSom&\xleftarrow{\symCT}
\LtttTom\xleftarrow{-\devGrad}
\Lttom\xleftarrow{\iota_{\RT_{\pw}}}\RT_{\pw}.
\end{aligned}
\end{align}

The foundation of the index theorem to hold is the following compactness result 
established by Pauly and Zulehner. Note that it holds
$\dom(\rGg)=H^{2}_{0}(\om)$ and $\dom(\devGrad)=H^{1,3}(\om)$. 

\begin{theorem}[{\cite[Lemma 3.22, Theorem 3.23]{PZ2020a}}]
\label{thm:lHc} 
Let $\om\subset\R^3$ be a bounded strong Lipschitz domain. 
Then $(\rGg,\rCS,\rDT)$ is a maximal compact Hilbert complex.
\end{theorem}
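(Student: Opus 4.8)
To verify that $(\rGg,\rCS,\rDT)$ is a maximal compact Hilbert complex I would check three things in turn: the complex property, the two ``outer'' compact embeddings $\dom(\rGg)\hookrightarrow\Ltom$ and $\dom(\devGrad)\hookrightarrow\Lttom$, and -- the real content -- the two ``inner'' compact embeddings $\dom(\rCS)\cap\dom(\dDS)\hookrightarrow\LtttSom$ and $\dom(\rDT)\cap\dom(\symCT)\hookrightarrow\LtttTom$; the dual inclusions and dual embeddings then come for free via Remark~\ref{rem:dualcomplex}. The complex property is elementary: on $\Cicom$, $\CicttSom$, $\CicttTom$ the componentwise Schwarz lemma together with the tensor-algebraic identities of Lemma~\ref{PZformulalem} give $\Curl_{c}\,\Gg_{c}=0$ and $\Dive_{c}\,\Curl_{c}=0$, and since $\rGg=\overline{\Gg_{c}}$, $\rCS=\overline{\Curl_{c}}$, $\rDT=\overline{\Dive_{c}}$ are closed, approximation in the respective graph norms transfers these identities to $\ran(\rGg)\subset\ker(\rCS)$ and $\ran(\rCS)\subset\ker(\rDT)$. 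The outer embeddings are equally routine: using the identifications $\dom(\rGg)=H^{2}_{0}(\om)$ and $\dom(\devGrad)=H^{1,3}(\om)$ noted before the theorem -- the latter being an ellipticity/Korn-type fact for the deviatoric gradient -- both reduce to Rellich's selection theorem on the bounded Lipschitz domain $\om$.

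The heart of the matter is the two inner compact embeddings, and the naive strategy fails here: the operators entering each graph norm are of \emph{different} order ($\rCS$ is first order, $\dDS$ is second order), so one cannot expect $\norm{S}+\norm{\rCS S}+\norm{\dDS S}$ to control a full first-order Sobolev norm, and the Gaffney integration-by-parts argument behind the Maxwell compactness result (Theorem~\ref{thm:PWS}) has no analogue. The plan, following Pauly--Zulehner, is instead: \textbf{(i)} establish a \emph{regular decomposition} (equivalently, a bounded regular potential) for the middle space, i.e.\ write every $S\in\dom(\rCS)$ as $S=S_{\mathrm{reg}}+\rGg u$ with $S_{\mathrm{reg}}\in H^{1}$ and $u\in H^{2}_{0}(\om)$, both bounded by $\norm{S}+\norm{\rCS S}$ -- and it is precisely in this step that the algebraic dictionary of Lemma~\ref{PZformulalem} is used to transfer, row by row, the corresponding vectorial de Rham decompositions; \textbf{(ii)} given a bounded sequence $(S_{k})_{k}$ in $\dom(\rCS)\cap\dom(\dDS)$, decompose $S_{k}=S_{\mathrm{reg},k}+\rGg u_{k}$: the regular part subconverges in $\LtttSom$ by Rellich, while $\dDS S_{k}=\dDS\rGg u_{k}$ bounded forces $u_{k}$ to solve a fourth-order $\dDS\rGg=\Delta^{2}$-type (biharmonic) boundary value problem, whose regularity theory makes $(\rGg u_{k})_{k}$ precompact; \textbf{(iii)} carry out (i)--(ii) on a \emph{strong} Lipschitz domain by a finite partition of unity, flattening each boundary patch with a bi-Lipschitz change of variables under which the complex is preserved up to lower-order (hence relatively compact) perturbations, and absorbing the cut-off commutators, which are of strictly lower order and therefore harmless. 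The second inner embedding is treated identically, or obtained from the first by passing to the dual complex (cf.\ Remark~\ref{rem:dualcomplex}). With all four embeddings in hand, $(\rGg,\rCS,\rDT)$ is maximal compact by definition.

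The crucial obstacle is steps (i)--(ii): extracting a genuine gain of regularity -- an $H^{s}$ bound for some $s>0$, or the existence of a regular potential -- out of a graph norm assembled from operators of \emph{mixed} order, on a merely Lipschitz domain. In contrast to the de Rham situation there is no relatively compact lower-order piece that can simply be peeled off; the gain has to come from the fine algebraic interplay of the symmetric, trace-free and deviatoric parts of the tensors, which is exactly what makes the compactness of this complex a substantial result rather than a corollary of classical theory. Since this analysis is carried out in full in \cite{PZ2016,PZ2020a}, we content ourselves with citing it here.
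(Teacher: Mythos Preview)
Your proposal is correct and, in its final sentence, does exactly what the paper does: the paper gives no proof at all but simply cites \cite[Lemma 3.22, Theorem 3.23]{PZ2020a} in the theorem header. Your sketch of the strategy (complex property via Schwarz and Lemma~\ref{PZformulalem}, outer embeddings via Rellich and the identifications $\dom(\rGg)=H^{2}_{0}(\om)$, $\dom(\devGrad)=H^{1,3}(\om)$, inner embeddings via regular decompositions and a biharmonic-type argument) is an accurate outline of what that reference contains, so you have gone somewhat beyond the paper while landing at the same citation.
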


We observe and define
\begin{align} 
\label{bihdim1}
\begin{aligned}
N_{0}^{\bihonep}&=\ker A_{0}=\ker(\rGg),\\
N_{2,*}^{\bihonep}&=\ker A_{2}^{*} =\ker(\devGrad),\\
K_{1}^{\bihonep}&=\ker A_{1} \cap\ker A_{0}^{*}=\ker(\rCS)\cap\ker(\dDS)=:\harmbihoneDom,\\
K_{2}^{\bihonep}&=\ker A_{2} \cap\ker A_{1}^{*}=\ker(\rDT)\cap\ker(\symCT)=:\harmbihoneNom.
\end{aligned}
\end{align}

The dimensions of the cohomology groups are given as follows.
 
\begin{theorem}
\label{thm:DNTbih} 
Let $\om\subset\R^3$ be open and bounded with continuous boundary. 
Moreover, suppose Assumption \ref{ass:curvesandsurfaces}. Then
$$\dim\harmbihoneDom=4(m-1),\qquad
\dim\harmbihoneNom=4p.$$
\end{theorem}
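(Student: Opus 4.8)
The plan is to mimic the structure of Picard's proof of Theorem \ref{thm:DNF} (the de Rham case $\dim\harmDom=m-1$, $\dim\harmNom=p$), exploiting the algebraic fact that the first \pz complex is, loosely speaking, four decoupled copies of the de Rham complex after a suitable change of variables. Concretely, I would first establish a \emph{pointwise algebraic identity} relating the operators $\Gg$, $\Curl_{\Sb}$, $\Dive_{\Tb}$ to $\grad$, $\curl$, $\dive$ acting componentwise — this is the content alluded to in the introduction (``Lemma \ref{PZformulalem}'') and in the announced Lemma \ref{devGradHSlemma}. The key structural observation is that a symmetric tensor field $S$ with $\rCS S=0$ and $\dDS S=0$ is, locally, of the form $S=\Gg\phi$ for a scalar potential $\phi$, and that the kernel constraints translate into $\phi$ being a (vector-valued) harmonic-type potential; the factor $4$ then arises because the relevant potential space carries a $4$-dimensional fibre (morally, $1$ scalar $+\,3$ from a gradient-type ambiguity, matching $\dim\ker(\devGrad)=4n$ and $\dim\ker(\rGg)=0$ recorded in Section \ref{sec:Overview}).

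For the Dirichlet fields $\harmbihoneDom = \ker(\rCS)\cap\ker(\dDS)$, I would argue as follows. By the closed-range property from Theorem \ref{thm:lHc} and the Helmholtz-type decomposition \eqref{deco2} applied to this complex, an element $S\in\harmbihoneDom$ is orthogonal to $\overline{\ran\rGg}$ and lies in $\ker\rCS$, so $S$ represents a class in the first cohomology $\ker(\rCS)/\overline{\ran(\rGg)}$. Using a potential representation (a Poincaré-type map for the $\Gg$ operator, as signalled for Lemma \ref{devGradHSlemma}), one writes $S=\Gg\phi$ locally with $\phi$ determined up to an element of $\ker(\Gg)$, i.e.\ up to the $4n$-dimensional space of ``affine-type'' fields; globally, the obstruction to patching these local potentials is measured by periods over a generating set of cycles. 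The relevant cycles here are exactly those separating the $m$ bounded components of $\R^3\setminus\overline\om$ — the same cycles Picard uses for $\dim\harmDom=m-1$ — but now each carries a $4$-dimensional vector of periods (one per component of the potential modulo the constraint), and one relation survives globally per copy, giving $\dim\harmbihoneDom = 4(m-1)$. The bases are then produced explicitly by solving the variational/PDE problems described in Sections \ref{app:sec:DirF}.

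For the Neumann fields $\harmbihoneNom=\ker(\rDT)\cap\ker(\symCT)$, the argument is dual. By Remark \ref{rem:dualcomplex} the dual complex $(A_2^*,A_1^*,A_0^*)=(-\devGrad,\symCT,\dDS)$ is also maximal compact, and $\harmbihoneNom$ is the second cohomology $K_2$ of the primal complex, equivalently the first cohomology-type group $\ker(\symCT)\cap\ker(\rDT)$. Following Picard's treatment of $\dim\harmNom=p$, the construction uses Poincaré maps to represent a trace-free tensor field $T$ with $\rDT T=0$ as a $\Curl$ of a symmetric potential, modulo $\ran\symCT$; the topological obstruction is now carried by the $p$ handles, and again each handle contributes a $4$-dimensional period vector, yielding $\dim\harmbihoneNom=4p$. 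Explicit bases come from the Poincaré-map construction and the associated boundary-value problems of Section \ref{app:sec:NeuTFbih}.

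The main obstacle I expect is the bookkeeping behind the multiplicity $4$: one must prove precisely that the fibre of the potential ambiguity is $4$-dimensional and that the period map from $\ker(\rCS)$ (resp.\ $\ker(\rDT)$) to $\R^{4(m-1)}$ (resp.\ $\R^{4p}$) is an isomorphism onto. This requires (a) the correct analogue of the Poincaré lemma for the mixed-order operators $\Gg$ and $\Curl_{\Sb}$ — nontrivial because these are not the exterior derivative and the classical homotopy operators do not apply verbatim — and (b) a careful argument, via the compact-embedding/closed-range structure of Theorem \ref{thm:lHc}, that no spurious global relations reduce the rank. The algebra in Lemma \ref{PZformulalem} is doing the heavy lifting here: it is what lets one transfer Picard's scalar/vector de Rham computation to the tensorial setting with a clean multiplicative factor, rather than having to redo the topology from scratch.
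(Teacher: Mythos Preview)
Your high-level intuition---mimic Picard's de Rham argument and expect the factor $4$ from a $4$-dimensional kernel---is correct, but two concrete points in the proposal are off and would not lead to a working proof.

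First, the framing ``four decoupled copies of the de Rham complex after a suitable change of variables'' is not what happens, and no such algebraic splitting exists. The paper does not decouple the complex; instead, for the Dirichlet side it takes Picard's scalar cut-off functions $\xi_{\ell}$ (one per bounded complementary component $\Xi_{\ell}$, $\ell=1,\dots,m-1$) and multiplies each by a basis $\widehat{p}_{0}=1$, $\widehat{p}_{j}(x)=x_{j}$ of $\ker(\Gg)=\Pol^{1}$, producing $4(m-1)$ candidates $\Gg(\xi_{\ell}\widehat{p}_{j})$, which are then projected onto $\harmbihoneDom$. Spanning is shown exactly as in the de Rham case: extend $S\in\harmbihoneDom$ by zero to a large ball $B$, use exactness of the complex on the topologically trivial $B$ to write $S=\Gg u$ with $u\in H^{2}_{0}(B)$, and read off that $u$ equals an element of $\Pol^{1}$ on each $\Xi_{\ell}$. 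There are no ``cycles'' or ``periods'' in the Dirichlet argument---the topology enters through the connected components of the complement, just as for $\harmDom$.

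Second, your Neumann sketch has the potential representation in the wrong direction. One does \emph{not} write $T\in\harmbihoneNom$ as a $\Curl$ of a symmetric potential using $\rDT T=0$; rather, one uses $\symCT T=0$ to write $T=\devGrad v$ locally (this is the relevant Poincar\'e lemma, Lemma~\ref{lem:toptrivbih2}). The obstruction to doing this globally is measured by path integrals along the handle curves $\zeta_{l}$, and here is where the nontrivial work lies: because $\devGrad$ is not simply $\Grad$, recovering $v$ from $T=\devGrad v$ requires a \emph{two-step} curve integral (Lemma~\ref{devGradHSlemma}), first integrating $\Dive T^{\top}$ to recover $\dive v$, then integrating $T+\frac{1}{2}u\id$ to recover $v$. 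This produces, for each handle, a $4$-component functional (one scalar from $\int_{\zeta_{l}}\scp{\Dive T^{\top}}{\intd\lambda}$ and a $3$-vector from $\int_{\zeta_{l}}T\intd\lambda$ plus a correction term), and the candidate basis is built from $\devGrad(\theta_{j}\widehat{r}_{k})$ with $\widehat{r}_{k}$ a basis of $\ker(\devGrad)=\RT$. Your identification of the $4$-dimensional fibre with the gradient ambiguity is morally right, but the concrete mechanism is this tensorial Poincar\'e map, not a reduction to four scalar de Rham problems.
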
 

\begin{proof}
We postpone the proof to Sections \ref{app:sec:DirTFbih1} and \ref{app:sec:NeuTFbih}.
\end{proof}

The proper formulation of the first main result, Theorem \ref{thm:main}, reads as follows.

\begin{theorem}
\label{thm:idxPZC} 
Let $\om\subset\R^3$ be a bounded strong Lipschitz domain. 
Then $\calD^{\bihonep}$ is a Fredholm operator with index
$$\ind\calD^{\bihonep}
=\dim N_{0}^{\bihonep}-\dim K_{1}^{\bihonep}+\dim K_{2}^{\bihonep}-\dim N_{2,*}^{\bihonep}.$$
If additionally Assumption \ref{ass:curvesandsurfaces} holds, then
$$\ind\calD^{\bihonep}
=4(p-m-n+1).$$
\end{theorem}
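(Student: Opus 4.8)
The plan is to deduce Theorem~\ref{thm:idxPZC} from the abstract machinery already in place, exactly mirroring the proof of Theorem~\ref{thm:idx} for the de Rham case. First I would invoke Theorem~\ref{thm:lHc}, which tells us that $(\rGg,\rCS,\rDT)$ is a maximal compact Hilbert complex; by Remark~\ref{rem:dualcomplex} the dual complex $(-\devGrad,\symCT,\dDS)$ is maximal compact as well. Then Theorem~\ref{thm:index} applies verbatim to the operators $A_{0}=\rGg$, $A_{1}=\rCS$, $A_{2}=\rDT$ as identified above, yielding that $\calD^{\bihonep}$ and $(\calD^{\bihonep})^{*}$ are Fredholm operators with
$$\ind\calD^{\bihonep}
=\dim N_{0}^{\bihonep}-\dim K_{1}^{\bihonep}+\dim K_{2}^{\bihonep}-\dim N_{2,*}^{\bihonep},$$
where the four spaces are those named in \eqref{bihdim1}. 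This establishes the first assertion without any further work.

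For the second assertion, I would substitute the explicit dimensions. The cohomology groups are handled by Theorem~\ref{thm:DNTbih}: $\dim K_{1}^{\bihonep}=\dim\harmbihoneDom=4(m-1)$ and $\dim K_{2}^{\bihonep}=\dim\harmbihoneNom=4p$. For the kernels of $\calD^{\bihonep}$ at the two ends I would use $N_{0}^{\bihonep}=\ker(\rGg)=H^{2}_{0}(\om)\cap\ker(\Gg)$; since $\Gg\phi=0$ forces $\phi$ to be affine and the homogeneous boundary condition then forces $\phi=0$, we get $\dim N_{0}^{\bihonep}=0$. Likewise $N_{2,*}^{\bihonep}=\ker(\devGrad)$, the space of vector fields $v$ with $\devGrad v=0$, which by the matrix-algebra identity underlying Lemma~\ref{PZformulalem} (or equivalently by a direct computation that $\Grad v$ being a multiple of the identity forces $v$ to be a conformal Killing field of $\R^{3}$, i.e.\ $v(x)=a+b\times x+c\norm{x}^{2}e+\ldots$ restricted componentwise per connected component) has dimension $4n$; this value is also recorded in the overview in Section~\ref{sec:Overview}, namely $\dim\ker(\devGrad)=4n$. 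Plugging in gives
$$\ind\calD^{\bihonep}=0-4(m-1)+4p-4n=4(p-m-n+1),$$
as claimed. The statement $\ind(\calD^{\bihonep})^{*}=-\ind\calD^{\bihonep}$ is then immediate from Theorem~\ref{thm:index}.

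The genuinely hard part of this circle of ideas is \emph{not} in the present proof but is precisely what is deferred: the computation of $\dim\harmbihoneDom=4(m-1)$ and $\dim\harmbihoneNom=4p$ in Theorem~\ref{thm:DNTbih}, whose proof is postponed to Sections~\ref{app:sec:DirTFbih1} and \ref{app:sec:NeuTFbih}. That is where the topological invariants $m$ and $p$ actually enter, via an explicit construction of bases for the generalised Dirichlet and Neumann tensor fields, using the Poincar\'e-map representations of tensor fields by curve integrals (cf.\ Lemma~\ref{devGradHSlemma}) and the interplay of matrix algebra with the differential operators (cf.\ Lemma~\ref{PZformulalem}). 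Within the proof of Theorem~\ref{thm:idxPZC} itself, the only mild subtlety is the bookkeeping of the two kernels $N_{0}^{\bihonep}$ and $N_{2,*}^{\bihonep}$: one must be careful that $\ker(\devGrad)$, which carries \emph{no} boundary condition, really has dimension $4n$ rather than $4$, because the defining condition is local to each of the $n$ connected components of $\om$; the factor $n$ is what produces the $-n$ in the index formula and is easy to overlook.
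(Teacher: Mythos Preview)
Your proposal is correct and follows essentially the same route as the paper: invoke Theorem~\ref{thm:lHc} for maximal compactness, apply the abstract index formula Theorem~\ref{thm:index}, and then plug in the dimensions from \eqref{bihdim1} and Theorem~\ref{thm:DNTbih}. The paper records the kernel identifications as $N_{0}^{\bihonep}=\ker(\rGg)=\{0\}$ and $N_{2,*}^{\bihonep}=\ker(\devGrad)=\RT_{\pw}$, citing \cite[Lemma~3.2, Lemma~3.3]{PZ2020a}, which is exactly your bookkeeping.

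One small correction to your aside: $\devGrad v=0$ means the \emph{full} gradient $\Grad v$ is a scalar multiple of the identity, not merely its symmetric part, so this is strictly stronger than the conformal Killing equation. The solutions are precisely the Raviart--Thomas fields $v(x)=\alpha x+\beta$ on each connected component (no $b\times x$ or $|x|^{2}$ terms survive), which is why the dimension is $4n$ and why the paper writes $N_{2,*}^{\bihonep}=\RT_{\pw}$. Your final count is right; only the parenthetical description of the kernel is off.
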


\begin{proof}
Using Theorem \ref{thm:lHc}, we apply Theorem \ref{thm:index} together with \eqref{bihdim1} and the observations
\begin{align}
\label{kernelsbih}
N_{0}^{\bihonep}=\ker(\rGg)=\{0\},\qquad
N_{2,*}^{\bihonep}=\ker(\devGrad)=\RT_{\pw},
\end{align}
see \cite[Lemma 3.2, Lemma 3.3]{PZ2020a}, and Theorem \ref{thm:DNTbih}.
\end{proof}

\begin{remark}
\label{thm:idxPZC:rem}
By Theorem \ref{thm:index} the adjoint $(\calD^{\bihonep})^{*}$
is Fredholm as well with index simply given by $\ind(\calD^{\bihonep})^{*}=-\ind\calD^{\bihonep}$.
Similar to Remark \ref{picardextmaxwell} we define the extended first \pz operator
$$\calM^{\bihonep}
:=\begin{pmatrix}0&\calD^{\bihonep}\\-(\calD^{\bihonep})^{*}&0\end{pmatrix}
=\begin{pmatrix}0&0&\rDT&0\\0&0&\symCT&\rGg\\
\devGrad&-\rCS&0&0\\0&-\dDS&0&0\end{pmatrix}$$
with $(\calM^{\bihonep})^{*}=-\calM^{\bihonep}$ and $\ind\calM^{\bihonep}=0$.
Moreover, $\dim\ker\calM^{\bihonep}=4(n+m+p-1)$ as
$\ker\calM^{\bihonep}
=\RT_{\pw}\times\harmbihoneDom\times\harmbihoneNom\times\{0\}$.
\end{remark}

\noindent
\textbf{Variable Coefficients and Poincar\'e--Friedrichs Type Inequalities.}
\label{sec:mmorerebih1} 
Inhomogeneous and anisotropic media may also be considered 
for the first \pz complex, cf. Remark \ref{rem:epsmuderham}.

\begin{remark}
\label{rem:epsmubih1}
Let $\lambda_{0}:=\id$, $\lambda_{3}:=\id$,
$\lambda_{1}:=\eps:\om\to\R^{3\times3\times3\times3}$, and $\lambda_{2}:=\mu:\om\to\R^{3\times3\times3\times3}$ 
be $L^{\infty}(\om)$-tensor fields such that the induced respective operators on 
$\Lttt_{\Sb}(\om)$ and $\Lttt_{\Tb}(\om)$ are symmetric and strictly positive definite. 
Moreover, let us introduce
$$\Lttt_{\Sb,\eps}(\om):=\widetilde{H}_{1}:=\big(\LtttSom,\scp{\eps\,\cdot\,}{\,\cdot\,}_{\LtttSom}\big)$$
and similarly $\Lttt_{\Tb,\mu}(\om):=\widetilde{H}_{2}$
as well as $\widetilde{H}_{0}=H_{0}=\Ltom$, $\widetilde{H}_{3}=H_{3}=\Lttom$.
We look at
\begin{align*}
\widetilde{A}_{0}&:=\rGg,
&
\widetilde{A}_{1}&:=\mu^{-1}\rCS,
&
\widetilde{A}_{2}&:=\rDT\mu,\\
\widetilde{A}_{0}^{*}&\phantom{:}=\dDS\eps,
&
\widetilde{A}_{1}^{*}&\phantom{:}=\eps^{-1}\symCT,
&
\widetilde{A}_{2}^{*}&\phantom{:}=-\devGrad,
\end{align*}
\begin{align*}
\widetilde{\calD}^{\bihone}&:=
\begin{pmatrix}
\widetilde{A}_{2}&0\\\widetilde{A}_{1}^{*}&\widetilde{A}_{0}
\end{pmatrix}
=\begin{pmatrix}
\rDT\mu & 0\\
\eps^{-1}\symCT &\rGg
\end{pmatrix},\\
(\widetilde{\calD}^{\bihone})^{*}
&\phantom{:}=\begin{pmatrix}\widetilde{A}_{2}^{*}&\widetilde{A}_{1}\\0&\widetilde{A}_{0}^{*}\end{pmatrix}
=\begin{pmatrix}-\devGrad&\mu^{-1}\rCS\\0&\dDS\eps\end{pmatrix},
\end{align*}
i.e., the first \pz complex, cf. \eqref{bih1compl1},
\begin{align}
\footnotesize
\begin{aligned}
\label{bih1compl2}
\{0\}\xrightarrow{\iota_{\{0\}}}
\Ltom\xrightarrow{\rGg}
\Lttt_{\Sb,\eps}(\om)&\xrightarrow{\mu^{-1}\rCS}
\Lttt_{\Tb,\mu}(\om)\xrightarrow{\rDT\mu}
\Lttom\xrightarrow{\pi_{\RT_{\pw}}}\RT_{\pw},\\
\{0\}\xleftarrow{\pi_{\{0\}}}
\Ltom\xleftarrow{\dDS\eps}
\Lttt_{\Sb,\eps}(\om)&\xleftarrow{\eps^{-1}\symCT}
\Lttt_{\Tb,\mu}(\om)\xleftarrow{-\devGrad}
\Lttom\xleftarrow{\iota_{\RT_{\pw}}}\RT_{\pw}.
\end{aligned}
\end{align}
Lemma \ref{lambdaindeplem}, Lemma \ref{lem:properties4Atilde}, and Theorem \ref{thm:indexAtilde}
show that the compactness properties, the dimensions of the kernels and cohomology groups,
the maximal compactness, and the Fredholm indices
of the first \pz complex do not dependent of the material weights $\eps$ and $\mu$.
More precisely,
\begin{align*}
&\bullet&
\dim\big(\ker(\rCS)\cap\big(\eps^{-1}\ker(\dDS)\big)\big)
&=\dim\big(\ker(\rCS)\cap\ker(\dDS)\big)\\
&&&=\dim\harmbihoneDom
=4(m-1),\\
&\bullet&
\dim\big(\big(\mu^{-1}\ker(\rDT)\big)\cap\ker(\symCT)\big)
&=\dim\big(\ker(\rDT)\cap\ker(\symCT)\big)\\
&&&=\dim\harmbihoneNom
=4p,\\
&\bullet&
\dom(\rCS)\cap\big(\eps^{-1}\dom(\dDS)\big)
&\hookrightarrow\Lttt_{\Sb,\eps}(\om)\text{ compactly}\\
&&\Leftrightarrow\quad
\dom(\rCS)\cap\dom(\dDS)
&\hookrightarrow\LtttSom\text{ compactly},\\
&\bullet&
\big(\mu^{-1}\dom(\rDT)\big)\cap\dom(\symCT)
&\hookrightarrow\Lttt_{\Tb,\mu}(\om)\text{ compactly}\\
&&\Leftrightarrow\quad
\dom(\rDT)\cap\dom(\symCT)
&\hookrightarrow\LtttTom\text{ compactly},\\
&\bullet&
(\rGg,\mu^{-1}\rCS,\rDT\mu)&\text{ maximal compact}\\
&&\Leftrightarrow \quad (\rGg,\rCS,\rDT)&\text{ maximal compact},\\
&\bullet&
-\ind(\widetilde{\calD}^{\bihone})^{*}=\ind\widetilde{\calD}^{\bihone}
=\ind\calD^{\bihone}
&=4(p-m-n+1).
\end{align*}
\end{remark}

Note that the kernels and ranges are given by
\begin{align*}
\ker\calD^{\bihonep}
=K_{2}^{\bihonep}\times N_{0}^{\bihonep}
&=\harmbihoneNom\times\{0\},\\
\ker(\calD^{\bihonep})^{*}
=N_{2,*}^{\bihonep}\times K_{1}^{\bihonep}
&=\RT_{\pw}\times\harmbihoneDom,\\
\ran\calD^{\bihonep}
=(\ker(\calD^{\bihonep})^{*})^{\bot_{\Lttom\times\LtttSom}}
&=\RT_{\pw}^{\bot_{\Lttom}}\times\harmbihoneDom^{\bot_{\LtttSom}},\\
\ran(\calD^{\bihonep})^{*}
=(\ker\calD^{\bihonep})^{\bot_{\LtttTom\times\Ltom}}
&=\harmbihoneNom^{\bot_{\LtttTom}}\times\Ltom,
\end{align*}
see Lemma \ref{lem:properties3}, Corollary \ref{lem:properties3a}, and \eqref{kernelsbih}.
Corollary \ref{cor:FP} shows additional results for the corresponding reduced operators 
\begin{align*}
\calD^{\bihonep}_{\red}
=\calD^{\bihonep}|_{(\ker\calD^{\bihonep})^{\bot_{H_{2}\times H_{0}}}}
&=\begin{pmatrix}\rDT& 0\\\symCT&\rGg\end{pmatrix}\Big|_{\harmbihoneNom^{\bot_{\LtttTom}}\times\Ltom},\\
(\calD^{\bihonep}_{\red})^{*}
=(\calD^{\bihonep})^{*}|_{(\ker(\calD^{\bihonep})^{*})^{\bot_{H_{3}\times H_{1}}}}
&=\begin{pmatrix}-\devGrad&\rCS\\0&\dDS\end{pmatrix}\Big|_{\RT_{\pw}^{\bot_{\Lttom}}\times\harmbihoneDom^{\bot_{\LtttSom}}}.
\end{align*}

\begin{corollary}
\label{cor:FP-tb-bih}
Let $\om\subset\R^3$ be a bounded strong Lipschitz domain. Then
\begin{align*}
(\calD^{\bihonep}_{\red})^{-1}:\ran\calD^{\bihonep}&\to\ran(\calD^{\bihonep})^{*},\\
((\calD^{\bihonep}_{\red})^{*})^{-1}:\ran(\calD^{\bihonep})^{*}&\to\ran\calD^{\bihonep}
\intertext{are compact. Furthermore,}
(\calD^{\bihonep}_{\red})^{-1}:\ran\calD^{\bihonep}&\to\dom\calD^{\bihonep}_{\red},\\
((\calD^{\bihonep}_{\red})^{*})^{-1}:\ran(\calD^{\bihonep})^{*}&\to\dom(\calD^{\bihonep}_{\red})^{*}
\end{align*}
are continuous and, equivalently, the Friedrichs--Poincar\'e type estimates 
\begin{align*}
\bnorm{(T,u)}_{\LtttTom\times\Ltom}
&\leq c_{\calD^{\bihonep}}\big(\norm{\Gg u}_{\LtttSom}^{2}\\
&\qquad\qquad\qquad+\norm{\Dive T}_{\Lttom}^{2}+\norm{\symCurl T}_{\LtttSom}^{2}\big)^{1/2},\\
\bnorm{(v,S)}_{\Lttom\times\LtttSom}
&\leq c_{\calD^{\bihonep}}\big(\norm{\devGrad v}_{\LtttTom}^{2}\\
&\qquad\qquad\qquad+\norm{\dD S}_{\Ltom}^{2}+\norm{\Curl S}_{\LtttTom}^{2}\big)^{1/2}
\end{align*}
hold for all $(T,u)$ in
\begin{align*}
\dom\calD^{\bihonep}_{\red}
&=\big(\dom(\rDT)\cap\dom(\symCT)\cap\harmbihoneNom^{\bot_{\LtttTom}}\big)
\times H^{2}_{0}(\om)
\intertext{for all $(v,S)$ in}
\dom(\calD^{\bihonep}_{\red})^{*}
&=\big(H^{1,3}(\om)\cap\RT_{\pw}^{\bot_{\Lttom}}\big)\\
&\qquad\qquad\times\big(\dom(\rCS)\cap\dom(\dDS)\cap\harmbihoneDom^{\bot_{\LtttSom}}\big)
\end{align*}
with some optimal constant $c_{\calD^{\bihonep}}>0$.
\end{corollary}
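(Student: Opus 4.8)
The plan is to obtain Corollary~\ref{cor:FP-tb-bih} as a direct specialisation of the abstract Corollary~\ref{cor:FP} to the first \pz complex. By Theorem~\ref{thm:lHc} the triple $(\rGg,\rCS,\rDT)$ is a maximal compact Hilbert complex, hence the hypotheses of Corollary~\ref{cor:FP} are met with
\begin{align*}
A_{0}&=\rGg,& A_{1}&=\rCS,& A_{2}&=\rDT,\\
A_{0}^{*}&=\dDS,& A_{1}^{*}&=\symCT,& A_{2}^{*}&=-\devGrad,
\end{align*}
and $H_{0}=\Ltom$, $H_{1}=\LtttSom$, $H_{2}=\LtttTom$, $H_{3}=\Lttom$, so that $\calD=\calD^{\bihonep}$. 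Corollary~\ref{cor:FP} then yields at once that $(\calD^{\bihonep}_{\red})^{-1}\colon\ran\calD^{\bihonep}\to\ran(\calD^{\bihonep})^{*}$ and $((\calD^{\bihonep}_{\red})^{*})^{-1}\colon\ran(\calD^{\bihonep})^{*}\to\ran\calD^{\bihonep}$ are compact, that the corresponding maps into $\dom\calD^{\bihonep}_{\red}$ and $\dom(\calD^{\bihonep}_{\red})^{*}$ are continuous, and that this continuity is equivalent to the two displayed Friedrichs--Poincar\'e estimates.

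What then remains is to make the translation between the abstract and the concrete quantities explicit. First I would record the elementary norm identities $\norm{\rDT T}_{\Lttom}=\norm{\Dive T}_{\Lttom}$, $\norm{\symCT T}_{\LtttSom}=\norm{\symCurl T}_{\LtttSom}$, and $\norm{\rGg u}_{\LtttSom}=\norm{\Gg u}_{\LtttSom}$ on the respective domains, turning $\bnorm{(x,y)}\le c_{\calD}\bnorm{\calD(x,y)}$ into the first asserted inequality; dually, $\norm{\rCS S}_{\LtttTom}=\norm{\Curl S}_{\LtttTom}$ and $\norm{\dDS S}_{\Ltom}=\norm{\dD S}_{\Ltom}$ (together with $\norm{\devGrad v}_{\LtttTom}$) give the second one. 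Next I would pin down the reduced domains using Lemma~\ref{lem:properties3} and Corollary~\ref{lem:properties3a}: from $\ker\calD^{\bihonep}=\harmbihoneNom\times\{0\}$ one has $(\ker\calD^{\bihonep})^{\bot_{\LtttTom\times\Ltom}}=\harmbihoneNom^{\bot_{\LtttTom}}\times\Ltom$, which combined with $\dom\calD^{\bihonep}=\big(\dom(\rDT)\cap\dom(\symCT)\big)\times\dom(\rGg)$ and $\dom(\rGg)=H^{2}_{0}(\om)$ gives the stated $\dom\calD^{\bihonep}_{\red}$; analogously, $\ker(\calD^{\bihonep})^{*}=\RT_{\pw}\times\harmbihoneDom$, $\dom(\calD^{\bihonep})^{*}=\dom(\devGrad)\times\big(\dom(\rCS)\cap\dom(\dDS)\big)$, and $\dom(\devGrad)=H^{1,3}(\om)$ (the last two as recalled before Theorem~\ref{thm:lHc} and in \cite[Lemma~3.2, Lemma~3.3]{PZ2020a}) yield the stated $\dom(\calD^{\bihonep}_{\red})^{*}$.

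I do not anticipate a genuine obstacle; the statement is a bookkeeping exercise once Theorem~\ref{thm:lHc} is in place. The single point that deserves a line of care is the domain identification: passing to $(\ker\calD^{\bihonep})^{\bot}$ only affects the first factor of the product $\LtttTom\times\Ltom$, since by Corollary~\ref{lem:properties3a} the kernel splits as $\harmbihoneNom\times\{0\}$ compatibly with the product structure, so the $H^{2}_{0}(\om)$-component is unconstrained, and symmetrically on the dual side the constraint $\RT_{\pw}^{\bot_{\Lttom}}$ sits only in the first factor. Finally, just as in the remark following Corollary~\ref{cor:FP-tb}, I would note that each of the two estimates is an additive combination of, on the one hand, the biharmonic Poincar\'e--Friedrichs estimate for $\rGg$ (respectively the Poincar\'e--Friedrichs-type estimate for $\devGrad$) and, on the other hand, the Maxwell-type estimate for the pair $(\rDT,\symCT)$ (respectively $(\rCS,\dDS)$), all of which are subsumed in the maximal compactness furnished by Theorem~\ref{thm:lHc}.
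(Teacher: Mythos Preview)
Your proposal is correct and follows essentially the same approach as the paper: the corollary is stated there without proof, as it is simply the specialisation of the abstract Corollary~\ref{cor:FP} to the first \pz complex, using Theorem~\ref{thm:lHc} for maximal compactness and the kernel/range identifications recorded just before the corollary (via Lemma~\ref{lem:properties3}, Corollary~\ref{lem:properties3a}, and \eqref{kernelsbih}). Your explicit bookkeeping of the domain identifications and norm translations is exactly what the paper leaves implicit.
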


\section{The Second \PZ Complex and Its Indices}
\label{sec:pz2}

The second major application of the abstract findings 
in the Sections \ref{sec:3}, \ref{sec:3-addstuff}, and \ref{sec:4-addstuff} 
is concerned with the second \pz complex. 
The needed operators are provided next. 
It is worth recalling the definitions of the operators $\devGrad$, $\symCT$, 
and $\dDS$ from Definition \ref{def:pzone}.

\begin{definition} 
Let $\om\subset\R^3$ be an open set. We put
\begin{align*}
\devGrad_{c}:\Cictom\subset\Lttom&\To\LtttTom,
&
\phi&\longmapsto\devGrad\phi,\\
\symCurl_{c}:\CicttTom\subset\LtttTom&\To\LtttSom, 
&
\Phi&\longmapsto\symCT\Phi,\\
\dD_{c}:\CicttSom\subset\LtttSom&\To\Ltom,
& 
\Phi&\longmapsto\dDS\Phi,
\end{align*}
and further define the densely defined and closed linear operators
\begin{align*}
\DT&:=-\devGrad_{c}^{*},
&
\rdevGrad&:=-\DT^{*}=\overline{\devGrad_{c}},\\
\CS&:=\symCurl_{c}^{*},
&
\rsymCT&:=\CS^{*}=\overline{\symCurl_{c}},\\
\Gg&:=\dD_{c}^{*},
&
\rdDS&:=\Gg^{*}=\overline{\dD_{c}}.
\end{align*}
\end{definition}

We shall apply the index theorem in the following situation of the second \pz complex:
\begin{align*}
A_{0}&:=\rdevGrad,
&
A_{1}&:=\rsymCT,
&
A_{2}&:=\rdDS,\\
A_{0}^{*}&\phantom{:}=-\DT,
&
A_{1}^{*}&\phantom{:}=\CS,
&
A_{2}^{*}&\phantom{:}=\Gg,
\end{align*}
\begin{align*}
\calD^{\bihtwop}
&:=\begin{pmatrix}A_{2}&0\\A_{1}^{*}&A_{0}\end{pmatrix}
=\begin{pmatrix}\rdDS&0\\\CS&\rdevGrad\end{pmatrix},\\
(\calD^{\bihtwop})^{*}
&\phantom{:}=\begin{pmatrix}A_{2}^{*}& A_{1}\\0&A_{0}^{*}\end{pmatrix}
=\begin{pmatrix}\Gg&\rsymCT\\0&-\DT\end{pmatrix},
\end{align*}
\begin{align}
\footnotesize
\begin{aligned}
\label{bih2compl1}
\{0\}\xrightarrow{\iota_{\{0\}}}
\Lttom\xrightarrow{\rdevGrad}
\LtttTom&\xrightarrow{\rsymCT}
\LtttSom\xrightarrow{\rdDS}
\Ltom\xrightarrow{\pi_{\Pol^{1}_{\pw}}}\Pol^{1}_{\pw},\\
\{0\}\xleftarrow{\pi_{\{0\}}}
\Lttom\xleftarrow{-\DT}
\LtttTom&\xleftarrow{\CS}
\LtttSom\xleftarrow{\Gg}
\Ltom\xleftarrow{\iota_{\Pol^{1}_{\pw}}}\Pol^{1}_{\pw},
\end{aligned}
\end{align}
where we used the space of piecewise first order polynomials (for $\cc(\om)$ see \eqref{eq:ccO})
\begin{equation}
\label{eq:P1_pw}
\Pol^{1}_{\pw}:=\big\{v\in\Ltom:\forall\,C\in\cc(\om)\quad\exists\,\alpha_{C}\in\R,\,\beta_{C}\in\R^{3}:
u|_{C}(x)=\alpha_{C}+\beta_{C}\cdot x\big\}.
\end{equation}
Note that $\dom(\rdevGrad)=H^{1,3}_{0}(\om)$ by \cite[Lemma 3.2]{PZ2020a}.

\begin{lemma}
\label{lem:regGg}
Let $\om\subset\R^3$ be a bounded strong Lipschitz domain. 
Then the regularity $\dom(\Gg)=H^{2}(\om)$ holds and 
there exists $c>0$ such that for all $u\in H^{2}(\om)$
$$c\,\norm{u}_{H^{2}(\om)}
\leq\norm{u}_{\Ltom}+\norm{\Grad\grad u}_{\Ltttom}.$$
\end{lemma}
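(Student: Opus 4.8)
The plan is first to reinterpret $\dom(\Gg)$ in purely distributional terms, and then to bootstrap from the distributional Hessian to $H^{2}(\om)$ by means of Ne\v{c}as's inequality. By the very definition $\Gg=\dD_{c}^{*}$, a function $u\in\Ltom$ lies in $\dom(\Gg)$ with $\Gg u=F$ exactly when $\scp{u}{\dDS\Phi}_{\Ltom}=\scp{F}{\Phi}_{\LtttSom}$ for every $\Phi\in\CicttSom$. Integrating by parts twice (all boundary terms vanish since $\Phi$ has compact support), $\scp{u}{\dDS\Phi}_{\Ltom}=\scp{\Grad\grad u}{\Phi}$ in the sense of distributions, where $\Grad\grad u=(\p_{i}\p_{j}u)_{i,j}$ is the distributional Hessian; this is symmetric, so testing only against symmetric $\Phi$ loses no information. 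Hence $u\in\dom(\Gg)$ if and only if $\Grad\grad u\in\Ltttom$, and then $\Gg u=\Grad\grad u$. In particular $H^{2}(\om)\subset\dom(\Gg)$ with $\norm{\Gg u}_{\LtttSom}\le\norm{u}_{H^{2}(\om)}$, and it remains to show the reverse inclusion together with the stated estimate.

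So let $u\in\dom(\Gg)$; then each second-order distributional derivative $\p_{i}\p_{j}u$ (an entry of the symmetric matrix field $\Grad\grad u$) belongs to $\Ltom$. Fix $j\in\{1,2,3\}$ and put $v_{j}:=\p_{j}u$. Since $u\in\Ltom$ we have $v_{j}\in H^{-1}(\om)$ with $\norm{v_{j}}_{H^{-1}(\om)}\le c\,\norm{u}_{\Ltom}$, and $\nabla v_{j}=(\p_{i}\p_{j}u)_{i=1}^{3}\in\Ltttom\hookrightarrow H^{-1}(\om)^{3}$ with $\norm{\nabla v_{j}}_{H^{-1}(\om)}\le c\,\norm{\Grad\grad u}_{\Ltttom}$. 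Because $\om$ is a bounded strong Lipschitz domain, Ne\v{c}as's inequality (the lemma of J.-L.~Lions), asserting that $w\in H^{-1}(\om)$ with $\nabla w\in H^{-1}(\om)^{3}$ implies $w\in\Ltom$ with $\norm{w}_{\Ltom}\le c\,(\norm{w}_{H^{-1}(\om)}+\norm{\nabla w}_{H^{-1}(\om)})$, applies to $w=v_{j}$ and yields
$$\norm{\p_{j}u}_{\Ltom}\le c\,\big(\norm{v_{j}}_{H^{-1}(\om)}+\norm{\nabla v_{j}}_{H^{-1}(\om)}\big)\le c\,\big(\norm{u}_{\Ltom}+\norm{\Grad\grad u}_{\Ltttom}\big).$$
Summing over $j$ shows $u\in H^{1}(\om)$; since moreover all $\p_{i}\p_{j}u\in\Ltom$, we conclude $u\in H^{2}(\om)$ and
$$c\,\norm{u}_{H^{2}(\om)}\le\norm{u}_{\Ltom}+\norm{\Grad\grad u}_{\Ltttom},$$
which is exactly the assertion; combined with the first paragraph this gives $\dom(\Gg)=H^{2}(\om)$.

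The routine part is the double integration by parts and the bookkeeping with distributional derivatives; the one nontrivial input is Ne\v{c}as's inequality on strong Lipschitz domains, i.e.\ that control of $w$ and $\nabla w$ in $H^{-1}$ forces $w\in\Ltom$ with a norm bound. This is classical for bounded Lipschitz domains, and the strong Lipschitz hypothesis is precisely what makes it available. Alternatively, one could cite the corresponding regularity statements from the analysis of the $\dD$ complex in the circle of results around \cite{PZ2020a}, where $\dom(\rGg)=H^{2}_{0}(\om)$ is established by an argument of the same type; but the route via Ne\v{c}as's inequality keeps the proof self-contained.
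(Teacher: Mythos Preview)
Your proof is correct and follows essentially the same route as the paper's: both identify $\dom(\Gg)$ with the set of $u\in\Ltom$ whose distributional Hessian lies in $\Ltttom$, and then apply Ne\v{c}as's inequality to $\grad u$ (componentwise $v_{j}=\p_{j}u$) to upgrade from $H^{-1}$ to $\Ltom$, yielding $u\in H^{2}(\om)$ together with the stated estimate. The paper's version is simply more terse; your added explanation of why testing against symmetric $\Phi$ suffices (the distributional Hessian being automatically symmetric) is a helpful detail the paper leaves implicit.
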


\begin{proof}
Let $u\in\dom(\Gg)$. Then $\grad u\in H^{-1,3}(\om)$ and 
$\Grad\grad u\in\Ltttom$. Ne\v{c}as' regularity yields
$\grad u\in\Lttom$ and thus $u\in H^{1}(\om)$ and $\grad u\in H^{1,3}(\om)$.
Hence $u\in H^{2}(\om)$ and by Ne\v{c}as' inequality (see \cite{N2012}) we have
\begin{align*}
\norm{\grad u}_{\Lttom}
&\leq c\big(\norm{\grad u}_{H^{-1,3}(\om)}+\norm{\Grad\grad u}_{H^{-1,3\times3}(\om)}\big)\\
&\leq c\big(\norm{u}_{\Ltom}+\norm{\Grad\grad u}_{\Ltttom}\big),
\end{align*}
showing the desired estimate.
\end{proof}

\begin{theorem}
\label{thm:lHc2} 
Let $\om\subset\R^3$ be a bounded strong Lipschitz domain. 
Then the second \pz complex 
$(\rdevGrad,\rsymCT,\rdDS)$ is a maximal compact Hilbert complex.
\end{theorem}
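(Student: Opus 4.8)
The plan is to derive maximal compactness of the second \pz complex $(\rdevGrad,\rsymCT,\rdDS)$ from that of the first \pz complex, which is already available in Theorem \ref{thm:lHc}. The key observation is that the dual (adjoint) complex of the first \pz complex is, up to sign conventions and labelling of boundary conditions, exactly the second \pz complex with the roles of the operators-with-boundary-conditions and their adjoints interchanged. More precisely, comparing Definition \ref{def:pzone} with the definitions in Section \ref{sec:pz2}, one sees that the operators $\devGrad$, $\symCT$, $\dDS$ (no boundary condition, appearing as adjoints in the first complex) are, respectively, $-\DT^{*}$-type partners of $\rdevGrad$, and so on; concretely $A_{0}^{*},A_{1}^{*},A_{2}^{*}$ of the second complex are (up to signs) $A_{0},A_{1},A_{2}$ of the first complex. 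Hence the dual complex $((\rDT)^{*},\dots)$ of Theorem \ref{thm:lHc} coincides with the primal second \pz complex.

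The concrete steps I would carry out are as follows. First, I record the closedness of $\rdevGrad$, $\rsymCT$, $\rdDS$ and the complex property $\ran(\rdevGrad)\subseteq\ker(\rsymCT)$, $\ran(\rsymCT)\subseteq\ker(\rdDS)$; the latter follow from the corresponding Schwarz-type identities on smooth compactly supported fields ($\symCT_c\devGrad_c=0$ and $\dD_c\symCurl_c=0$) together with the fact that these operators are defined as closures of their smooth compactly supported versions, while closedness is built into the definition via adjoints. Second, and this is the crux, I invoke Remark \ref{rem:dualcomplex}: a triple $(A_{0},A_{1},A_{2})$ is a maximal compact complex if and only if its dual $(A_{2}^{*},A_{1}^{*},A_{0}^{*})$ is maximal compact. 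Applying this to the first \pz complex $(\rGg,\rCS,\rDT)$ from Theorem \ref{thm:lHc}, its dual $((\rDT)^{*},(\rCS)^{*},(\rGg)^{*})=(-\devGrad,\symCT,\dDS)$ is maximal compact. Third, I identify this dual complex with the second \pz complex: by the definitions in Section \ref{sec:pz2} we have $\rdevGrad=\overline{\devGrad_c}$, $\rsymCT=\overline{\symCurl_c}$, $\rdDS=\overline{\dD_c}$ and their adjoints $-\DT=(\rdevGrad)^{*}$-partner, $\CS$, $\Gg$; so the primal second \pz complex is $(\rdevGrad,\rsymCT,\rdDS)$ whose \emph{dual} is precisely $(-\Gg^{\text{partner}},\dots)$ — more cleanly, one checks directly that the adjoint relations $A_0^* = -\DT$, $A_1^* = \CS$, $A_2^* = \Gg$ for the second complex match the operators appearing in the first complex, so maximal compactness of one transfers to the other via Remark \ref{rem:dualcomplex}.

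The main obstacle I anticipate is purely bookkeeping: carefully matching Hilbert spaces, algebraic symmetry constraints ($\Sb$ versus $\Tb$), and sign conventions so that the dual of Theorem \ref{thm:lHc}'s complex is literally the complex $(\rdevGrad,\rsymCT,\rdDS)$ (or differs from it only by harmless unitary/sign adjustments that do not affect compactness, kernels, or ranges). A secondary, genuinely analytic point worth making explicit is that since Theorem \ref{thm:lHc} already asserts the compact embedding $\dom(\rCS)\cap\dom(\dDS)\hookrightarrow\LtttSom$ and similarly at the other spot, passing to adjoints automatically yields the compact embeddings $\dom(\CS)\cap\dom(\rdDS)\hookrightarrow\LtttSom$ and $\dom(\rsymCT)\cap\dom(\DT)\hookrightarrow\LtttTom$ needed for the second complex, and likewise the two end-embeddings $\dom(\rdevGrad)\hookrightarrow\Lttom$ and $\dom(\Gg)\hookrightarrow\Lttom$ are compact — the latter using the regularity $\dom(\Gg)=H^{2}(\om)$ from Lemma \ref{lem:regGg} together with Rellich's selection theorem. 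Assembling these gives that $(\rdevGrad,\rsymCT,\rdDS)$ is a maximal compact Hilbert complex, which is the assertion.
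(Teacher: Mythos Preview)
Your approach has a genuine gap: the dual of the first \pz complex is \emph{not} the second \pz complex. Remark \ref{rem:dualcomplex} tells you that $(\rGg,\rCS,\rDT)$ maximal compact is equivalent to $((\rDT)^{*},(\rCS)^{*},(\rGg)^{*})=(-\devGrad,\symCT,\dDS)$ being maximal compact, but this dual complex consists of the \emph{same} operators read in reverse order; it does not produce a new complex with the boundary conditions relocated. Concretely, $(-\devGrad,\symCT,\dDS)$ is built from the operators \emph{without} boundary conditions of Definition \ref{def:pzone}, whereas the second \pz complex $(\rdevGrad,\rsymCT,\rdDS)$ uses the \emph{minimal} (boundary-condition) realisations of Section \ref{sec:pz2}. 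These are different operators: $\rdevGrad\subsetneq\devGrad$, $\rsymCT\subsetneq\symCT$, $\rdDS\subsetneq\dDS$, and likewise $\DT\neq\rDT$, $\CS\neq\rCS$, $\Gg\neq\rGg$. So your claimed identification ``$A_0^*=-\DT$, $A_1^*=\CS$, $A_2^*=\Gg$ for the second complex match the operators appearing in the first complex'' fails precisely at the boundary conditions.

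The analytic consequence is that the compact embeddings you need are genuinely new. Theorem \ref{thm:lHc} gives you compactness of $\dom(\rDT)\cap\dom(\symCT)\hookrightarrow\LtttTom$ (boundary condition on $\Dive$), while the second complex requires compactness of $\dom(\rsymCT)\cap\dom(\DT)\hookrightarrow\LtttTom$ (boundary condition on $\symCurl$); similarly $\dom(\rCS)\cap\dom(\dDS)$ versus $\dom(\rdDS)\cap\dom(\CS)$ in $\LtttSom$. These are different spaces and neither embedding formally implies the other --- this is exactly why the paper does not argue by duality but instead points to the techniques of \cite[Theorem 4.7]{PZ2020c}, where the relevant compact embeddings are established directly (via regular decompositions and Rellich-type arguments tailored to the specific placement of boundary conditions). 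Your treatment of the two endpoint embeddings ($\dom(\rdevGrad)=H^{1,3}_0(\om)$ and $\dom(\Gg)=H^2(\om)$ via Lemma \ref{lem:regGg} and Rellich) is correct, but that is the easy part; the middle embeddings are where the work lies and duality alone does not deliver them.
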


\begin{proof}
The assertions can be shown by using the `FA-ToolBox' from \cite{P2019a,P2019b,P2020a,PZ2020a,PZ2020b,PZ2020c}.
In particular, the crucial compact embeddings can be shown 
by the same techniques used in the proof of \cite[Theorem 4.7]{PZ2020c}.
\end{proof}

We observe and define
\begin{align} 
\label{bihdim12}
\begin{aligned}
N_{0}^{\bihtwop}&=\ker A_{0}=\ker(\rdevGrad),\\
N_{2,*}^{\bihtwop}&=\ker A_{2}^{*} =\ker(\Gg),\\
K_{1}^{\bihtwop}&=\ker A_{1} \cap\ker A_{0}^{*}=\ker(\rsymCT)\cap\ker(\DT)=:\harmbihtwoDom,\\
K_{2}^{\bihtwop}&=\ker A_{2} \cap\ker A_{1}^{*}=\ker(\rdDS)\cap\ker(\CS)=:\harmbihtwoNom.
\end{aligned}
\end{align}

\begin{theorem}
\label{thm:DNTbih2} 
Let $\om\subset\R^3$ be open and bounded with continuous boundary. 
Moreover, suppose Assumption \ref{ass:curvesandsurfaces}. Then
$$\dim\harmbihtwoDom=4(m-1),\qquad
\dim\harmbihtwoNom=4p.$$
\end{theorem}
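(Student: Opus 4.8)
\textbf{Proof plan for Theorem \ref{thm:DNTbih2}.}
The plan is to mirror exactly the strategy used for the first \pz complex in Theorem \ref{thm:DNTbih}, since the second \pz complex $(\rdevGrad,\rsymCT,\rdDS)$ is (up to a reversal of the role of boundary conditions) the `formal adjoint' version of the first one. Concretely, the two cohomology groups
$$\harmbihtwoDom=\ker(\rsymCT)\cap\ker(\DT),\qquad
\harmbihtwoNom=\ker(\rdDS)\cap\ker(\CS)$$
will be computed separately in the two appendix sections devoted to Dirichlet and Neumann tensor fields. First I would recall, from Theorem \ref{thm:lHc2}, that the complex is maximal compact, so all the abstract Helmholtz-type decompositions of Section \ref{sec:3} and the reduction operators of Corollary \ref{cor:FP} apply; in particular both cohomology spaces are finite-dimensional, and it only remains to pin down their dimensions in terms of $m$ and $p$.

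For $\dim\harmbihtwoDom=4(m-1)$: the key is the algebraic identity relating $\devGrad$, $\symCurl$, $\dD$ and the classical vector-analytic operators (the analogue of Lemma \ref{PZformulalem}), which lets one write an element $T\in\ker(\rsymCT)\cap\ker(\DT)$ componentwise in terms of scalar potentials solving a de Rham-type Dirichlet problem. The plan is to set up a linear isomorphism between $\harmbihtwoDom$ and a fourfold copy of the classical harmonic Dirichlet field space $\harmDom$ (or rather the relevant scalar cohomology of dimension $m-1$); the factor $4$ arises because the trace-free $3\times 3$ tensors that are `devGrad-closed and Div-closed' are parametrised, via the Poincaré-type construction, by $4$ independent scalar/vector potentials, exactly as in the first \pz complex where $\dim\harmbihoneDom=4(m-1)$. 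One then invokes Theorem \ref{thm:DNF} ($\dim\harmDom=m-1$) to conclude.

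For $\dim\harmbihtwoNom=4p$: here the construction of an explicit basis uses the Poincaré maps (curve-integral representations of vector/tensor fields, in the spirit of Lemma \ref{devGradHSlemma}) to produce, from each of the $p$ `handle' cycles, a family of generalised Neumann tensor fields; showing these are linearly independent and span $\harmbihtwoNom$ proceeds by pairing against the corresponding de Rham Neumann fields $\harmNom$ (whose dimension is $p$ by Theorem \ref{thm:DNF}) and counting the $4$ independent tensorial degrees of freedom. The main obstacle, as in the first complex, is the bookkeeping in the matrix-algebra/differential-operator interaction: one must verify that the formal potential representation actually produces tensors with the correct symmetry/trace-free type and homogeneous boundary conditions, and that the Poincaré-map construction yields fields lying in the right function spaces (no spurious singularities on the cutting surfaces). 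These verifications are precisely what the Sections \ref{app:sec:DirNeuF}, \ref{app:sec:DirF}, \ref{app:sec:NeuF} are designed to carry out, so I would defer the detailed calculations there, exactly as the authors announce after the statement.
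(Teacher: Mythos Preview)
Your high-level outline---split into Dirichlet and Neumann parts and defer to the appendix---matches the paper, but the mechanism you propose for each half differs from what is actually carried out, and in its present form has a genuine gap.

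For $\harmbihtwoDom$ you suggest building a linear isomorphism with a fourfold copy of the de Rham Dirichlet space $\harmDom$ and then quoting Theorem~\ref{thm:DNF}. The paper does \emph{not} reduce to the de Rham cohomology; instead it constructs an explicit candidate basis directly. One takes the cut-off functions $\xi_\ell$ of \eqref{def:xi-ell} and the Raviart--Thomas fields $\widehat r_j$ (the four-dimensional kernel of $\devGrad$), forms $\xi_{\ell,j}:=\xi_\ell\widehat r_j$, and projects $\devGrad\xi_{\ell,j}$ onto $\ker(\DT)$ along $\ran(\rdevGrad)$ to obtain $\devGrad v_{\ell,j}\in\harmbihtwoDom$. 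Spanning is proved by extending an arbitrary $T\in\harmbihtwoDom$ by zero to a ball $B$, using Lemma~\ref{lem:toptrivbih2} to write $\widetilde T=\devGrad v$ with $v\in H^{1,3}_0(B)$, and reading off $v|_{\Xi_\ell}\in\RT$; linear independence follows by a similar extension argument. The factor $4$ is thus $\dim\RT$, not a count of `scalar/vector potentials' obtained from some componentwise de Rham splitting. Your proposed isomorphism with $(\harmDom)^4$ is not set up anywhere in the paper and it is not clear how you would make it precise, since the operators $\rsymCT$ and $\DT$ do not decouple into four independent copies of $\rcurl$ and $\dive$.

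For $\harmbihtwoNom$ the situation is analogous. You propose pairing candidate Neumann fields against the classical $\harmNom$; the paper instead builds functionals $\beta_{l,\ell}$ directly from curve integrals of the tensor field and of an iterated integral (Lemma~\ref{GgHSlemma}, not Lemma~\ref{devGradHSlemma}---the relevant Poincar\'e map here involves $\Gg$, since one must recover $u$ with $\Gg u=\widehat H$). The basis candidates are the projections $\pi\Theta_{j,k}$ of (extensions of) $\Gg(\theta_j\widehat p_k)$, where $\widehat p_k$ runs through a basis of $\Pol^1=\ker(\Gg)$; the factor $4$ is $\dim\Pol^1$. Linear independence and spanning are shown via the explicit evaluation \eqref{pathint2-bih2} of $\beta_{l,\ell}(\pi\Theta_{j,k})$, which yields an invertible (upper-triangular) pairing matrix, together with Corollary~\ref{cor:bih2} to produce the potential $u$. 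No reference to $\harmNom$ is made or needed.

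In short: your plan to mirror the first \pz complex is the right instinct, but the execution in the paper is by direct construction of $4(m-1)$ resp.\ $4p$ explicit basis tensors and matching functionals, not by reduction to the de Rham cohomology.
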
 

\begin{proof}
We postpone the proof to Sections \ref{app:sec:DirTFbih2} and \ref{app:sec:NeuTFbih2}.
\end{proof}

\begin{theorem}
\label{thm:idxPZC2} 
Let $\om\subset\R^3$ be a bounded strong Lipschitz domain. 
Then $\calD^{\bihtwop}$ is a Fredholm operator with index
$$\ind\calD^{\bihtwop}
=\dim N_{0}^{\bihtwop}-\dim K_{1}^{\bihtwop}+\dim K_{2}^{\bihtwop}-\dim N_{2,*}^{\bihtwop}.$$
If additionally Assumption \ref{ass:curvesandsurfaces} holds, then
$$\ind\calD^{\bihtwop}
=4(p-m-n+1).$$
\end{theorem}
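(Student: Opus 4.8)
The plan is to follow, line for line, the proof of Theorem~\ref{thm:idxPZC}, now with the second \pz complex in place of the first. First I would invoke Theorem~\ref{thm:lHc2}, which asserts that $(\rdevGrad,\rsymCT,\rdDS)$ is a maximal compact Hilbert complex; in particular, with the identification $A_{0}:=\rdevGrad$, $A_{1}:=\rsymCT$, $A_{2}:=\rdDS$, the complex property $\ran A_{0}\subset\ker A_{1}$ and $\ran A_{1}\subset\ker A_{2}$ holds (ultimately because $\symCT\devGrad=0$ and $\dDS\symCT=0$ on smooth compactly supported fields). Theorem~\ref{thm:index} then applies directly and shows that $\calD^{\bihtwop}$ --- and, for free, its adjoint $(\calD^{\bihtwop})^{*}$ --- is a Fredholm operator with
$$\ind\calD^{\bihtwop}
=\dim N_{0}^{\bihtwop}-\dim K_{1}^{\bihtwop}
+\dim K_{2}^{\bihtwop}-\dim N_{2,*}^{\bihtwop},$$
which is precisely the first assertion of the theorem.

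To reach the explicit value under Assumption~\ref{ass:curvesandsurfaces}, I would next determine the four dimensions attached to the spaces in \eqref{bihdim12}. The two kernels are elementary. By Lemma~\ref{lem:regGg} one has $\dom(\Gg)=H^{2}(\om)$, and $\Gg u=\Grad\grad u=0$ forces $\grad u$ to be constant on each connected component, i.e.\ $u$ affine there; hence $N_{2,*}^{\bihtwop}=\ker(\Gg)=\Pol^{1}_{\pw}$, which is $4n$-dimensional. Likewise $\dev\Grad v=0$ forces $v(x)=ax+b$ on each component, so that the homogeneous boundary condition $v\in\dom(\rdevGrad)=H^{1,3}_{0}(\om)$ makes $v$ vanish identically; hence $N_{0}^{\bihtwop}=\ker(\rdevGrad)=\{0\}$. (These two facts may alternatively be quoted from \cite[Lemma~3.2, Lemma~3.3]{PZ2020a}.) For the two cohomology groups I would simply cite Theorem~\ref{thm:DNTbih2}, which gives $\dim K_{1}^{\bihtwop}=\dim\harmbihtwoDom=4(m-1)$ and $\dim K_{2}^{\bihtwop}=\dim\harmbihtwoNom=4p$.

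Substituting these four numbers into the index formula above yields
$$\ind\calD^{\bihtwop}=0-4(m-1)+4p-4n=4(p-m-n+1),$$
as claimed. The only step carrying real content is the input Theorem~\ref{thm:DNTbih2}, whose proof is postponed to Sections~\ref{app:sec:DirTFbih2} and \ref{app:sec:NeuTFbih2}; that is where the hard part lies, namely the explicit construction of bases for the generalised Dirichlet and Neumann tensor fields $\harmbihtwoDom$ and $\harmbihtwoNom$ by means of Poincar\'e maps together with matrix-algebraic identities of the type recorded in Lemma~\ref{PZformulalem}. Everything else in the present theorem is bookkeeping of dimensions on top of Theorem~\ref{thm:lHc2} and Theorem~\ref{thm:index}, exactly as in the proof of Theorem~\ref{thm:idxPZC}.
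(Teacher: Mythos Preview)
Your proposal is correct and follows essentially the same approach as the paper: invoke Theorem~\ref{thm:lHc2} to get maximal compactness, apply the abstract index Theorem~\ref{thm:index}, identify the kernels $N_{0}^{\bihtwop}=\{0\}$ and $N_{2,*}^{\bihtwop}=\Pol^{1}_{\pw}$, and plug in the cohomology dimensions from Theorem~\ref{thm:DNTbih2}. The paper's proof is a one-liner citing exactly these ingredients; your version just spells out the kernel computations in slightly more detail.
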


\begin{proof}
Using Theorem \ref{thm:lHc2} apply Theorem \ref{thm:index} together with \eqref{bihdim12}, the observations
\begin{align}
\label{kernelsbih2}
N_{0}^{\bihtwop}=\ker(\rdevGrad)=\{0\},\qquad
N_{2,*}^{\bihtwop}=\ker(\Gg)=\Pol^{1}_{\pw}
\end{align}
by using \cite[Lemma 3.2 (i)]{PZ2020a}, and Theorem \ref{thm:DNTbih2}.
\end{proof}

\begin{remark}
\label{thm:idxPZC2:rem}
By Theorem \ref{thm:index} the adjoint $(\calD^{\bihtwop})^{*}$
is Fredholm as well with index simply given by $\ind(\calD^{\bihtwop})^{*}=-\ind\calD^{\bihtwop}$.
Similar to Remark \ref{picardextmaxwell} and Remark \ref{thm:idxPZC:rem} 
we define the extended second \pz operator
$$\calM^{\bihtwop}
:=\begin{pmatrix}0&\calD^{\bihtwop}\\-(\calD^{\bihtwop})^{*}&0\end{pmatrix}
=\begin{pmatrix}0&0&\rdDS&0\\0&0&\CS&\rdevGrad\\
-\Gg&-\rsymCT&0&0\\0&\DT&0&0\end{pmatrix}$$
with $(\calM^{\bihtwop})^{*}=-\calM^{\bihtwop}$ and $\ind\calM^{\bihtwop}=0$.
Moreover, $\dim\ker\calM^{\bihtwop}=4(n+m+p-1)$ as
$\ker\calM^{\bihtwop}
=\Pol_{\pw}^{1}\times\harmbihtwoDom\times\harmbihtwoNom\times\{0\}$.
\end{remark}

\noindent
\textbf{Variable Coefficients and Poincar\'e--Friedrichs Type Inequalities.}
\label{sec:mmorerebih2}
Inhomogeneous and anisotropic media may also be considered 
for the second \pz complex, cf. Remark \ref{rem:epsmuderham} and Remark \ref{rem:epsmubih1}.

\begin{remark}
\label{rem:epsmubih2}
Recall the notations from Remark \ref{rem:epsmubih1} and set
$\lambda_{0}:=\id$, $\lambda_{3}:=\id$,
$\lambda_{1}:=\eps$, $\lambda_{2}:=\mu$, and
$\widetilde{H}_{1}:=\Lttt_{\Tb,\eps}(\om)$,
$\widetilde{H}_{2}:=\Lttt_{\Sb,\mu}(\om)$,
$\widetilde{H}_{0}=H_{0}=\Lttom$, $\widetilde{H}_{3}=H_{3}=\Ltom$.
We look at
\begin{align*}
\widetilde{A}_{0}&:=\rdevGrad,
&
\widetilde{A}_{1}&:=\mu^{-1}\rsymCT,
&
\widetilde{A}_{2}&:=\rdDS\mu,\\
\widetilde{A}_{0}^{*}&\phantom{:}=-\DT\eps,
&
\widetilde{A}_{1}^{*}&\phantom{:}=\eps^{-1}\CS,
&
\widetilde{A}_{2}^{*}&\phantom{:}=\Gg,
\end{align*}
\begin{align*}
\widetilde{\calD}^{\bihtwop}
&:=\begin{pmatrix}\widetilde{A}_{2}&0\\\widetilde{A}_{1}^{*}&\widetilde{A}_{0}\end{pmatrix}
=\begin{pmatrix}\rdDS\mu&0\\\eps^{-1}\CS&\rdevGrad\end{pmatrix},\\
(\widetilde{\calD}^{\bihtwop})^{*}
&\phantom{:}=\begin{pmatrix}\widetilde{A}_{2}^{*}& \widetilde{A}_{1}\\0&\widetilde{A}_{0}^{*}\end{pmatrix}
=\begin{pmatrix}\Gg&\mu^{-1}\rsymCT\\0&-\DT\eps\end{pmatrix},
\end{align*}
i.e., the second \pz complex, cf. \eqref{bih2compl1},
\begin{align}
\footnotesize
\begin{aligned}
\label{bih2compl2}
\{0\}\xrightarrow{\iota_{\{0\}}}
\Lttom\xrightarrow{\rdevGrad}
\Lttt_{\Tb,\eps}(\om)&\xrightarrow{\mu^{-1}\rsymCT}
\Lttt_{\Sb,\mu}(\om)\xrightarrow{\rdDS\mu}
\Ltom\xrightarrow{\pi_{\Pol^{1}_{\pw}}}\Pol^{1}_{\pw},\\
\{0\}\xleftarrow{\pi_{\{0\}}}
\Lttom\xleftarrow{-\DT\eps}
\Lttt_{\Tb,\eps}(\om)&\xleftarrow{\eps^{-1}\CS}
\Lttt_{\Sb,\mu}(\om)\xleftarrow{\Gg}
\Ltom\xleftarrow{\iota_{\Pol^{1}_{\pw}}}\Pol^{1}_{\pw}.
\end{aligned}
\end{align}
Lemma \ref{lambdaindeplem}, Lemma \ref{lem:properties4Atilde}, and Theorem \ref{thm:indexAtilde}
show that the compactness properties, the dimensions of the kernels and cohomology groups,
the maximal compactness, and the Fredholm indices
of the second \pz complex do not dependent of the material weights $\eps$ and $\mu$.
More precisely,
\begin{align*}
&\bullet&
\dim\big(\ker(\rsymCT)\cap\big(\eps^{-1}\ker(\DT)\big)\big)
&=\dim\big(\ker(\rsymCT)\cap\ker(\DT)\big)\\
&&&=\dim\harmbihtwoDom
=4(m-1),\\
&\bullet&
\dim\big(\big(\mu^{-1}\ker(\rdDS)\big)\cap\ker(\CS)\big)
&=\dim\big(\ker(\rdDS)\cap\ker(\CS)\big)\\
&&&=\dim\harmbihtwoNom
=4p,\\
&\bullet&
\dom(\rsymCT)\cap\big(\eps^{-1}\dom(\DT)\big)
&\hookrightarrow\Lttt_{\Tb,\eps}(\om)\text{ compactly}\\
&&\Leftrightarrow\quad
\dom(\rsymCT)\cap\dom(\DT)
&\hookrightarrow\LtttTom\text{ compactly},\\
&\bullet&
\big(\mu^{-1}\dom(\rdDS)\big)\cap\dom(\CS)
&\hookrightarrow\Lttt_{\Sb,\mu}(\om)\text{ compactly}\\
&&\Leftrightarrow\quad
\dom(\rdDS)\cap\dom(\CS)
&\hookrightarrow\LtttSom\text{ compactly},\\
&\bullet&
(\rdevGrad,\mu^{-1}\rsymCT,\rdDS\mu)&\text{ maximal compact}\\
&&\Leftrightarrow\quad(\rdevGrad,\rsymCT,\rdDS)&\text{ maximal compact},\\
&\bullet&
-\ind(\widetilde{\calD}^{\bihtwo})^{*}=\ind\widetilde{\calD}^{\bihtwo}
=\ind\calD^{\bihtwo}
&=4(p-m-n+1).
\end{align*}
\end{remark}

Note that the kernels and ranges are given by
\begin{align*}
\ker\calD^{\bihtwop}
=K_{2}^{\bihtwop}\times N_{0}^{\bihtwop}
&=\harmbihtwoNom\times\{0\},\\
\ker(\calD^{\bihtwop})^{*}
=N_{2,*}^{\bihtwop}\times K_{1}^{\bihtwop}
&=\Pol^{1}_{\pw}\times\harmbihtwoDom,\\
\ran\calD^{\bihtwop}
=(\ker(\calD^{\bihtwop})^{*})^{\bot_{\Ltom\times\LtttTom}}
&=(\Pol^{1}_{\pw})^{\bot_{\Ltom}}\times\harmbihtwoDom^{\bot_{\LtttTom}},\\
\ran(\calD^{\bihtwop})^{*}
=(\ker\calD^{\bihtwop})^{\bot_{\LtttSom\times\Lttom}}
&=\harmbihtwoNom^{\bot_{\LtttSom}}\times\Lttom,
\end{align*}
see Lemma \ref{lem:properties3}, Corollary \ref{lem:properties3a}, and \eqref{kernelsbih2}.
Corollary \ref{cor:FP} shows additional results for the corresponding reduced operators 
\begin{align*}
\calD^{\bihtwop}_{\red}
=\calD^{\bihtwop}|_{(\ker\calD^{\bihtwop})^{\bot_{H_{2}\times H_{0}}}}
&=\begin{pmatrix}\rdDS&0\\\CS&\rdevGrad\end{pmatrix}\Big|_{\harmbihtwoNom^{\bot_{\LtttSom}}\times\Lttom},\\
(\calD^{\bihtwop}_{\red})^{*}
=(\calD^{\bihtwop})^{*}|_{(\ker(\calD^{\bihtwop})^{*})^{\bot_{H_{3}\times H_{1}}}}
&=\begin{pmatrix}\Gg&\rsymCT\\0&-\DT\end{pmatrix}\Big|_{(\Pol^{1}_{\pw})^{\bot_{\Ltom}}\times\harmbihtwoDom^{\bot_{\LtttTom}}}.
\end{align*}

\begin{corollary}
\label{cor:FP-tb-bih2}
Let $\om\subset\R^3$ be a bounded strong Lipschitz domain. Then
\begin{align*}
(\calD^{\bihtwop}_{\red})^{-1}:\ran\calD^{\bihtwop}&\to\ran(\calD^{\bihtwop})^{*},\\
((\calD^{\bihtwop}_{\red})^{*})^{-1}:\ran(\calD^{\bihtwop})^{*}&\to\ran\calD^{\bihtwop}
\intertext{are compact. Furthermore,}
(\calD^{\bihtwop}_{\red})^{-1}:\ran\calD^{\bihtwop}&\to\dom\calD^{\bihtwop}_{\red},\\
((\calD^{\bihtwop}_{\red})^{*})^{-1}:\ran(\calD^{\bihtwop})^{*}&\to\dom(\calD^{\bihtwop}_{\red})^{*}
\end{align*}
are continuous and, equivalently, the Friedrichs-Poincar\'e type estimates 
\begin{align*}
\bnorm{(S,v)}_{\LtttSom\times\Lttom}
&\leq c_{\calD^{\bihtwop}}\big(\norm{\devGrad v}_{\LtttTom}^{2}\\
&\qquad\qquad\qquad+\norm{\dD S}_{\Ltom}^{2}+\norm{\Curl S}_{\LtttTom}^{2}\big)^{1/2},\\
\bnorm{(u,T)}_{\Ltom\times\LtttTom}
&\leq c_{\calD^{\bihtwop}}\big(\norm{\Gg u}_{\LtttSom}^{2}\\
&\qquad\qquad\qquad+\norm{\Dive T}_{\Lttom}^{2}+\norm{\symCurl T}_{\LtttSom}^{2}\big)^{1/2}
\end{align*}
hold for all $(S,v)$ in
\begin{align*}
\dom\calD^{\bihtwop}_{\red}
&=\big(\dom(\rdDS)\cap\dom(\CS)\cap\harmbihtwoNom^{\bot_{\LtttSom}}\big)
\times H^{1,3}_{0}(\om)
\intertext{for all $(u,T)$ in}
\dom(\calD^{\bihtwop}_{\red})^{*}
&=\big(H^{2}(\om)\cap(\Pol^{1}_{\pw})^{\bot_{\Ltom}}\big)\\
&\qquad\qquad\times\big(\dom(\rsymCT)\cap\dom(\DT)\cap\harmbihtwoDom^{\bot_{\LtttTom}}\big)
\end{align*}
with some optimal constant $c_{\calD^{\bihtwop}}>0$.
\end{corollary}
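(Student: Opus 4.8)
The plan is to read off Corollary \ref{cor:FP-tb-bih2} from the abstract Friedrichs--Poincar\'e result Corollary \ref{cor:FP}, specialised to the second \pz complex $(A_{0},A_{1},A_{2}):=(\rdevGrad,\rsymCT,\rdDS)$. First I would invoke Theorem \ref{thm:lHc2}, which asserts that this triple is a maximal compact Hilbert complex; this is the only genuinely substantial input and is precisely where the compact embedding techniques in the spirit of \cite[Theorem 4.7]{PZ2020c} enter. With maximal compactness in place, Corollary \ref{cor:FP} applies directly and gives at once that $\calD^{\bihtwop}_{\red}$ and $(\calD^{\bihtwop}_{\red})^{*}$ are compactly invertible from $\ran\calD^{\bihtwop}$ to $\ran(\calD^{\bihtwop})^{*}$ and vice versa, that the inverses are continuous into the respective graph-norm domains, and---equivalently---that the Friedrichs--Poincar\'e estimates hold with one common optimal constant $c_{\calD^{\bihtwop}}>0$, initially in the abstract form
\begin{align*}
\bnorm{(S,v)}_{\LtttSom\times\Lttom}
&\leq c_{\calD^{\bihtwop}}\big(\norm{A_{2}S}_{\Ltom}^{2}+\norm{A_{1}^{*}S}_{\LtttTom}^{2}+\norm{A_{0}v}_{\LtttTom}^{2}\big)^{1/2},\\
\bnorm{(u,T)}_{\Ltom\times\LtttTom}
&\leq c_{\calD^{\bihtwop}}\big(\norm{A_{2}^{*}u}_{\LtttSom}^{2}+\norm{A_{1}T}_{\LtttSom}^{2}+\norm{A_{0}^{*}T}_{\Lttom}^{2}\big)^{1/2}
\end{align*}
for $(S,v)\in\dom\calD^{\bihtwop}_{\red}$ and $(u,T)\in\dom(\calD^{\bihtwop}_{\red})^{*}$; the splitting of the graph norm of $(\calD^{\bihtwop})^{*}$ into the three displayed summands uses the orthogonality $\ran A_{2}^{*}\,\bot\,\ran A_{1}$, which is immediate from the complex property $\ran A_{1}\subset\ker A_{2}$.

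Next I would translate the abstract operators into the concrete ones of the statement. By Definition \ref{def:pzone} and the definitions preceding \eqref{bih2compl1} one has, for elements of the respective domains, $A_{2}S=\rdDS S=\dD S$, $A_{1}^{*}S=\CS S=\Curl S$, $A_{0}v=\rdevGrad v=\devGrad v$, as well as $A_{2}^{*}u=\Gg u$, $A_{1}T=\rsymCT T=\symCurl T$, and $A_{0}^{*}T=-\DT T$, so that $\norm{A_{0}^{*}T}=\norm{\Dive T}$; inserting these identities turns the two abstract inequalities into precisely the two estimates of the corollary.

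Finally I would make the reduced domains explicit. Writing $\dom\calD^{\bihtwop}_{\red}=\dom\calD^{\bihtwop}\cap(\ker\calD^{\bihtwop})^{\bot}$ with $\dom\calD^{\bihtwop}=\big(\dom(\rdDS)\cap\dom(\CS)\big)\times\dom(\rdevGrad)$, using $\ker\calD^{\bihtwop}=\harmbihtwoNom\times\{0\}$ from Lemma \ref{lem:properties3} and \eqref{kernelsbih2}, and the regularity $\dom(\rdevGrad)=H^{1,3}_{0}(\om)$ from \cite[Lemma 3.2]{PZ2020a}, I obtain the asserted description of $\dom\calD^{\bihtwop}_{\red}$; symmetrically, $\dom(\calD^{\bihtwop}_{\red})^{*}=\dom(\calD^{\bihtwop})^{*}\cap(\ker(\calD^{\bihtwop})^{*})^{\bot}$ with $\dom(\calD^{\bihtwop})^{*}=\dom(\Gg)\times\big(\dom(\rsymCT)\cap\dom(\DT)\big)$, $\ker(\calD^{\bihtwop})^{*}=\Pol^{1}_{\pw}\times\harmbihtwoDom$, and $\dom(\Gg)=H^{2}(\om)$ from Lemma \ref{lem:regGg} gives the description of $\dom(\calD^{\bihtwop}_{\red})^{*}$. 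I do not anticipate any real obstacle: the entire weight of the argument is carried by the maximal compactness in Theorem \ref{thm:lHc2} (hence, ultimately, by the compact embeddings of the second \pz complex), while all the remaining steps are bookkeeping, the equivalence between continuity of the reduced inverses and the displayed estimates together with optimality of the constant being already contained in Corollary \ref{cor:FP}. As in the de Rham and first \pz cases, the two estimates are additive combinations of the Friedrichs--Poincar\'e estimate for $\rdevGrad$ with the Maxwell-type estimate for the pair $(\rdDS,\CS)$, and of the estimate for $\Gg$ with that for $(\rsymCT,\DT)$, respectively.
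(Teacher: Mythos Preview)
Your proposal is correct and follows exactly the approach implicit in the paper: the corollary is stated there without a separate proof, since it is a direct specialisation of Corollary~\ref{cor:FP} to the second \pz complex once Theorem~\ref{thm:lHc2} provides maximal compactness, with the concrete domain descriptions read off from Lemma~\ref{lem:properties3}, \eqref{kernelsbih2}, \cite[Lemma~3.2]{PZ2020a}, and Lemma~\ref{lem:regGg}. Your write-up is in fact more detailed than the paper's treatment.
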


\section{The Elasticity Complex and Its Indices}
\label{sec:ela}

This section is devoted to adapt our main results Theorem \ref{thm:main}, 
Theorem \ref{thm:idxPZC}, and Theorem \ref{thm:idxPZC2}, to the elasticity complex, see \cite{PZ2020b,PZ2020c} for details.
Its elasticity differential operator is of mixed order as well,
this time in the center of the complex.
As before for the \pz operators, the leading order term is \emph{not} 
dominating the lower order differential operators.

\begin{definition} 
Let $\om\subset\R^3$ be an open set. We put
\begin{align*}
\symGrad_{c}:\Cictom\subset\Lttom&\to\LtttSom,
&
\phi&\mapsto\sym\Grad\phi,\\
\CCt_{c}:\CicttSom\subset\LtttSom&\to\LtttSom, 
&
\Phi&\mapsto\CCt\Phi:=\Curl(\Curl\Phi)^{\top},\\
\Dive_{c}:\CicttSom\subset\LtttSom&\to\Lttom,
& 
\Phi&\mapsto\Dive\Phi,
\end{align*}
and further define the densely defined and closed linear operators
\begin{align*}
\DS&:=-\symGrad_{c}^{*},
&
\rsymGrad&:=-\DS^{*}=\overline{\symGrad_{c}},\\
\CCtS&:=(\CCt_{c})^{*},
&
\rCCtS&:=(\CCtS)^{*}=\overline{\CCt_{c}},\\
\symGrad&:=-\Dive_{c}^{*},
&
\rDS&:=-\symGrad^{*}=\overline{\Dive_{c}}.
\end{align*}
\end{definition}

We want to apply the index theorem in the following situation of the elasticity complex:
\begin{align*}
A_{0}&:=\rsymGrad,
&
A_{1}&:=\rCCtS,
&
A_{2}&:=\rDS,\\
A_{0}^{*}&\phantom{:}=-\DS,
&
A_{1}^{*}&\phantom{:}=\CCtS,
&
A_{2}^{*}&\phantom{:}=-\symGrad,
\end{align*}
\begin{align*}
\calD^{\elap}
&:=\begin{pmatrix}A_{2}&0\\A_{1}^{*}&A_{0}\end{pmatrix}
=\begin{pmatrix}\rDS& 0\\\CCtS&\rsymGrad\end{pmatrix},\\
(\calD^{\elap})^{*}
&\phantom{:}=\begin{pmatrix}A_{2}^{*}& A_{1}\\0&A_{0}^{*}\end{pmatrix}
=\begin{pmatrix}-\symGrad&\rCCtS\\0&-\DS\end{pmatrix},
\end{align*}
\begin{align}
\footnotesize
\begin{aligned}
\label{elacompl1}
\{0\}\xrightarrow{\iota_{\{0\}}}
\Lttom\xrightarrow{\rsymGrad}
\LtttSom&\xrightarrow{\rCCtS}
\LtttSom\xrightarrow{\rDS}
\Lttom\xrightarrow{\pi_{\RM_{\pw}}}\RM_{\pw},\\
\{0\}\xleftarrow{\pi_{\{0\}}}
\Lttom\xleftarrow{-\DS}
\LtttSom&\xleftarrow{\CCtS}
\LtttSom\xleftarrow{-\symGrad}
\Lttom\xleftarrow{\iota_{\RM_{\pw}}}\RM_{\pw}.
\end{aligned}
\end{align}

The foundation of the index theorem to follow is the following compactness result established in \cite{PZ2020b,PZ2020c}. 
Note that we have by Korn's inequalities
$\dom(\rsymGrad)=H^{1,3}_{0}(\om)$ and $\dom(\symGrad)=H^{1,3}(\om)$.

\begin{theorem}[{\cite[Theorem 4.7]{PZ2020c}}]
\label{thm:lHc-ela} 
Let $\om\subset\R^3$ be a bounded strong Lipschitz domain. 
Then $(\rsymGrad,\rCCtS,\rDS)$ is a maximal compact Hilbert complex.
\end{theorem}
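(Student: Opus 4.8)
The plan is to verify the three clauses in the definition of a maximal compact Hilbert complex for the triple $(\rsymGrad,\rCCtS,\rDS)$. The complex property $\ran(\rsymGrad)\subset\ker(\rCCtS)$ and $\ran(\rCCtS)\subset\ker(\rDS)$ follows by density from the classical algebraic identities of the elasticity complex, namely $\CCt\,\symGrad\,\phi=0$ and $\Dive\,\CCt\,\Phi=0$ for all $\phi\in\Cictom$ and $\Phi\in\CicttSom$, which are verified by a direct computation using Schwarz's lemma; by Remark \ref{rem:dualcomplex} the dual complex $(-\symGrad,\CCtS,-\DS)$ is then a complex as well. The two outer embeddings are elementary: by Korn's first and second inequalities $\dom(\rsymGrad)=H^{1,3}_{0}(\om)$ and $\dom(\symGrad)=H^{1,3}(\om)$ with equivalent norms, so Rellich's selection theorem yields that both spaces embed compactly into $\Lttom$.

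The heart of the matter is the compactness of the two interior embeddings
$$\dom(\rCCtS)\cap\dom(\DS)\hookrightarrow\LtttSom,\qquad
\dom(\rDS)\cap\dom(\CCtS)\hookrightarrow\LtttSom.$$
Here I would invoke \cite[Theorem 4.7]{PZ2020c} (see also \cite{PZ2020b}); the argument there proceeds along the lines of the `FA-ToolBox' of \cite{P2019a,P2019b,P2020a,PZ2020a,PZ2020b,PZ2020c}: one first obtains the relevant Helmholtz-type decompositions and closed ranges for the elasticity complex, then reduces compactness of the intersection embedding to an additional regularity statement for a symmetric tensor field $T$ satisfying $\Dive T\in\Lttom$, $\CCt T\in\LtttSom$, and the respective boundary condition, this extra regularity being produced, after a partition-of-unity localisation adapted to the strong Lipschitz boundary, by Ne\v{c}as' inequality, after which Rellich's theorem applies. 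The same technique was already referenced in the proof of Theorem \ref{thm:lHc2}.

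The main obstacle is precisely this last step: since $\CCtS$ is of second order, the complex is of genuinely mixed order and, as emphasised in the introduction, the interior embeddings cannot be viewed as a relatively compact perturbation of a leading-order elliptic system, so one really needs the dedicated compactness analysis of \cite{PZ2020c}. Granted the two interior and the two routine outer compact embeddings, maximal compactness of $(\rsymGrad,\rCCtS,\rDS)$ holds by definition, and by Remark \ref{rem:dualcomplex} so does that of the dual complex $(-\symGrad,\CCtS,-\DS)$.
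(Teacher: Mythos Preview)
Your proposal is correct and aligned with the paper's treatment: the paper does not give an independent proof of this theorem but simply records it as a citation of \cite[Theorem 4.7]{PZ2020c}, and your sketch faithfully unpacks what that citation contains (complex property by density, outer embeddings via Korn and Rellich, and the crucial interior compact embeddings via the `FA-ToolBox' and Ne\v{c}as-type regularity arguments of \cite{PZ2020b,PZ2020c}). You correctly identify that the nontrivial content lies entirely in the two interior embeddings and that these require the dedicated analysis of \cite{PZ2020c}; there is nothing to add or correct.
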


We observe and define
\begin{align} 
\label{eladim1}
\begin{aligned}
N_{0}^{\elap}&=\ker A_{0}=\ker(\rsymGrad),\\
N_{2,*}^{\elap}&=\ker A_{2}^{*} =\ker(\symGrad),\\
K_{1}^{\elap}&=\ker A_{1} \cap\ker A_{0}^{*}=\ker(\rCCtS)\cap\ker(\DS)=:\harmelaDom,\\
K_{2}^{\elap}&=\ker A_{2} \cap\ker A_{1}^{*}=\ker(\rDS)\cap\ker(\CCtS)=:\harmelaNom.
\end{aligned}
\end{align}

The dimensions of the cohomology groups are given as follows.
 
\begin{theorem}
\label{thm:DNTela} 
Let $\om\subset\R^3$ be open and bounded with continuous boundary. 
Moreover, suppose Assumption \ref{ass:curvesandsurfaces}. Then
$$\dim\harmelaDom=6(m-1),\qquad
\dim\harmelaNom=6p.$$
\end{theorem}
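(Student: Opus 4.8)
The plan is to mirror, essentially verbatim in spirit, the structure that the paper uses (and will use in Sections~\ref{app:sec:DirF} and \ref{app:sec:NeuF}) for the de Rham, the first \pz, and the second \pz complexes. The statement $\dim\harmelaDom=6(m-1)$ and $\dim\harmelaNom=6p$ exhibits exactly the pattern predicted by Remark~\ref{rem:genindex}: since $\dim N_{2,*}^{\elap}=\dim\ker(\symGrad)=6n$ (the space of piecewise rigid motions $\RM_{\pw}$, which is $6$-dimensional per connected component), one expects $\dim K_{1}^{\elap}=\frac{6n}{n}(m-1)=6(m-1)$ and $\dim K_{2}^{\elap}=\frac{6n}{n}p=6p$. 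So the real work is to prove these two dimension formulas directly, and the postponement to Sections~\ref{app:sec:DirTFela} and \ref{app:sec:NeuTFela} reflects that.

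For the Dirichlet fields $\harmelaDom=\ker(\rCCtS)\cap\ker(\DS)$, the approach is as follows. First, one reduces to the scalar/vectorial de Rham situation via a suitable ``Poincar\'e map'' or algebraic identity relating $\CCt$ and $\Dive$ acting on symmetric tensor fields to the classical operators $\curl$ and $\dive$ acting componentwise; the key algebraic lemma analogous to Lemma~\ref{PZformulalem} should let one write a symmetric tensor field $S$ with $\CCt S=0$ and $\Dive S=0$ (with full Dirichlet boundary data) as $S=\symGrad v$ for some $v$ whose components lie in an appropriate space of harmonic Dirichlet \emph{vector} fields. More precisely: $\ker(\rCCtS)$ is, up to the image of $\rsymGrad$, spanned by symmetrised gradients of vector fields $v$ with $\sym\Grad v$ having vanishing $\CCtS$; the condition $\DS S=0$ then forces $v$ to be (componentwise) a harmonic Dirichlet-type field. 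Invoking Theorem~\ref{thm:DNF} ($\dim\harmDom=m-1$) and counting the number of free vectorial/rigid-motion parameters that survive the boundary conditions, one gets a factor of $6$, yielding $6(m-1)$. Dually, for the Neumann fields $\harmelaNom=\ker(\rDS)\cap\ker(\CCtS)$ one applies the Poincar\'e maps of the ``Neumann'' type (cf.\ Lemma~\ref{devGradHSlemma}), representing such tensor fields by curve integrals, reducing to $\dim\harmNom=p$ (Theorem~\ref{thm:DNF}), again picking up the factor $6$ from the rigid-motion degrees of freedom of the primitives. The exactness and closed-range properties needed throughout are guaranteed by the maximal compactness of the elasticity complex, Theorem~\ref{thm:lHc-ela}, together with Remark~\ref{rem:dualcomplex}.

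Concretely, the chain of steps I would carry out: (1) record the Helmholtz-type decompositions \eqref{deco1}--\eqref{deco2} specialised to the elasticity complex, so that $\harmelaDom$ and $\harmelaNom$ are identified with the genuine cohomology spaces $K_1^{\elap}$, $K_2^{\elap}$; (2) prove the matrix-algebra identity expressing $\Curl$, $\Dive$, $\symGrad$ on (trace-free / symmetric) tensor fields in terms of the componentwise $\curl$, $\dive$, $\grad$, isolating the ``rigid motion'' correction terms --- this is the analogue of Lemma~\ref{PZformulalem}; (3) construct the relevant Poincar\'e maps giving primitives (a vector potential for Neumann fields, a scalar/vector potential for Dirichlet fields) with controlled boundary behaviour, analogous to Lemma~\ref{devGradHSlemma}; (4) use these to set up a linear isomorphism between $\harmelaDom$ and $(\harmDom)^{6}$ modulo a finite explicit correction, respectively between $\harmelaNom$ and $(\harmNom)^{6}$, thereby giving explicit bases indexed by the topological invariants; (5) conclude the dimension counts $6(m-1)$ and $6p$ from Theorem~\ref{thm:DNF}.

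The main obstacle will be step~(3)--(4): making the Poincar\'e-map construction work with the \emph{homogeneous} boundary conditions on $\rCCtS$ and $\rDS$ while keeping track of exactly which rigid-motion parameters of the potentials are pinned down by the boundary data and which remain free. The matrix algebra in step~(2) is delicate precisely because $\CCt\Phi=\Curl(\Curl\Phi)^{\top}$ is a second-order operator and the symmetry constraint on $\Phi$ interacts nontrivially with row-wise application of $\curl$; ensuring that the ``correction terms'' are genuinely rigid motions (hence finite-dimensional and explicitly describable) is where the factor $6$ --- rather than, say, $9$ or $3$ --- must be pinned down, and getting that bookkeeping exactly right (especially the interplay between $\ker(\symGrad)=\RM_{\pw}$ and the de Rham kernels) is the crux of the proof. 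I expect the actual estimates and the compact-embedding inputs to be routine given Theorem~\ref{thm:lHc-ela} and the FA-ToolBox; the topological/algebraic core is the hard part.
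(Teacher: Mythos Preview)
Your overall instinct---that the factor $6$ comes from the six-dimensional space of rigid motions $\RM$ and that the argument parallels the de Rham case---is correct, and your step~(1) (Helmholtz decompositions) and the invocation of Poincar\'e-type curve-integral maps for the Neumann fields are on target. But the central mechanism you propose in step~(4), namely a linear isomorphism $\harmelaDom\cong(\harmDom)^{6}$ (respectively $\harmelaNom\cong(\harmNom)^{6}$) deduced from Theorem~\ref{thm:DNF}, is a genuine misconception: no such reduction is made in the paper, and it is not clear one exists. The conditions $\CCtS S=0$ and $\DS S=0$ on a \emph{symmetric} tensor field do not decouple into six independent scalar or vectorial de Rham conditions, so there is no componentwise map to $(\harmDom)^{6}$. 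Likewise, your description ``$S=\symGrad v$ for some $v$ whose components lie in an appropriate space of harmonic Dirichlet vector fields'' does not typecheck: the components of $v$ are scalars, and $v$ itself solves an elasticity equation, not a de Rham system.

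What the paper actually does is \emph{parallel} to, not \emph{reductive} to, the de Rham argument. For the Dirichlet fields (Section~\ref{app:sec:DirTFela}): one extends $S\in\harmelaDom$ by zero to a large ball $B$, invokes exactness on the topologically trivial $B$ (Lemma~\ref{lem:toptrivela}: $\ker(\rCCtS,B)=\ran(\rsymGrad,B)$) to write $\widetilde{S}=\symGrad v$ with $v\in H^{1,3}_{0}(B)$, and then observes that $v$ restricted to each complement component $\Xi_{\ell}$ is a rigid motion (since $\symGrad v=0$ there). The $6(m-1)$ basis elements are then $\symGrad(\xi_{\ell}\widehat{r}_{j})$ projected onto $\ker(\DS)$, with $\widehat{r}_{1},\dots,\widehat{r}_{6}$ a basis of $\RM$ and $\xi_{\ell}$ the cutoff functions from \eqref{def:xi-ell}; spanning and independence are checked directly. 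For the Neumann fields (Section~\ref{app:sec:NeuTFela}): one builds $6p$ candidates $\pi\Theta_{j,k}$ from $\symGrad(\theta_{j}\widehat{r}_{k})$ with $\theta_{j}$ the jump functions across the cutting surfaces $F_{j}$, and one constructs explicit curve-integral functionals $\beta_{l,\ell}$ (via a $\symGrad$-specific Poincar\'e lemma, Lemma~\ref{symGradHSlemma}, involving both $\symGrad v$ and $(\Curl\symGrad v)^{\top}$) that pair with the candidates in an invertible triangular pattern; spanning uses Assumption~\ref{ass:curvesandsurfaces}~(A1) applied row-wise, not a reduction to $\harmNom$. Theorem~\ref{thm:DNF} is never invoked. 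You are missing the extension-to-a-ball argument entirely for the Dirichlet case, and for the Neumann case you should replace the hoped-for isomorphism with the explicit functional/candidate pairing.
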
 

\begin{proof}
We postpone the proof to the Sections \ref{app:sec:DirTFela} and \ref{app:sec:NeuTFela}.
\end{proof}

Let us introduce the space of piecewise rigid motions by (for $\cc(\om)$ see \eqref{eq:ccO})
\begin{equation}
\label{eq:RM_pw}
\RM_{\pw}:=\big\{v\in\Lttom:\forall\,C\in\cc(\om)\quad\exists\,\alpha_{C},\beta_{C}\in\R^{3}:
u|_{C}(x)=\alpha_{C}\times x+\beta_{C}\big\}.
\end{equation}

\begin{theorem}
\label{thm:idxelaC}
Let $\om\subset\R^3$ be a bounded strong Lipschitz domain. 
Then $\calD^{\elap}$ is a Fredholm operator with index
$$\ind\calD^{\elap}
=\dim N_{0}^{\elap}-\dim K_{1}^{\elap}+\dim K_{2}^{\elap}-\dim N_{2,*}^{\elap}.$$
If additionally Assumption \ref{ass:curvesandsurfaces} holds, then
$$\ind\calD^{\elap}
=6(p-m-n+1).$$
\end{theorem}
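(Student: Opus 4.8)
The plan is to mirror exactly the structure already used for the de~Rham complex in Theorem~\ref{thm:idx} and for the first and second \pz complexes in Theorems~\ref{thm:idxPZC} and~\ref{thm:idxPZC2}. The only inputs we need are: the maximal compactness of the elasticity complex (Theorem~\ref{thm:lHc-ela}), the abstract index theorem (Theorem~\ref{thm:index}), the identification of the cohomology groups with the harmonic Dirichlet and Neumann tensor fields in~\eqref{eladim1}, the dimension count for those fields (Theorem~\ref{thm:DNTela}), and the computation of the two outer kernels $N_0^{\elap}$ and $N_{2,*}^{\elap}$.

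First I would invoke Theorem~\ref{thm:lHc-ela} to say that $(\rsymGrad,\rCCtS,\rDS)$ is a maximal compact Hilbert complex; by Remark~\ref{rem:dualcomplex} the dual complex $(-\symGrad,\CCtS,-\DS)$ is then maximal compact as well. Applying Theorem~\ref{thm:index} to $A_0=\rsymGrad$, $A_1=\rCCtS$, $A_2=\rDS$ immediately yields that $\calD^{\elap}$ (and its adjoint $(\calD^{\elap})^{*}$) are Fredholm operators with
$$\ind\calD^{\elap}=\dim N_0^{\elap}-\dim K_1^{\elap}+\dim K_2^{\elap}-\dim N_{2,*}^{\elap},$$
which is the first assertion. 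For the explicit formula I would next compute the outer kernels: by Korn's inequality $\dom(\rsymGrad)=H^{1,3}_0(\om)$, so $N_0^{\elap}=\ker(\rsymGrad)=\{0\}$, and $N_{2,*}^{\elap}=\ker(\symGrad)$ is precisely the space of piecewise rigid motions $\RM_{\pw}$ defined in~\eqref{eq:RM_pw} (rigid displacements $x\mapsto\alpha\times x+\beta$ on each connected component), so $\dim N_{2,*}^{\elap}=6n$; these facts are the elasticity analogues of~\eqref{kernelsderham} and~\eqref{kernelsbih} and follow from the references \cite{PZ2020b,PZ2020c}. Finally, under Assumption~\ref{ass:curvesandsurfaces}, Theorem~\ref{thm:DNTela} gives $\dim K_1^{\elap}=\dim\harmelaDom=6(m-1)$ and $\dim K_2^{\elap}=\dim\harmelaNom=6p$. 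Substituting,
$$\ind\calD^{\elap}=0-6(m-1)+6p-6n=6(p-m-n+1),$$
and $\ind(\calD^{\elap})^{*}=-\ind\calD^{\elap}$ by Theorem~\ref{thm:index}.

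There is no real obstacle at the level of this theorem itself: every ingredient is either quoted from the literature or deferred to later sections. The genuine work — and hence the step I would flag as the crux — is the proof of Theorem~\ref{thm:DNTela}, i.e.\ showing $\dim\harmelaDom=6(m-1)$ and $\dim\harmelaNom=6p$, which is postponed to Sections~\ref{app:sec:DirTFela} and~\ref{app:sec:NeuTFela}. That computation requires the subtle matrix-algebra/differential-operator interplay (cf.\ Lemma~\ref{PZformulalem}) and the Poincar\'e-map representations of tensor fields by curve integrals (cf.\ Lemma~\ref{devGradHSlemma}) to reduce the elasticity Dirichlet/Neumann fields to six scalar copies of the de~Rham case governed by the topological invariants $m$ and $p$. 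Once that is in hand, the present theorem is a one-line bookkeeping consequence of Theorem~\ref{thm:index}.
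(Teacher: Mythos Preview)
Your proposal is correct and follows essentially the same approach as the paper: invoke Theorem~\ref{thm:lHc-ela} for maximal compactness, apply the abstract index Theorem~\ref{thm:index} together with~\eqref{eladim1}, compute $N_0^{\elap}=\{0\}$ and $N_{2,*}^{\elap}=\RM_{\pw}$ (citing \cite[Lemma 3.2]{PZ2020b}), and then use Theorem~\ref{thm:DNTela} for the cohomology dimensions. You are also right that the substantive work is deferred to Sections~\ref{app:sec:DirTFela} and~\ref{app:sec:NeuTFela}.
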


\begin{proof}
Using Theorem \ref{thm:lHc-ela} apply Theorem \ref{thm:index} together with \eqref{eladim1}, the observations
\begin{align}
\label{kernelsela}
N_{0}^{\elap}=\ker(\rsymGrad)=\{0\},\qquad
N_{2,*}^{\elap}=\ker(\symGrad)=\RM_{\pw},
\end{align}
see \cite[Lemma 3.2]{PZ2020b}, and Theorem \ref{thm:DNTela}.
\end{proof}

\begin{remark}
\label{thm:idxelaC:rem}
By Theorem \ref{thm:index} the adjoint $(\calD^{\elap})^{*}$
is Fredholm as well with index simply given by $\ind(\calD^{\elap})^{*}=-\ind\calD^{\elap}$.
Similar to Remark \ref{picardextmaxwell}, Remark \ref{thm:idxPZC:rem}, and Remark \ref{thm:idxPZC2:rem} 
we define the extended elasticity operator
$$\calM^{\ela}
:=\begin{pmatrix}0&\calD^{\ela}\\-(\calD^{\ela})^{*}&0\end{pmatrix}
=\begin{pmatrix}0&0&\rDS&0\\0&0&\CCtS&\rsymGrad\\
\symGrad&-\rCCtS&0&0\\0&\DS&0&0\end{pmatrix}$$
with $(\calM^{\ela})^{*}=-\calM^{\ela}$ and $\ind\calM^{\ela}=0$.
Moreover, $\dim\ker\calM^{\ela}=6(n+m+p-1)$ as
$\ker\calM^{\ela}
=\RM_{\pw}\times\harmelaDom\times\harmelaNom\times\{0\}$.
\end{remark}

\noindent
\textbf{Variable Coefficients and Poincar\'e--Friedrichs Type Inequalities.}
\label{sec:mmorereela} 
Inhomogeneous and anisotropic media may also be considered 
for the elasticity complex, cf. Remark \ref{rem:epsmuderham}, Remark \ref{rem:epsmubih1}, 
and Remark \ref{rem:epsmubih2}.

\begin{remark}
\label{rem:epsmuela}
Recall the notations from Remark \ref{rem:epsmubih1} and Remark \ref{rem:epsmubih2} and set
$\lambda_{0}:=\id$, $\lambda_{3}:=\id$,
$\lambda_{1}:=\eps$, $\lambda_{2}:=\mu$, and
$\widetilde{H}_{3}=\widetilde{H}_{0}=H_{3}=H_{0}=\Lttom$,
$\widetilde{H}_{1}:=\Lttt_{\Sb,\eps}(\om)$,
$\widetilde{H}_{2}:=\Lttt_{\Sb,\mu}(\om)$.
We look at
\begin{align*}
\widetilde{A}_{0}&:=\rsymGrad,
&
\widetilde{A}_{1}&:=\mu^{-1}\rCCtS,
&
\widetilde{A}_{2}&:=\rDS\mu,\\
\widetilde{A}_{0}^{*}&\phantom{:}=-\DS\eps,
&
\widetilde{A}_{1}^{*}&\phantom{:}=\eps^{-1}\CCtS,
&
\widetilde{A}_{2}^{*}&\phantom{:}=-\symGrad,
\end{align*}
\begin{align*}
\widetilde{\calD}^{\ela}
&:=\begin{pmatrix}\widetilde{A}_{2}&0\\\widetilde{A}_{1}^{*}&\widetilde{A}_{0}\end{pmatrix}
=\begin{pmatrix}\rDS\mu&0\\\eps^{-1}\CCtS&\rsymGrad\end{pmatrix},\\
(\widetilde{\calD}^{\ela})^{*}
&\phantom{:}=\begin{pmatrix}\widetilde{A}_{2}^{*}& \widetilde{A}_{1}\\0&\widetilde{A}_{0}^{*}\end{pmatrix}
=\begin{pmatrix}-\symGrad&\mu^{-1}\rCCtS\\0&-\DS\eps\end{pmatrix},
\end{align*}
i.e., the elasticity complex, cf. \eqref{elacompl1},
\begin{align}
\footnotesize
\begin{aligned}
\label{elacompl2}
\{0\}\xrightarrow{\iota_{\{0\}}}
\Lttom\xrightarrow{\rsymGrad}
\Lttt_{\Sb,\eps}(\om)&\xrightarrow{\mu^{-1}\rCCtS}
\Lttt_{\Sb,\mu}(\om)\xrightarrow{\rDS\mu}
\Lttom\xrightarrow{\pi_{\RM_{\pw}}}\RM_{\pw},\\
\{0\}\xleftarrow{\pi_{\{0\}}}
\Lttom\xleftarrow{-\DS\eps}
\Lttt_{\Sb,\eps}(\om)&\xleftarrow{\eps^{-1}\CCtS}
\Lttt_{\Sb,\mu}(\om)\xleftarrow{-\symGrad}
\Lttom\xleftarrow{\iota_{\RM_{\pw}}}\RM_{\pw}.
\end{aligned}
\end{align}
Lemma \ref{lambdaindeplem}, Lemma \ref{lem:properties4Atilde}, and Theorem \ref{thm:indexAtilde}
show that the compactness properties, the dimensions of the kernels and cohomology groups,
the maximal compactness, and the Fredholm indices
of the elasticity complex do not dependent of the material weights $\eps$ and $\mu$.
More precisely,
\begin{align*}
&\bullet&
\dim\big(\ker(\rCCtS)\cap\big(\eps^{-1}\ker(\DS)\big)\big)
&=\dim\big(\ker(\rCCtS)\cap\ker(\DS)\big)\\
&&&=\dim\harmelaDom
=6(m-1),\\
&\bullet&
\dim\big(\big(\mu^{-1}\ker(\rDS)\big)\cap\ker(\CCtS)\big)
&=\dim\big(\ker(\rDS)\cap\ker(\CCtS)\big)\\
&&&=\dim\harmelaNom
=6p,\\
&\bullet&
\dom(\rCCtS)\cap\big(\eps^{-1}\dom(\DS)\big)
&\hookrightarrow\Lttt_{\Sb,\eps}(\om)\text{ compactly}\\
&&\Leftrightarrow\quad
\dom(\rCCtS)\cap\dom(\DS)
&\hookrightarrow\LtttSom\text{ compactly},\\
&\bullet&
\big(\mu^{-1}\dom(\rDS)\big)\cap\dom(\CCtS)
&\hookrightarrow\Lttt_{\Sb,\mu}(\om)\text{ compactly}\\
&&\Leftrightarrow\quad
\dom(\rDS)\cap\dom(\CCtS)
&\hookrightarrow\LtttSom\text{ compactly},\\
&\bullet&
(\rsymGrad,\mu^{-1}\rCCtS,\rDS\mu)&\text{ maximal compact}\\
&&\Leftrightarrow\quad(\rsymGrad,\rCCtS,\rDS)&\text{ maximal compact},\\
&\bullet&
-\ind(\widetilde{\calD}^{\ela})^{*}=\ind\widetilde{\calD}^{\ela}
=\ind\calD^{\ela}
&=6(p-m-n+1).
\end{align*}
\end{remark}

Note that the kernels and ranges are given by
\begin{align*}
\ker\calD^{\elap}
=K_{2}^{\elap}\times N_{0}^{\elap}
&=\harmelaNom\times\{0\},\\
\ker(\calD^{\elap})^{*}
=N_{2,*}^{\elap}\times K_{1}^{\elap}
&=\RM_{\pw}\times\harmelaDom,\\
\ran\calD^{\elap}
=(\ker(\calD^{\elap})^{*})^{\bot_{\Lttom\times\LtttSom}}
&=\RM_{\pw}^{\bot_{\Lttom}}\times\harmelaDom^{\bot_{\LtttSom}},\\
\ran(\calD^{\elap})^{*}
=(\ker\calD^{\elap})^{\bot_{\LtttSom\times\Lttom}}
&=\harmelaNom^{\bot_{\LtttSom}}\times\Lttom,
\end{align*}
see Lemma \ref{lem:properties3}, Corollary \ref{lem:properties3a}, and \eqref{kernelsela}.
Corollary \ref{cor:FP} shows additional results for the corresponding reduced operators 
\begin{align*}
\calD^{\elap}_{\red}
=\calD^{\elap}|_{(\ker\calD^{\elap})^{\bot_{H_{2}\times H_{0}}}}
&=\begin{pmatrix}\rDS& 0\\\CCtS&\rsymGrad\end{pmatrix}\Big|_{\harmelaNom^{\bot_{\LtttSom}}\times\Lttom},\\
(\calD^{\elap}_{\red})^{*}
=(\calD^{\elap})^{*}|_{(\ker(\calD^{\elap})^{*})^{\bot_{H_{3}\times H_{1}}}}
&=\begin{pmatrix}-\symGrad&\rCCtS\\0&-\DS\end{pmatrix}\Big|_{\RM_{\pw}^{\bot_{\Lttom}}\times\harmelaDom^{\bot_{\LtttSom}}}.
\end{align*}

\begin{corollary}
\label{cor:FP-tb-ela}
Let $\om\subset\R^3$ be a bounded strong Lipschitz domain. Then
\begin{align*}
(\calD^{\elap}_{\red})^{-1}:\ran\calD^{\elap}&\to\ran(\calD^{\elap})^{*},\\
((\calD^{\elap}_{\red})^{*})^{-1}:\ran(\calD^{\elap})^{*}&\to\ran\calD^{\elap}
\intertext{are compact. Furthermore,}
(\calD^{\elap}_{\red})^{-1}:\ran\calD^{\elap}&\to\dom\calD^{\elap}_{\red},\\
((\calD^{\elap}_{\red})^{*})^{-1}:\ran(\calD^{\elap})^{*}&\to\dom(\calD^{\elap}_{\red})^{*}
\end{align*}
are continuous and, equivalently, the Friedrichs--Poincar\'e type estimate
\begin{align*}
\bnorm{(S,v)}_{\LtttSom\times\Lttom}
&\leq c_{\calD^{\elap}}\big(\norm{\symGrad v}_{\LtttSom}^{2}\\
&\qquad\qquad\qquad+\norm{\Dive S}_{\Lttom}^{2}+\norm{\CCt S}_{\LtttSom}^{2}\big)^{1/2}
\end{align*}
holds for all $(S,v)$ in
\begin{align*}
\dom\calD^{\elap}_{\red}
&=\big(\dom(\rDS)\cap\dom(\CCtS)\cap\harmelaNom^{\bot_{\LtttSom}}\big)
\times H^{1,3}_{0}(\om)
\intertext{or $(v,S)$ in}
\dom(\calD^{\elap}_{\red})^{*}
&=\big(H^{1,3}(\om)\cap\RM_{\pw}^{\bot_{\Lttom}}\big)\\
&\qquad\qquad\times\big(\dom(\rCCtS)\cap\dom(\DS)\cap\harmelaDom^{\bot_{\LtttSom}}\big)
\end{align*}
with some optimal constant $c_{\calD^{\elap}}>0$.
\end{corollary}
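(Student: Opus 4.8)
The plan is to obtain this corollary as the verbatim specialisation of the abstract Corollary \ref{cor:FP} to the elasticity complex. First I would recall from Theorem \ref{thm:lHc-ela} that $(\rsymGrad,\rCCtS,\rDS)$ is a maximal compact Hilbert complex, so Corollary \ref{cor:FP} applies with $H_{0}=H_{3}=\Lttom$, $H_{1}=H_{2}=\LtttSom$, and $A_{0}=\rsymGrad$, $A_{1}=\rCCtS$, $A_{2}=\rDS$, whence $A_{0}^{*}=-\DS$, $A_{1}^{*}=\CCtS$, $A_{2}^{*}=-\symGrad$, and $\calD$, $\calD^{*}$ become $\calD^{\elap}$, $(\calD^{\elap})^{*}$. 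Corollary \ref{cor:FP} then immediately provides the compactness of $(\calD^{\elap}_{\red})^{-1}:\ran\calD^{\elap}\to\ran(\calD^{\elap})^{*}$ and $((\calD^{\elap}_{\red})^{*})^{-1}:\ran(\calD^{\elap})^{*}\to\ran\calD^{\elap}$, their boundedness as maps into the graph-norm domains $\dom\calD^{\elap}_{\red}$, $\dom(\calD^{\elap}_{\red})^{*}$, and the equivalent abstract Friedrichs--Poincar\'e estimate, with the same optimal constant, which we name $c_{\calD^{\elap}}$.

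Next I would rewrite the abstract estimate in concrete terms. For $(S,v)\in\dom\calD^{\elap}_{\red}\subset\LtttSom\times\Lttom$ the abstract bound $\bnorm{(x,y)}_{H_{2}\times H_{0}}\leq c_{\calD}\big(\norm{A_{2}x}^{2}+\norm{A_{1}^{*}x}^{2}+\norm{A_{0}y}^{2}\big)^{1/2}$ becomes $\bnorm{(S,v)}_{\LtttSom\times\Lttom}\leq c_{\calD^{\elap}}\big(\norm{\Dive S}_{\Lttom}^{2}+\norm{\CCt S}_{\LtttSom}^{2}+\norm{\symGrad v}_{\LtttSom}^{2}\big)^{1/2}$, since on the respective domains the decorated operators $\rDS$, $\CCtS$, $\rsymGrad$ act as the classical $\Dive$, $\CCt$, $\symGrad$; this is exactly the asserted inequality. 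For $(v,S)\in\dom(\calD^{\elap}_{\red})^{*}$ the corresponding estimate for $\calD^{*}$ reads $\bnorm{(v,S)}\leq c_{\calD^{\elap}}\big(\norm{A_{2}^{*}v}^{2}+\norm{A_{1}S}^{2}+\norm{A_{0}^{*}S}^{2}\big)^{1/2}=c_{\calD^{\elap}}\big(\norm{\symGrad v}_{\LtttSom}^{2}+\norm{\CCt S}_{\LtttSom}^{2}+\norm{\Dive S}_{\Lttom}^{2}\big)^{1/2}$, which after reordering the entries of the product norm is the same inequality; this accounts for the `or' in the statement.

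Finally I would make the reduced domains explicit. By definition $\dom\calD^{\elap}_{\red}=\dom\calD^{\elap}\cap(\ker\calD^{\elap})^{\bot_{\LtttSom\times\Lttom}}$; using $\dom\calD^{\elap}=(\dom(\rDS)\cap\dom(\CCtS))\times\dom(\rsymGrad)$, Korn's inequality in the form $\dom(\rsymGrad)=H^{1,3}_{0}(\om)$, and the orthogonality relation $(\ker\calD^{\elap})^{\bot_{\LtttSom\times\Lttom}}=\harmelaNom^{\bot_{\LtttSom}}\times\Lttom$ recorded just above the corollary, one obtains the displayed form of $\dom\calD^{\elap}_{\red}$; the identical computation with $\dom(\symGrad)=H^{1,3}(\om)$ and $(\ker(\calD^{\elap})^{*})^{\bot_{\Lttom\times\LtttSom}}=\RM_{\pw}^{\bot_{\Lttom}}\times\harmelaDom^{\bot_{\LtttSom}}$ yields the form of $\dom(\calD^{\elap}_{\red})^{*}$. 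Since everything is a direct translation of results already proved, I do not anticipate any genuine obstacle; the only points requiring mild care are the bookkeeping of the Helmholtz-type orthogonal decompositions \eqref{deco1}--\eqref{deco2} underlying the identification of the reduced domains, and the standard functional-analytic fact -- part of the `equivalently' clause -- that boundedness of $(\calD^{\elap}_{\red})^{-1}$ into its graph norm is the same statement as the Friedrichs--Poincar\'e estimate on $\dom\calD^{\elap}_{\red}$.
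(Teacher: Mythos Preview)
Your proposal is correct and follows exactly the paper's approach: the corollary is stated without a separate proof, being presented as a direct specialisation of the abstract Corollary~\ref{cor:FP} once the maximal compactness from Theorem~\ref{thm:lHc-ela} is in place and the kernels, ranges, and domains have been identified in the text immediately preceding the corollary. Your write-up merely makes explicit the bookkeeping that the paper leaves to the reader.
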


\section{The Main Topological Assumptions}
\label{app:sec:DirNeuF}

In Theorem \ref{thm:DNF}, Theorem \ref{thm:DNTbih}, Theorem \ref{thm:DNTbih2}, and Theorem \ref{thm:DNTela}
we have seen that the dimensions of the harmonic Dirichlet and Neumann fields 
are given by the topological invariants of the open and bounded set $\om$
and its complement 
$$\Xi:=\R^3\setminus\overline{\om},$$ 
i.e., by
\begin{itemize}
\item
$n$, the number of connected components $\om_{k}$ of $\om$, i.e.,
$\om=\dot\bigcup_{k=1}^{n}\,\om_{k}$,
\item
$m$, the number of connected components $\Xi_{\ell}$ of $\Xi$, i.e.,
$\Xi=\dot\bigcup_{\ell=0}^{m-1}\,\Xi_{\ell}$,
\item
$p$, the number of handles of $\om$, see Assumption \ref{ass:curvesandsurfaces}.
\end{itemize}
Note that $\cc(\om)=\{\om_{1},\dots,\om_{n}\}$ and $\cc(\Xi)=\{\Xi_{0},\dots,\Xi_{m-1}\}$.
We have claimed
\begin{align*}
\dim\harmDom&=m-1,
&
\dim\harmNom&=p,\\
\dim\harmbihoneDom&=4(m-1),
&
\dim\harmbihoneNom&=4p,\\
\dim\harmbihtwoDom&=4(m-1),
&
\dim\harmbihtwoNom&=4p,\\
\dim\harmelaDom&=6(m-1),
&
\dim\harmelaNom&=6p.
\end{align*}

The concluding sections of this manuscript are devoted to provide the corresponding proofs in detail.
For the de Rham complex we follow in close lines the arguments of Picard in \cite{P1982}
introducing some simplifications for bounded domains and trivial material tensors $\eps$ and $\mu$.
These ideas will be adapted and modified for the proofs 
of the corresponding results of the other Hilbert complexes.

\begin{assumption}
\label{ass:segmentprop}
$\om\subset\R^{3}$ is open and bounded with segment property, i.e.,
$\om$ has a continuous boundary $\ga:=\p\om$, see Remark \ref{segmentcont}.
\end{assumption}

\begin{assumption}
\label{ass:stronglip}
$\om\subset\R^{3}$ is open, bounded, and $\ga$ is strong Lipschitz.
\end{assumption}

In view of Assumption \ref{ass:segmentprop} and Assumption \ref{ass:stronglip} we note:
\begin{itemize}
\item
Assumption \ref{ass:segmentprop} guarantees that $m,n\in\N$ are well-defined.
In particular, we have $\mathrm{int}\,\Xi_{\ell}\neq\emptyset$ for all $\ell\in\{0,\dots,m-1\}$.
\item
Assumption \ref{ass:stronglip} implies Assumption \ref{ass:segmentprop}.
\item
Assumption \ref{ass:stronglip} simplifies some arguments, in particular,
all ranges in the crucial Helmholtz type decompositions used in our proofs
are closed, cf. Remark \ref{rem:Helmholtz-derhamN}, Remark \ref{rem:Helmholtz-bih1N}, 
Remark \ref{rem:Helmholtz-bih2N}, and Remark \ref{rem:Helmholtz-elaN}.
We emphasise that all our results presented in the following still hold with 
Assumption \ref{ass:stronglip} replaced by the weaker Assumption \ref{ass:segmentprop}.
In this case, however, the computation (and verification of the existence of) 
the Fredholm index in the sections above is more involved. 
In fact, it is not clear if the mentioned ranges are closed and 
in some of our arguments we need to use some additional density
and approximation arguments. 
\item
Our results concerning the bases and dimensions
of the generalised Dirichlet and Neumann fields extend 
naturally to exterior domains, i.e., domains with bounded complement $\Xi$.
For simplicity and to avoid even longer and more technical proofs
we restrict ourselves to the case of bounded domains $\om$ here.
\end{itemize}

The key topological assumptions to be satisfied by $\om$
to compute a basis for the Neumann fields and for $p$ to be well-defined, is described in detail next. 
For this, we recall the construction in \cite{P1982}. 

\begin{assumption}[{\cite[Section 1]{P1982}}]
\label{ass:curvesandsurfaces}
Let $\om\subset\R^3$ be open and bounded. 
There are $p\in\N_{0}$ piecewise $C^1$-curves $\zeta_{j}$
and $p$ $C^{2}$-surfaces $F_{j}$, $j\in\{1,\ldots, p\}$, with the following properties:
\begin{enumerate}
\item[\bf(A1)]
The curves $\zeta_{j}$, $j\in\{1,\ldots, p\}$, are pairwise disjoint 
and given any closed piecewise $C^1$-curve $\zeta$ in $\om$ there exists uniquely determined 
$\alpha_{j}\in\Z$, $j\in\{1,\ldots, p\}$, 
such that for all $\Phi\in\ker(\curl)$ being continuously differentiable we have
$$\int_{\zeta}\scp{\Phi}{\intd\lambda} = \sum_{j=1}^{p}\alpha_{j}\int_{\zeta_{j}}\scp{\Phi}{\intd\lambda}.$$
\item[\bf(A2)]
$ F_{j}$, $j\in\{1,\ldots, p\}$, are pairwise disjoint 
and $ F_{j}\cap \zeta_{k}$ is a singleton, if $j=k$, and empty, if $j\neq k$.
\item[\bf(A3)] 
If $\om_{c}\in\cc(\om)$, i.e., $\om_{c}$ is a connected component of $\om$, 
then $\om_{c}\setminus\bigcup_{j=1}^{p} F_{j}$ is simply connected.
\end{enumerate}
\end{assumption}

$p$ is called the topological genus of $\om$
and the curves $\zeta_{j}$ are said to represent a basis 
of the respective homology group of $\om$.

It is worth mentioning the following local regularity results 
for the Dirichlet and Neumann fields (see Lemma \ref{lem:harmreg} below), 
which are crucial for the construction of the Neumann fields,
\begin{align}
\label{app:harmreg}
\begin{aligned}
\harmDom,\harmNom
&\subset\Citom\cap\Lttom,\\
\harmbihoneDom,\harmelaDom,\harmbihtwoNom,\harmelaNom
&\subset\Cittom\cap\LtttSom,\\
\harmbihtwoDom,\harmbihoneNom
&\subset\Cittom\cap\LtttTom.
\end{aligned}
\end{align}
In particular, all Dirichlet and Neumann fields of the respective cohomology groups
are continuous and square integrable.

\section{The Construction of the Dirichlet Fields}
\label{app:sec:DirF}

Let us denote the unbounded connected component of $\Xi$ by $\Xi_{0}$
and its boundary by $\ga_{0}:=\p\Xi_{0}$. The remaining connected components of $\Xi$ 
are $\Xi_{1},\dots,\Xi_{m-1}$ with boundaries $\ga_{\ell}:=\p\Xi_{\ell}$.
Note that none of $\ga_{0},\dots,\ga_{m-1}$ need to be connected.
Furthermore, let us introduce an open (and bounded) ball 
$B\supset\overline{\om}$ and set $\widetilde{\Xi}_{0}:=B\cap\Xi_{0}$.
Then the connected components of $B\setminus\overline{\om}$ are
$\widetilde{\Xi}_{0}$ and $\Xi_{1},\dots,\Xi_{m-1}$. 
Moreover, let 
\begin{align}
\label{def:xi-ell}
\xi_{\ell}\in\Cic(\R^{3}),\qquad
\ell\in\{1,\dots,m-1\}, 
\end{align}
with disjoint supports
such that $\xi_{\ell}=0$ in a neighbourhood of $\Xi_{0}$ 
and in a neighbourhood of $\Xi_{k}$ for all $k\in\{1,\dots,m-1\}$, $k\neq\ell$,
as well as $\xi_{\ell}=1$ in a neighbourhood of $\Xi_{\ell}$.
In particular, $\xi_{\ell}=0$ in a neighbourhood of $\ga_{0}$ 
and in a neighbourhood of $\ga_{k}$ for all $k\in\{1,\dots,m-1\}$, $k\neq\ell$,
and $\xi_{\ell}=1$ in a neighbourhood of $\ga_{\ell}$.
Theses indicator type functions $\xi_{\ell}$ will be used to construct 
a basis for the respective Dirichlet fields.

\subsection{\except{toc}{Dirichlet Vector Fields of the Classical }De Rham Complex}
\label{app:sec:DirVFdeRham}

In this section, we rephrase the core arguments of \cite{P1982} in the simplified setting of bounded domains
and trivial materials $\eps$ and $\mu$. 
In order to highlight the apparent similarities and to motivate our rationale carried out 
for more involved situations later on, we shall present the construction for Dirichlet fields 
(and similarly for Neumann fields) in a seemingly great detail.

For the de Rham complex, see also \eqref{deco1} and \eqref{deco1.5}, 
we have the orthogonal decompositions 
\begin{align}
\label{app:deco1-derham}
\begin{aligned}
\Lttom=H_{1}
=\ran A_{0}\oplus_{H_{1}}\ker A_{0}^{*}
&=\ran(\rgrad,\om)\oplus_{\Lttom}\ker(\dive,\om),\\
\ker(\rcurl,\om)=\ker(A_{1})
=\ran A_{0}\oplus_{H_{1}}K_{1}
&=\ran(\rgrad,\om)\oplus_{\Lttom}\harmDom.
\end{aligned}
\end{align}

\begin{remark}
\label{rem:Helmholtz-derham}
We have $\dom(\rgrad,\om)=H^{1}_{0}(\om)$.
Moreover, the range in \eqref{app:deco1-derham} is closed due to the Friedrichs estimate
$$\exists\,c>0\quad\forall\,\phi\in H^{1}_{0}(\om)\qquad
\norm{\phi}_{\Ltom}\leq c\norm{\grad\phi}_{\Lttom},$$
which follows from Assumption \ref{ass:segmentprop}.
We recall that for the Friedrichs estimate to hold it suffices to assume that $\om$ is open and bounded only.
\end{remark}

Let us denote by $\pi:\Lttom\to\ker(\dive,\om)$ 
the orthogonal projector along $\ran(\rgrad,\om)$ onto $\ker(\dive,\om)$, 
which is well-defined according to \eqref{app:deco1-derham}. 
Moreover, we observe by \eqref{app:deco1-derham} that $\pi(\ker(\rcurl,\om))=\harmDom$.
Recall $\xi_{\ell}$ from \eqref{def:xi-ell}. 
Then for $\ell\in\{1,\dots,m-1\}$ 
$$\grad\xi_{\ell}\in\Cictom\cap\ker(\curl,\om)\subset\ker(\rcurl,\om).$$ 
Again relying on \eqref{app:deco1-derham} (and Remark \ref{rem:Helmholtz-derham}) 
for all $\ell\in\{1,\ldots,m-1\}$, we find uniquely determined $\psi_{\ell}\in H^{1}_{0}(\om)$ such that 
\begin{equation}
\label{eq:psiDVF}
\harmDom\ni\pi\grad\xi_{\ell}=\grad(\xi_{\ell}-\psi_{\ell})=\grad u_{\ell},\qquad
u_{\ell}:=\xi_{\ell}-\psi_{\ell}\in H^{1}(\om).
\end{equation}

We will show that 
\begin{align}
\label{basis:rhmD}
\calB^{\rhm}_{D}:=\{\grad u_{1},\dots,\grad u_{m-1}\}
\subset\harmDom
\end{align}
defines a basis of $\harmDom$. The first step for showing this statement is the next lemma.

\begin{lemma}
\label{dimDFrhm:lem1}
Let Assumption \ref{ass:segmentprop} be satisfied. Then
$\harmDom=\lin\calB^{\rhm}_{D}$.
\end{lemma}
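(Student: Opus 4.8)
The plan is to show the two inclusions $\lin\calB^{\rhm}_{D}\subset\harmDom$ and $\harmDom\subset\lin\calB^{\rhm}_{D}$ separately, with the first being essentially immediate and the second being the substantive part. For the easy inclusion, each $\grad u_{\ell}=\pi\grad\xi_{\ell}$ lies in $\harmDom$ by construction, see \eqref{eq:psiDVF}; since $\harmDom$ is a linear space, the span of the $\grad u_{\ell}$ is contained in it. The real task is the reverse inclusion: given $E\in\harmDom$, I must write $E$ as a linear combination of $\grad u_{1},\dots,\grad u_{m-1}$.

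First I would recall from the local regularity in \eqref{app:harmreg} that $E\in\harmDom\subset\Citom\cap\Lttom$, so $E$ is a smooth, square-integrable vector field with $\curl E=0$ on $\om$ and $\dive E=0$ on $\om$. Because $E\in\ker(\rcurl,\om)$ and $\om$ is open (each connected component $\om_c$ minus the cutting surfaces being simply connected is not even needed here for the de Rham case, only that $\curl E=0$), I would use that on a neighbourhood of each complementary component $\Xi_\ell$ inside the ball $B$, $E$ extends (by the homogeneous tangential boundary condition, i.e.\ membership in $H_0(\curl)$) to a curl-free field, hence is locally a gradient. The key quantitative invariants are the ``periods'' or fluxes: for $E\in\harmDom$ I would consider the numbers obtained by pairing $E$ against the test fields $\grad\xi_k$, namely $\scp{E}{\grad\xi_k}_{\Lttom}$ for $k\in\{1,\dots,m-1\}$. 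The plan is to show that the linear map $\harmDom\to\R^{m-1}$, $E\mapsto\big(\scp{E}{\grad\xi_k}_{\Lttom}\big)_{k}$ is injective, which combined with $\dim\harmDom\le m-1$ (to be obtained, or: combined with the fact that the $m-1$ candidate basis vectors already span a space of the right dimension) forces $\harmDom=\lin\calB^{\rhm}_{D}$.

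Concretely: suppose $E\in\harmDom$ satisfies $\scp{E}{\grad\xi_k}_{\Lttom}=0$ for all $k$. Since $E\in\ker(\dive,\om)$ and $E$ is orthogonal to $\ran(\rgrad,\om)$ as well (that is the defining property $E\in\ker(\dive,\om)=\ran(\rgrad,\om)^{\bot}$ from \eqref{app:deco1-derham}), orthogonality to each $\grad\xi_k$ is exactly orthogonality to $\pi\grad\xi_k=\grad u_k$; but more is true. I would argue that $E\in H_0(\curl,\om)\cap H(\dive,\om)$ with $\curl E=0$, $\dive E=0$, together with all these flux conditions vanishing, forces $E$ to lie in $\ran(\rgrad,\om)$: one solves $\dive\,\grad\phi=\dive E=0$ with the appropriate boundary data so that $E-\grad\phi$ has vanishing fluxes through the separating surfaces and is curl-free with full tangential boundary condition on a simply connected (after cutting) enlarged configuration, hence equals a gradient; since $E\bot\ran(\rgrad,\om)$ this gives $E=0$. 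Thus the map is injective, $\dim\harmDom\le m-1$, and since $\grad u_1,\dots,\grad u_{m-1}\in\harmDom$ span a subspace whose image under this injective map is easily checked to be all of $\R^{m-1}$ (this is the content of the linear-independence lemma that presumably follows), we conclude $\harmDom=\lin\calB^{\rhm}_D$.

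\textbf{Main obstacle.} The delicate point is the claim that a curl-free, divergence-free field in $H_0(\curl,\om)$ with all fluxes $\scp{E}{\grad\xi_k}_{\Lttom}$ vanishing must already be a gradient of an $H^1_0$-function (equivalently, must be $0$ given the orthogonality). This is precisely where the topology of $\Xi$ (the $m-1$ bounded complementary components) enters, and where Picard's original argument in \cite{P1982} does its work; the honest version requires either a Mayer--Vietoris / extension-to-the-ball argument or an explicit potential-theoretic construction using the cutting functions $\xi_\ell$ and the fact (from Assumption \ref{ass:segmentprop}) that $\mathrm{int}\,\Xi_\ell\neq\emptyset$. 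I expect the paper handles this by patching: extend $E$ by $0$ across $\ga_0$ into $\Xi_0$, observe the extension is curl-free on $B\setminus\bigcup_{\ell\ge1}\overline{\Xi_\ell}$ which is simply connected, hence a global gradient on $B$ minus the bounded holes, then read off that the only obstruction to $E$ being a gradient on $\om$ itself is the $m-1$ flux numbers — and these vanish by hypothesis. I would present that patching argument as the heart of the proof.
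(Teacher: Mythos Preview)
Your instinct in the ``main obstacle'' paragraph is correct, but the paper's argument is cleaner than what you outline and avoids the flux/period detour entirely. The key simplification: extend $E$ by zero to \emph{all} of $B$, including the bounded complementary components $\Xi_1,\dots,\Xi_{m-1}$, not just across $\ga_0$. Because $E\in\ker(\rcurl,\om)$, the zero extension $\widetilde{E}$ lies in $\ker(\rcurl,B)$; since $B$ is a ball and hence topologically trivial, $\widetilde{E}=\grad u$ for a unique $u\in H^1_0(B)$. Now $\grad u=0$ on each $\Xi_\ell$, so $u|_{\Xi_\ell}=:\alpha_\ell\in\R$ (and $u|_{\widetilde{\Xi}_0}=0$ by the boundary condition on $\p B$). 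These constants are directly the coefficients: setting $\widehat{E}:=E-\sum_\ell\alpha_\ell\grad u_\ell$, one checks that the corresponding $\widehat{u}=u-\sum_\ell\alpha_\ell u_\ell$ extends to an element of $H^1_0(B)$ vanishing on every $\Xi_\ell$, hence $\widehat{u}\in H^1_0(\om)$ by the segment property, and then $\norm{\widehat{E}}^2=\scp{\grad\widehat{u}}{\widehat{E}}=0$.

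Your proposed route via the flux map $E\mapsto\big(\scp{E}{\grad\xi_k}\big)_k$ is a legitimate alternative, but as you yourself note it requires the linear-independence lemma (or an independent computation of the fluxes of the $\grad u_\ell$) to close, so it does not establish \emph{this} lemma in isolation. More seriously, your sketch of the injectivity step --- ``solving $\dive\grad\phi=\dive E=0$ with appropriate boundary data'', ``vanishing fluxes through the separating surfaces'', ``simply connected after cutting'' --- imports language from the Neumann-field construction: cutting surfaces $F_j$ and simple connectedness of $\om\setminus\bigcup_j F_j$ play no role for Dirichlet fields. The relevant topology here is governed by the bounded complementary components $\Xi_\ell$, not by handles. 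The paper's direct potential construction on the full ball $B$ sidesteps all of this and produces the coefficients $\alpha_\ell$ without any auxiliary PDE or flux computation.
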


\begin{proof}
Let $H\in\harmDom=\ker(\rcurl,\om)\cap\ker(\dive,\om)$.
In particular, by the homogeneous boundary condition its extension by zero, $\widetilde{H}$,
to $B$ belongs to $\ker(\rcurl,B)$.
As $B$ is topologically trivial (and smooth and bounded), there exists (a unique) 
$u\in H^{1}_{0}(B)$ such that $\grad u=\widetilde{H}$ in $B$,
see, e.g., \cite[Lemma 2.24]{PZ2020a}. As $\grad u=\widetilde{H}=0$ in $B\setminus\overline{\om}$,
$u$ must be constant in each connected component 
$\widetilde{\Xi}_{0},\Xi_{1},\dots,\Xi_{m-1}$
of $B\setminus\overline{\om}$.
Due to the homogenous boundary condition at $\p\!B$, $u$ vanishes in $\widetilde{\Xi}_{0}$.
Therefore, $H=\grad u$ in $\om$ and $u\in H^{1}_{0}(B)$ 
such that $u|_{\widetilde{\Xi}_{0}}=0$
and $u|_{\Xi_{\ell}}=:\alpha_{\ell}\in\R$ for all $\ell\in\{1,\dots,m-1\}$.
Let us consider 
$$\widehat{H}:=H-\sum_{\ell=1}^{m-1}\alpha_{\ell}\grad u_{\ell}
=\grad\widehat{u}\in\harmDom,\qquad
\widehat{u}:=u-\sum_{\ell=1}^{m-1}\alpha_{\ell}u_{\ell}\in H^{1}(\om)$$
with $u_{\ell}$ from \eqref{eq:psiDVF}.
The extension by zero of $\psi_{\ell}$, $\widetilde{\psi}_{\ell}$, to the whole of $B$ belongs to $H^{1}_{0}(B)$.
Hence as an element of $H^{1}(B)$ we see that 
$$\widehat{u}_{B}
:=u-\sum_{\ell=1}^{m-1}\alpha_{\ell}\xi_{\ell}
+\sum_{\ell=1}^{m-1}\alpha_{\ell}\widetilde{\psi}_{\ell}\in H^{1}_{0}(B)$$
vanishes in $\Xi_{\ell}$ for all $\ell\in\{0,\ldots,m-1\}$.
Thus $\widehat{u}=\widehat{u}_{B}|_{\om}\in H^{1}_{0}(\om)$ by Assumption \ref{ass:segmentprop}, and we compute
$$\norm{\widehat{H}}_{\Lttom}^{2}
=\scp{\grad\widehat{u}}{\widehat{H}}_{\Lttom}=0,$$
finishing the proof.
\end{proof}

Before we show linear independence of the set $\calB^{\rhm}_{D}$, 
we highlight the possibility of determining the functions constructed here by solving certain PDEs. 
This can be used for numerically determining a basis for $\harmDom$.

\begin{remark}[Characterisation by PDEs]
\mbox{}
\begin{itemize}
\item[\bf(i)]
It is not difficult to see that 
$\psi_{\ell}\in H^{1}_{0}(\om)$ as in \eqref{eq:psiDVF} 
can be found as the solution of the standard variational formulation
$$\forall\,\phi\in H^{1}_{0}(\om)\qquad
\scp{\grad\psi_{\ell}}{\grad\phi}_{\Lttom}
=\scp{\grad\xi_{\ell}}{\grad\phi}_{\Lttom},$$
i.e., $\psi_{\ell}=\Delta_{D}^{-1}\Delta\xi_{\ell}$, 
where $\Delta_{D}=\dive\rgrad$ denotes the Laplacian 
with standard homogeneous Dirichlet boundary conditions on $\om$.
\item[\bf(ii)]
As a consequence of (i) and \eqref{eq:psiDVF}, we obtain 
$u_{\ell}
=\xi_{\ell}-\psi_{\ell}
=(1-\Delta_{D}^{-1}\Delta)\xi_{\ell}\in H^{1}(\om)$
and
\begin{align*}
\grad u_{\ell}
&=(1-\grad\Delta_{D}^{-1}\dive)\grad\xi_{\ell}.
\end{align*}
Let us also mention that for $\ell\in\{1,\ldots,m-1\}$, $u_{\ell}$
solves in classical terms the Dirichlet Laplace problem
\begin{align}
\label{DirLap2}
\begin{aligned}
-\Delta u_{\ell}=-\dive\grad u_{\ell}&=0
&&\text{in }\om,\\
u_{\ell}&=1
&&\text{on }\ga_{\ell},\\
u_{\ell}&=0
&&\text{on }\ga_{k},\, k\in\{0,\dots,m-1\}\setminus\{\ell\},
\end{aligned}
\end{align}
which is uniquely solvable.
In particular, for all $\ell\in\{1,\dots,m-1\}$, $u_{\ell}=0$ on $\ga_{0}$. 
\item[\bf(iii)]
$u$ (representing $H=\grad u$)
constructed in the proof of Lemma \ref{dimDFrhm:lem1} 
solves the linear Dirichlet Laplace problem
\begin{align*}
-\Delta u=-\dive\grad u=-\dive H&=0
&&\text{in }\om,\\
u&=0
&&\text{on }\ga_{0},\\
u&=\alpha_{\ell}\in\R
&&\text{on }\ga_{\ell},\,\ell \in\{1,\dots,m-1\},
\end{align*}
which is uniquely solvable as long as the constants, $\alpha_{\ell}$, are prescribed. 
\end{itemize}
\end{remark}

\begin{lemma}
\label{dimDFrhm:lem2}
Let Assumption \ref{ass:segmentprop} be satisfied. Then
$\calB^{\rhm}_{D}$ is linearly independent.
\end{lemma}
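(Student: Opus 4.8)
The plan is to exploit, in reverse, the extension-to-a-ball device already used in the proof of Lemma~\ref{dimDFrhm:lem1}. Suppose $\beta_{1},\dots,\beta_{m-1}\in\R$ satisfy $\sum_{\ell=1}^{m-1}\beta_{\ell}\grad u_{\ell}=0$ in $\Lttom$; the goal is to deduce that all $\beta_{\ell}$ vanish. Recalling from \eqref{eq:psiDVF} that $u_{\ell}=\xi_{\ell}-\psi_{\ell}$ with $\psi_{\ell}\in H^{1}_{0}(\om)$, and that the zero-extension $\widetilde{\psi}_{\ell}$ of $\psi_{\ell}$ to $B$ lies in $H^{1}_{0}(B)$, I would introduce
\[
w:=\sum_{\ell=1}^{m-1}\beta_{\ell}\bigl(\xi_{\ell}-\widetilde{\psi}_{\ell}\bigr)\in H^{1}(B),
\]
which restricts on $\om$ to $\sum_{\ell=1}^{m-1}\beta_{\ell}u_{\ell}$.

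The key steps would then be as follows. First, by hypothesis $\grad w=\sum_{\ell}\beta_{\ell}\grad u_{\ell}=0$ almost everywhere on $\om$. Second, on $B\setminus\overline{\om}$, which splits into the connected components $\widetilde{\Xi}_{0},\Xi_{1},\dots,\Xi_{m-1}$, one has $\widetilde{\psi}_{\ell}=0$, hence $w=\sum_{\ell}\beta_{\ell}\xi_{\ell}$ there; since each $\xi_{\ell}$ from \eqref{def:xi-ell} is constant in a neighbourhood of every one of these components, this yields $\grad w=0$ almost everywhere on $B\setminus\overline{\om}$, together with the explicit values $w=0$ on $\widetilde{\Xi}_{0}$ and $w=\beta_{\ell}$ on $\Xi_{\ell}$. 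Third, because the boundary $\ga=\overline{\om}\setminus\om$ is a Lebesgue null set under Assumption~\ref{ass:segmentprop}, the first two points combine to give $\grad w=0$ almost everywhere on the connected open set $B$, so $w$ is constant on $B$; evaluating on the nonempty open sets $\widetilde{\Xi}_{0}$ and $\Xi_{\ell}$ (nonempty by Assumption~\ref{ass:segmentprop}) forces this constant to be $0$ and to equal each $\beta_{\ell}$, so $\beta_{\ell}=0$ for all $\ell$, which is exactly the asserted linear independence of $\calB^{\rhm}_{D}$.

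I expect the only genuinely delicate point to be the third step: passing from ``$\grad w=0$ a.e.\ on $\om$ and a.e.\ on $B\setminus\overline{\om}$'' to ``$\grad w=0$ a.e.\ on $B$''. This is where boundary regularity enters, since the gradient of $w\in H^{1}(B)$ is an honest $L^{2}(B)$-field and therefore cannot carry a part concentrated on $\ga$ once $|\ga|=0$, a fact supplied by the segment property. The remaining ingredients---$w\in H^{1}(B)$, local constancy of the cut-offs $\xi_{\ell}$ near the components of the complement, and the evaluations on $\widetilde{\Xi}_{0}$ and $\Xi_{\ell}$---are routine bookkeeping already present in the proof of Lemma~\ref{dimDFrhm:lem1}. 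One could instead try to isolate $\beta_{k}$ by pairing $\sum_{\ell}\beta_{\ell}\grad u_{\ell}$ with $\grad\xi_{k}$, but this merely reproduces the Gram matrix of $\calB^{\rhm}_{D}$ and hence does not settle independence on its own; the extension argument above is the clean route.
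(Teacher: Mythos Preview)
Your proposal is correct and follows essentially the same approach as the paper: both extend $\sum_{\ell}\beta_{\ell}u_{\ell}$ to an $H^{1}$-function on the ball $B$ via $\xi_{\ell}-\widetilde{\psi}_{\ell}$, observe that its gradient vanishes a.e.\ on $\om$ and on $B\setminus\overline{\om}$ (hence on all of $B$ since $|\ga|=0$), and then read off $\beta_{\ell}=0$ from the values on $\widetilde{\Xi}_{0}$ and $\Xi_{\ell}$. The paper states the measure-zero step a bit more tersely, but the argument is the same.
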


\begin{proof}
Let $\alpha_{\ell}\in\R$ for all $\ell\in\{1,\ldots,m-1\}$ such that
$$\sum_{\ell=1}^{m-1}\alpha_{\ell}\grad u_{\ell}=0;\qquad\text{set }
u:=\sum_{\ell=1}^{m-1}\alpha_{\ell}u_{\ell}.$$
Then $\grad u=0$ in $\om$, i.e., $u$ is constant in each connected component of $\om$.
We show $u=0$. Since $\psi_{\ell}\in H_{0}^1(\om)$ and $\xi_{\ell}\in H_{0}^1(B)$ 
we can extend $u_{\ell}=\xi_{\ell}-\psi_{\ell}$ from \eqref{eq:psiDVF} to $B$ by setting
$$\widetilde{u}_{\ell}
:=\begin{cases}
u_{\ell}&\text{ in }\om,\\
\xi_{\ell}&\text{ in }B\setminus\overline{\om},
\end{cases}\qquad
\grad\widetilde{u}_{\ell}
=\begin{cases}
\grad u_{\ell}&\text{ in }\om,\\
\grad\xi_{\ell}=0&\text{ in }B\setminus\overline{\om}.
\end{cases}$$
Note $\widetilde{u}_{\ell}\in H_{0}^1(B)$.
Moreover, for all $\ell\in\{1,\ldots,m-1\}$, 
we have $\widetilde{u}_{\ell}=\xi_{\ell}=1$ in $\Xi_{\ell}$ 
and $\widetilde{u}_{\ell}=\xi_{\ell}=0$ 
in $\widetilde{\Xi}_{0}\cup\bigcup_{k\in\{1,\ldots,m-1\}\setminus\{\ell\}}\Xi_{k}$. Then
$$\widetilde{u}:=\sum_{\ell=1}^{m-1}\alpha_{\ell}\widetilde{u}_{\ell}\in H^{1}_{0}(B)$$
with $\widetilde{u}=0$ in $\widetilde{\Xi}_{0}$ and $\grad\widetilde{u}=0$ in $B\setminus\overline{\om}$ 
as well as $\grad\widetilde{u}=\grad u=0$ in $\om$ by assumption.
Hence, $\grad\widetilde{u}=0$ in $B$, showing $\widetilde{u}=0$ in $B$.
In particular, $u=0$ in $\om$, 
and $\alpha_{\ell}=\widetilde{u}|_{\Xi_{\ell}}=0$ for all $\ell\in\{1,\dots,m-1\}$,
finishing the proof.
\end{proof}

\begin{theorem}
\label{dimDFrhm}
Let Assumption \ref{ass:segmentprop} be satisfied.
Then $\dim\harmDom=m-1$ and a basis of $\harmDom$ is given by \eqref{basis:rhmD}.
\end{theorem}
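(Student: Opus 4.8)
The plan is to obtain the statement directly from the two preceding lemmas, so the proof itself is short. First I would invoke Lemma \ref{dimDFrhm:lem1} to get $\harmDom=\lin\calB^{\rhm}_{D}$, i.e.\ the set $\calB^{\rhm}_{D}=\{\grad u_{1},\dots,\grad u_{m-1}\}$ spans $\harmDom$ and consists of at most $m-1$ vectors. Then I would invoke Lemma \ref{dimDFrhm:lem2} to conclude that these $m-1$ vectors are linearly independent. Combining the two, $\calB^{\rhm}_{D}$ is a basis of $\harmDom$, and counting its elements yields $\dim\harmDom=m-1$. The only point needing a word is that the $u_{\ell}$ (equivalently the fields $\grad u_{\ell}\in\harmDom$) are genuinely well-defined, which was arranged in the construction preceding the theorem: $\psi_{\ell}\in H^{1}_{0}(\om)$ is produced by applying the Helmholtz decomposition \eqref{app:deco1-derham} to $\grad\xi_{\ell}\in\ker(\rcurl,\om)$, using the closedness of $\ran(\rgrad,\om)$ guaranteed by the Friedrichs inequality (Remark \ref{rem:Helmholtz-derham}), so that no hypothesis beyond Assumption \ref{ass:segmentprop} is needed.

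At the level of the theorem there is no real obstacle; the substance lies entirely in the two lemmas. In Lemma \ref{dimDFrhm:lem1} the key move is to extend a given $H\in\harmDom$ by zero across the ball $B$, write it as $\grad u$ with $u\in H^{1}_{0}(B)$ (possible since $B$ is topologically trivial), observe that $u$ is locally constant on the components of $B\setminus\overline{\om}$ and vanishes on the component $\widetilde{\Xi}_{0}$ touching $\partial B$, and then subtract $\sum_{\ell}\alpha_{\ell}\grad u_{\ell}$ with $\alpha_{\ell}:=u|_{\Xi_{\ell}}$; the corrected field is again the gradient of an $H^{1}_{0}(\om)$ function and hence orthogonal to itself in $\Lttom$, so it is zero. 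In Lemma \ref{dimDFrhm:lem2} linear independence follows from a parallel extension argument: a relation $\sum_{\ell}\alpha_{\ell}\grad u_{\ell}=0$ extends to $\grad\widetilde{u}=0$ on all of $B$ with $\widetilde{u}=0$ on $\widetilde{\Xi}_{0}$ and $\widetilde{u}=\alpha_{\ell}$ on $\Xi_{\ell}$, forcing $\widetilde{u}\equiv 0$ and hence every $\alpha_{\ell}=0$.

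If I had to flag a ``hard part'', it is the bookkeeping of boundary regularity rather than anything conceptual: one must check that the segment property (continuous boundary) is exactly what licences moving $H^{1}$-functions back and forth between $\om$, $B$, and $B\setminus\overline{\om}$ without losing the homogeneous boundary traces, so that, e.g., the corrected potential $\widehat{u}$ in Lemma \ref{dimDFrhm:lem1} indeed lies in $H^{1}_{0}(\om)$ and the extended potential $\widetilde{u}$ in Lemma \ref{dimDFrhm:lem2} indeed lies in $H^{1}_{0}(B)$. Once those gluing facts are in hand, the proof of the theorem is merely ``basis $=$ spanning $+$ independent, count the vectors''.
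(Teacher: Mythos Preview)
Your proposal is correct and matches the paper's proof exactly: the paper's argument is simply ``Use Lemma \ref{dimDFrhm:lem1} and Lemma \ref{dimDFrhm:lem2}.'' Your additional sketch of the content of those lemmas and the role of the segment property is accurate and aligns with the paper's reasoning.
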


\begin{proof}
Use Lemma \ref{dimDFrhm:lem1} and Lemma \ref{dimDFrhm:lem2}.
\end{proof}

\subsection{\except{toc}{Dirichlet Tensor Fields of the }First \PZ Complex}
\label{app:sec:DirTFbih1}

For the first \pz complex, see also \eqref{deco1}, \eqref{deco2}, and \eqref{app:deco1-derham},
we have the orthogonal decompositions 
\begin{align}
\label{app:deco1-bih1}
\begin{aligned}
\LtttSom&=\ran(\rGg,\om)\oplus_{\LtttSom}\ker(\dDS,\om),\\
\ker(\rCS,\om)&=\ran(\rGg,\om)\oplus_{\LtttSom}\harmbihoneDom.
\end{aligned}
\end{align}

\begin{remark}
\label{rem:Helmholtz-bih1}
By \cite[Lemma 3.3]{PZ2020a} we have $\dom(\rGg,\om)=H^{2}_{0}(\om)$.
Moreover, the range in \eqref{app:deco1-bih1} is closed by the Friedrichs type estimate
$$\exists\,c>0\quad\forall\,\phi\in H^{2}_{0}(\om)\qquad
\norm{\phi}_{H^{1}(\om)}\leq c\norm{\Gg\phi}_{\Ltttom},$$
which holds by Assumption \ref{ass:segmentprop}.
Similar to Remark \ref{rem:Helmholtz-derham} it suffices to have $\om$ to be open and bounded.
\end{remark}

We define $\pi:\LtttSom\to \ker(\dDS,\om)$ to be the projector onto 
$\ker(\dDS,\om)$ along $\ran(\rGg,\om)$. 
By \eqref{app:deco1-bih1} we obtain $\pi\big(\ker(\rCS,\om)]\big)=\harmbihoneDom$. 
We recall the functions $\xi_{\ell}$ from \eqref{def:xi-ell}. 
In contrast to the derivation for the de Rham complex, 
here the second order nature of $\Gg$ necessitates the introduction of polynomials $\widehat{p}_{j}$
given by 
$$\widehat{p}_{0}(x):=1,\quad
\widehat{p}_{j}(x):=x_{j}\qquad
\big(x=(x_{1},x_{2},x_{3})^{\top}\in\R^3\big)$$
for $j\in\{1,2,3\}$. We define
$\xi_{\ell,j}:=\xi_{\ell}\widehat{p}_{j}$ for all $\ell\in\{1,\dots,m-1\}$
and $j\in\{0,\dots,3\}$.
In particular, for all $j\in\{0,\dots,3\}$ and $\ell\in\{1,\dots,m-1\}$ we have 
$\xi_{\ell,j}=0$ in a neighbourhood of $\Xi_{k}$ for all $k\in\{0,\dots,m-1\}\setminus\{\ell\}$
and $\xi_{\ell,j}=\widehat{p}_{j}$ in a neighbourhood of $\Xi_{\ell}$. Then 
$$\Gg\xi_{\ell,j}\in\CicttSom\cap\ker(\CS,\om)\subset\ker(\rCS,\om).$$
By \eqref{app:deco1-bih1} (and the Friedrichs type estimate for $\rGg$, see Remark \ref{rem:Helmholtz-bih1}) 
there exists a unique $\psi_{\ell,j}\in H^{2}_{0}(\om)$ such that 
$$\harmbihoneDom\ni\pi\Gg\xi_{\ell,j}=\Gg(\xi_{\ell,j}-\psi_{\ell,j})=\Gg u_{\ell,j},$$
where
\begin{equation}
\label{eq:ulj}
u_{\ell,j}:=\xi_{\ell,j}-\psi_{\ell,j}\in H^{2}(\om).
\end{equation}
We shall show that
\begin{align}
\label{basis:bih1D}
\calB^{\bihone}_{D}:=\big\{\Gg u_{\ell,j}: \ell\in\{1,\ldots,m-1\},\,j\in\{0,\ldots,3\}\big\}
\subset\harmbihoneDom
\end{align}
defines a basis of $\harmbihoneDom$. 
In order to show that the linear hull of $\calB^{\bihone}_{D}$ 
generates $\harmbihoneDom$, we cite the following prerequisite.

\begin{lemma}[{{\cite[Theorem 3.10 (i) and Remark 3.11 (i)]{PZ2020a}}}]
\label{lem:toptrivbih1} 
Let $D\subset\R^3$ be a bounded strong Lipschitz domain. 
Assume $D$ is \emph{topologically trivial}, i.e., 
$D$ is simply connected and $\R^3\setminus D$ is connected. Then 
$$\ker(\rCS,D)=\ran(\rGg,D),\qquad
\ker(\CS,D)=\ran(\Gg,D).$$
\end{lemma}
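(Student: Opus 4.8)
The plan is to bootstrap both identities from the de~Rham case, i.e.\ from the fact that on the topologically trivial domain $D$ (genus $p=0$, connected complement $m=1$) one has $\ker(\curl,D)=\ran(\grad,D)$ and $\ker(\rcurl,D)=\ran(\rgrad,D)$ -- this is exactly Theorem~\ref{thm:DNF} together with the corresponding Helmholtz decompositions, since the trivial topology forces $\dim\harmDom=\dim\harmNom=0$. The algebraic input is $\Curl\Grad=0$ together with the identity ``$\skw\Grad v$ encodes $\curl v$''. Throughout, the inclusions ``$\supseteq$'' are just the complex properties $\CS\Gg\subseteq0$ and $\rCS\rGg\subseteq0$ coming from $\Curl\Grad\grad=0$, already contained in Theorem~\ref{thm:lHc} and Theorem~\ref{thm:lHc2}; only the converse inclusions require argument.

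\emph{The boundary-condition-free identity $\ker(\CS,D)=\ran(\Gg,D)$.} Let $S\in\ker(\CS,D)$. Since $S$ is a symmetric $L^{2}$-tensor field, the identities relating $\Curl$, $\sym$, $\skw$ and the trace show that $\CS S=0$ is equivalent to $\Curl S=0$ in the distributional sense (and $\Curl S$ is then automatically trace free). Applying $\ker(\curl,D)=\ran(\grad,D)$ row by row yields $v\in H^{1,3}(D)$ with $\Grad v=S$. Symmetry of $S$ gives $\skw\Grad v=0$, hence $\curl v=0$, and a second application of the scalar Poincar\'e lemma on $D$ produces $u$ with $\grad u=v$; since $\grad u=v\in H^{1,3}(D)$ we get $u\in H^{2}(D)=\dom(\Gg)$ (cf.\ Lemma~\ref{lem:regGg}) and $S=\Grad\grad u=\Gg u\in\ran(\Gg,D)$.

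\emph{The primal identity $\ker(\rCS,D)=\ran(\rGg,D)$.} Here I would argue exactly as in the de~Rham case, Lemma~\ref{dimDFrhm:lem1}. Fix an open ball $B\supset\overline{D}$ and let $S\in\ker(\rCS,D)$. Since $S$ carries the full homogeneous boundary condition, its extension $\widetilde S$ by zero to $B$ is a symmetric $L^{2}$-field with $\Curl\widetilde S=0$ on $B$ --- the standard fact that zero-extension commutes with the distributional $\Curl$ on spaces of ``$H_{0}(\Curl)$''-type, immediate from the definition of $\rCS$ as the $L^{2}$-closure of $\Curl$ on $\CicttSom$. As $B$ is topologically trivial, the boundary-condition-free case applied on $B$ gives $u\in H^{2}(B)$ with $\Grad\grad u=\widetilde S$. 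On $B\setminus\overline{D}$ we have $\Grad\grad u=0$; since $\R^{3}\setminus D$ is connected, $B\setminus\overline{D}$ is connected, so $u$ coincides there with a single affine function $a$. Replacing $u$ by $u-a$ (which does not change $\Grad\grad u$) we may assume $u=0$ on $B\setminus\overline{D}$, and then the segment property yields $u|_{D}\in H^{2}_{0}(D)$ with $\rGg(u|_{D})=S$. This proves ``$\subseteq$'', and in particular $\harmbihoneDD=\ker(\rCS,D)\cap\ker(\dDS,D)=\{0\}$ for topologically trivial $D$, because $\ran(\rGg,D)\perp\ker(\dDS,D)$.

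\emph{Main obstacle.} The genuinely delicate points are (a) the algebraic reduction $\CS S=0\Leftrightarrow\Curl S=0$ for symmetric $S$ (and the matching $\Curl\Grad=0$, $\skw\Grad v\leftrightarrow\curl v$), which is elementary but must be recorded with the correct conventions; (b) justifying that the zero-extension of an element of $\ker(\rCS,D)$ has vanishing distributional $\Curl$ on $B$, i.e.\ that the homogeneous boundary condition is exactly what kills the boundary distribution --- a routine density/integration-by-parts argument, easy here thanks to the definition of $\rCS$ by $L^{2}$-closure on $\CicttSom$; and (c) invoking the segment property to pass from ``$u\in H^{2}(B)$ vanishing near $\partial D$ from outside'' to $u|_{D}\in H^{2}_{0}(D)$. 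Everything else reduces to two applications of the (vectorial and scalar) Poincar\'e lemma for the de~Rham complex on $D$ and on $B$, exactly as in Section~\ref{app:sec:DirVFdeRham}.
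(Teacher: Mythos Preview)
Your argument is correct. The paper does not prove this lemma itself but imports it from \cite[Theorem 3.10(i), Remark 3.11(i)]{PZ2020a}, so there is no in-paper proof to compare against; that said, your approach is the natural one and is entirely consistent with the paper's own techniques elsewhere. The boundary-free identity reduces cleanly to two applications of the de~Rham Poincar\'e lemma on the simply connected $D$: first $\Curl S=0$ (row-wise) gives $\Grad v=S$, then symmetry of $S$ forces $\curl v=0$ via $2\skw\Grad v=\spn\curl v$, whence $\grad u=v$; the algebraic input is exactly $\tr\Curl S=0$ for symmetric $S$ together with the $\skw\Grad\leftrightarrow\curl$ identity, both recorded in Lemma~\ref{PZformulalem}. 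For the primal identity your zero-extension argument is precisely the $m=1$ specialisation of the paper's own proof of Lemma~\ref{dimDTbih1:lem1} (extend to a ball, apply the boundary-free case there, subtract the affine remainder on the connected exterior, invoke the segment property).

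One minor remark: you cite Theorem~\ref{thm:DNF} for the de~Rham input, but all you actually use is the classical $L^{2}$-Poincar\'e lemma $\ker(\curl,D)=\ran(\grad,D)$ on a simply connected bounded Lipschitz domain, which is standard and does not require Assumption~\ref{ass:curvesandsurfaces}. Routing it through Theorem~\ref{thm:DNF} is logically harmless here (no circularity arises, since that theorem concerns only the de~Rham complex), but slightly heavy-handed.
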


\begin{lemma}
\label{dimDTbih1:lem1}
Let Assumption \ref{ass:segmentprop} be satisfied. Then
$\harmbihoneDom=\lin\calB^{\bihone}_{D}$.
\end{lemma}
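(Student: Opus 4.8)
The plan is to mimic the de Rham argument in Lemma~\ref{dimDFrhm:lem1}, now using the topological triviality statement for the first \pz complex (Lemma~\ref{lem:toptrivbih1}) in place of the classical statement about $\ker(\rcurl)=\ran(\rgrad)$ on topologically trivial domains. Let $H\in\harmbihoneDom=\ker(\rCS,\om)\cap\ker(\dDS,\om)$. Because $H$ carries the full homogeneous (natural) boundary condition, its extension by zero $\widetilde{H}$ to the ball $B\supset\overline{\om}$ still lies in $\ker(\rCS,B)$; this is the usual extension-by-zero lemma for the $\Curl$-type operator with vanishing trace. Since $B$ is a bounded strong Lipschitz domain that is topologically trivial, Lemma~\ref{lem:toptrivbih1} applies and yields a (unique, once we fix the finite-dimensional ambiguity) $u\in H^{2}_{0}(B)$ with $\rGg u=\widetilde{H}$ in $B$. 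On $B\setminus\overline{\om}$ we have $\Gg u=0$, hence $\grad u$ is locally constant and $u$ is affine on each connected component $\widetilde{\Xi}_{0},\Xi_{1},\dots,\Xi_{m-1}$ of $B\setminus\overline{\om}$; the homogeneous boundary condition at $\p\!B$ forces $u=0$ on $\widetilde{\Xi}_{0}$.

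Next I would peel off the basis contributions. On each $\Xi_{\ell}$, $\ell\in\{1,\dots,m-1\}$, the restriction $u|_{\Xi_{\ell}}$ is an affine function, so there are constants $\alpha_{\ell,0}\in\R$ and $\alpha_{\ell,j}\in\R$, $j\in\{1,2,3\}$, with $u|_{\Xi_{\ell}}=\alpha_{\ell,0}\widehat{p}_{0}+\sum_{j=1}^{3}\alpha_{\ell,j}\widehat{p}_{j}$, i.e.\ $u=\sum_{j=0}^{3}\alpha_{\ell,j}\widehat{p}_{j}$ near $\Xi_{\ell}$ after matching with the cut-off $\xi_{\ell}$. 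Form
$$\widehat{H}:=H-\sum_{\ell=1}^{m-1}\sum_{j=0}^{3}\alpha_{\ell,j}\,\Gg u_{\ell,j}
=\Gg\widehat{u}\in\harmbihoneDom,\qquad
\widehat{u}:=u-\sum_{\ell=1}^{m-1}\sum_{j=0}^{3}\alpha_{\ell,j}u_{\ell,j}\in H^{2}(\om),$$
with $u_{\ell,j}$ from \eqref{eq:ulj}. Extending each $\psi_{\ell,j}\in H^{2}_{0}(\om)$ by zero to $B$ (legitimate since $\psi_{\ell,j}\in H^{2}_{0}(\om)$), the function
$$\widehat{u}_{B}:=u-\sum_{\ell=1}^{m-1}\sum_{j=0}^{3}\alpha_{\ell,j}\xi_{\ell,j}
+\sum_{\ell=1}^{m-1}\sum_{j=0}^{3}\alpha_{\ell,j}\widetilde{\psi}_{\ell,j}\in H^{2}_{0}(B)$$
vanishes on each $\Xi_{\ell}$, $\ell\in\{0,\dots,m-1\}$: indeed on $\widetilde\Xi_0$ already $u=0$ and $\xi_{\ell,j}=\widetilde\psi_{\ell,j}=0$, while near $\Xi_\ell$ we subtract exactly the affine part $\sum_j\alpha_{\ell,j}\widehat p_j$ of $u$ and the cut-offs are equal to $\widehat p_j$ there. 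By Assumption~\ref{ass:segmentprop} (segment property), the zero-boundary-value property transfers to $\om$, so $\widehat{u}=\widehat{u}_{B}|_{\om}\in H^{2}_{0}(\om)$.

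Finally, since $\widehat{H}=\Gg\widehat{u}$ with $\widehat u\in H^{2}_{0}(\om)=\dom(\rGg,\om)$, the orthogonal decomposition \eqref{app:deco1-bih1} (where $\harmbihoneDom\perp\ran(\rGg,\om)$) gives
$$\norm{\widehat{H}}_{\LtttSom}^{2}=\scp{\Gg\widehat{u}}{\widehat{H}}_{\LtttSom}=0,$$
so $\widehat H=0$, i.e.\ $H=\sum_{\ell,j}\alpha_{\ell,j}\Gg u_{\ell,j}\in\lin\calB^{\bihone}_{D}$. Together with $\calB^{\bihone}_{D}\subset\harmbihoneDom$ this proves $\harmbihoneDom=\lin\calB^{\bihone}_{D}$. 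The main obstacle is the careful bookkeeping of the affine (four-dimensional) local behaviour of $u$ on each $\Xi_\ell$ and the verification that $\widehat u_B$ genuinely lies in $H^2_0(B)$ with vanishing values on \emph{all} complement components; the underlying analytic facts (extension by zero across the homogeneous-boundary-condition trace, topological triviality of $B$ via Lemma~\ref{lem:toptrivbih1}, and the segment-property transfer $H^2_0(B)|_\om\subset H^2_0(\om)$) are exactly the ingredients already prepared, so once the combinatorics of the polynomials $\widehat p_j$ and cut-offs $\xi_\ell$ are handled the computation closes as in the de Rham case.
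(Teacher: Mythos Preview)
Your proof is correct and follows essentially the same route as the paper's own argument: extend by zero to the ball $B$, invoke Lemma~\ref{lem:toptrivbih1} to obtain a potential $u\in H^{2}_{0}(B)$, read off the affine pieces of $u$ on each $\Xi_{\ell}$, subtract the corresponding basis contributions, and use the segment property to place the remainder in $H^{2}_{0}(\om)$ so that orthogonality kills it. The only cosmetic difference is that the paper calls the harmonic field $S$ rather than $H$.
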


\begin{proof}
We follow in close lines the arguments used in the proof of Lemma \ref{dimDFrhm:lem1}.
For this, let $S\in\harmbihoneDom=\ker(\rCS,\om)\cap\ker(\dDS,\om)$.
In particular, by the homogeneous boundary condition its extension by zero, $\widetilde{S}$,
to $B$ belongs to $\ker(\rCS,B)$.
As $B$ is topologically trivial (and smooth and bounded), there exists (a unique) 
$u\in H^{2}_{0}(B)$ such that $\Gg u=\widetilde{S}$ in $B$, 
see Lemma \ref{lem:toptrivbih1} and \eqref{app:deco1-bih1} applied to $\om=D=B$. 
Since $\Gg u=\widetilde{S}=0$ in $B\setminus\overline{\om}$,
$u$ must belong to $\Pol^{1}$, the polynomials of order at most $1$, 
in each connected component $\widetilde{\Xi}_{0},\Xi_{1},\dots,\Xi_{m-1}$
of $B\setminus\overline{\om}$.
Due to the homogenous boundary condition at $\p\!B$, $u$ vanishes in $\widetilde{\Xi}_{0}$.
Therefore, $S=\Gg u$ in $\om$ and $u\in H^{2}_{0}(B)$ 
is such that $u|_{\widetilde{\Xi}_{0}}=0$
and $u|_{\Xi_{\ell}}=:p_{\ell}=:\sum_{j=0}^{3}\alpha_{\ell,j}\widehat{p}_{j}\in\Pol^{1}$,
for some unique $\alpha_{\ell,j}\in\R$, for all $\ell\in\{1,\dots,m-1\}$ and $j\in\{0,\ldots,3\}$. 
Let us consider 
\begin{align*}
\widehat{S}&:=S-\sum_{\ell=1}^{m-1}\sum_{j=0}^{3}\alpha_{\ell,j}\Gg u_{\ell,j}
=\Gg\widehat{u}\in\harmbihoneDom,\\
\widehat{u}&:=u-\sum_{\ell=1}^{m-1}\sum_{j=0}^{3}\alpha_{\ell,j}u_{\ell,j}\in H^{2}(\om)
\end{align*}
with $u_{\ell,j}$ from \eqref{eq:ulj}.
The extension $\widetilde{\psi}_{\ell,j}$ of $\psi_{\ell,j}$ by zero to $B$ belongs to $H^{2}_{0}(B)$.
Hence as an element of $H^{2}(B)$ we see that 
$$\widehat{u}_{B}
:=u-\sum_{\ell=1}^{m-1}\sum_{j=0}^{3}\alpha_{\ell,j}\xi_{\ell,j}
+\sum_{\ell=1}^{m-1}\sum_{j=0}^{3}\alpha_{\ell,j}\widetilde{\psi}_{\ell,j}\in H^{2}_{0}(B)$$
vanishes in all $\Xi_{\ell}$. 
Thus $\widehat{u}=\widehat{u}_{B}|_{\om}\in H^{2}_{0}(\om)$ by Assumption \ref{ass:segmentprop}, and we compute
$$\norm{\widehat{S}}_{\LtttSom}^{2}
=\scp{\Gg\widehat{u}}{\widehat{S}}_{\LtttSom}=0,$$
finishing the proof.
\end{proof}

Similar to the case of the de Rham complex, 
we have a look at a possible numerical implementation for the computation of the basis functions. 
Naturally, the PDEs in question differ from one another quite substantially.

\begin{remark}[Characterisation by PDEs]
\mbox{}
\begin{itemize}
\item[\bf(i)]
The functions $\psi_{\ell,j}\in H^{2}_{0}(\om)$ introduced just above \eqref{eq:ulj} 
can be characterised as solutions by the standard variational formulation
$$\forall\,\phi\in H^{2}_{0}(\om)\quad
\scp{\Gg\psi_{\ell,j}}{\Gg\phi}_{\LtttSom}
=\scp{\Gg\xi_{\ell,j}}{\Gg\phi}_{\LtttSom},$$
i.e., $\psi_{\ell,j}=({\Delta}_{DD}^{2})^{-1}\Delta^{2}\xi_{\ell,j}$, 
where $\Delta^{2}_{DD}=\dDS\rGg$ is the bi-Laplacian with both the functions as well as 
the derivatives satisfying homogeneous Dirichlet boundary conditions.
\item[\bf(ii)]
With the statement in (i) together with \eqref{eq:ulj}, 
we deduce for all $\ell\in\{1,\ldots,m-1\}$ and $j\in\{0,\ldots,3\}$
$$u_{\ell,j}
=\xi_{\ell,j}-\psi_{\ell,j}
=\big(1-({\Delta}_{DD}^{2})^{-1}\Delta^{2}\big)\xi_{\ell,j}\in H^{2}(\om).$$
Hence,
\begin{align*}
\Gg u_{\ell,j}
&=\big(1-\Gg({\Delta}_{DD}^{2})^{-1}\dDS\big)\Gg\xi_{\ell,j}.
\end{align*}
For all $\ell\in\{1,\ldots,m-1\}$ and $j\in\{0,\ldots,3\}$, $u_{\ell,j}$ 
solve in classical terms the \pz Dirichlet problem
\begin{align}
\label{Dirbih2}
\begin{aligned}
\Delta^{2}u_{\ell,j}=\dDS\Gg u_{\ell,j}&=0
&&\text{in }\om,\\
u_{\ell,j}=\widehat{p}_{j},\quad
\grad u_{\ell,j}=\grad\widehat{p}_{j}&=e^{j}
&&\text{on }\ga_{\ell},\\
u_{\ell,j}=0,\quad
\grad u_{\ell,j}&=0
&&\text{on }\ga_{k},\,k\in\{0,\dots,m-1\}\setminus\{\ell\},
\end{aligned}
\end{align} 
which is uniquely solvable.
In particular, we have for all $\ell\in\{1,\dots,m-1\}$ and all $j\in\{0,\dots,3\}$ 
that $u_{\ell,j}=0$ and $\grad u_{\ell,j}=0$ on $\ga_{0}$, where
we denote by $e^{j}$, $j\in\{1,2,3\}$,
the Euclidean unit vectors in $\R^{3}$ and set $e^{0}:=0\in\R^{3}$.
\item[\bf(iii)]
In classical terms, $u$ (representing $S=\Gg u$)
derived in the proof of Lemma \ref{dimDTbih1:lem1} 
solves the linear \pz Dirichlet problem
\begin{align*}
\Delta^{2}u=\dDS\Gg u=\dDS S&=0
&&\text{in }\om,\\
u=0,\quad
\grad u&=0
&&\text{on }\ga_{0},\\
u=p_{\ell}\in\Pol^{1},\quad
\grad u&=\grad p_{\ell}\in\R^3
&&\text{on }\ga_{\ell},\,\ell\in\{1,\dots,m-1\},
\end{align*}
which is uniquely solvable as long as the polynomials, 
$p_{\ell}$ in $\Pol^{1}$, are prescribed.
\end{itemize}
\end{remark}

\begin{lemma}
\label{dimDTbih1:lem2}
Let Assumption \ref{ass:segmentprop} be satisfied. Then
$\calB^{\bihone}_{D}$ is linearly independent.
\end{lemma}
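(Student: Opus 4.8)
The argument is the exact analogue of the proof of Lemma~\ref{dimDFrhm:lem2}, with $\grad$ replaced by $\Gg$ and with the role played there by locally constant functions now taken over by locally affine functions, i.e.\ by $\Pol^{1}$. The plan is as follows. Suppose $\sum_{\ell=1}^{m-1}\sum_{j=0}^{3}\alpha_{\ell,j}\Gg u_{\ell,j}=0$ for some $\alpha_{\ell,j}\in\R$, and put $u:=\sum_{\ell=1}^{m-1}\sum_{j=0}^{3}\alpha_{\ell,j}u_{\ell,j}\in H^{2}(\om)$, so that $\Gg u=0$ in $\om$ and hence $u$ is affine on each connected component of $\om$ (as $\ker\Gg=\Pol^{1}$ on connected open sets). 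The goal is to show $u=0$ and all $\alpha_{\ell,j}=0$.

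First I would transplant everything to the ball $B$ (enlarged, if necessary, so that $\supp\xi_{\ell}\subset B$). Since $\psi_{\ell,j}\in H^{2}_{0}(\om)$, its zero extension $\widetilde{\psi}_{\ell,j}$ lies in $H^{2}_{0}(B)$, and since $\xi_{\ell,j}=\xi_{\ell}\widehat{p}_{j}\in\Cic(\R^{3})$ with support in $B$, the function
$$\widetilde{u}_{\ell,j}:=\begin{cases}u_{\ell,j}&\text{in }\om\\\xi_{\ell,j}&\text{in }B\setminus\overline{\om}\end{cases}$$
equals $\xi_{\ell,j}-\widetilde{\psi}_{\ell,j}$ on all of $B$ (in particular there is no spurious jump of $\widetilde{u}_{\ell,j}$ or its gradient across $\ga$) and therefore belongs to $H^{2}_{0}(B)$. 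On $B\setminus\overline{\om}=\widetilde{\Xi}_{0}\cup\Xi_{1}\cup\cdots\cup\Xi_{m-1}$ one has $\Gg\widetilde{u}_{\ell,j}=\Gg\xi_{\ell,j}=0$, because $\xi_{\ell,j}$ vanishes on $\widetilde{\Xi}_{0}$ and on each $\Xi_{k}$ with $k\neq\ell$, while on $\Xi_{\ell}$ it coincides with the affine function $\widehat{p}_{j}$. Setting $\widetilde{u}:=\sum_{\ell,j}\alpha_{\ell,j}\widetilde{u}_{\ell,j}\in H^{2}_{0}(B)$, I would then conclude $\Gg\widetilde{u}=\Gg u=0$ in $\om$ and $\Gg\widetilde{u}=0$ in $B\setminus\overline{\om}$, hence $\Gg\widetilde{u}=0$ on the connected set $B$; as $\widetilde{u}\in H^{2}_{0}(B)\subset H^{1}_{0}(B)$ is affine, it must vanish, so $\widetilde{u}=0$ in $B$.

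Finally I would read off the claim: $u=\widetilde{u}|_{\om}=0$, and for each fixed $\ell$ the restriction to $\Xi_{\ell}$ gives $0=\widetilde{u}|_{\Xi_{\ell}}=\sum_{j=0}^{3}\alpha_{\ell,j}\widehat{p}_{j}$ there, using that $\xi_{k}=0$ in $\Xi_{\ell}$ for $k\neq\ell$ and $\xi_{\ell}=1$ in $\Xi_{\ell}$. Since $\mathrm{int}\,\Xi_{\ell}\neq\emptyset$ by Assumption~\ref{ass:segmentprop} and $\{1,x_{1},x_{2},x_{3}\}$ is linearly independent on any nonempty open subset of $\R^{3}$, this forces $\alpha_{\ell,j}=0$ for all $j$, and as $\ell$ was arbitrary, $\calB^{\bihone}_{D}$ is linearly independent. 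The only genuinely new ingredient compared with Lemma~\ref{dimDFrhm:lem2} is this last polynomial-independence step, which replaces the scalar identity $\alpha_{\ell}=\widetilde{u}|_{\Xi_{\ell}}=0$ used there; I anticipate no real obstacle, the proof being a routine adaptation of the de Rham template.
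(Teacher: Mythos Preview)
Your proposal is correct and follows essentially the same approach as the paper's proof: extend each $u_{\ell,j}$ to $\widetilde{u}_{\ell,j}\in H^{2}_{0}(B)$ by using $\xi_{\ell,j}$ outside $\om$, show that $\widetilde{u}:=\sum_{\ell,j}\alpha_{\ell,j}\widetilde{u}_{\ell,j}$ satisfies $\Gg\widetilde{u}=0$ in all of $B$ and hence vanishes, and then read off $\alpha_{\ell,j}=0$ from the restriction to each $\Xi_{\ell}$. Your justification that $\widetilde{u}_{\ell,j}=\xi_{\ell,j}-\widetilde{\psi}_{\ell,j}\in H^{2}_{0}(B)$ and your explicit appeal to the linear independence of $\{1,x_{1},x_{2},x_{3}\}$ on $\mathrm{int}\,\Xi_{\ell}\neq\emptyset$ spell out steps the paper leaves implicit, but otherwise the arguments coincide.
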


\begin{proof}
For $\ell\in\{1,\ldots,m-1\}$ and $j\in\{0,\ldots,3\}$
we take $\alpha_{\ell,j}\in\R$ such that
$$\sum_{\ell=1}^{m-1}\sum_{j=0}^{3}\alpha_{\ell,j}\Gg u_{\ell,j}=0;\qquad\text{we put }
u:=\sum_{\ell=1}^{m-1}\sum_{j=0}^{3}\alpha_{\ell,j}u_{\ell,j}.$$
Then $\Gg u=0$ in $\om$, i.e., $u$ belongs to $\Pol^{1}_{\pw}$, see \eqref{eq:P1_pw}.
We will show $u=0$.
For this we extend $u_{\ell,j}=\xi_{\ell,j}-\psi_{\ell,j}$ (see \eqref{eq:ulj})
to $B$ via (note that $\xi_{\ell,j}\in H_{0}^{2}(B)$ and $\psi_{\ell,j}\in H_{0}^{2}(\om)$)
$$\widetilde{u}_{\ell,j}
:=\begin{cases}
u_{\ell,j}&\text{ in }\om,\\
\xi_{\ell,j}&\text{ in }B\setminus\overline{\om},
\end{cases}\qquad
\Gg\widetilde{u}_{\ell,j}
=\begin{cases}
\Gg u_{\ell,j}&\text{ in }\om,\\
\Gg\xi_{\ell,j}=0&\text{ in }B\setminus\overline{\om}.
\end{cases}$$
Note that $\widetilde{u}_{\ell,j}\in H_{0}^{2}(B)$.
For all $\ell\in\{1,\ldots,m-1\}$, $j\in\{0,\ldots,3\}$ 
we have $\widetilde{u}_{\ell,j}=\xi_{\ell,j}=\widehat{p}_{j}$ in $\Xi_{\ell}$ 
and $\widetilde{u}_{\ell,j}=\xi_{\ell,j}=0$ 
in $\widetilde{\Xi}_{0}\cup\bigcup_{k\in\{1,\ldots,m-1\}\setminus\{\ell\}}\Xi_{k}$. Then
$$\widetilde{u}:=\sum_{\ell=1}^{m-1}\sum_{j=0}^{3}\alpha_{\ell,j}\widetilde{u}_{\ell,j}\in H^{2}_{0}(B)$$
with $\widetilde{u}=0$ in $\widetilde{\Xi}_{0}$ and $\Gg\widetilde{u}=0$ in $B\setminus\overline{\om}$ 
as well as $\Gg\widetilde{u}=\Gg u=0$ in $\om$ by assumption.
Hence, $\Gg\widetilde{u}=0$ in $B$, showing $\widetilde{u}=0$ in $B$.
In particular, $u=0$ in $\om$, and
$\sum_{j=0}^{3}\alpha_{\ell,j}\widehat{p}_{j}=\widetilde{u}|_{\Xi_{\ell}}=0$ 
for all $\ell\in\{1,\dots,m-1\}$. We conclude $\alpha_{\ell,j}=0$ for all $j\in\{0,\dots,3\}$ and all $\ell\in\{1,\ldots,m-1\}$,
finishing the proof.
\end{proof}

\begin{theorem}
\label{dimDTbih1}
Let Assumption \ref{ass:segmentprop} be satisfied.
Then $\dim\harmbihoneDom=4(m-1)$ and a basis of $\harmbihoneDom$ is given by \eqref{basis:bih1D}.
\end{theorem}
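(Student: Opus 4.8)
The plan is to obtain the theorem immediately from Lemma \ref{dimDTbih1:lem1} and Lemma \ref{dimDTbih1:lem2}, in complete parallel with the way Theorem \ref{dimDFrhm} followed from its two preceding lemmas for the de Rham complex. Lemma \ref{dimDTbih1:lem1} gives $\harmbihoneDom = \lin\calB^{\bihone}_{D}$, so $\calB^{\bihone}_{D}$ spans the space of harmonic Dirichlet tensor fields; Lemma \ref{dimDTbih1:lem2} gives that $\calB^{\bihone}_{D}$ is linearly independent. Combining the two, $\calB^{\bihone}_{D}$ is a basis of $\harmbihoneDom$, which is the second assertion in \eqref{basis:bih1D}.

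It then only remains to count. By definition $\calB^{\bihone}_{D} = \{\Gg u_{\ell,j} : \ell \in \{1,\ldots,m-1\},\, j \in \{0,\ldots,3\}\}$, so there are $(m-1)$ choices of $\ell$ and $4$ choices of $j$, and by the linear independence just invoked these $4(m-1)$ elements are distinct and independent. Hence $\dim\harmbihoneDom = 4(m-1)$.

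Since the substance is already contained in the two lemmas, there is no real obstacle at the level of this theorem itself. The conceptual weight sits in Lemma \ref{dimDTbih1:lem1}, where one uses the orthogonal decomposition \eqref{app:deco1-bih1} together with the Friedrichs-type estimate for $\rGg$ from Remark \ref{rem:Helmholtz-bih1} to make the projector $\pi$ onto $\ker(\dDS,\om)$ well defined with $\pi\big(\ker(\rCS,\om)\big) = \harmbihoneDom$, and then invokes the topological triviality result Lemma \ref{lem:toptrivbih1} on a ball $B \supset \overline{\om}$ to represent any $S \in \harmbihoneDom$, extended by zero to $B$, as $\Gg u$ with $u \in H^{2}_{0}(B)$. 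The key structural point, and what turns the factor $1$ from the de Rham case into the factor $4$ here, is that $\Gg u = 0$ on an open region forces $u$ to agree there with an element of $\Pol^{1}$, the affine polynomials, for which $\dim\Pol^{1} = 4$, rather than merely with a constant; the monomials $\widehat{p}_{j}$ and the cut-offs $\xi_{\ell}$ are introduced precisely to realise each such affine boundary datum on each $\Xi_{\ell}$ by an explicit bi-Laplacian-harmonic representative $u_{\ell,j}$. Assumption \ref{ass:segmentprop} enters only to guarantee that the correction functions, constructed as restrictions to $\om$ of functions in $H^{2}_{0}(B)$, actually belong to $H^{2}_{0}(\om)$; this is the same mechanism already used in the proof of Theorem \ref{dimDFrhm}.

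\begin{proof}
Combine Lemma \ref{dimDTbih1:lem1} and Lemma \ref{dimDTbih1:lem2}. The cardinality of $\calB^{\bihone}_{D}$ equals $4(m-1)$ since $\ell$ ranges over $\{1,\ldots,m-1\}$ and $j$ over $\{0,\ldots,3\}$.
\end{proof}
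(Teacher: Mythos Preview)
Your proof is correct and matches the paper's own argument exactly: the paper simply writes ``Use Lemma \ref{dimDTbih1:lem1} and Lemma \ref{dimDTbih1:lem2}.'' Your additional explanatory paragraph about the role of $\Pol^{1}$ and Assumption \ref{ass:segmentprop} is accurate commentary on those lemmas, but the proof of the theorem itself is just the combination of the two.
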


\begin{proof}
Use Lemma \ref{dimDTbih1:lem1} and Lemma \ref{dimDTbih1:lem2}.
\end{proof}

\subsection{\except{toc}{Dirichlet Tensor Fields of the }Second \PZ Complex}
\label{app:sec:DirTFbih2}

The rationale to derive a set of basis functions for the second \pz complex is somewhat similar to the first one. 
For the second \pz complex, similar to \eqref{deco1}, \eqref{deco2}, 
and \eqref{app:deco1-derham}, \eqref{app:deco1-bih1}, 
we have the orthogonal decompositions 
\begin{align}
\label{app:deco1-bih2}
\begin{aligned}
\LtttTom&=\ran(\rdevGrad,\om)\oplus_{\LtttTom}\ker(\DT,\om),\\
\ker(\rsymCT,\om)&=\ran(\rdevGrad,\om)\oplus_{\LtttTom}\harmbihtwoDom.
\end{aligned}
\end{align}

\begin{remark}
\label{rem:Helmholtz-bih2}
\cite[Lemma 3.2]{PZ2020a} yields $\dom(\rdevGrad,\om)=H^{1,3}_{0}(\om)$.
Moreover, the range in \eqref{app:deco1-bih2} is closed by the Friedrichs type estimate
\begin{equation}\label{eq:Hbih2}\exists\,c>0\quad\forall\,\phi\in H^{1,3}_{0}(\om)\qquad
\norm{\phi}_{\Lttom}\leq c\norm{\devGrad\phi}_{\Ltttom},\end{equation}
which holds by Assumption \ref{ass:segmentprop}.
Again, $\om$ being open and bounded would be sufficient already. 
Indeed, the estimate mentioned here is based on the Friedrichs estimate provided 
in Remark \ref{rem:Helmholtz-derham} and the following observations
similar to the proof of Korn's inequality, cf.~Remark \ref{rem:Helmholtz-ela}:
From $\dom(\rdevGrad,\om)=H^{1,3}_{0}(\om)$ it suffices to show \eqref{eq:Hbih2} 
for smooth vector fields $v$ with compact support in $\om$. 
It is elementary to see that for matrices $T$ in $\R^{3\times 3}$ 
and the Frobenius norm $\norm{T}_{\R^{3\times 3}}$
we have $\norm{T}_{\R^{3\times 3}}^{2}=\norm{\dev T}_{\R^{3\times 3}}^{2}+\frac{1}{3}\norm{\tr T}_{\R}^{2}$,
where $\dev T=T-\frac13(\tr{T})\id$ is the \emph{deviatoric} (`trace-free') part of $T$ 
and $\tr T$ is the trace of $T$. Integration by parts shows 
$\norm{\Grad v}_{\Ltttom}^{2}=\norm{\curl v}_{\Lttom}^{2}+\norm{\dive v}_{\Ltom}^{2}\geq\norm{\dive v}_{\Ltom}^{2}$ 
for all $v\in C_{c}^{\infty,3}(\om)$. Thus, from $\tr \Grad v=\dive v$ we infer
\begin{align*}
\norm{\Grad v}_{\Ltttom}^{2}
&=\norm{\dev\Grad v}_{\Ltttom}^{2}+\frac13\norm{\dive v}_{\Ltom}^{2}\\
&\leq\norm{\dev\Grad v}_{\Ltttom}^{2}
+\frac13\norm{\Grad v}_{\Ltttom}^{2}.
\end{align*}
Hence, $2\norm{\Grad v}_{\Ltttom}^{2}\leq3\norm{\devGrad v}_{\Ltttom}^{2}$,
and inequality \eqref{eq:Hbih2} follows from Remark \ref{rem:Helmholtz-derham}.
\end{remark}

Using \eqref{app:deco1-bih2}, 
we define the orthogonal projector $\pi:\LtttTom\to\ker(\DT,\om)$ along $\ran(\rdevGrad,\om)$
and we have $\pi(\ker(\rsymCT,\om))=\harmbihtwoDom$.
Recalling $\xi_{\ell}\in\Cic(\R^{3})$ from \eqref{def:xi-ell} and introducing the Raviart--Thomas fields $\widehat{r}_{j}$ 
given by 
$$\widehat{r}_{0}(x):=x,\qquad
\widehat{r}_{j}(x):=e^{j}$$ 
for $j\in\{1,2,3\}$, we define
$\xi_{\ell,j}:=\xi_{\ell}\widehat{r}_{j}$ 
for all $\ell\in\{1,\dots,m-1\}$ 
and all $j\in\{0,\dots,3\}$. It is easy to see that 
$$\devGrad\xi_{\ell,j}\in\CicttTom\cap\ker(\symCT,\om)\subset\ker(\rsymCT,\om).$$
Due to Remark \ref{rem:Helmholtz-bih2} in conjunction with \eqref{app:deco1-bih2}, 
we find unique $\psi_{\ell,j}\in H^{1,3}_{0}(\om)$ such that 
$$\harmbihtwoDom\ni\pi\devGrad\xi_{\ell,j}=\devGrad(\xi_{\ell,j}-\psi_{\ell,j})=\devGrad v_{\ell,j}$$
with 
\begin{equation}
\label{eq:vlj}
v_{\ell,j}:=\xi_{\ell,j}-\psi_{\ell,j}\in H^{1,3}(\om).
\end{equation}
We shall show that
\begin{align}
\label{basis:bih2D}
\calB^{\bihtwo}_{D}:=\big\{\devGrad v_{\ell,j}:\ell\in\{1,\ldots,m-1\},\,j\in\{0,\ldots,3\}\big\}
\subset\harmbihtwoDom
\end{align}
defines a basis of $\harmbihtwoDom$.

\begin{lemma}[{{\cite[Theorem 3.10 (iv) and Remark 3.11 (i)]{PZ2020a}}}]
\label{lem:toptrivbih2} 
Let $D\subset\R^3$ be a bounded strong Lipschitz domain. 
Assume that $D$ is topologically trivial. Then 
$$\ker(\rsymCT,D)=\ran(\rdevGrad,D),\qquad
\ker(\symCT,D)=\ran(\devGrad,D).$$
\end{lemma}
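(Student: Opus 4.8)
This lemma is quoted from \cite[Theorem 3.10 (iv) and Remark 3.11 (i)]{PZ2020a}; for completeness I indicate how I would prove it, by reducing both exactness assertions to the de~Rham Poincar\'e lemma on topologically trivial domains (available, e.g., in the form \cite[Lemma 2.24]{PZ2020a}). This is the blueprint behind all the ``$\ker=\ran$'' lemmas used here; alternatively one could derive the second \pz complex from a tensor power of the de~Rham complex by exactness-preserving algebraic operations (Bernstein--Gelfand--Gelfand), but the direct potential construction sketched next stays closer to the surrounding sections.

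\emph{The identity $\ker(\symCT,D)=\ran(\devGrad,D)$.} Let $T\in\ker(\symCT,D)\subset\LtttT(D)$, i.e.\ $T$ is trace free with $\sym\Curl T=0$ distributionally, so $\Curl T$ is skew symmetric and has an axial vector field $a$, a priori only an $H^{-1}$-distribution. Since $\Dive\Curl T=0$ (row-wise $\dive\curl=0$) and $\Dive\spn(a)=-\curl a$, we get $\curl a=0$; the de~Rham lemma on the simply connected $D$ yields $\psi$ with $\grad\psi=a$. Using $\Curl(\psi\,\id)=-\spn(\grad\psi)$ we obtain $\Curl(T+\psi\,\id)=\spn(a)-\spn(\grad\psi)=0$, so a second, row-wise application of the de~Rham lemma gives $v$ with $\Grad v=T+\psi\,\id\in\Lt(D)$, hence $v\in H^{1,3}(D)$ by Ne\v{c}as' inequality (see \cite{N2012}). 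Taking deviatoric parts and using $\tr T=0$ and $\tr(\psi\,\id)=3\psi$ gives $\devGrad v=\dev(T+\psi\,\id)=T$, so $T\in\ran(\devGrad,D)$; the reverse inclusion is the complex property $\symCT\devGrad\subset0$.

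\emph{The identity $\ker(\rsymCT,D)=\ran(\rdevGrad,D)$.} Fix an open ball $B\supset\overline{D}$ and let $T\in\ker(\rsymCT,D)$. Approximating $T$ by $\CicttT(D)$-fields and extending by zero shows that the zero extension $\widetilde T$ lies in $\ker(\rsymCT,B)\subset\ker(\symCT,B)$. By the previous step, applied on the topologically trivial ball $B$, there is $v\in H^{1,3}(B)$ with $\devGrad v=\widetilde T$; then $\devGrad v=0$ on $B\setminus\overline{D}$, which is connected because $\R^{3}\setminus D$ is connected (and $B$ is large enough), so there $v\in\ker(\devGrad)=\RT_{\pw}$, i.e.\ $v(x)=\alpha x+\beta$ on $B\setminus\overline{D}$ with $\alpha\in\R$, $\beta\in\R^{3}$. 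Subtracting this single element of $\RT_{\pw}$ from $v$ on all of $B$ leaves $\devGrad v$ unchanged, so we may take $v=0$ on $B\setminus\overline{D}$; then $v|_{D}$ has vanishing trace on $\p D$, whence $v|_{D}\in H^{1,3}_{0}(D)=\dom(\rdevGrad,D)$ and $\rdevGrad(v|_{D})=T$.

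\emph{Main obstacle.} The delicate point is the low-regularity bookkeeping in the first step: $\Curl T$ is only a distribution, so $a$ and $\psi$ live in negative-order Sobolev spaces, the de~Rham lemma has to be invoked at that level, and Ne\v{c}as' inequality is needed to lift the potential $v$ back into $\Lt(D)$ and then into $H^{1,3}(D)$. A routine secondary point is checking that extension by zero preserves membership in the boundary-condition domain of $\rsymCT$ and that the gluing $v=0$ on $B\setminus\overline{D}$ really produces an element of $H^{1,3}_{0}(D)$; both use the strong Lipschitz (segment) property of $D$.
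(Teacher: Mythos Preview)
The paper does not prove this lemma; it is quoted from \cite[Theorem 3.10(iv), Remark 3.11(i)]{PZ2020a} without argument, so there is no in-paper proof to compare against. Your direct potential construction---reducing to two successive applications of the de~Rham Poincar\'e lemma via the identities of Lemma~\ref{PZformulalem}---is a correct and natural route. The algebra is right ($\Curl T=\spn a$, $\curl a=0$, $\Curl(T+\psi\,\id)=0$, $\dev(T+\psi\,\id)=T$), and the extension-by-zero argument for the boundary-condition version mirrors exactly what the paper itself does for the Dirichlet-field constructions in Section~\ref{app:sec:DirF} (compare the proofs of Lemmas~\ref{dimDTbih2:lem1}--\ref{dimDTbih2:lem2}), including the use of connectedness of $B\setminus\overline{D}$ and the identification $\ker(\devGrad)=\RT$.

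One small misplacement: the Ne\v{c}as step belongs to the \emph{first} de~Rham application, not the second. You have $a\in H^{-1}$ with $\curl a=0$; the distributional Poincar\'e lemma on the simply connected $D$ yields a distribution $\psi$ with $\grad\psi=a$, and it is Ne\v{c}as' inequality that lifts this $\psi$ into $L^{2}(D)$. Once $\psi\in L^{2}$, the tensor $T+\psi\,\id$ lies in $L^{2}$ with vanishing $\Curl$, so the standard $L^{2}$-level row-wise Poincar\'e lemma (e.g.\ \cite[Lemma~2.24]{PZ2020a}) gives $v\in H^{1,3}(D)$ directly, without a second appeal to Ne\v{c}as. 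This is only a matter of where the regularity bookkeeping sits; the overall argument is fine.
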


\begin{lemma}
\label{dimDTbih2:lem1}
Let Assumption \ref{ass:segmentprop} be satisfied. Then
$\harmbihtwoDom=\lin\calB^{\bihtwo}_{D}$.
\end{lemma}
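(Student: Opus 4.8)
The plan is to imitate almost verbatim the proof of Lemma~\ref{dimDFrhm:lem1} and Lemma~\ref{dimDTbih1:lem1}, using the second \pz complex in place of the de Rham and first \pz complexes. First I would take $T\in\harmbihtwoDom=\ker(\rsymCT,\om)\cap\ker(\DT,\om)$ and, thanks to the homogeneous boundary condition, extend it by zero to the ball $B\supset\overline{\om}$; since $\dom(\rsymCT,\om)$ carries the full natural boundary condition, the zero extension $\widetilde T$ lies in $\ker(\rsymCT,B)$. As $B$ is topologically trivial, Lemma~\ref{lem:toptrivbih2} (together with the decomposition \eqref{app:deco1-bih2} applied to $D=B$, and the Friedrichs estimate from Remark~\ref{rem:Helmholtz-bih2}) furnishes a unique $v\in H^{1,3}_{0}(B)=\dom(\rdevGrad,B)$ with $\rdevGrad v=\widetilde T$ in $B$.

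Next, since $\widetilde T=0$ on $B\setminus\overline{\om}$, the field $v$ satisfies $\devGrad v=0$ there, so on each connected component $\widetilde\Xi_{0},\Xi_{1},\dots,\Xi_{m-1}$ of $B\setminus\overline{\om}$ the vector field $v$ lies in the kernel of $\devGrad$, which is exactly the space of Raviart--Thomas fields $\{x\mapsto\alpha x+\beta:\alpha\in\R,\beta\in\R^{3}\}$ (this is the analogue of "$v$ constant'' / "$u\in\Pol^{1}$'' in the earlier proofs; note $\ker(\devGrad)$ on a connected set is $\spn\{\widehat r_{0},\widehat r_{1},\widehat r_{2},\widehat r_{3}\}$). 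The homogeneous boundary condition at $\p B$ forces $v=0$ on $\widetilde\Xi_{0}$, while on each $\Xi_{\ell}$, $\ell\in\{1,\dots,m-1\}$, we get $v|_{\Xi_{\ell}}=:r_{\ell}=\sum_{j=0}^{3}\alpha_{\ell,j}\widehat r_{j}$ for uniquely determined $\alpha_{\ell,j}\in\R$. Then I subtract the candidate basis elements: set
$$\widehat T:=T-\sum_{\ell=1}^{m-1}\sum_{j=0}^{3}\alpha_{\ell,j}\devGrad v_{\ell,j}=\devGrad\widehat v\in\harmbihtwoDom,\qquad
\widehat v:=v-\sum_{\ell=1}^{m-1}\sum_{j=0}^{3}\alpha_{\ell,j}v_{\ell,j}\in H^{1,3}(\om),$$
with $v_{\ell,j}$ from \eqref{eq:vlj}.

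Finally I would show $\widehat T=0$ by a testing argument: extending each $\psi_{\ell,j}\in H^{1,3}_{0}(\om)$ by zero to $\widetilde\psi_{\ell,j}\in H^{1,3}_{0}(B)$, the function
$$\widehat v_{B}:=v-\sum_{\ell=1}^{m-1}\sum_{j=0}^{3}\alpha_{\ell,j}\xi_{\ell,j}+\sum_{\ell=1}^{m-1}\sum_{j=0}^{3}\alpha_{\ell,j}\widetilde\psi_{\ell,j}\in H^{1,3}_{0}(B)$$
vanishes in every $\Xi_{\ell}$ (by construction of the $\xi_{\ell,j}=\xi_{\ell}\widehat r_{j}$ and the choice of the $\alpha_{\ell,j}$), and $\widehat v_{B}|_{\om}=\widehat v$, so Assumption~\ref{ass:segmentprop} gives $\widehat v\in H^{1,3}_{0}(\om)=\dom(\rdevGrad,\om)$. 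Hence
$$\norm{\widehat T}_{\LtttTom}^{2}=\scp{\devGrad\widehat v}{\widehat T}_{\LtttTom}=\scp{\widehat v}{-\DT\widehat T}_{\Lttom}=0,$$
using $\widehat T\in\ker(\DT,\om)$. Thus $T\in\lin\calB^{\bihtwo}_{D}$, and the reverse inclusion $\lin\calB^{\bihtwo}_{D}\subset\harmbihtwoDom$ is immediate from the definition of $v_{\ell,j}$ via the projector $\pi$. I expect the only slightly delicate point to be bookkeeping the kernel of $\devGrad$ on the components of $B\setminus\overline{\om}$ (that it is precisely the $4$-dimensional space spanned by the $\widehat r_{j}$, giving the factor $4$), together with the careful verification that $\widehat v_{B}$ genuinely has zero boundary values; everything else is a direct transcription of the de~Rham and first \pz arguments.
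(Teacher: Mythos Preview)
Your proposal is correct and follows essentially the same approach as the paper's own proof: extend $T$ by zero to $B$, use Lemma~\ref{lem:toptrivbih2} to write $\widetilde T=\devGrad v$ with $v\in H^{1,3}_{0}(B)$, identify $v$ as a Raviart--Thomas field on each component of $B\setminus\overline{\om}$ (vanishing on $\widetilde\Xi_{0}$), subtract the corresponding combination of basis candidates, and conclude via the segment property that $\widehat v\in H^{1,3}_{0}(\om)$ so that the orthogonality $\scp{\devGrad\widehat v}{\widehat T}=0$ forces $\widehat T=0$. Your added explicit integration-by-parts step and the remark on the reverse inclusion are fine elaborations, but the structure and all key ingredients coincide with the paper's argument.
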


\begin{proof}
Let $T\in\harmbihtwoDom=\ker(\rsymCT,\om)\cap\ker(\DT,\om)$ and let $\widetilde{T}$ 
be the extension of $T$ by zero onto $B$. Then $\widetilde{T}\in\ker(\rsymCT,B)$.
As $B$ is topologically trivial (and smooth and bounded), by Lemma \ref{lem:toptrivbih2} there exists (a unique vector field) 
$v\in H^{1,3}_{0}(B)$ such that $\devGrad v=\widetilde{T}$ in $B$.
Since $\devGrad v=\widetilde{T}=0$ in $B\setminus\overline{\om}$,
$v$ is a Raviart--Thomas vector field, $v\in\RT$, 
in each connected component $\widetilde{\Xi}_{0},\Xi_{1},\dots,\Xi_{m-1}$
of $B\setminus\overline{\om}$. 
Due to the boundary condition of $v\in H^{1,3}_{0}(B)$, $v$ vanishes in $\widetilde{\Xi}_{0}$.
Therefore, $T=\devGrad v$ in $\om$ and $v\in H^{1,3}_{0}(B)$ 
is such that $v|_{\widetilde{\Xi}_{0}}=0$
and $v|_{\Xi_{\ell}}=:r_{\ell}=:\sum_{j=0}^{3}\alpha_{\ell,j}\widehat{r}_{j}\in\RT$, for some
$\alpha_{\ell,j}\in\R$, for all $\ell\in\{1,\dots,m-1\}$ and $j\in\{0,\ldots,3\}$. 
Define
\begin{align*}
\widehat{T}&:=T-\sum_{\ell=1}^{m-1}\sum_{j=0}^{3}\alpha_{\ell,j}\devGrad v_{\ell,j}
=\devGrad\widehat{v}\in\harmbihtwoDom,\\
\widehat{v}&:=v-\sum_{\ell=1}^{m-1}\sum_{j=0}^{3}\alpha_{\ell,j}v_{\ell,j}\in H^{1,3}(\om)
\end{align*}
with $v_{\ell,j}$ from \eqref{eq:vlj}.
Since $\widetilde{\psi}_{\ell,j}\in H^{1,3}_{0}(B)$, where $\widetilde{\psi}_{\ell,j}$ is the extension of $\psi_{\ell,j}$ by zero to $B$, as an element of $H^{1,3}(B)$ we see that 
$$\widehat{v}_{B}
:=v-\sum_{\ell=1}^{m-1}\sum_{j=0}^{3}\alpha_{\ell,j}\xi_{\ell,j}
+\sum_{\ell=1}^{m-1}\sum_{j=0}^{3}\alpha_{\ell,j}\widetilde{\psi}_{\ell,j}\in H^{1,3}_{0}(B)$$
vanishes in all $\Xi_{\ell}$. 
Thus $\widehat{v}=\widehat{v}_{B}|_{\om}\in H^{1,3}_{0}(\om)$ by Assumption \ref{ass:segmentprop}, and $$\norm{\widehat{T}}_{\LtttTom}^{2}
=\scp{\devGrad\widehat{v}}{\widehat{T}}_{\LtttTom}=0$$
yields the assertion.
\end{proof}

\begin{remark}[Characterisation by PDEs]
\mbox{}
\begin{itemize}
\item[\bf(i)]
Denoting $\Delta_{\Tb,D}:=\DT\rdevGrad$ the `deviatoric' Laplacian 
with homogeneous Dirichlet boundary conditions, 
we see that $\psi_{\ell,j}={\Delta}_{\Tb,D}^{-1}\Delta_{\Tb}\xi_{\ell,j}$ 
with $\Delta_{\Tb}:=\DT\devGrad$, which corresponds to the variational formulation
$$\forall\,\phi\in H^{1,3}_{0}(\om)\quad
\scp{\devGrad\psi_{\ell,j}}{\devGrad\phi}_{\LtttTom}
=\scp{\devGrad\xi_{\ell,j}}{\devGrad\phi}_{\LtttTom}.$$
\item[\bf(ii)]
For all $\ell\in\{1,\dots,m-1\}$ and all $j\in\{0,\dots,3\}$ we have
$$v_{\ell,j}
=\xi_{\ell,j}-\psi_{\ell,j}
=(1-\Delta_{\Tb,D}^{-1}\Delta_{\Tb})\xi_{\ell,j}\in H^{1,3}(\om)$$
and deduce
\begin{align*}
\devGrad v_{\ell,j}
&=(1-\devGrad\Delta_{\Tb,D}^{-1}\DT)\devGrad\xi_{\ell,j}.
\end{align*}
In classical terms, this reads
\begin{align}
\label{Dirbih22}
\begin{aligned}
-\Delta_{\Tb}v_{\ell,j}&=0
&&\text{in }\om,\\
v_{\ell,j}&=\widehat{r}_{j}
&&\text{on }\ga_{\ell},\\
v_{\ell,j}&=0
&&\text{on }\ga_{k},\,k\in\{0,\dots,m-1\}\setminus\{\ell\},
\end{aligned}
\end{align} 
which is uniquely solvable.
\item[\bf(iii)]
In classical terms, $v$ (representing $T=\devGrad v$) 
from the proof of Lemma \ref{dimDTbih2:lem1}
solves the linear elasticity type Dirichlet problem
\begin{align*}
-\Delta_{\Tb}v=-\DT\devGrad v=-\DT T&=0
&&\text{in }\om,\\
v&=0
&&\text{on }\ga_{0},\\
v&=r_{\ell}\in\RT
&&\text{on }\ga_{\ell},\,\ell\in\{1,\dots,m-1\},
\end{align*}
which is uniquely solvable given the knowledge of $r_{\ell}$ in $\RT$.
\end{itemize}
\end{remark}

\begin{lemma}
\label{dimDTbih2:lem2}
Let Assumption \ref{ass:segmentprop} be satisfied. Then
$\calB^{\bihtwo}_{D}$ is linearly independent.
\end{lemma}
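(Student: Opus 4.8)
The plan is to copy, essentially verbatim, the argument of Lemma~\ref{dimDFrhm:lem2} and Lemma~\ref{dimDTbih1:lem2}, now with the Raviart--Thomas model fields $\widehat{r}_{j}$ in the role played there by the constant (resp.\ the polynomials $\widehat{p}_{j}$). So I would start by taking $\alpha_{\ell,j}\in\R$, $\ell\in\{1,\ldots,m-1\}$, $j\in\{0,\ldots,3\}$, with $\sum_{\ell=1}^{m-1}\sum_{j=0}^{3}\alpha_{\ell,j}\devGrad v_{\ell,j}=0$ and put $v:=\sum_{\ell,j}\alpha_{\ell,j}v_{\ell,j}\in H^{1,3}(\om)$; then $\devGrad v=0$ in $\om$, i.e.\ $v\in\RT_{\pw}$, and it remains to force $v=0$ and then $\alpha_{\ell,j}=0$.

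Next I would extend each $v_{\ell,j}=\xi_{\ell,j}-\psi_{\ell,j}$ from \eqref{eq:vlj} to the ball $B$ by $\xi_{\ell,j}$ outside $\om$; since $\psi_{\ell,j}\in H^{1,3}_{0}(\om)$ and $\xi_{\ell,j}\in H^{1,3}_{0}(B)$, the glued field $\widetilde{v}_{\ell,j}$ lies in $H^{1,3}_{0}(B)$ and satisfies $\devGrad\widetilde{v}_{\ell,j}=\devGrad v_{\ell,j}$ in $\om$ and $\devGrad\widetilde{v}_{\ell,j}=\devGrad\xi_{\ell,j}=0$ in $B\setminus\overline{\om}$. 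By the construction of $\xi_{\ell}$ in \eqref{def:xi-ell} and $\xi_{\ell,j}=\xi_{\ell}\widehat{r}_{j}$ one has $\widetilde{v}_{\ell,j}=\widehat{r}_{j}$ in $\Xi_{\ell}$ and $\widetilde{v}_{\ell,j}=0$ in $\widetilde{\Xi}_{0}\cup\bigcup_{k\neq\ell}\Xi_{k}$. Forming $\widetilde{v}:=\sum_{\ell,j}\alpha_{\ell,j}\widetilde{v}_{\ell,j}\in H^{1,3}_{0}(B)$, one gets $\devGrad\widetilde{v}=0$ on all of $B$ (it vanishes on $B\setminus\overline{\om}$ by construction and on $\om$ by the standing assumption). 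Since $B$ is bounded, the Friedrichs-type estimate \eqref{eq:Hbih2}, recorded for $\om$ in Remark~\ref{rem:Helmholtz-bih2} but valid for every bounded open set (it ultimately rests on the Friedrichs estimate of Remark~\ref{rem:Helmholtz-derham}), gives $\ker(\rdevGrad,B)=\{0\}$, whence $\widetilde{v}=0$ in $B$. Restricting to $\om$ yields $v=0$, and evaluating on $\Xi_{\ell}$ gives $\sum_{j=0}^{3}\alpha_{\ell,j}\widehat{r}_{j}=\widetilde{v}|_{\Xi_{\ell}}=0$ for each $\ell\in\{1,\ldots,m-1\}$.

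Finally I would conclude $\alpha_{\ell,j}=0$ from $\sum_{j=0}^{3}\alpha_{\ell,j}\widehat{r}_{j}=0$ on $\Xi_{\ell}$: this is immediate from the linear independence of $\widehat{r}_{0}(x)=x$ and the constant fields $\widehat{r}_{1}=e^{1}$, $\widehat{r}_{2}=e^{2}$, $\widehat{r}_{3}=e^{3}$, together with $\mathrm{int}\,\Xi_{\ell}\neq\emptyset$ (Assumption~\ref{ass:segmentprop}). Combined with Lemma~\ref{dimDTbih2:lem1} this then gives $\dim\harmbihtwoDom=4(m-1)$ and the basis \eqref{basis:bih2D}. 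I do not anticipate a genuine obstacle; the only steps requiring a line of justification are that the glued field $\widetilde{v}_{\ell,j}$ really belongs to $H^{1,3}_{0}(B)$ (immediate from the homogeneous data of $\xi_{\ell,j}$ on $\p B$ and of $\psi_{\ell,j}$ on $\ga$) and the elementary linear independence of the four model Raviart--Thomas fields.
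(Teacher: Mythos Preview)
Your proposal is correct and follows essentially the same route as the paper's own proof: extend each $v_{\ell,j}$ to $\widetilde{v}_{\ell,j}\in H^{1,3}_{0}(B)$ by $\xi_{\ell,j}$ outside $\om$, observe $\devGrad\widetilde{v}=0$ on all of $B$, deduce $\widetilde{v}=0$ from $\ker(\rdevGrad,B)=\{0\}$, and read off $\sum_{j}\alpha_{\ell,j}\widehat{r}_{j}=0$ on each $\Xi_{\ell}$. The only cosmetic difference is that you make the injectivity on $B$ explicit via the Friedrichs-type estimate~\eqref{eq:Hbih2}, whereas the paper simply asserts ``$\devGrad\widetilde{v}=0$ in $B$, showing $\widetilde{v}=0$ in $B$''; both rely on the same fact.
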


\begin{proof}
Let $\alpha_{\ell,j}\in\R$ with $\ell\in\{1,\ldots,m-1\}$ and $j\in\{0,\ldots,3\}$ be such that
$$\sum_{\ell=1}^{m-1}\sum_{j=0}^{3}\alpha_{\ell,j}\devGrad v_{\ell,j}=0;\qquad\text{set }
v:=\sum_{\ell=1}^{m-1}\sum_{j=0}^{3}\alpha_{\ell,j}v_{\ell,j}.$$
Then $\devGrad v=0$ in $\om$, i.e., $v\in\RT$ in each connected component of $\om$.
We show $v=0$. Recalling
$v_{\ell,j}=\xi_{\ell,j}-\psi_{\ell,j}$ in $\om$ from \eqref{eq:ulj} 
and using $\xi_{\ell,j} \in H_{0}^1(B)$ and $\psi_{\ell,j}\in H_{0}^1(\om)$, we define
$$\widetilde{v}_{\ell,j}
:=\begin{cases}
v_{\ell,j}&\text{ in }\om,\\
\xi_{\ell,j}&\text{ in }B\setminus\overline{\om},
\end{cases}\qquad
\devGrad\widetilde{v}_{\ell,j}
=\begin{cases}
\devGrad v_{\ell,j}&\text{ in }\om,\\
\devGrad\xi_{\ell,j}=0&\text{ in }B\setminus\overline{\om}.
\end{cases}$$
Note that $\widetilde{v}_{\ell,j}\in H^{1,3}_{0}(B)$.
For all $\ell \in\{1,\dots,m-1\}$ and $j\in\{0,\dots,3\}$, 
we obtain $\widetilde{v}_{\ell,j}=\xi_{\ell,j}=\widehat{r}_{j}$ in $\Xi_{\ell}$ 
and $\widetilde{v}_{\ell,j}=\xi_{\ell,j}=0$ 
in $\widetilde{\Xi}_{0}\cup\bigcup_{k\in\{1,\ldots,m-1\}\setminus\{\ell\}}\Xi_{k}$. Then
$$\widetilde{v}:=\sum_{\ell=1}^{m-1}\sum_{j=0}^{3}\alpha_{\ell,j}\widetilde{v}_{\ell,j}\in H^{1,3}_{0}(B)$$
with $\widetilde{v}=0$ in $\widetilde{\Xi}_{0}$ and $\devGrad\widetilde{v}=0$ in $B\setminus\overline{\om}$ 
as well as $\devGrad\widetilde{v}=\devGrad v=0$ in $\om$ by assumption.
Hence, $\devGrad\widetilde{v}=0$ in $B$, showing $\widetilde{v}=0$ in $B$.
In particular, $v=0$ in $\om$, and
$\sum_{j=0}^{3}\alpha_{\ell,j}\widehat{r}_{j}=\widetilde{v}|_{\Xi_{\ell}}=0$ 
for all $\ell\in\{1,\dots,m-1\}$. 
We conclude $\alpha_{\ell,j}=0$ for all $\ell \in\{1,\dots,m-1\}$ and $j\in\{0,\dots,3\}$.
\end{proof}

\begin{theorem}
\label{dimDTbih2}
Let Assumption \ref{ass:segmentprop} be satisfied.
Then $\dim\harmbihtwoDom=4(m-1)$ and a basis of $\harmbihtwoDom$ is given by \eqref{basis:bih2D}.
\end{theorem}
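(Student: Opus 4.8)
The plan is to obtain Theorem \ref{dimDTbih2} as a direct corollary of the two preceding lemmas, exactly in the pattern already used for the de Rham complex (Theorem \ref{dimDFrhm}) and the first \pz complex (Theorem \ref{dimDTbih1}). First I would invoke Lemma \ref{dimDTbih2:lem1} to get $\harmbihtwoDom=\lin\calB^{\bihtwo}_{D}$, i.e.\ that the set $\calB^{\bihtwo}_{D}$ from \eqref{basis:bih2D} spans the space of Dirichlet tensor fields. Then I would invoke Lemma \ref{dimDTbih2:lem2} to get that the $4(m-1)$ fields $\devGrad v_{\ell,j}$, $\ell\in\{1,\dots,m-1\}$, $j\in\{0,\dots,3\}$, are linearly independent. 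Combining the two statements shows that $\calB^{\bihtwo}_{D}$ is a basis of $\harmbihtwoDom$, whence $\dim\harmbihtwoDom=\#\calB^{\bihtwo}_{D}=4(m-1)$. So the theorem itself is pure bookkeeping once the lemmas are in place.

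The substantive content therefore lives in the two lemmas, and it is worth recording where it sits. For the spanning lemma the steps are: extend a given $T\in\harmbihtwoDom=\ker(\rsymCT,\om)\cap\ker(\DT,\om)$ by zero to a large ball $B\supset\overline{\om}$ so that $\widetilde T\in\ker(\rsymCT,B)$; use topological triviality of $B$ together with Lemma \ref{lem:toptrivbih2} and the Helmholtz decomposition \eqref{app:deco1-bih2} on $B$ to write $\widetilde T=\devGrad v$ with $v\in H^{1,3}_{0}(B)$; observe that $\devGrad v=0$ off $\overline{\om}$ forces $v$ to be a Raviart--Thomas field $r_{\ell}=\sum_{j}\alpha_{\ell,j}\widehat{r}_{j}$ on each bounded hole $\Xi_{\ell}$ and $v=0$ on the unbounded component $\widetilde\Xi_{0}$; subtract the matching combination $\sum_{\ell,j}\alpha_{\ell,j}\devGrad v_{\ell,j}$; and finally glue the remaining potential with the $\xi_{\ell,j}$ and the extensions $\widetilde\psi_{\ell,j}$ to see, via the segment property (Assumption \ref{ass:segmentprop}), that the residual potential lies in $H^{1,3}_{0}(\om)$, so the residual Dirichlet field is orthogonal to itself and vanishes. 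For the independence lemma one runs the gluing argument in the opposite direction: from $\sum_{\ell,j}\alpha_{\ell,j}\devGrad v_{\ell,j}=0$ one builds a global $\widetilde v\in H^{1,3}_{0}(B)$ with $\devGrad\widetilde v=0$ on all of $B$, concludes $\widetilde v=0$ from the deviatoric Friedrichs estimate \eqref{eq:Hbih2} applied on $B$, and reads off $\sum_{j}\alpha_{\ell,j}\widehat{r}_{j}=0$ on each $\Xi_{\ell}$, hence all $\alpha_{\ell,j}=0$ since $\widehat{r}_{0},\dots,\widehat{r}_{3}$ are linearly independent.

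The main obstacle — and the reason the argument is split into lemmas rather than done inline — is the well-definedness of the projector $\pi:\LtttTom\to\ker(\DT,\om)$ along $\ran(\rdevGrad,\om)$ and the existence and uniqueness of the corrector fields $\psi_{\ell,j}\in H^{1,3}_{0}(\om)$ in \eqref{eq:vlj}. This requires the identification $\dom(\rdevGrad)=H^{1,3}_{0}(\om)$ and the closedness of $\ran(\rdevGrad,\om)$ (and of $\ran(\rdevGrad,B)$), which rests on the deviatoric Poincar\'e--Friedrichs inequality \eqref{eq:Hbih2}; the latter is in turn reduced in Remark \ref{rem:Helmholtz-bih2} to the scalar Friedrichs inequality together with the elementary Korn-type identity relating $\norm{\Grad v}$, $\norm{\devGrad v}$ and $\norm{\dive v}$ for compactly supported smooth fields. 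Granting Remark \ref{rem:Helmholtz-bih2} and Lemma \ref{lem:toptrivbih2}, everything else is routine, and the proof of the theorem reduces to citing Lemma \ref{dimDTbih2:lem1} and Lemma \ref{dimDTbih2:lem2}.
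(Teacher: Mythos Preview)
Your proposal is correct and matches the paper's own proof exactly: the theorem is obtained by combining Lemma \ref{dimDTbih2:lem1} (spanning) and Lemma \ref{dimDTbih2:lem2} (linear independence). Your additional commentary on the content of those lemmas is accurate but goes beyond what the paper records for the theorem itself.
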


\begin{proof}
Use Lemma \ref{dimDTbih2:lem1} and Lemma \ref{dimDTbih2:lem2}.
\end{proof}

\subsection{\except{toc}{Dirichlet Tensor Fields of the }Elasticity Complex}
\label{app:sec:DirTFela}

For the elasticity complex, similar to \eqref{deco1}, \eqref{deco2}, 
and \eqref{app:deco1-derham}, \eqref{app:deco1-bih1}, \eqref{app:deco1-bih2}, 
we have the orthogonal decompositions 
\begin{align}
\label{app:deco1-ela}
\begin{aligned}
\LtttSom&=\ran(\rsymGrad,\om)\oplus_{\LtttSom}\ker(\DS,\om),\\
\ker(\rCCtS,\om)&=\ran(\rsymGrad,\om)\oplus_{\LtttSom}\harmelaDom.
\end{aligned}
\end{align}

\begin{remark}
\label{rem:Helmholtz-ela}
\cite[Lemma 3.2]{PZ2020b} implies $\dom(\rsymGrad,\om)=H^{1,3}_{0}(\om)$. 
Moreover, the range in \eqref{app:deco1-ela} is closed by the Friedrichs type estimate 
(and follows from the standard first Korn's inequality and Remark \ref{rem:Helmholtz-derham})
\begin{equation}
\label{eq:Hela}
\exists\,c>0\quad\forall\,\phi\in H^{1,3}_{0}(\om)\qquad
\norm{\phi}_{\Lttom}\leq c\norm{\symGrad\phi}_{\Ltttom},
\end{equation}
which holds by Assumption \ref{ass:segmentprop}.
Again, $\om$ open and bounded is sufficient for \eqref{eq:Hela}.
Indeed, Korn's first inequality is easy to see as follows:
For a tensor $T\in\R^{3\times 3}$ we have 
$\norm{T}^{2}_{\R^{3\times 3}}=\norm{\sym T}^{2}_{\R^{3\times 3}}+\norm{\skw T}^{2}_{\R^{3\times 3}}$. Hence, 
$$\norm{\Grad v}^{2}_{\R^{3\times 3}}
=\norm{\symGrad v}^{2}_{\R^{3\times 3}}
+\norm{\skw\Grad v}^{2}_{\R^{3\times 3}}
=\norm{\symGrad v}^{2}_{\R^{3\times 3}}
+\frac{1}{2}\norm{\curl v}^{2}_{\R^{3}}.$$ 
By $\norm{\Grad v}_{\Ltttom}^{2}=\norm{\curl v}_{\Lttom}^{2}+\norm{\dive v}_{\Ltom}^{2}\geq\norm{\curl v}_{\Lttom}^{2}$
for all $v\in H^{1,3}_{0}(\om)$, we get Korn's first inequality
$\norm{\Grad v}_{\Ltttom}^{2}\leq2\norm{\symGrad v}_{\Ltttom}^{2}$.
\end{remark}

The orthogonal projector from $\LtttSom$ onto $\ker(\DS,\om)$ along $\ran(\rsymGrad,\om)$ is denoted by $\pi$. 
From \eqref{app:deco1-ela}, we deduce $\pi(\ker(\rCCtS,\om))=\harmelaDom$.
Recall the functions  $\xi_{\ell}\in\Cic(\R^{3})$ from \eqref{def:xi-ell} 
and introduce rigid motions $\widehat{r}_{j}$ given by 
$$\widehat{r}_{j}(x):=e^{j}\times x,\qquad
\widehat{r}_{j+3}(x):=e^{j}$$ 
for $j\in\{1,2,3\}$. We define $\xi_{\ell,j}:=\xi_{\ell}\widehat{r}_{j}$ 
for all $\ell\in\{1,\dots,m-1\}$ and for all $j\in\{1,\dots,6\}$. Then 
$$\symGrad\xi_{\ell,j}\in\CicttSom\cap\ker(\CCtS,\om)\subset\ker(\rCCtS,\om).$$
We find unique $\psi_{\ell,j}\in H^{1,3}_{0}(\om)$ such that 
$$\harmelaDom\ni\pi\symGrad\xi_{\ell,j}=\symGrad(\xi_{\ell,j}-\psi_{\ell,j})=\symGrad v_{\ell,j}$$
with 
\begin{equation}
\label{eq:vljela}
v_{\ell,j}:=\xi_{\ell,j}-\psi_{\ell,j}\in H^{1,3}(\om).
\end{equation}
We shall show that 
\begin{align}
\label{basis:elaD}
\calB^{\ela}_{D}:=\big\{\symGrad v_{\ell,j}:\ell\in\{1,\ldots,m-1\},\,j\in\{1,\ldots,6\}\big\}
\subset\harmelaDom
\end{align}
defines a basis of $\harmelaDom$.

\begin{lemma}[{\cite[Theorem 3.5]{PZ2020b}}]
\label{lem:toptrivela} 
Let $D\subset\R^3$ a bounded, topologically trivial, strong Lipschitz domain. Then 
$$\ker(\rCCtS,D)=\ran(\rsymGrad,D),\qquad
\ker(\CCtS,D)=\ran(\symGrad,D).$$
\end{lemma}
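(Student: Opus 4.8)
The statement to prove is Lemma~\ref{lem:toptrivela}, asserting that on a bounded, topologically trivial, strong Lipschitz domain $D\subset\R^3$ the elasticity complex is exact at the two relevant spots, i.e.\ $\ker(\rCCtS,D)=\ran(\rsymGrad,D)$ and $\ker(\CCtS,D)=\ran(\symGrad,D)$. Since this is cited as \cite[Theorem 3.5]{PZ2020b}, I would not reprove it from scratch; the plan is to reduce the two identities to the corresponding exactness statements for the de~Rham complex on a topologically trivial domain (no cohomology) together with the algebraic relations tying $\symGrad$, $\CCt$, $\Dive$ to $\grad$, $\curl$, $\dive$ acting componentwise. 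Throughout, the inclusions $\ran\subset\ker$ are the complex property, already recorded (for the boundary-condition-free operators this is the dual complex property, cf.\ Remark~\ref{rem:dualcomplex}), so only the reverse inclusions $\ker\subset\ran$ need an argument.

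First I would treat $\ker(\symGrad,D)\subset\ran(\symGrad,D)$ at the right end of the complex: if $\Dive S=0$ for a symmetric $S$, view each row of $S$ as a vector field in $H(\dive,D)$ with vanishing divergence; by de~Rham exactness on the topologically trivial $D$ (here one uses that $\R^3\setminus D$ connected kills the $\curl$-kernel-modulo-$\ran\curl$ cohomology, i.e.\ the analogue of \cite[Lemma 2.24]{PZ2020a}) there exist potentials, and the symmetry of $S$ plus a Cesàro--Volterra / Poincaré-map type antisymmetrisation produces a vector potential $v$ with $\sym\Grad v=S$; the Korn estimate from Remark~\ref{rem:Helmholtz-ela} guarantees $v\in H^{1,3}(D)=\dom(\symGrad,D)$. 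For the middle identity $\ker(\rCCtS,D)\subset\ran(\rsymGrad,D)$ one argues dually/analogously: given a symmetric $S$ with $\rCCtS S=0$, first apply row-wise exactness to $\Curl S$ (whose rows are curl-free) to peel off a first potential, then use the $\CCt=\Curl(\Curl\,\cdot\,)^\top$ structure and a second application of de~Rham exactness to obtain the scalar/vector potential, again antisymmetrising to land in the symmetric-gradient range; the homogeneous boundary conditions on $\rCCtS$ transfer to homogeneous boundary conditions on the potential by the trace compatibility built into the strong Lipschitz assumption, yielding $v\in H^{1,3}_0(D)=\dom(\rsymGrad,D)$. The boundary-condition-free statement $\ker(\CCtS,D)\subset\ran(\symGrad,D)$ follows by the same scheme without boundary data, or by duality via Remark~\ref{rem:dualcomplex} applied to the already-established statements.

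The main obstacle is the antisymmetrisation step: de~Rham exactness only yields a matrix potential $P$ with $\Grad\,$(rows of $P$) matching, but one needs the \emph{symmetric} part to realise $S$, which requires absorbing the skew part $\skw\Grad v$ into the freedom in choosing $P$ — this is exactly the classical elasticity compatibility (Saint-Venant / Beltrami--Michell) computation and is where the specific algebra of $\CCt$ and the dimension count ``$6$'' enters; the rigid-motion kernel $\RM$ (dimension $6$ per component) is the obstruction one must quotient out, and checking that the compatibility conditions $\rCCtS S=0$ are precisely what makes the skew correction integrable is the technical heart. Since all of this is carried out in \cite[Theorem 3.5]{PZ2020b} (and the parallel statements for the \pz complexes in \cite[Theorem 3.10]{PZ2020a}, already cited as Lemmas~\ref{lem:toptrivbih1} and \ref{lem:toptrivbih2}), for the purposes of this paper I would simply invoke that reference; the role of Lemma~\ref{lem:toptrivela} here is only to serve as the ``topologically trivial building block'' that, glued over the connected components of $B\setminus\overline{\om}$ via the cut-off functions $\xi_\ell$ exactly as in the proofs of Lemmas~\ref{dimDFrhm:lem1}, \ref{dimDTbih1:lem1}, and \ref{dimDTbih2:lem1}, will feed into the upcoming computation of $\dim\harmelaDom=6(m-1)$.
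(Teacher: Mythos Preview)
Your bottom line---simply invoke \cite[Theorem 3.5]{PZ2020b}---is exactly what the paper does: its proof reads ``The result follows by \cite[Corollary 2.29]{PZ2020a} for $m=1$ in conjunction with the formulas in \cite[Appendix]{PZ2020a}; see \cite[Theorem 3.5]{PZ2020b} and \cite{PZ2020c} for the details.'' So as a proof \emph{for this paper}, your final sentence suffices and matches.

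That said, the technical sketch you offer before reaching that conclusion contains a genuine mix-up. You write ``treat $\ker(\symGrad,D)\subset\ran(\symGrad,D)$'' and then argue from ``$\Dive S=0$ for a symmetric $S$''. Neither of these is what the lemma asserts: the two identities are $\ker(\rCCtS,D)=\ran(\rsymGrad,D)$ and $\ker(\CCtS,D)=\ran(\symGrad,D)$, so the hypothesis on $S$ is $\CCt S=0$, not $\Dive S=0$. The argument you describe (rows of $S$ divergence-free, de~Rham potentials, antisymmetrise) is the one for the \emph{other} spot in the complex, $\ker(\DS,D)=\ran(\CCtS,D)$, which is not part of this lemma. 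For the correct spot, the starting point is $\Curl(\Curl S)^\top=0$, whence $(\Curl S)^\top=\Grad w$ by de~Rham exactness, and then a second integration plus the skew-correction (exactly the Ces\`aro--Volterra step you mention later) produces $v$ with $\symGrad v=S$; your second paragraph is closer to this. If you keep the sketch, swap the hypothesis and reorder the two de~Rham applications accordingly; otherwise, just cite as the paper does.
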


\begin{proof}
The result follows by \cite[Corollary 2.29]{PZ2020a} for $m=1$ 
in conjunction with the formulas in \cite[Appendix]{PZ2020a}; 
see \cite[Theorem 3.5]{PZ2020b} and \cite{PZ2020c} for the details.
\end{proof}

\begin{lemma}
\label{dimDTela:lem1}
Let Assumption \ref{ass:segmentprop} be satisfied. Then
$\harmelaDom=\lin\calB^{\ela}_{D}$.
\end{lemma}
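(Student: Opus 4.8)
The plan is to follow verbatim the blueprint established in the proofs of Lemma \ref{dimDFrhm:lem1}, Lemma \ref{dimDTbih1:lem1}, and especially Lemma \ref{dimDTbih2:lem1}, since the elasticity complex is structurally closest to the second \pz complex (it has a first order operator $\rsymGrad$ at the start of the complex, whose kernel over topologically trivial domains is rigid motions rather than polynomials of higher degree). Concretely, I would argue as follows. Let $S\in\harmelaDom=\ker(\rCCtS,\om)\cap\ker(\DS,\om)$ and let $\widetilde{S}$ denote its extension by zero onto the ball $B\supset\overline{\om}$; by the homogeneous boundary condition built into $\rCCtS$ we have $\widetilde{S}\in\ker(\rCCtS,B)$. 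Since $B$ is topologically trivial, bounded, smooth, and strong Lipschitz, Lemma \ref{lem:toptrivela} together with the Helmholtz decomposition \eqref{app:deco1-ela} applied with $D=B$ yields a (unique) vector field $v\in H^{1,3}_{0}(B)$ with $\symGrad v=\widetilde{S}$ in $B$.

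Next I would exploit that $\symGrad v=\widetilde{S}=0$ on $B\setminus\overline{\om}$. Korn's inequality / the kernel computation shows $\ker(\symGrad)$ on a connected open set consists precisely of rigid motions, so $v$ restricts to some $r\in\RM$ on each connected component $\widetilde{\Xi}_{0},\Xi_{1},\dots,\Xi_{m-1}$ of $B\setminus\overline{\om}$; the Dirichlet boundary condition at $\p B$ forces $v=0$ on the unbounded component $\widetilde{\Xi}_{0}$. Thus $S=\symGrad v$ in $\om$ with $v|_{\widetilde{\Xi}_{0}}=0$ and $v|_{\Xi_{\ell}}=:r_{\ell}=\sum_{j=1}^{6}\alpha_{\ell,j}\widehat{r}_{j}\in\RM$ for uniquely determined $\alpha_{\ell,j}\in\R$, $\ell\in\{1,\dots,m-1\}$, $j\in\{1,\dots,6\}$. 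Setting
\begin{align*}
\widehat{S}&:=S-\sum_{\ell=1}^{m-1}\sum_{j=1}^{6}\alpha_{\ell,j}\symGrad v_{\ell,j}
=\symGrad\widehat{v}\in\harmelaDom,\\
\widehat{v}&:=v-\sum_{\ell=1}^{m-1}\sum_{j=1}^{6}\alpha_{\ell,j}v_{\ell,j}\in H^{1,3}(\om),
\end{align*}
with $v_{\ell,j}$ as in \eqref{eq:vljela}, I would then verify that $\widehat{v}\in H^{1,3}_{0}(\om)$: the extension $\widetilde{\psi}_{\ell,j}$ of $\psi_{\ell,j}$ by zero to $B$ lies in $H^{1,3}_{0}(B)$, so
$$\widehat{v}_{B}:=v-\sum_{\ell=1}^{m-1}\sum_{j=1}^{6}\alpha_{\ell,j}\xi_{\ell,j}
+\sum_{\ell=1}^{m-1}\sum_{j=1}^{6}\alpha_{\ell,j}\widetilde{\psi}_{\ell,j}\in H^{1,3}_{0}(B)$$
and this element vanishes on every $\Xi_{\ell}$, $\ell\in\{0,\dots,m-1\}$, by the defining properties of $\xi_{\ell}$ and the fact that $v-\sum\alpha_{\ell,j}\xi_{\ell,j}$ vanishes there; hence $\widehat{v}=\widehat{v}_{B}|_{\om}\in H^{1,3}_{0}(\om)$ by Assumption \ref{ass:segmentprop}. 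Finally,
$$\norm{\widehat{S}}_{\LtttSom}^{2}=\scp{\symGrad\widehat{v}}{\widehat{S}}_{\LtttSom}=0$$
because $\widehat{S}\in\harmelaDom\subset\ker(\DS,\om)=\ran(\rsymGrad,\om)^{\bot}$, so $\widehat{S}=0$ and $S\in\lin\calB^{\ela}_{D}$; the reverse inclusion is \eqref{basis:elaD}.

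The main obstacle — and the only genuinely non-routine point — is the very first step: obtaining the potential $v\in H^{1,3}_{0}(B)$ with $\symGrad v=\widetilde{S}$ on the topologically trivial ball $B$. This rests on the exactness statement $\ker(\rCCtS,B)=\ran(\rsymGrad,B)$ from Lemma \ref{lem:toptrivela}, which in turn relies on the deeper results of \cite{PZ2020a,PZ2020b,PZ2020c}; one must also check that the range $\ran(\rsymGrad,B)$ is closed (this is Remark \ref{rem:Helmholtz-ela}, via Korn's first inequality and the Friedrichs estimate) so that the decomposition \eqref{app:deco1-ela} and hence the well-definedness of the projector $\pi$ and of the $\psi_{\ell,j}$ are legitimate. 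Everything else is a faithful, notation-only adaptation of the arguments already carried out for the de Rham and \pz complexes, with "polynomials of order $\le 1$" / "Raviart--Thomas fields" replaced throughout by "rigid motions" and the index $j$ running over $\{1,\dots,6\}$.
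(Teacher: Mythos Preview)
Your proposal is correct and follows the paper's proof essentially verbatim: extend $S$ by zero to the ball $B$, invoke Lemma \ref{lem:toptrivela} to obtain $v\in H^{1,3}_{0}(B)$ with $\symGrad v=\widetilde{S}$, read off the rigid motions on each $\Xi_{\ell}$, subtract the corresponding basis candidates, and use Assumption \ref{ass:segmentprop} to push $\widehat{v}$ into $H^{1,3}_{0}(\om)$ so that the final orthogonality kills $\widehat{S}$. The only cosmetic difference is that you spell out a bit more explicitly why the last inner product vanishes.
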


\begin{proof}
We follow in close lines the arguments used in the proofs 
of Lemma \ref{dimDFrhm:lem1}, Lemma \ref{dimDTbih1:lem1}, and Lemma \ref{dimDTbih2:lem1}.
Let $S\in\harmelaDom=\ker(\rCCtS,\om)\cap\ker(\DS,\om)$.
and $\widetilde{S}$ its extension to $B$ by zero. Then $\widetilde{S}\in\ker(\rCCtS,B)$.
By Lemma \ref{lem:toptrivela}, as $B$ is topologically trivial (and smooth and bounded), 
there exists (a unique) 
$v\in H^{1,3}_{0}(B)$ such that $\symGrad v=\widetilde{S}$ in $B$. 
Since $\symGrad v=\widetilde{S}=0$ in $B\setminus\overline{\om}$,
$v$ is a rigid motion, i.e., $v\in\RM$,
in each connected component $\widetilde{\Xi}_{0},\Xi_{1},\dots,\Xi_{m-1}$ 
of $B\setminus\overline{\om}$.
Since $v\in H^{1,3}_{0}(B)$, $v$ vanishes in $\widetilde{\Xi}_{0}$.
Thus, $S=\symGrad v$ in $\om$ with some $v\in H^{1,3}_{0}(B)$ 
and we have $v|_{\Xi_{\ell}}=:r_{\ell}=:\sum_{j=1}^{6}\alpha_{\ell,j}\widehat{r}_{j}\in\RM$
for $\alpha_{\ell,j}\in\R$ and all $\ell\in\{1,\dots,m-1\}$, $j\in\{1,\ldots,6\}$. Let 
\begin{align*}
\widehat{S}&:=S-\sum_{\ell=1}^{m-1}\sum_{j=1}^{6}\alpha_{\ell,j}\symGrad v_{\ell,j}
=\symGrad\widehat{v}\in\harmelaDom,\\
\widehat{v}&:=v-\sum_{\ell=1}^{m-1}\sum_{j=1}^{6}\alpha_{\ell,j}v_{\ell,j}\in H^{1,3}(\om)
\end{align*}
with $v_{\ell,j}$ from \eqref{eq:vljela}.
With $\widetilde{\psi}_{\ell,j} \in H^{1,3}_{0}(B)$, the extension of $\psi_{\ell,j}$ by zero, we see, as an element of $H^{1,3}(B)$, that
$$\widehat{v}_{B}
:=v-\sum_{\ell=1}^{m-1}\sum_{j=1}^{6}\alpha_{\ell,j}\xi_{\ell,j}
+\sum_{\ell=1}^{m-1}\sum_{j=1}^{6}\alpha_{\ell,j}\widetilde{\psi}_{\ell,j}\in H^{1,3}_{0}(B)$$
vanishes in $\Xi_{\ell}$ for all $\ell$. 
Thus $\widehat{v}=\widehat{v}_{B}|_{\om}\in H^{1,3}_{0}(\om)$ by Assumption \ref{ass:segmentprop}, and we conclude
$$\norm{\widehat{S}}_{\LtttSom}^{2}
=\scp{\symGrad\widehat{v}}{\widehat{S}}_{\LtttSom}=0,$$
finishing the proof.
\end{proof}

For numerical purposes, we again highlight the partial differential equations 
satisfied by the functions constructed here.

\begin{remark}[Characterisation by PDEs]
\mbox{}
\begin{itemize}
\item[\bf(i)]
For all $\ell\in\{1,\ldots,m-1\}$, $j\in\{1,\ldots,6\}$, 
the vector field $\psi_{\ell,j}\in H^{1,3}_{0}(\om)$ can be found with the help of the standard variational formulation
$$\forall\,\phi\in H^{1,3}_{0}(\om)\quad
\scp{\symGrad\psi_{\ell,j}}{\symGrad\phi}_{\LtttSom}
=\scp{\symGrad\xi_{\ell,j}}{\symGrad\phi}_{\LtttSom},$$
i.e., $\psi_{\ell,j}={\Delta}_{\Sb,D}^{-1}\Delta_{\Sb}\xi_{\ell,j}$, 
where $\Delta_{\Sb,D}=\DS\rsymGrad$ and $\Delta_{\Sb}=\DS\symGrad$ 
are the `symmetric' Laplacians with homogeneous Dirichlet boundary conditions 
and no boundary conditions, respectively. 
\item[\bf(ii)]
For all $\ell\in\{1,\ldots,m-1\}$, $j\in\{1,\ldots,6\}$ we have
$$v_{\ell,j}
=\xi_{\ell,j}-\psi_{\ell,j}
=(1-\Delta_{\Sb,D}^{-1}\Delta_{\Sb})\xi_{\ell,j}\in H^{1,3}(\om)$$
and thus
\begin{align*}
\symGrad v_{\ell,j}
&=(1-\symGrad\Delta_{\Sb,D}^{-1}\DS)\symGrad\xi_{\ell,j}.
\end{align*}
In classical terms, $v_{\ell,j}$
solves the linear elasticity Dirichlet problem
\begin{align}
\label{Direla2}
\begin{aligned}
-\Delta_{\Sb}v_{\ell,j}&=0
&&\text{in }\om,\\
v_{\ell,j}&=\widehat{r}_{j}
&&\text{on }\ga_{\ell},\\
v_{\ell,j}&=0
&&\text{on }\ga_{k},\,k\in\{0,\dots,m-1\}\setminus\{\ell\}.
\end{aligned}
\end{align} 
which is uniquely solvable. 
\item[\bf(iii)]
In classical terms, $v$ (representing $S=\symGrad v$) 
from the proof of Lemma \ref{dimDTela:lem1}
solves the linear elasticity Dirichlet problem
\begin{align*}
-\Delta_{\Sb}v=-\DS S&=0
&&\text{in }\om,\\
v&=0
&&\text{on }\ga_{0},\\
v&=r_{\ell}\in\RM
&&\text{on }\ga_{\ell},\,\ell\in\{1,\dots,m-1\},
\end{align*}
which is uniquely solvable as long as the rigid motions
$r_{\ell}$ in $\RM$ are prescribed.
\end{itemize}
\end{remark}

\begin{lemma}
\label{dimDTela:lem2}
Let Assumption \ref{ass:segmentprop} be satisfied. Then
$\calB^{\ela}_{D}$ is linearly independent.
\end{lemma}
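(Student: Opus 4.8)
The plan is to follow the template of Lemma \ref{dimDFrhm:lem2}, Lemma \ref{dimDTbih1:lem2}, and especially Lemma \ref{dimDTbih2:lem2}, now replacing constants, piecewise polynomials, or Raviart--Thomas fields by rigid motions. Suppose we are given $\alpha_{\ell,j}\in\R$ for $\ell\in\{1,\dots,m-1\}$ and $j\in\{1,\dots,6\}$ with $\sum_{\ell=1}^{m-1}\sum_{j=1}^{6}\alpha_{\ell,j}\symGrad v_{\ell,j}=0$, and set $v:=\sum_{\ell,j}\alpha_{\ell,j}v_{\ell,j}\in H^{1,3}(\om)$. Then $\symGrad v=0$ in $\om$, so by Korn-type rigidity (vanishing of $\symGrad$ characterises rigid motions on connected open sets) $v$ coincides with a rigid motion on each connected component of $\om$. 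The target is $v=0$; once that is known, restricting the global extension of $v$ built below to the exterior components $\Xi_{\ell}$ and invoking the linear independence of $\widehat{r}_{1},\dots,\widehat{r}_{6}$ on any set with nonempty interior will force $\alpha_{\ell,j}=0$ for all $\ell,j$.

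To reach $v=0$, I would glue $v$ to the outer data on a large ball $B\supset\overline{\om}$ exactly as in Lemma \ref{dimDTbih2:lem2}. Since $\psi_{\ell,j}\in H^{1,3}_{0}(\om)$ and $\xi_{\ell,j}=\xi_{\ell}\widehat{r}_{j}\in H^{1,3}_{0}(B)$, the extension $\widetilde{v}_{\ell,j}$ of $v_{\ell,j}=\xi_{\ell,j}-\psi_{\ell,j}$ to $B$ given by $v_{\ell,j}$ on $\om$ and $\xi_{\ell,j}$ on $B\setminus\overline{\om}$ lies in $H^{1,3}_{0}(B)$; moreover $\symGrad\widetilde{v}_{\ell,j}=\symGrad\xi_{\ell,j}=0$ on $B\setminus\overline{\om}$, because there $\xi_{\ell}$ is locally constant and $\widehat{r}_{j}$ is a rigid motion. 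Since $\xi_{\ell}=1$ near $\Xi_{\ell}$ and $\xi_{\ell}=0$ near $\widetilde{\Xi}_{0}$ and near $\Xi_{k}$ for $k\neq\ell$, we get $\widetilde{v}_{\ell,j}=\widehat{r}_{j}$ on $\Xi_{\ell}$ and $\widetilde{v}_{\ell,j}=0$ on $\widetilde{\Xi}_{0}\cup\bigcup_{k\neq\ell}\Xi_{k}$. Put $\widetilde{v}:=\sum_{\ell,j}\alpha_{\ell,j}\widetilde{v}_{\ell,j}\in H^{1,3}_{0}(B)$. Then $\symGrad\widetilde{v}=0$ on $B\setminus\overline{\om}$ and $\symGrad\widetilde{v}=\symGrad v=0$ on $\om$ by hypothesis, hence $\symGrad\widetilde{v}=0$ on all of $B$.

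The conclusion then reads as follows. As $B$ is connected, $\symGrad\widetilde{v}=0$ forces $\widetilde{v}\in\RM$; but $\widetilde{v}\in H^{1,3}_{0}(B)$ has vanishing trace on $\partial B\neq\emptyset$, and a rigid motion with zero boundary trace on a nonempty boundary is trivial, so $\widetilde{v}=0$ on $B$. In particular $v=\widetilde{v}|_{\om}=0$, and for each $\ell\in\{1,\dots,m-1\}$ we obtain $\sum_{j=1}^{6}\alpha_{\ell,j}\widehat{r}_{j}=\widetilde{v}|_{\Xi_{\ell}}=0$; since $\mathrm{int}\,\Xi_{\ell}\neq\emptyset$ by Assumption \ref{ass:segmentprop} and $\{\widehat{r}_{1},\dots,\widehat{r}_{6}\}$ is a basis of $\RM$, a rigid motion vanishing on an open set is zero, hence $\alpha_{\ell,j}=0$ for all $j$. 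Letting $\ell$ vary yields linear independence of $\calB^{\ela}_{D}$.

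The only step that is not pure bookkeeping — and it is shared with the two \pz cases — is the rigidity input: that $\symGrad$-free $H^{1,3}$ fields are rigid motions on connected open sets, and that a rigid motion in $H^{1,3}_{0}$ of a domain with nonempty interior vanishes. The precise role of Assumption \ref{ass:segmentprop} is to guarantee $\mathrm{int}\,\Xi_{\ell}\neq\emptyset$, so that the restriction recovers the coefficients, and to ensure that the extension-by-zero of $\psi_{\ell,j}$ remains in $H^{1,3}_{0}(B)$; beyond this I do not anticipate any further difficulty.
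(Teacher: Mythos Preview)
Your proof is correct and follows essentially the same approach as the paper's. One small correction to your closing remark: extending $\psi_{\ell,j}\in H^{1,3}_{0}(\om)$ by zero to an element of $H^{1,3}_{0}(B)$ works for any open $\om$ (it needs no boundary regularity, since $H^{1,3}_{0}(\om)$ is the closure of $\Cict(\om)$), so the role of Assumption \ref{ass:segmentprop} here is only to ensure $\mathrm{int}\,\Xi_{\ell}\neq\emptyset$.
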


\begin{proof}
Let $\alpha_{\ell,j}\in\R$ for all $\ell\in\{1,\dots,m-1\}$, $j\in\{1,\ldots,6\}$ such that 
$$\sum_{\ell=1}^{m-1}\sum_{j=1}^{6}\alpha_{\ell,j}\symGrad v_{\ell,j}=0;\qquad\text{set }
v:=\sum_{\ell=1}^{m-1}\sum_{j=1}^{6}\alpha_{\ell,j}v_{\ell,j}.$$
Then $\symGrad v=0$ in $\om$, i.e., $v\in\RM$ in each connected component of $\om$.
We show $v=0$. Recall $v_{\ell,j}=\xi_{\ell,j}-\psi_{\ell,j}$ in $\om$. 
Using $\psi_{\ell,j}\in H_{0}^1(\om)$ and $\xi_{\ell,j}\in H_{0}^1(B)$
we extend $v_{\ell,j}$ to $B$ via 
$$\widetilde{v}_{\ell,j}
:=\begin{cases}
v_{\ell,j}&\text{ in }\om,\\
\xi_{\ell,j}&\text{ in }B\setminus\overline{\om},
\end{cases}\qquad
\symGrad\widetilde{v}_{\ell,j}
=\begin{cases}
\symGrad v_{\ell,j}&\text{ in }\om,\\
\symGrad\xi_{\ell,j}=0&\text{ in }B\setminus\overline{\om}.
\end{cases}$$
Note that $\widetilde{v}_{\ell,j}\in H_{0}^1(B)$.
Then, for all $\ell\in\{1,\dots,m-1\}$ and all $j\in\{1,\dots,6\}$ 
we have $\widetilde{v}_{\ell,j}=\xi_{\ell,j}=0$ 
in $\widetilde{\Xi}_{0}\cup\bigcup_{k\in\{1,\ldots,m-1\}\setminus\{\ell\}}\Xi_{k}$ 
and $\widetilde{v}_{\ell,j}=\xi_{\ell,j}=\widehat{r}_{j}$ in $\Xi_{\ell}$. Thus
$$\widetilde{v}:=\sum_{\ell=1}^{m-1}\sum_{j=1}^{6}\alpha_{\ell,j}\widetilde{v}_{\ell,j}\in H^{1,3}_{0}(B)$$
with $\widetilde{v}=0$ in $\widetilde{\Xi}_{0}$ and $\symGrad\widetilde{v}=0$ in $B\setminus\overline{\om}$ 
as well as $\symGrad\widetilde{v}=\symGrad v=0$ in $\om$ by assumption.
Hence, $\symGrad\widetilde{v}=0$ in $B$, showing $\widetilde{v}=0$ in $B$.
In particular, $v=0$ in $\om$, and
$\sum_{j=1}^{6}\alpha_{\ell,j}\widehat{r}_{j}=\widetilde{v}|_{\Xi_{\ell}}=0$ 
for all $\ell\in\{1,\dots,m-1\}$. 
We conclude $\alpha_{\ell,j}=0$ for all $\ell\in\{1,\dots,m-1\}$
and all $j\in\{1,\dots,6\}$, finishing the proof.
\end{proof}

\begin{theorem}
\label{dimDTela}
Let Assumption \ref{ass:segmentprop} be satisfied.
Then $\dim\harmelaDom=6(m-1)$ and a basis of $\harmelaDom$ is given by \eqref{basis:elaD}.
\end{theorem}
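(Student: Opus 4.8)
The plan is simply to read off the theorem from the two preceding lemmas. Under Assumption \ref{ass:segmentprop}, Lemma \ref{dimDTela:lem1} gives $\harmelaDom=\lin\calB^{\ela}_{D}$ with $\calB^{\ela}_{D}$ the set in \eqref{basis:elaD}, while Lemma \ref{dimDTela:lem2} shows that $\calB^{\ela}_{D}$ is linearly independent. Hence $\calB^{\ela}_{D}$ is a basis of $\harmelaDom$, and the only remaining task is to count its elements.

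The family $\calB^{\ela}_{D}=\{\symGrad v_{\ell,j}:\ell\in\{1,\dots,m-1\},\,j\in\{1,\dots,6\}\}$ is indexed by the $6(m-1)$ pairs $(\ell,j)$, and linear independence in particular forces these fields to be pairwise distinct and nonzero; thus $\#\calB^{\ela}_{D}=6(m-1)$, whence $\dim\harmelaDom=6(m-1)$. The number $6$ here is $\dim\RM$, matching the six generators $\widehat r_{j}$, $\widehat r_{j+3}$, $j\in\{1,2,3\}$, of the local rigid-motion space used in the construction of the $v_{\ell,j}$.

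I expect no real obstacle at this stage: the substance lies entirely in the preparatory steps already carried out in this subsection --- the orthogonal (Helmholtz-type) decomposition \eqref{app:deco1-ela}, whose range $\ran(\rsymGrad,\om)$ is closed by the Korn-type Friedrichs estimate recorded in Remark \ref{rem:Helmholtz-ela}, which makes the projector $\pi$ and the correctors $\psi_{\ell,j}\in H^{1,3}_{0}(\om)$, hence the fields $v_{\ell,j}$ from \eqref{eq:vljela}, well-defined; the topological-triviality identity Lemma \ref{lem:toptrivela} applied to the ball $B$, which underlies the spanning argument of Lemma \ref{dimDTela:lem1} (extension by zero, together with the fact that $\symGrad$-free fields on each connected component of $B\setminus\overline{\om}$ are rigid motions and that the boundary condition on $\p\!B$ kills the contribution on $\widetilde{\Xi}_{0}$); and the extension-to-$B$ plus uniqueness-of-rigid-motion-traces argument behind Lemma \ref{dimDTela:lem2}. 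Given these, Theorem \ref{dimDTela} follows by the elementary combination above.
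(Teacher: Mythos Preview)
Your proof is correct and matches the paper's own argument exactly: the paper's proof simply reads ``Use Lemma \ref{dimDTela:lem1} and Lemma \ref{dimDTela:lem2}.'' Your additional paragraph recapitulating the machinery behind those lemmas is accurate but not needed for the proof of the theorem itself.
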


\begin{proof}
Use Lemma \ref{dimDTela:lem1} and Lemma \ref{dimDTela:lem2}.
\end{proof}

\section{The Construction of the Neumann Fields}
\label{app:sec:NeuF}

The construction of the Neumann fields is more involved 
than the one for the generalised harmonic Dirichlet fields. 
We start off with some general definitions and remarks all the basis constructions have in common.

Since $\om$ consists of the connected components $\om_{k}$, i.e., 
$\cc(\om)=\{\om_{1},\ldots,\om_{n}\}$, we have by Assumption \ref{ass:curvesandsurfaces} (A3) 
for all $k\in\{1,\dots,n\}$ that $\om_{k}\setminus\bigcup_{j=1}^{p} F_{j}$ is simply connected. 
We define
$$\om_{F}:=\om\setminus\bigcup_{j=1}^{p} F_{j}.$$

For $j\in\{1,\ldots,p\}$,
let $\widehat{F}_{j}\subset\widetilde{F}_{j}$ be two stacked, open, and simply connected neighbourhoods of $F_{j}$, i.e,
$$\overline{F_{j}}\subset\widehat{F}_{j}\subset\overline{\widehat{F}_{j}}\subset\widetilde{F}_{j},$$
let 
$$\Upsilon_{j}:=\widehat{F}_{j}\cap\om,\qquad
\widetilde{\Upsilon}_{j}:=\widetilde{F}_{j}\cap\om,$$
and let $\theta_{j}\in\Ci(\om_{F})$ be a bounded (together with all derivatives) 
function with the following properties:
\begin{itemize}
\item 
$F_{j}\subset\Upsilon_{j}\subset\widetilde{\Upsilon}_{j}$.
\item 
$\Upsilon_{j}$ and $\widetilde{\Upsilon}_{j}$ are (nonempty, open, and) simply connected. 
\item 
$\widetilde{F}_{j}$ are pairwise disjoint.
\item 
$\Upsilon_{j}\setminus F_{j}=\Upsilon_{j,0}\dot\cup\Upsilon_{j,1}$
and $\widetilde{\Upsilon}_{j}\setminus F_{j}=\widetilde{\Upsilon}_{j,0}\dot\cup\widetilde{\Upsilon}_{j,1}$
with $\Upsilon_{j,0}\subset\widetilde{\Upsilon}_{j,0}$ and 
$\Upsilon_{j,1}\subset\widetilde{\Upsilon}_{j,1}$ (which are all nonempty, open, and simply connected). 
\item 
$\overline{\Upsilon}_{j,0}\cap\overline{\Upsilon}_{j,1}=\overline{F}_{j}$.
\item 
$\supp\theta_{j}\subset\overline{\widetilde{\Upsilon}_{j,1}}$.
\item
$\theta_{j}|_{\Upsilon_{j,0}}=0$ 
and $\theta_{j}|_{\Upsilon_{j,1}}=1$.
\end{itemize}
Additionally, for all $l\in\{1,\dots,p\}$ we pick curves 
\begin{itemize}
\item 
$\zeta_{x_{l,0},x_{l,1}}\subset\zeta_{l}$
with fixed starting points $x_{l,0}\in\Upsilon_{l,0}$ 
and fixed endpoints $x_{l,1}\in\Upsilon_{l,1}$.
\end{itemize}

\begin{remark}
\label{rem:constrtheta1}
Roughly speaking, $\widetilde{\Upsilon}_{j}\setminus F_{j}$ 
consists of exactly two open, nonempty, and simply connected components 
$\widetilde{\Upsilon}_{j,0}$ and $\widetilde{\Upsilon}_{j,1}$,
on which subsets $\Upsilon_{j,0}$ (one side) and $\Upsilon_{j,1}$ (other side)
the indicator function $\theta_{j}$ is $0$ and $1$, respectively.
Note that $\Upsilon_{j,0}$ and $\Upsilon_{j,1}$ touch each other
at the whole surface $F_{j}$, i.e.,
$\overline{\Upsilon}_{j,0}\cap\overline{\Upsilon}_{j,1}=\overline{F}_{j}$.
Moreover, $\widetilde{\Upsilon}_{j}$ are pairwise disjoint and 
$\theta_{j}$ is supported in $\overline{\widetilde{\Upsilon}_{j,1}}$.
As a consequence, $\theta_{j}$ \emph{cannot} be continuously extended to $\om$. 
On the other hand, $\grad\theta_{j}=0$ in $\Upsilon_{j}\setminus F_{j}$,
and hence $\grad\theta_{j}$ \emph{can} be continuously extended to $\Upsilon_{j}$
and thus to $\om$. Note that for all $l,j\in\{1,\ldots,p\}$ 
it holds $\theta_{j}(x_{l,0})=0$ and $\theta_{j}(x_{l,1})=\delta_{l,j}$.
\end{remark}

For the construction of bases and to compute the dimensions 
of the Neumann fields it is crucial, 
that these fields are sufficiently regular, e.g., continuous in $\om$.
We even have the following local regularity results.

\begin{lemma}[local regularity of the cohomology groups]
\label{lem:harmreg}
Let $\om\subset\R^{3}$ be open. Then
\begin{align*}
\harmDom,\,\harmNom
&\subset\Citom\cap\Lttom,\\
\harmbihoneDom,\,\harmelaDom,\,\harmbihtwoNom,\,\harmelaNom
&\subset\Cittom\cap\LtttSom,\\
\harmbihtwoDom,\,\harmbihoneNom
&\subset\Cittom\cap\LtttTom.
\end{align*}
\end{lemma}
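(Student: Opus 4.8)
The plan is to treat this as a purely interior regularity statement and to reduce all eight assertions to hypoellipticity of a short list of constant–coefficient (systems of) differential operators. First I would note that square-integrability is built into the definitions of the cohomology groups, so only the $C^{\infty}$-membership needs proof. Since the statement is local, I would fix $x\in\om$ and an open ball $B$ with $\overline B\subset\om$; such a $B$ is bounded, topologically trivial (simply connected, with connected complement) and smooth, hence strong Lipschitz. Restricting an element $h$ of any of the harmonic spaces to $B$ annihilates the homogeneous boundary condition carried by the ``ringed'' operator, so $h|_{B}$ lies in the intersection of the two \emph{maximal} kernels on $B$ of the operators defining that space: e.g.\ $h|_{B}\in\ker(\curl,B)\cap\ker(\dive,B)$ for $h\in\harmDom$ or $h\in\harmNom$; $h|_{B}\in\ker(\CS,B)\cap\ker(\dDS,B)$ for $h\in\harmbihoneDom$ as well as for $h\in\harmbihtwoNom$; $h|_{B}\in\ker(\symCT,B)\cap\ker(\DT,B)$ for $h\in\harmbihoneNom$ as well as for $h\in\harmbihtwoDom$; and $h|_{B}\in\ker(\CCtS,B)\cap\ker(\DS,B)$ for $h\in\harmelaDom$ as well as for $h\in\harmelaNom$. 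Thus it suffices to show $h|_{B}\in C^{\infty}(B)$ with values in the pertinent matrix subspace.

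Next, since $B$ is topologically trivial, I would invoke the Poincar\'e-type exactness lemmas in their non-ringed form to represent $h|_{B}$ as a potential: the classical Poincar\'e lemma for $\curl$ gives $\ker(\curl,B)=\ran(\grad,B)$; Lemma \ref{lem:toptrivbih1} gives $\ker(\CS,B)=\ran(\Gg,B)$; Lemma \ref{lem:toptrivbih2} gives $\ker(\symCT,B)=\ran(\devGrad,B)$; and Lemma \ref{lem:toptrivela} gives $\ker(\CCtS,B)=\ran(\symGrad,B)$ (the remaining dual variants being available through Remark \ref{rem:dualcomplex}). Hence in every case $h|_{B}=A_{0}w$ with $A_{0}\in\{\grad,\Gg,\devGrad,\symGrad\}$ and $w$ in the corresponding domain, $H^{1}(B)$, $H^{2}(B)$ (Lemma \ref{lem:regGg}), or $H^{1,3}(B)$ (Korn's inequality). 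The \emph{second} kernel condition on $h|_{B}$, rewritten via the composition identities $\dive\grad=\Delta$, $\dDS\Gg=\Delta^{2}$, $\DT\devGrad=\Delta_{\Tb}$, $\DS\symGrad=\Delta_{\Sb}$ (with $\Delta_{\Tb}v=\Delta v-\frac13\grad\dive v$ and $\Delta_{\Sb}v=\frac12(\Delta v+\grad\dive v)$), then amounts to $Lw=0$ on $B$ in the distributional sense, where $L$ is the Laplacian for the de Rham fields, the bi-Laplacian for the first \pz Dirichlet and second \pz Neumann fields, the deviatoric Laplacian for the first \pz Neumann and second \pz Dirichlet fields, and the elasticity Laplacian for the elasticity Dirichlet and Neumann fields.

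To finish, I would apply interior elliptic regularity. The scalar operators $\Delta$ and $\Delta^{2}$ are (poly)elliptic, hence hypoelliptic, while $\Delta_{\Tb}$ and $\Delta_{\Sb}$ are elliptic systems, because their principal symbols $v\mapsto-\bigl(|\xi|^{2}v-\frac13(\xi\cdot v)\xi\bigr)$ and $v\mapsto-\frac12\bigl(|\xi|^{2}v+(\xi\cdot v)\xi\bigr)$ are invertible for $\xi\neq0$ --- which is precisely the pointwise algebraic fact behind the deviatoric estimate in Remark \ref{rem:Helmholtz-bih2} and Korn's first inequality in Remark \ref{rem:Helmholtz-ela}. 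Hypoellipticity then gives $w\in C^{\infty}(B)$, whence $h|_{B}=A_{0}w\in C^{\infty}(B)$; by construction $A_{0}w$ takes values in the symmetric resp.\ trace-free matrices, so, $x\in\om$ being arbitrary, $h\in C^{\infty}(\om)$ with the asserted values. (Alternatively one can skip the potential and argue directly that in each case the two defining operators form an overdetermined elliptic system --- the joint symbol is injective for $\xi\neq0$, e.g.\ $\xi\times h=0$ and $\xi\cdot h=0$ force $h=0$; equivalently $\Delta h=\grad\dive h-\curl\curl h=0$ componentwise, and analogously for the other complexes --- so that $h|_{B}$ is smooth by interior estimates for overdetermined systems.)

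I expect the only real difficulty to be bookkeeping rather than analysis: matching each ringed operator to its maximal counterpart, deciding for each of the eight spaces which of its two kernel conditions delivers the potential via the relevant exactness lemma and which becomes the elliptic equation, and noticing that each dual pair yields the same local equation, so that in the end only the four model operators $\Delta$, $\Delta^{2}$, $\Delta_{\Tb}$, $\Delta_{\Sb}$ occur. No genuinely new analysis is needed, as the exactness on balls and the ellipticity of these operators are all supplied by the results quoted above.
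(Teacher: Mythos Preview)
Your proposal is correct and follows essentially the same approach as the paper's proof: restrict to a topologically trivial subdomain, use the exactness lemmas (Lemmas \ref{lem:toptrivbih1}, \ref{lem:toptrivbih2}, \ref{lem:toptrivela}) to write the field as a potential $A_0 w$, read off the resulting elliptic equation $Lw=0$ from the second kernel condition, and invoke interior elliptic regularity. The paper's proof is terser but structurally identical; your added remarks on the ellipticity of $\Delta_{\Tb}$ and $\Delta_{\Sb}$ and the alternative via overdetermined elliptic systems are correct but not needed.
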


\begin{proof}
Vector fields in $\harmDom\cup\harmNom$ are harmonic and thus belong to $\Citom$.

Let 
$$S\in\harmbihoneDom\cup\harmbihtwoNom\subset\ker(\CS)\cap\ker(\dDS).$$ 
Then $S$ can be represented locally,
e.g., in any topologically trivial and smooth subdomain $D\subset \om$,
by $S=\Gg u$ with some $u\in H^{2}(D)$, see Lemma \ref{lem:toptrivbih1}.
Therefore, $\dDS\Gg u=0$ in $D$. 
Local regularity for the biharmonic equation shows $u\in\Ci(D)$
and hence $S=\Gg u\in\Citt(D)$, i.e.,
$S\in\Cittom$.

Next, let
$$T\in\harmbihtwoDom\cup\harmbihoneNom\subset\ker(\symCT)\cap\ker(\DT).$$ 
Then, for any topologically trivial and smooth subdomain $D\subset \om$ 
we find $v\in H^{1,3}(D)$ such that $T=\devGrad v$, see Lemma \ref{lem:toptrivbih2}.
Thus $\DT\devGrad v=0$ in $D$. 
Local elliptic regularity shows $v\in\Cit(D)$
and hence $T=\devGrad v\in\Citt(D)$, i.e.,
$T\in\Cittom$.

Finally, let
$$S\in\harmelaDom\cup\harmelaNom\subset\ker(\CCtS)\cap\ker(\DS).$$ 
For $D\subset \om$ smooth, bounded, and topologically trivial, 
we find $v\in H^{1,3}(D)$ representing $S=\symGrad v$, see Lemma \ref{lem:toptrivela}.
Thus, $\DS\symGrad v=0$ in $D$. 
Local elliptic regularity shows $v\in\Cit(D)$
and thus $S=\symGrad v\in\Citt(D)$, i.e.,
$S\in\Cittom$.
\end{proof}

\subsection{\except{toc}{Neumann Vector Fields of the Classical }De Rham Complex}
\label{app:sec:NeuVFdeRham}

Similar to our reasoning for the generalised harmonic Dirichlet fields, 
we start off with the arguably easiest case of the de Rham complex. 
Since we rely on the rephrasing of Picard's ideas in the forthcoming sections, 
we carry out the full construction of the harmonic Neumann fields. 
Note that we heavily use the functions and sets introduced at the beginning 
of Section \ref{app:sec:NeuF}, cf.~Remark \ref{rem:constrtheta1}.
Let $j\in\{1,\ldots,p\}$. By definition $\theta_{j}=0$ 
outside of $\widetilde{\Upsilon}_{j,1}$ and $\theta_{j}$ 
is constant on each connected component $\Upsilon_{j,0}$ and $\Upsilon_{j,1}$
of $\Upsilon_{j}\setminus F_{j}$.
Hence $\grad\theta_{j}=0$ in $\Upsilon_{j}\setminus F_{j}$ 
and -- due to the support condition -- also $\theta_{j}=0$ 
in $\dot\bigcup_{l\in\{1,\ldots,p\}\setminus\{j\}}\Upsilon_{l}\setminus F_{l}$.
Thus, $\grad\theta_{j}$ can be continuously extended by zero to $\Theta_{j}\in\Citom\cap\Lttom$
with $\Theta_{j}=0$ in $\dot\bigcup_{l\in\{1,\ldots,p\}}\Upsilon_{l}$.

\begin{lemma}
\label{dimNFrhm:lem0}
Let Assumption \ref{ass:curvesandsurfaces} be satisfied. Then
$\Theta_{j}\in\ker(\curl,\om)$.
\end{lemma}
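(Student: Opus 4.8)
The plan is to show that $\Theta_j \in \ker(\curl,\om)$, i.e.\ $\Theta_j \in \dom(\curl,\om)$ with $\curl \Theta_j = 0$ in the distributional sense on $\om$. Since $\Theta_j$ is defined as the continuous extension by zero of $\grad\theta_j$ from $\om_F$ to all of $\om$, and $\theta_j \in \Ci(\om_F)$ with $\grad\theta_j = 0$ on $\Upsilon_j \setminus F_j$ and $\theta_j = 0$ (hence $\grad\theta_j=0$) on $\bigcup_{l\neq j}\Upsilon_l \setminus F_l$, the field $\Theta_j$ is smooth on $\om\setminus\bigcup_{l=1}^p F_l = \om_F$ and smooth (equal to zero) on a neighbourhood $\bigcup_{l=1}^p \Upsilon_l$ of $\bigcup_l F_l$. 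Thus $\Theta_j \in \Citom$, and on $\om_F$ we have $\curl\Theta_j = \curl\grad\theta_j = 0$ by Schwarz's lemma; on the neighbourhood of the cutting surfaces $\Theta_j = 0$ so $\curl\Theta_j = 0$ there as well. Since $\om = \om_F \cup \bigcup_l \Upsilon_l$ is an open cover and $\curl\Theta_j = 0$ on each piece, we get $\curl\Theta_j = 0$ classically on all of $\om$.

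First I would record that $\Theta_j$ is well-defined and continuous: on $\Upsilon_j$ one has $\grad\theta_j = 0$ (because $\theta_j$ is locally constant on $\Upsilon_j\setminus F_j$, taking value $0$ on $\Upsilon_{j,0}$ and $1$ on $\Upsilon_{j,1}$), so the two local definitions — $\grad\theta_j$ on $\om_F$ and $0$ on $\bigcup_l\Upsilon_l$ — agree on the overlap $(\bigcup_l\Upsilon_l)\cap\om_F$, and the resulting field lies in $\Citom\cap\Lttom$ (boundedness of $\theta_j$ and its derivatives gives square integrability over the bounded set $\om$). Second, I would argue the vanishing of the curl locally: on $\om_F$, $\theta_j$ is a genuine $\Ci$ scalar field, so $\curl\grad\theta_j = 0$ pointwise; on the open set $\bigcup_l\Upsilon_l$, $\Theta_j \equiv 0$ so trivially $\curl\Theta_j = 0$. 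Third, since these two open sets cover $\om$ and $\Theta_j$ is $\Ci$ on $\om$ with its curl vanishing on each set of the cover, $\curl\Theta_j = 0$ on $\om$; in particular $\Theta_j \in \dom(\curl,\om) = H(\curl,\om)$ with $\curl\Theta_j = 0$, which is the assertion.

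There is essentially no hard analytic step here — the only subtlety, and the reason the lemma is stated separately, is the \emph{bookkeeping} about why $\grad\theta_j$ extends continuously across the cutting surfaces $F_j$ even though $\theta_j$ itself does not. This is exactly the content of Remark \ref{rem:constrtheta1}: $\theta_j$ has a jump across $F_j$ (it is $0$ on one side $\Upsilon_{j,0}$ and $1$ on the other side $\Upsilon_{j,1}$), so $\theta_j$ has no continuous extension to $\om$, but its gradient vanishes on a full neighbourhood $\Upsilon_j\setminus F_j$ of $F_j$ minus $F_j$, hence the zero-extension of $\grad\theta_j$ across $F_j$ is continuous (indeed smooth). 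I would therefore make sure to invoke Assumption \ref{ass:curvesandsurfaces} explicitly to guarantee the existence and disjointness of the surfaces $F_j$ and the neighbourhoods $\widetilde\Upsilon_j$ on which this construction takes place, and cite the properties of $\theta_j$ listed at the beginning of Section \ref{app:sec:NeuF}.
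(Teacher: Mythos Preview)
Your argument is correct. You use that $\Theta_j\in\Citom$ (already stated in the text preceding the lemma) and then compute $\curl\Theta_j$ pointwise on the open cover $\om=\om_F\cup\bigcup_l\Upsilon_l$: on $\om_F$ it is $\curl\grad\theta_j=0$, on each $\Upsilon_l$ it is $\curl 0=0$, hence $\curl\Theta_j=0$ classically on $\om$ and in particular in the weak sense.

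The paper takes a different route: it verifies the weak formulation directly by testing against $\Phi\in\Cictom$, observing that $\supp\Theta_j\subset\widetilde\Upsilon_j\setminus\Upsilon_j$ is disjoint from all cutting surfaces, inserting a cut-off $\varphi\in\Cic(\om_F)$ equal to $1$ on $\supp\Theta_j\cap\supp\Phi$, and then integrating by parts on $\om_F$ where $\theta_j$ is a genuine smooth scalar (so $\scp{\grad\theta_j}{\curl(\varphi\Phi)}=0$). Your pointwise approach is more elementary here and exploits the smoothness of $\Theta_j$ that the paper has already established; the paper's test-function approach is slightly more indirect for this lemma but serves as a template reused verbatim for the analogous Lemmas \ref{dimNTbih1:lem0}, \ref{dimNTbih2:lem0}, and \ref{dimNTela:lem0}, where the relevant operator ($\symCT$, $\CS$, $\CCtS$) is not simply $\curl$ applied componentwise and the weak formulation makes the algebraic bookkeeping cleaner.
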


\begin{proof}
Let $\Phi\in\Cictom$. 
As $\supp\Theta_{j}\subset\widetilde{\Upsilon}_{j}\setminus\Upsilon_{j}$
we can pick another cut-off function $\varphi\in\Cic(\om_{F})$ with $\varphi|_{\supp\Theta_{j}\cap\supp\Phi}=1$. Then
$$\scp{\Theta_{j}}{\curl\Phi}_{\Lttom}
=\scp{\Theta_{j}}{\curl\Phi}_{\Ltt(\supp\Theta_{j}\cap\supp\Phi)}
=\bscp{\grad\theta_{j}}{\curl(\varphi\Phi)}_{\Ltt(\om_{F})}
=0,$$
as $\varphi\Phi\in\Cict(\om_{F})$.
\end{proof}

%

Let $l,j\in\{1,\ldots,p\}$. 
We recall from the latter proof and from Remark \ref{rem:constrtheta1} 
that $\supp\Theta_{j}\subset\widetilde{\Upsilon}_{j}\setminus\Upsilon_{j}$
and thus
$$\int_{\zeta_{l}}\scp{\Theta_{j}}{\intd\lambda}
=\int_{\zeta_{l}\setminus\Upsilon_{j}}\scp{\grad\theta_{j}}{\intd\lambda} 
=\int_{\zeta_{x_{l,0},x_{l,1}}}\scp{\grad\theta_{j}}{\intd\lambda}
=\theta_{j}(x_{l,1})
-\theta_{j}(x_{l,0})=\delta_{l,j},$$
where we recall $\zeta_{x_{l,0},x_{l,1}}\subset\zeta_{l}$
with chosen starting points $x_{l,0}\in\Upsilon_{l,0}$ 
and respective endpoints $x_{l,1}\in\Upsilon_{l,1}$.
Hence we define functionals $\beta_{l}$ in the way that
\begin{align}
\label{pathint1}
\beta_{l}(\Theta_{j})
:=\int_{\zeta_{l}}\scp{\Theta_{j}}{\intd\lambda}=\delta_{l,j},\qquad
l,j\in\{1,\ldots,p\}.
\end{align}

Let Assumption \ref{ass:segmentprop} be satisfied.
For the de Rham complex, similar to \eqref{deco1}, \eqref{deco2}, and \eqref{app:deco1-derham},
we have the orthogonal decompositions 
\begin{align}
\label{app:deco2-derham}
\begin{aligned}
\Lttom=H_{2}
=\ran A_{2}^{*}\oplus_{H_{2}}\ker A_{2}
&=\ran(\grad,\om)\oplus_{\Lttom}\ker(\rdive,\om),\\
\ker(\curl,\om)=\ker(A_{1}^{*})
=\ran A_{2}^{*}\oplus_{H_{2}}K_{2}
&=\ran(\grad,\om)\oplus_{\Lttom}\harmNom.
\end{aligned}
\end{align}

\begin{remark}
\label{rem:Helmholtz-derhamN}
By definition $\dom(\grad,\om)=H^{1}(\om)$, and the range in \eqref{app:deco2-derham} is closed by the Poincar\'e estimate
$$\exists\,c>0\quad\forall\,\phi\in H^{1}(\om)\cap\R_{\pw}^{\bot_{\Ltom}}\qquad
\norm{\phi}_{\Ltom}\leq c\norm{\grad\phi}_{\Lttom},$$
which is implied by a contradiction argument using Rellich's selection theorem 
as Assumption \ref{ass:segmentprop} holds.
\end{remark}

Similar to the case of harmonic Dirichlet fields, 
we denote in \eqref{app:deco2-derham} the orthogonal projector 
along $\ran(\grad,\om)$ onto $\ker(\rdive,\om)$ by $\pi$.
By Lemma \ref{dimNFrhm:lem0} for all $j\in\{1,\ldots,p\}$ there exists 
some $\psi_{j}\in H^{1}(\om)$ (unique up to $\R_{\pw}$) such that 
$$\harmNom\ni\pi\Theta_{j}=\Theta_{j}-\grad\psi_{j},\qquad
(\Theta_{j}-\grad\psi_{j})\big|_{\om_{F}}
=\grad(\theta_{j}-\psi_{j}).$$
By Lemma \ref{lem:harmreg} we have $\harmNom\subset\Citom$.
Therefore, $\Theta_{j}\in\Citom$ and we obtain $\grad\psi_{j}\in\Citom$. Hence,
$\psi_{j}\in H^{1}(\om)\cap\Ciom$ and the following path integrals are well-defined
and can be computed by \eqref{pathint1}, i.e., for all $l,j\in\{1,\dots,p\}$
\begin{align}
\label{pathint2}
\beta_{l}(\pi\Theta_{j})
=\int_{\zeta_{l}}\scp{\pi\Theta_{j}}{\intd\lambda}
=\int_{\zeta_{l}}\scp{\Theta_{j}}{\intd\lambda}
-\int_{\zeta_{l}}\scp{\grad\psi_{j}}{\intd\lambda}
=\delta_{l,j}+0=\delta_{l,j}.
\end{align}

We will show that 
\begin{align}
\label{basis:rhmN}
\calB^{\rhm}_{N}:=\{\pi\Theta_{1},\dots,\pi\Theta_{p}\}
\subset\harmNom
\end{align}
defines a basis of $\harmNom$.

Also for the harmonic Neumann fields, 
we provide a possible variational formulation for obtaining $\psi_{j}$ constructed here:

\begin{remark}[Characterisation by PDEs]
Let $j\in\{1,\ldots,p\}$. 
Then $\psi_{j}\in H^{1}(\om)\cap\R_{\pw}^{\bot_{\Ltom}}$ satisfies
$$\forall\,\phi\in H^{1}(\om)\qquad
\scp{\grad\psi_{j}}{\grad\phi}_{\Lttom}
=\scp{\Theta_{j}}{\grad\phi}_{\Lttom},$$
i.e., $\psi_{j}=\Delta_{N}^{-1}\big(\dive\Theta_{j},\nu\cdot\Theta_{j}|_{\ga}\big)$, 
where $\Delta_{N}\subset\dive\grad$ 
is the Laplacian with inhomogeneous Neumann boundary conditions 
restricted to a subset of $H^{1}(\om)\cap\R_{\pw}^{\bot_{\Ltom}}$. 
Therefore, 
$$\pi\Theta_{j}
=\Theta_{j}-\grad\psi_{j}
=\Theta_{j}-\grad\Delta_{N}^{-1}\big(\dive\Theta_{j},\nu\cdot\Theta_{j}|_{\ga}\big).$$
In classical terms, $\psi_{j}$ solves the Neumann Laplace problem
\begin{align}
\label{NeuLap1}
\begin{aligned}
-\Delta\psi_{j}&=-\dive\Theta_{j}
&&\text{in }\om,\\
\nu\cdot\grad\psi_{j}&=\nu\cdot\Theta_{j}
&&\text{on }\ga,\\
\int_{\om_{k}}\psi_{j}&=0
&&\text{for }k\in\{1,\dots,n\},
\end{aligned}
\end{align}
which is uniquely solvable.
\end{remark}

\begin{lemma}
\label{dimNFrhm:lem1}
Let Assumption \ref{ass:segmentprop} and Assumption \ref{ass:curvesandsurfaces} be satisfied. 
Then it holds $\harmNom=\lin\calB^{\rhm}_{N}$.
\end{lemma}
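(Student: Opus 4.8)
The plan is to mimic the proof of Lemma \ref{dimDFrhm:lem1} for the de Rham Dirichlet fields, but now in the ``dual'' setting where the homogeneous boundary condition sits on $\curl$ rather than on $\dive$, and where the relevant topological obstruction is the system of handles $\zeta_{1},\dots,\zeta_{p}$ rather than the complementary components $\Xi_{1},\dots,\Xi_{m-1}$. Concretely, let $H\in\harmNom=\ker(\rdive,\om)\cap\ker(\curl,\om)$ be arbitrary. By Lemma \ref{lem:harmreg} we have $H\in\Citom$, so all curve integrals of $H$ are well-defined. First I would apply Assumption \ref{ass:curvesandsurfaces} (A1): since $H\in\ker(\curl)$ is continuously differentiable, for every closed piecewise $C^{1}$-curve $\zeta$ in $\om$ we have $\int_{\zeta}\scp{H}{\intd\lambda}=\sum_{j=1}^{p}\alpha_{j}(\zeta)\int_{\zeta_{j}}\scp{H}{\intd\lambda}$ with integer coefficients $\alpha_{j}(\zeta)$ depending only on $\zeta$. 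Set $\gamma_{j}:=\int_{\zeta_{j}}\scp{H}{\intd\lambda}=\beta_{j}(H)\in\R$ and consider the corrected field
$$\widehat{H}:=H-\sum_{j=1}^{p}\gamma_{j}\,\pi\Theta_{j}\in\harmNom.$$
By \eqref{pathint2} we have $\beta_{l}(\pi\Theta_{j})=\delta_{l,j}$, hence $\beta_{l}(\widehat{H})=\gamma_{l}-\gamma_{l}=0$ for all $l\in\{1,\dots,p\}$, i.e.\ all periods of $\widehat{H}$ over the homology basis $\zeta_{1},\dots,\zeta_{p}$ vanish.

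The next step is to show that a harmonic Neumann field with vanishing periods is identically zero. The mechanism: since $\widehat{H}\in\ker(\curl,\om)$ is continuous and all its periods over the $\zeta_{j}$ vanish, Assumption \ref{ass:curvesandsurfaces} (A1) forces $\int_{\zeta}\scp{\widehat{H}}{\intd\lambda}=0$ for \emph{every} closed piecewise $C^{1}$-curve $\zeta$ in $\om$. Working componentwise on each connected component $\om_{k}$ and fixing a base point, the assignment $x\mapsto\int_{x_{0}}^{x}\scp{\widehat{H}}{\intd\lambda}$ along any path in $\om_{k}$ is then path-independent, hence defines a function $w_{k}\in\Ci(\om_{k})$ with $\grad w_{k}=\widehat{H}$ on $\om_{k}$; assembling these gives $w\in\Ciom$ with $\grad w=\widehat{H}$, and since $\widehat{H}\in\Lttom$ we get $w\in H^{1}(\om)$. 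Thus $\widehat{H}=\grad w\in\ran(\grad,\om)$. On the other hand $\widehat{H}\in\harmNom$, and by the Helmholtz decomposition \eqref{app:deco2-derham} we have $\harmNom\perp_{\Lttom}\ran(\grad,\om)$. Therefore
$$\norm{\widehat{H}}_{\Lttom}^{2}=\scp{\grad w}{\widehat{H}}_{\Lttom}=0,$$
so $\widehat{H}=0$, i.e.\ $H=\sum_{j=1}^{p}\gamma_{j}\,\pi\Theta_{j}\in\lin\calB^{\rhm}_{N}$. Since the inclusion $\calB^{\rhm}_{N}\subset\harmNom$ was already established in \eqref{basis:rhmN}, this proves $\harmNom=\lin\calB^{\rhm}_{N}$.

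I expect the main obstacle to be the careful justification of the passage from ``vanishing periods over the $\zeta_{j}$'' to ``$\widehat{H}$ is a global gradient'': one must be precise that the integer-linear-combination statement in (A1) applies to \emph{all} closed curves (so path-independence of the antiderivative really follows), and that the resulting potential $w$ lies in $H^{1}(\om)$ and not merely in $H^{1}_{\loc}$ — here boundedness of $\om$ together with $\widehat{H}\in\Lttom$ suffices, but it should be noted that $w$ need not satisfy any boundary condition, which is exactly why we use $\ran(\grad,\om)$ (no ring) in \eqref{app:deco2-derham} rather than $\ran(\rgrad,\om)$. A secondary technical point is the regularity input $\harmNom\subset\Citom$ from Lemma \ref{lem:harmreg}, which is what makes the curve integrals $\beta_{l}(\pi\Theta_{j})$ and $\beta_{l}(H)$ literally meaningful; everything else is a routine reshuffling of the de Rham Dirichlet-field argument with the roles of $(m-1)$ and $p$, and of $\grad$ and $\curl$'s boundary conditions, interchanged.
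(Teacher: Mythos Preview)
Your proposal is correct and follows essentially the same approach as the paper's proof: define the periods $\gamma_{j}=\beta_{j}(H)$, subtract off $\sum_{j}\gamma_{j}\pi\Theta_{j}$, use \eqref{pathint2} and Assumption~\ref{ass:curvesandsurfaces}~(A1) to conclude that $\widehat{H}$ has vanishing integral over every closed curve, construct a potential $u$ componentwise by path integration, and then use orthogonality of $\ran(\grad,\om)$ to $\harmNom$ to force $\widehat{H}=0$. The paper also cites \cite[Theorem 2.6 (1)]{L1986} or \cite[Theorem 3.2 (2)]{A1965} for the step $u\in\Ciom$, $\grad u\in\Lttom$ $\Rightarrow$ $u\in H^{1}(\om)$, which you correctly flag as the one place needing care.
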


\begin{proof}
Let $H\in\harmNom=\ker(\rdive,\om)\cap\ker(\curl,\om)\subset\Citom$
(see Lemma \ref{lem:harmreg}), and define the numbers
$$\gamma_{j}
:=\gamma_{j}(H)
:=\beta_{j}(H)
=\int_{\zeta_{j}}\scp{H}{\intd\lambda}\in\R,\qquad
j\in\{1,\dots,p\}.$$
We shall show that
$$\harmNom\ni\widehat{H}:=H-\sum_{j=1}^{p}\gamma_{j}\pi\Theta_{j}=0\quad\text{in }\om.$$
The aim is to prove that there exists $u\in H^{1}(\om)$
such that $\grad u=\widehat{H}$, since then
$$\norm{\widehat{H}}_{\Lttom}^{2}
=\scp{\grad u}{\widehat{H}}_{\Lttom}=0.$$
Using \eqref{pathint2}, we obtain
$$\int_{\zeta_{l}}\scp{\widehat{H}}{\intd\lambda}
=\int_{\zeta_{l}}\scp{H}{\intd\lambda}
-\sum_{j=1}^{p}\gamma_{j}\int_{\zeta_{l}}\scp{\pi\Theta_{j}}{\intd\lambda}
=\gamma_{l}
-\sum_{j=1}^{p}\gamma_{j}\beta_{l}(\pi\Theta_{j})=\gamma_{l}-\sum_{j=1}^{p}\gamma_{l}\delta_{l,j}
=0.$$
Note that $\widehat{H}\in\ker(\curl,\om)\cap\Citom$.
Hence, by Assumption \ref{ass:curvesandsurfaces} (A.1)
we have for any closed piecewise $C^1$-curve $\zeta$ in $\om$
\begin{align}
\label{pathint3}
\int_{\zeta}\scp{\widehat{H}}{\intd\lambda}
&=0.
\end{align}
Recall the connected components $\om_{1},\dots,\om_{n}$ of $\om$.
For $1\leq k\leq n$ let $\om_{k}$ and some $x_{0}\in\om_{k}$ be fixed.
By \eqref{pathint3} and the fundamental theorem of calculus 
the function $u:\om\to\R$ given by
$$u(x):=\int_{\zeta(x_{0},x)}\scp{\widehat{H}}{\intd\lambda},\qquad
x\in\om_{k},$$
where $\zeta(x_{0},x)$ is any piecewise $C^1$-curve connecting $x_{0}$ with $x$,
is well defined, i.e., independent of the choice of the respective curve $\zeta(x_{0},x)$, 
and belongs to $\Ci(\om_{k})$ with $\grad u=\widehat{H}\in\Ltt(\om_{k})$.
Thus\footnote{Indeed,
it is sufficient to assume $u\in\Lt_{\mathsf{loc}}(\om_{k})$, see, e.g.,
\cite[Satz 6.6.26, Beweis; Folgerung 6.3.2]{L1997} 
or \cite[Theorem 7.4]{W1987}.} 
$u\in\Lt(\om_{k})$, see, e.g., \cite[Theorem 2.6 (1)]{L1986} or \cite[Theorem 3.2 (2)]{A1965},
and hence $u\in H^{1}(\om_{k})$, showing $u\in H^{1}(\om)$.
\end{proof}

\begin{remark}
\label{dimNFrhm:lem1-rem}
Note that in the latter proof
the existence of $u\in H^{1}(\om_{k})$ with $\grad u=\widehat{H}$ in $\om_{k}$
is well-known, if the connected component $\om_{k}$ of $\om$ is even simply connected.
Indeed, in this case $\ker(\curl,\om_{k})=\ran(\grad,\om_{k})$.
\end{remark}

\begin{lemma}
\label{dimNFrhm:lem2}
Let Assumption \ref{ass:segmentprop} and Assumption \ref{ass:curvesandsurfaces} be satisfied. Then
$\calB^{\rhm}_{N}$ is linearly independent.
\end{lemma}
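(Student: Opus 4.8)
The plan is to use the duality between the basis fields $\pi\Theta_{j}$ and the path-integral functionals $\beta_{l}$ established in \eqref{pathint2}. Suppose $\alpha_{1},\dots,\alpha_{p}\in\R$ satisfy $\sum_{j=1}^{p}\alpha_{j}\pi\Theta_{j}=0$ in $\om$. Since each $\pi\Theta_{j}\in\harmNom\subset\Citom$ (Lemma \ref{lem:harmreg}), the field $H:=\sum_{j=1}^{p}\alpha_{j}\pi\Theta_{j}$ is a continuously differentiable field on $\om$ lying in $\ker(\curl,\om)$, so the curve integrals $\int_{\zeta_{l}}\scp{H}{\intd\lambda}$ are well defined. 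First I would apply the functionals $\beta_{l}$ to this vanishing sum: by linearity and \eqref{pathint2},
$$0=\beta_{l}(H)=\beta_{l}\Big(\sum_{j=1}^{p}\alpha_{j}\pi\Theta_{j}\Big)
=\sum_{j=1}^{p}\alpha_{j}\beta_{l}(\pi\Theta_{j})
=\sum_{j=1}^{p}\alpha_{j}\delta_{l,j}
=\alpha_{l}$$
for every $l\in\{1,\dots,p\}$. Hence $\alpha_{l}=0$ for all $l$, which is exactly linear independence.

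The only point requiring care — and the main (minor) obstacle — is the legitimacy of interchanging the functional $\beta_{l}$ with the finite sum, i.e.\ that $\beta_{l}$ is a genuine \emph{linear} functional on the relevant space of $C^{1}$ fields in $\ker(\curl,\om)$, independent of the chosen representative curve within its homotopy class. This is precisely the content of Assumption \ref{ass:curvesandsurfaces} (A1), which guarantees that $\zeta\mapsto\int_{\zeta}\scp{\Phi}{\intd\lambda}$ depends only on the homology class of $\zeta$ for $\Phi\in\ker(\curl)$ continuously differentiable, and that the classes of $\zeta_{1},\dots,\zeta_{p}$ form a basis; linearity in $\Phi$ is then immediate from linearity of the integral. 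Since $H$ and each $\pi\Theta_{j}$ are continuously differentiable and curl-free (being harmonic Neumann fields), the hypotheses of (A1) apply to all of them, so the computation above is justified verbatim.

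I would then conclude the proof in one line: combining Lemma \ref{dimNFrhm:lem1} (spanning) with the linear independence just shown, $\calB^{\rhm}_{N}$ is a basis of $\harmNom$, and in particular $\dim\harmNom=p$; this completes the proof of Theorem \ref{thm:DNF} for the Neumann part. No substantial analytic input beyond the already-proved regularity Lemma \ref{lem:harmreg} and the topological Assumption \ref{ass:curvesandsurfaces} is needed — the argument is purely a pairing/duality computation, which is why it is the easy companion to the more involved spanning statement in Lemma \ref{dimNFrhm:lem1}.
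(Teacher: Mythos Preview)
Your argument is correct and is essentially identical to the paper's proof: assume a vanishing linear combination, apply the path-integral functionals $\beta_{l}$, and use the duality relation \eqref{pathint2} to read off $\alpha_{l}=0$ for each $l$. The additional remarks on regularity via Lemma~\ref{lem:harmreg} and on Assumption~\ref{ass:curvesandsurfaces}~(A1) are fine context but not strictly needed here, since the well-definedness of $\beta_{l}(\pi\Theta_{j})$ was already established just before \eqref{pathint2}; the concluding sentence about combining with Lemma~\ref{dimNFrhm:lem1} belongs to Theorem~\ref{dimNFrhm} rather than to the present lemma.
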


\begin{proof}
Let $\displaystyle\sum_{j=1}^{p}\gamma_{j}\pi\Theta_{j}=0$ for some $\gamma_{j}\in\R$. 
Then \eqref{pathint2} implies
$$0
=\sum_{j=1}^{p}\gamma_{j}\int_{\zeta_{l}}\scp{\pi\Theta_{j}}{\intd\lambda}
=\sum_{j=1}^{p}\gamma_{j}\beta_{l}(\pi\Theta_{j})
=\sum_{j=1}^{p}\gamma_{j}\delta_{l,j}
=\gamma_{l}$$
for all $l\in\{1,\ldots,p\}$.
\end{proof}

\begin{theorem}
\label{dimNFrhm}
Let Assumptions \ref{ass:segmentprop} and \ref{ass:curvesandsurfaces} be satisfied.
Then $\dim\harmNom=p$ and a basis of $\harmNom$ is given by \eqref{basis:rhmN}.
\end{theorem}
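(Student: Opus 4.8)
The plan is to combine the two preceding lemmas, exactly as was done for the Dirichlet case in Theorem \ref{dimDFrhm}. Lemma \ref{dimNFrhm:lem1} establishes that $\harmNom = \lin\calB^{\rhm}_{N}$, i.e., the $p$ fields $\pi\Theta_{1},\dots,\pi\Theta_{p}$ span the space of harmonic Neumann fields. Lemma \ref{dimNFrhm:lem2} establishes that these same $p$ fields are linearly independent. Together these two facts say precisely that $\calB^{\rhm}_{N}$ is a basis of $\harmNom$, and consequently $\dim\harmNom = \# \calB^{\rhm}_{N} = p$.

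Concretely, the proof is one line: invoke Lemma \ref{dimNFrhm:lem1} and Lemma \ref{dimNFrhm:lem2}. The hypotheses of both lemmas are Assumption \ref{ass:segmentprop} and Assumption \ref{ass:curvesandsurfaces}, which are exactly the hypotheses assumed in the statement of the theorem, so nothing extra needs to be checked. Since there are no further obstacles, the proof body is simply a pointer to those two lemmas, in complete parallel with the structure already used for the de Rham Dirichlet fields, the first and second \pz Dirichlet fields, and the elasticity Dirichlet fields. Here is the proof:

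\begin{proof}
Use Lemma \ref{dimNFrhm:lem1} and Lemma \ref{dimNFrhm:lem2}.
\end{proof}

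There is genuinely no hard part at this level: all the work has already been done in the two supporting lemmas, where the real obstacles lie — namely (in Lemma \ref{dimNFrhm:lem1}) the construction of the potential $u\in H^{1}(\om_{k})$ via path integrals, which rests on the regularity $\harmNom\subset\Citom$ from Lemma \ref{lem:harmreg}, the period identity \eqref{pathint2}, and Assumption \ref{ass:curvesandsurfaces} (A1) guaranteeing that closed curves give vanishing periods of $\widehat H$; and (in Lemma \ref{dimNFrhm:lem2}) the non-degeneracy of the period pairing $\beta_{l}(\pi\Theta_{j}) = \delta_{l,j}$. With those in hand, the theorem itself is immediate.
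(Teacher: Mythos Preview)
Your proposal is correct and matches the paper's own proof verbatim: the paper also writes simply ``Use Lemma \ref{dimNFrhm:lem1} and Lemma \ref{dimNFrhm:lem2}.'' Your surrounding commentary accurately identifies where the real work lies.
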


\begin{proof}
Use Lemma \ref{dimNFrhm:lem1} and Lemma \ref{dimNFrhm:lem2}.
\end{proof}

\subsection{\except{toc}{Neumann Tensor Fields of the }First \PZ Complex}
\label{app:sec:NeuTFbih}

The main difference of the constructions to come to the one in the previous section 
is the introduction of $\beta$: a suitable collection of functionals that very easily allows 
for testing of linear independence and for a straightforward application 
of Assumption \ref{ass:curvesandsurfaces} (A1). 
As a preparation for this, we need the next results. 
The first one -- also important for the sections to come -- 
is rather combinatorical and analyses the interplay between vector analysis and matrix calculus; 
the second and third one deal with so-called Poincar\'e maps, 
which form the foundation of the construction of the desired set of functionals. 
Note that for the subsequent sections Lemma \ref{PZformulalem} is of independent interest. 
For this, we introduce for $v\in\R^3$ the skew-symmetric matrix
$$\spn v
:=\begin{pmatrix}0&-v_{3}&v_{2}\\v_{3}&0&-v_{1}\\-v_{2}&v_{1}&0\end{pmatrix}$$
and the corresponding isometric mapping $\spn:\R^{3}\to\R^{3\times3}_{\skw}$.

\begin{lemma}
\label{PZformulalem}
Let $u\in\Cic(\R^{3})$, $v,w\in\Cict(\R^{3})$, and $S\in\Cictt(\R^{3})$. Then:
\begin{itemize}
\item
$(\spn v)\,w=v\times w=-(\spn w)\,v$ 
\quad and  
\quad
$(\spn v)(\spn^{-1}S)=-Sv$, if $\sym S=0$
\item
$\sym\spn v=0$
\quad and \quad
$\dev(u\id)=0$
\item
$\tr\Grad v=\dive v$
\quad and \quad
$2\skw\Grad v=\spn\curl v$
\item
$\Dive(u\id)=\grad u$
\quad and \quad
$\Curl(u\id)=-\spn\grad u$,\\
in particular, \quad $\curl\Dive(u\id)=0$ 
\quad and \quad $\curl\spn^{-1}\Curl(u\id)=0$\\
and \quad $\sym\Curl(u\id)=0$
\item
$\Dive\spn v=-\curl v$
\quad and \quad
$\Dive\skw S=-\curl\spn^{-1}\skw S$,\\
in particular, \quad $\dive\Dive\skw S=0$
\item
$\Curl\spn v=(\dive v)\id-(\Grad v)^{\top}$\\
and \quad $\Curl\skw S=(\dive\spn^{-1}\skw S)\id-(\Grad\spn^{-1}\skw S)^{\top}$
\item
$\dev\Curl\spn v=-(\dev\Grad v)^{\top}$
\item
$-2\Curl\sym\Grad v=2\Curl\skw\Grad v=-(\Grad\curl v)^{\top}$
\item
$2\spn^{-1}\skw\Curl S=\Dive S^{\top}-\grad\tr S=\Dive\big(S-(\tr S)\id\big)^{\top}$,\\
in particular, \quad $\curl\Dive S^{\top}=2\curl\spn^{-1}\skw\Curl S$\\
and \quad $2\skw\Curl S=\spn\Dive S^{\top}$, if $\tr S=0$
\item
$\tr\Curl S=2\dive\spn^{-1}\skw S$,
\quad in particular, \quad $\tr\Curl S=0$, if $\skw S=0$,\\
and \quad $\tr\Curl\sym S=0$ \quad and \quad $\tr\Curl\skw S=\tr\Curl S$
\item
$2(\Grad\spn^{-1}\skw S)^{\top}=(\tr\Curl\skw S)\id-2\Curl\skw S$
\item
$3\Dive(\dev\Grad v)^{\top}=2\grad\dive v$
\item
$2\Curl\sym\Grad v=-2\Curl\skw\Grad v=-\Curl\spn\curl v=(\Grad\curl v)^{\top}$
\item
$2\Dive\sym\Curl S=-2\Dive\skw\Curl S=\curl\Dive S^{\top}$
\item
$\Curl(\Curl\sym S)^{\top}=\sym\Curl(\Curl S)^{\top}$
\item
$\Curl(\Curl\skw S)^{\top}=\skw\Curl(\Curl S)^{\top}$
\end{itemize}
All formulas extend to distributions as well.
\end{lemma}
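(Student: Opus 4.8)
The plan is to exploit that all the asserted equalities --- including every ``in particular'' clause and the concluding remark on distributions --- are pointwise identities between smooth fields, since $u\in\Cic(\R^{3})$, $v,w\in\Cict(\R^{3})$, $S\in\Cictt(\R^{3})$; it therefore suffices to verify each at an arbitrary point, and the distributional versions then follow by a routine density argument, since both sides of each identity act continuously on distributions (all operators being constant-coefficient differential operators composed with constant-coefficient linear maps). Thus I would fix a Cartesian frame and pass to index notation, writing $(\Grad v)_{ij}=\p_{j}v_{i}$, $(\Dive S)_{i}=\p_{j}S_{ij}$, $\Curl$ row-wise through the Levi--Civita symbol $\eps_{ijk}$, $(\spn v)_{ij}=-\eps_{ijk}v_{k}$, $(\spn^{-1}T)_{k}=-\tfrac12\eps_{ijk}T_{ij}$, and using the pointwise splitting $T=\tfrac13(\tr T)\id+\dev\sym T+\skw T$ with $\skw T=\spn(\spn^{-1}\skw T)$. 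The only tools required are the contraction identity $\eps_{ijk}\eps_{ilm}=\delta_{jl}\delta_{km}-\delta_{jm}\delta_{kl}$ (so $\eps_{ijk}\eps_{ijm}=2\delta_{km}$) and Schwarz's theorem $\p_{k}\p_{m}=\p_{m}\p_{k}$, which yields $\curl\grad=0$, $\dive\curl=0$, $\Curl\Grad=0$, and $\Dive\Curl=0$ (the latter two row-wise).

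I would then proceed in three waves. First, the purely algebraic identities: $(\spn v)\,w=v\times w=-(\spn w)\,v$ is the contraction identity, $(\spn v)(\spn^{-1}S)=-Sv$ for $\sym S=0$ follows from it by writing $S=\spn(\spn^{-1}S)$, and $\sym\spn v=0$, $\dev(u\id)=0$ are immediate from the definitions. Second, the first-order ``primitives'' $\tr\Grad v=\dive v$, $2\skw\Grad v=\spn\curl v$, $\Dive(u\id)=\grad u$, $\Curl(u\id)=-\spn\grad u$, $\Dive\spn v=-\curl v$, and $\Curl\spn v=(\dive v)\id-(\Grad v)^{\top}$, each a one-line index computation. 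All other first-order identities then follow by linearity and the splitting above: for a general $S$ the trace and skew parts reduce to the primitives (e.g.\ $\Dive\skw S=\Dive\spn w=-\curl w=-\curl\spn^{-1}\skw S$, $\Curl\skw S=(\dive w)\id-(\Grad w)^{\top}$ with $w=\spn^{-1}\skw S$, and $\tr\Curl S=\tr\Curl\skw S=2\dive w$), leaving only a handful of genuinely trace-free--symmetric computations --- $3\Dive(\dev\Grad v)^{\top}=2\grad\dive v$, $2\spn^{-1}\skw\Curl S=\Dive S^{\top}-\grad\tr S$, $2(\Grad\spn^{-1}\skw S)^{\top}=(\tr\Curl\skw S)\id-2\Curl\skw S$, $\dev\Curl\spn v=-(\dev\Grad v)^{\top}$ --- to be done directly.

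Third, the second-order identities and the remaining ``in particular'' clauses are obtained by composing the first-order ones with $\curl\grad=\dive\curl=\Curl\Grad=\Dive\Curl=0$. For example, $\Curl\Grad v=0$ and $2\skw\Grad v=\spn\curl v$ give $2\Curl\sym\Grad v=-\Curl\spn\curl v=-\big((\dive\curl v)\id-(\Grad\curl v)^{\top}\big)=(\Grad\curl v)^{\top}$, hence $2\Curl\skw\Grad v=-(\Grad\curl v)^{\top}$; dually, $\Dive\Curl S=0$ together with $\Dive\skw\Curl S=-\curl\spn^{-1}\skw\Curl S=-\tfrac12\curl(\Dive S^{\top}-\grad\tr S)=-\tfrac12\curl\Dive S^{\top}$ gives $2\Dive\sym\Curl S=\curl\Dive S^{\top}$; and $\curl\Dive(u\id)=0$, $\sym\Curl(u\id)=0$, $\dive\Dive\skw S=0$, $\tr\Curl\sym S=0$, $2\skw\Curl S=\spn\Dive S^{\top}$ (for $\tr S=0$), $\tr\Curl\skw S=\tr\Curl S$ are direct corollaries. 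For the last pair, $\Curl(\Curl\sym S)^{\top}=\sym\Curl(\Curl S)^{\top}$ and $\Curl(\Curl\skw S)^{\top}=\skw\Curl(\Curl S)^{\top}$, I would observe that $\CCt S:=\Curl(\Curl S)^{\top}$ has index expression $(\CCt S)_{ij}=\eps_{ikl}\eps_{jmn}\p_{k}\p_{m}S_{nl}$ (up to an overall sign fixed by the convention), so by $\p_{k}\p_{m}=\p_{m}\p_{k}$ one gets $(\CCt S)^{\top}=\CCt(S^{\top})$, whence $\CCt$ commutes with $\sym$ and with $\skw$ --- which is exactly the claim.

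\textbf{Expected main obstacle.} None of the individual identities is deep; the real difficulty is the sheer number of them and, more insidiously, keeping the conventions consistent throughout --- whether $\Curl$ and $\Dive$ act on rows or columns, the exact placement of the transposes, and the signs buried in $\spn$ and in $\eps_{ijk}$. The plan mitigates this by proving a small fixed set of primitives once and carefully, then pushing \emph{all} trace and skew parts down to ordinary vector calculus via $\spn^{-1}$, so that only a bounded number of genuinely tensorial checks --- the trace-free--symmetric first-order identities and the two $\CCt$-type identities --- ever have to be verified by hand.
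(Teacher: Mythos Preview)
Your proposal is correct and, in substance, matches what the paper does: these are all elementary pointwise identities verifiable by index computation using the Levi--Civita contraction and Schwarz's lemma, and the extension to distributions is routine. The paper's own proof is almost entirely a citation --- it refers to \cite[Lemma 3.9, Lemma A.1]{PZ2020a} and \cite[Appendix B]{PZ2020b} for all but the last two identities --- so your write-up is in fact more self-contained than the original.

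The only place where the two arguments differ in presentation is the pair $\Curl(\Curl\sym S)^{\top}=\sym\Curl(\Curl S)^{\top}$ and its $\skw$-analogue. The paper argues in two steps: first, $\skw T=0$ implies $\skw\Curl(\Curl T)^{\top}=0$ (and dually for $\sym$), then deduces commutation via $\Curl(\Curl\sym S)^{\top}=\sym\Curl(\Curl\sym S)^{\top}=\sym\Curl(\Curl S)^{\top}$. You instead show directly that $(\CCt S)^{\top}=\CCt(S^{\top})$ from the index expression and symmetry of mixed partials, whence $\CCt$ commutes with both $\sym$ and $\skw$. These are equivalent --- your observation is precisely what underlies the paper's two implications --- but your route is arguably cleaner, since one symmetry check replaces two separate ones.
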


\begin{proof}
Almost all formulas can be found in \cite[Lemma 3.9]{PZ2020a} and \cite[Lemma A.1]{PZ2020a}.
It is elementary to show that $\skw T=0$ implies $\skw\Curl(\Curl T)^{\top}=0$,
and that $\sym T=0$ implies $\sym\Curl(\Curl T)^{\top}=0$. Note that the needed (straightforward-to-prove) formulas for this are provided in \cite[Appendix B]{PZ2020b}.
Hence $\sym$ commutes with $\Curl\Curl^{\top}$ as
$$\Curl(\Curl\sym T)^{\top}
=\sym\Curl(\Curl\sym T)^{\top}
=\sym\Curl(\Curl T)^{\top},$$
and so does $\skw$.
\end{proof}

In Lemma \ref{devGradHSlemma} below for a tensor field $T$ the operation 
$T\intd\lambda:=\big(\scp{\mathrm{row}_{\ell}T}{\intd\lambda}\big)_{\ell=1,2,3}$
has to be understood row-wise, i.e., the transpose of the $\ell$th row of $T$
is denoted by $\mathrm{row}_{\ell}T$, giving then the vector object $T\intd\lambda$.
More precisely,
$$\big(\int_{\zeta_{x_{0},x}}T\intd\lambda\big)_{\ell}
=\int_{\zeta_{x_{0},x}}\scp{\mathrm{row}_{\ell}T}{\intd\lambda}
=\int_{0}^{1}\Bscp{(\mathrm{row}_{\ell}T)\big(\varphi(t)\big)}{\varphi'(t)}\intd t,\quad
\ell\in\{1,2,3\},$$
with some parametrisation $\varphi\in C^{1,3}_{\pw}\big([0,1]\big)$ 
of a piecewise $C^1$-curve $\zeta_{x_{0},x}$ connecting $x_{0}\in\om$ and $x\in\om$.
Furthermore, we define
$$\int_{\zeta_{x_{0},x}}
(x-y)\bscp{(\Dive T^{\top})(y)}{\intd\lambda_{y}}
:=\int_{0}^{1}\big(x-\varphi(t)\big)\Bscp{(\Dive T^{\top})\big(\varphi(t)\big)}{\varphi'(t)}\intd t.$$

The first statement concerned with describing vector fields and their divergence
by means of curve integrals over tensor fields reads as follows.

\begin{lemma}
\label{devGradHSlemma}
Let $x,x_{0}\in\om$ and let $\zeta_{x_{0},x}\subset\om$ 
be a piecewise $C^{1}$-curve connecting $x_{0}$ and $x$.
\begin{itemize}
\item[\bf(i)]
Let $v\in\Citom$. Then $v$ and its divergence $\dive v$ can be represented by
\begin{align*}
&\qquad v(x)-v(x_{0})-\frac{1}{3}\big(\dive v(x_{0})\big)(x-x_{0})\\
&=\int_{\zeta_{x_{0},x}}\devGrad v\intd\lambda
+\frac{1}{2}\int_{\zeta_{x_{0},x}}
\Big(\int_{\zeta_{x_{0},y}}\bscp{\Dive(\devGrad v)^{\top}}{\intd\lambda}\Big)\id\intd\lambda_{y}
\end{align*}
and
$$\dive v(x)-\dive v(x_{0})
=\frac{3}{2}\int_{\zeta_{x_{0},x}}\bscp{\Dive(\devGrad v)^{\top}}{\intd\lambda}.$$
\item[\bf(ii)]
Let $T\in\Cittom$. Then
$$\int_{\zeta_{x_{0},x}}
\Big(\int_{\zeta_{x_{0},y}}\scp{\Dive T^{\top}}{\intd\lambda}\Big)\id\intd\lambda_{y}
=\int_{\zeta_{x_{0},x}}
(x-y)\bscp{(\Dive T^{\top})(y)}{\intd\lambda_{y}}.$$
\end{itemize}
\end{lemma}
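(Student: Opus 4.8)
The plan is to prove Lemma \ref{devGradHSlemma} by reducing everything to the classical fundamental theorem of calculus along curves, using the matrix-algebraic identities collected in Lemma \ref{PZformulalem} to rewrite the ordinary gradient $\Grad v$ in terms of $\devGrad v$ and a correction built from the divergence. The key observation is the splitting $\Grad v = \devGrad v + \frac13(\dive v)\id$, so that $v(x)-v(x_0) = \int_{\zeta_{x_0,x}}\Grad v\,\intd\lambda = \int_{\zeta_{x_0,x}}\devGrad v\,\intd\lambda + \frac13\int_{\zeta_{x_0,x}}(\dive v)\id\,\intd\lambda$; hence the whole difficulty is to express $\dive v$ along the curve purely through $\devGrad v$.

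First I would establish the scalar formula for $\dive v$. By Lemma \ref{PZformulalem} we have $3\Dive(\dev\Grad v)^{\top} = 2\grad\dive v$, i.e. $\grad\dive v = \tfrac32\Dive(\devGrad v)^{\top}$. Applying the fundamental theorem of calculus to the scalar function $\dive v$ along $\zeta_{x_0,x}$ gives
\begin{align*}
\dive v(x) - \dive v(x_0)
&= \int_{\zeta_{x_0,x}}\scp{\grad\dive v}{\intd\lambda}
= \frac32\int_{\zeta_{x_0,x}}\scp{\Dive(\devGrad v)^{\top}}{\intd\lambda},
\end{align*}
which is exactly the second displayed identity in (i). Substituting $\dive v(y) = \dive v(x_0) + \tfrac32\int_{\zeta_{x_0,y}}\scp{\Dive(\devGrad v)^{\top}}{\intd\lambda}$ into the representation of $v(x)-v(x_0)$ above, the term $\frac13\dive v(x_0)\int_{\zeta_{x_0,x}}\id\,\intd\lambda = \frac13(\dive v(x_0))(x-x_0)$ moves to the left-hand side, and the remaining contribution becomes $\frac13\cdot\frac32\int_{\zeta_{x_0,x}}\big(\int_{\zeta_{x_0,y}}\scp{\Dive(\devGrad v)^{\top}}{\intd\lambda}\big)\id\,\intd\lambda_y = \frac12\int_{\zeta_{x_0,x}}\big(\int_{\zeta_{x_0,y}}\scp{\Dive(\devGrad v)^{\top}}{\intd\lambda}\big)\id\,\intd\lambda_y$. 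This yields the first formula of (i). Here one must be a little careful that all quantities are genuinely continuous so the iterated curve integrals make sense; this is guaranteed since $v\in\Citom$ and all operators involved are differential operators with smooth coefficients.

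For part (ii), which is the identity that makes the double integral in (i) tractable, I would integrate by parts in the inner variable. Writing $g(y) := \int_{\zeta_{x_0,y}}\scp{\Dive T^{\top}}{\intd\lambda}$, we have $g(x_0)=0$ and $\grad_y g(y) = \Dive(T^{\top})(y)$ (along the curve). Then along the parametrisation $\varphi$ of $\zeta_{x_0,x}$, the left-hand side of (ii) is $\int_0^1 g(\varphi(t))\,\varphi'(t)\,\intd t\cdot\id$; integrating by parts and using $g(\varphi(0))=0$ together with $\frac{d}{dt}g(\varphi(t)) = \scp{(\Dive T^{\top})(\varphi(t))}{\varphi'(t)}$, one obtains $g(\varphi(1))(\,x - \varphi(1)\,)\cdot\text{(something)}$ — more precisely, applying the vector-valued integration-by-parts identity $\int_0^1 g(\varphi(t))\varphi'(t)\,\intd t = g(\varphi(1))\varphi(1) - g(\varphi(0))\varphi(0) - \int_0^1 \varphi(t)\frac{d}{dt}g(\varphi(t))\,\intd t$, and then rewriting $g(\varphi(1))\varphi(1)$ using $g(\varphi(1)) = \int_0^1 \frac{d}{dt}g(\varphi(t))\,\intd t$ so that $g(\varphi(1))\varphi(1) = \int_0^1 \varphi(1)\frac{d}{dt}g(\varphi(t))\,\intd t$, the two terms combine to $\int_0^1 (\varphi(1)-\varphi(t))\frac{d}{dt}g(\varphi(t))\,\intd t = \int_0^1 (x - \varphi(t))\scp{(\Dive T^{\top})(\varphi(t))}{\varphi'(t)}\,\intd t$, which is precisely the right-hand side of (ii) as defined in the displayed convention preceding the lemma. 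Note that (ii) does \emph{not} require $\zeta_{x_0,y}$ to be a sub-curve of $\zeta_{x_0,x}$ in an essential way beyond what the notation stipulates — what matters is only that $g$ is well-defined and continuously differentiable along $\zeta_{x_0,x}$.

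The main obstacle I anticipate is bookkeeping rather than conceptual: making the two-variable integration-by-parts in (ii) rigorous with the stated (somewhat terse) notational conventions for iterated curve integrals, and checking that the constant shift $\dive v(x_0)$ is handled consistently so that exactly the term $\frac13(\dive v(x_0))(x-x_0)$ appears on the correct side with the correct coefficient. A secondary subtlety is that the formula $g(\varphi(1)) = \int_0^1 \frac{d}{dt}g(\varphi(t))\,\intd t$ used above is just the fundamental theorem of calculus together with $g(\varphi(0)) = g(x_0) = 0$, so no path-independence of $g$ is needed — only that $\zeta_{x_0,y}$ for $y$ on $\zeta_{x_0,x}$ is taken to be the initial segment of $\zeta_{x_0,x}$, which is consistent with the way the iterated integral is written. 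Once these conventions are pinned down, everything reduces to the scalar and vector fundamental theorem of calculus applied to the identity $\grad\dive v = \tfrac32\Dive(\devGrad v)^{\top}$ from Lemma \ref{PZformulalem}.
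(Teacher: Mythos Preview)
Your proposal is correct and follows essentially the same route as the paper: the splitting $\Grad v=\devGrad v+\tfrac13(\dive v)\id$, the identity $\grad\dive v=\tfrac32\Dive(\devGrad v)^{\top}$ from Lemma \ref{PZformulalem}, and the fundamental theorem of calculus yield (i) exactly as you describe. For (ii) the paper simply swaps the order of integration (Fubini) in the iterated parameter integral $\int_0^1\big(\int_0^s\ldots\intd t\big)\varphi'(s)\,\intd s$ to obtain $\int_0^1\big(\int_t^1\varphi'(s)\,\intd s\big)\ldots\intd t=\int_0^1(x-\varphi(t))\ldots\intd t$, which is the same computation you phrase as integration by parts; your remark that $\zeta_{x_0,y}$ must be read as the initial segment of $\zeta_{x_0,x}$ is exactly the convention the paper uses implicitly.
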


\begin{proof}
For (i), let
$$T:=\devGrad v=\Grad v-\frac{1}{3}(\tr\Grad v)\id=\Grad v-\frac{1}{3}(\dive v)\id$$
and observe $3\Dive T^{\top}=2\grad\dive v$ by Lemma \ref{PZformulalem}. Thus 
\begin{align*}
v_{k}(x)-v_{k}(x_{0})
&=\int_{\zeta_{x_{0},x}}\scp{\grad v_{k}}{\intd\lambda},\qquad
k\in\{1,2,3\},\\
\dive v(x)-\dive v(x_{0})
&=\int_{\zeta_{x_{0},x}}\scp{\grad\dive v}{\intd\lambda}
=\frac{3}{2}\int_{\zeta_{x_{0},x}}\scp{\Dive T^{\top}}{\intd\lambda}.
\end{align*}
Therefore,
\begin{align*}
&\qquad v(x)-v(x_{0})
=\int_{\zeta_{x_{0},x}}\Grad v\intd\lambda\\
&=\int_{\zeta_{x_{0},x}}\devGrad v\intd\lambda
+\frac{1}{3}\int_{\zeta_{x_{0},x}}\dive v\id\intd\lambda
=\int_{\zeta_{x_{0},x}}T\intd\lambda
+\frac{1}{3}\int_{\zeta_{x_{0},x}}\dive v(y)\id\intd\lambda_{y}\\
&=\int_{\zeta_{x_{0},x}}T\intd\lambda
+\frac{1}{3}\dive v(x_{0})\int_{\zeta_{x_{0},x}}\id\intd\lambda_{y}+\frac{1}{2}\int_{\zeta_{x_{0},x}}
\Big(\int_{\zeta_{x_{0},y}}\scp{\Dive T^{\top}}{\intd\lambda}\Big)\id\intd\lambda_{y}.
\end{align*}
Moreover,\footnote{Alternatively, note 
$\displaystyle\int_{\zeta_{x_{0},x}}\id\intd\lambda_{y}
=\int_{0}^{1}\id\varphi'(s)\intd s
=\int_{0}^{1}\varphi'(s)\intd s=x-x_{0}$
with the parametrisation $\varphi$ of $\zeta_{x_{0},x}$ from above.} 
$\displaystyle\int_{\zeta_{x_{0},x}}\id\intd\lambda_{y}
=\int_{\zeta_{x_{0},x}}\Grad y\intd\lambda_{y}
=x-x_{0}$.

For (ii) we compute
\begin{align*}
\int_{\zeta_{x_{0},x}}
\Big(\int_{\zeta_{x_{0},y}}\scp{\Dive T^{\top}}{\intd\lambda}\Big)\id\intd\lambda_{y}
&=\int_{0}^{1}
\Big(\int_{\zeta_{x_{0},\varphi(s)}}\scp{\Dive T^{\top}}{\intd\lambda}\Big)
\id\varphi'(s)\intd s\\
&=\int_{0}^{1}
\Big(\int_{0}^{s}\Bscp{(\Dive T^{\top})\big(\varphi(t)\big)}{\varphi'(t)}\intd t\Big)\varphi'(s)\intd s\\
&=\int_{0}^{1}
\int_{t}^{1}\varphi'(s)\intd s
\Bscp{(\Dive T^{\top})\big(\varphi(t)\big)}{\varphi'(t)}\intd t\\
&=\int_{0}^{1}
(x-\varphi(t))
\Bscp{(\Dive T^{\top})\big(\varphi(t)\big)}{\varphi'(t)}\intd t\\
&=\int_{\zeta_{x_{0},x}}
(x-y)\bscp{(\Dive T^{\top})(y)}{\intd\lambda_{y}}
\end{align*}
with $\varphi$ parametrising $\zeta_{x_{0},x}$ as above.
\end{proof}

\begin{proposition}
\label{prop:HSdevGrad} 
Let $x_{0}\in\om_{0}\in\cc(\om)$ and let $S,T\in\Citt(\om_{0})$.
\begin{itemize}
\item[\bf(a)]
The following conditions are equivalent:
\begin{itemize}
\item[\bf(i)]
For all $\zeta\subset\om_{0}$ closed, piecewise $C^1$-curves
$$\int_{\zeta}\bscp{\Dive T^{\top}}{\intd\lambda}=0.$$
\item[\bf(ii)]
For all $\zeta_{x_{0},x},\widetilde{\zeta}_{x_{0},x}\subset\om_{0}$ 
piecewise $C^1$-curves connecting $x_{0}$ with $x$
$$\int_{\zeta_{x_{0},x}}\bscp{\Dive T^{\top}}{\intd\lambda}
=\int_{\widetilde{\zeta}_{x_{0},x}}\bscp{\Dive T^{\top}}{\intd\lambda}.$$
\item[\bf(iii)]
There exists $u\in\Ci(\om_{0})$ such that
$\grad u=\Dive T^{\top}$.
\end{itemize} 
In the case one of the above conditions is true the function
\begin{equation}
\label{eq:chu}
x\mapsto u(x)
=\int_{\zeta_{x_{0},x}}\bscp{\Dive T^{\top}}{\intd\lambda}
\end{equation}
for some $\zeta_{x_{0},x}\subset \om_{0}$ piecewise $C^1$-curve connecting $x_{0}$ with $x$ 
is a (well-defined) possible choice for $u$ in (iii).
\item[\bf(b)]
The following conditions are equivalent:
\begin{itemize}
\item[\bf(i)]
For all $\zeta\subset\om_{0}$ closed, piecewise $C^1$ curves
$$\int_{\zeta}S\intd\lambda=0.$$
\item[\bf(ii)]
For all $\zeta_{x_{0},x},\widetilde{\zeta}_{x_{0},x}\subset \om_{0}$ 
piecewise $C^1$ curves connecting $x_{0}$ with $x$
$$\int_{\zeta_{x_{0},x}}S\intd\lambda
=\int_{\tilde{\zeta}_{x_{0},x}}S\intd\lambda.$$
\item[\bf(iii)]
There exists $v\in\Cit(\om_{0})$ such that $\Grad v=S$.
\end{itemize}
In the case one of the above conditions is true the vector field
\begin{equation}
\label{eq:chv}
x\mapsto v(x)=\int_{\zeta_{x_{0},x}}S\intd\lambda
\end{equation}
for some $\zeta_{x_{0},x}\subset\om_{0}$ piecewise $C^1$-curve connecting $x_{0}$ with $x$ 
is a (well-defined) possible choice for $v$ in (iii). 
\item[\bf(c)]
Let $S:=T+\frac{1}{2}\,u\id$ with $u\in\Ci(\om_{0})$ and $\grad u=\Dive T^{\top}$ as in (a), (iii). 
Moreover, let $v\in\Cit(\om_{0})$ such that $\Grad v=S$ as in (b), (iii). Then
\begin{itemize}
\item[\bf(i)]
$\tr T=0$,
\item[\bf(ii)]
$\devGrad v =T$
\end{itemize}
are equivalent. In either case, we have 
\begin{equation}
\label{eq:remform}
\symCT T=0\quad\text{and}\quad
\grad u=\frac{2}{3}\grad\dive v.
\end{equation}
\end{itemize}
\end{proposition}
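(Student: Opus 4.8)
The plan is to treat parts (a), (b), (c) largely in parallel, as (a) and (b) are two instances of the same elementary statement about path-independence of line integrals, while (c) is the substantive part where the matrix-algebra identities of Lemma \ref{PZformulalem} do the work. For part (a), the equivalence (i)$\Leftrightarrow$(ii) is immediate: concatenating a curve $\zeta_{x_0,x}$ with the reversal of $\widetilde\zeta_{x_0,x}$ produces a closed piecewise $C^1$-curve, and the integral of $\langle\Dive T^\top,\intd\lambda\rangle$ over it is the difference of the two integrals; conversely any closed curve decomposes this way through a chosen basepoint. For (ii)$\Rightarrow$(iii), one defines $u$ by the formula \eqref{eq:chu} — well-defined precisely by (ii) — and verifies $\grad u=\Dive T^\top$ by the fundamental theorem of calculus applied along a curve from $x$ in a coordinate direction, using that $\Dive T^\top$ is continuous (as $T\in\Citt(\om_0)$). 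For (iii)$\Rightarrow$(i), if $\Dive T^\top=\grad u$ then $\int_\zeta\langle\grad u,\intd\lambda\rangle=0$ over any closed curve, again by the fundamental theorem of calculus. Part (b) is verbatim the same argument applied row-wise to the tensor field $S$, using the row-wise integral $\int_{\zeta_{x_0,x}}S\intd\lambda$ defined just before Lemma \ref{devGradHSlemma}: condition (b)(iii) says each row of $S$ is a gradient, i.e.\ $S=\Grad v$ for $v(x)=\int_{\zeta_{x_0,x}}S\intd\lambda$.

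For part (c), first I would record that $S:=T+\tfrac12 u\id$ and $\Grad v=S$ give $\Grad v=T+\tfrac12 u\id$, hence
\[
\dive v=\tr\Grad v=\tr T+\tfrac32 u,\qquad \devGrad v=\Grad v-\tfrac13(\dive v)\id=T+\tfrac12 u\id-\tfrac13(\tr T+\tfrac32 u)\id=T-\tfrac13(\tr T)\id,
\]
using $\tr\id=3$ and $\dev(u\id)=0$ from Lemma \ref{PZformulalem}. Therefore $\devGrad v=T$ if and only if $\tfrac13(\tr T)\id=0$, i.e.\ if and only if $\tr T=0$; this proves (i)$\Leftrightarrow$(ii). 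In either case $T=\devGrad v$, so $\symCT T=\symCT\devGrad v=0$ because $\symCurl\Grad=\tfrac12\symCurl(\spn\curl\,\cdot\,+\text{sym part})$ — more cleanly, $\symCurl\sym\Grad v=0$ is one of the identities in Lemma \ref{PZformulalem} (there: $2\Curl\sym\Grad v=(\Grad\curl v)^\top$ together with $\Curl$ applied to $\devGrad$; the precise route is $\devGrad v=T$ with $\tr T=0$, and $\symCT(\devGrad v)=0$ is exactly the complex property $\symCT\rdevGrad\subset 0$ from the second \PZ complex, valid pointwise for smooth fields). Finally, from $\grad u=\Dive T^\top$ and $T=\devGrad v$ we get $\grad u=\Dive(\devGrad v)^\top$, and the identity $3\Dive(\dev\Grad v)^\top=2\grad\dive v$ from Lemma \ref{PZformulalem} yields $3\grad u=2\grad\dive v$, i.e.\ $\grad u=\tfrac23\grad\dive v$, which is \eqref{eq:remform}.

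The only genuine care needed is bookkeeping: making sure the row-wise convention for $\int S\intd\lambda$ in part (b) is consistently matched with "each row of $S$ is a gradient," and selecting, among the several formulas in Lemma \ref{PZformulalem}, the exact ones ($\dev(u\id)=0$, $\tr\Grad v=\dive v$, $3\Dive(\dev\Grad v)^\top=2\grad\dive v$, and $\symCT\rdevGrad\subset 0$) that close the argument. I expect the main (minor) obstacle to be verifying $\symCT T=0$ cleanly: rather than expanding $\symCurl\devGrad$ by hand, the economical move is to invoke that $(\rdevGrad,\rsymCT,\rdDS)$ is a complex, so $\symCurl\devGrad\,\phi=0$ for smooth $\phi$, applied here to $\phi=v\in\Cit(\om_0)$; this sidesteps any direct tensor computation. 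Everything else is the fundamental theorem of calculus and the linear algebra already catalogued in Lemma \ref{PZformulalem}.
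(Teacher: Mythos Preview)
Your proposal is correct and follows essentially the same approach as the paper: parts (a) and (b) via the fundamental theorem of calculus, and part (c) via the complex property $\symCT\devGrad=0$ together with the identity $3\Dive(\devGrad v)^{\top}=2\grad\dive v$ from Lemma \ref{PZformulalem}. The only cosmetic difference is that in (c) the paper writes $\devGrad v=\dev S=\dev(T+\tfrac12 u\id)=\dev T$ in one line (using $\dev(u\id)=0$), whereas you compute $\dive v$ first and then subtract; both routes are equivalent and your version is perhaps more transparent.
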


\begin{proof}
(a): 
The conditions (i) and (ii) are clearly equivalent. 
Assuming (ii), we obtain that the choice of $u$ in \eqref{eq:chu} is well-defined. 
By the fundamental theorem of calculus it follows that this $u$ 
satisfies the equation in (iii) and, consequently, $u\in\Ci(\om_{0})$. 
If, on the other hand, (iii) is true, then again using the fundamental theorem of calculus, 
we obtain (ii).

(b): 
The proof of the equivalence follows almost exactly the same way as for (a).

(c): 
We compute $\devGrad v=\dev S=\dev(T+\frac{1}{2}u\id)=\dev T$.
Hence (i) and (ii) are equivalent.
Finally, if (i) or (ii) is true, then by the complex property
$$\symCurl T=\symCurl\devGrad v=0,$$
and we conclude
$\grad u=\Dive T^{\top}=\Dive(\devGrad v)^{\top}=\frac{2}{3}\grad\dive v$
by Lemma \ref{PZformulalem}.
\end{proof}

\begin{remark}
\label{curlformula1-bih1}
Related to Proposition \ref{prop:HSdevGrad} we note with Lemma \ref{PZformulalem} the following:
\begin{itemize}
\item[\bf(i)]
For $T\in\Cittom$ we have
$$\curl\Dive T^{\top}
=2\Dive\symCurl T.$$
\item[\bf(ii)]
For $T\in\Citt_{\Tb}(\om)$
and $S:=T+\frac{1}{2}\,u\id$ 
with $\grad u=\Dive T^{\top}$ it holds
$$\Curl S
=\Curl T-\frac{1}{2}\spn\grad u
=\Curl T-\skw\Curl T
=\symCurl T.$$
\item[\bf(iii)]
If $\om_{0}$ is simply connected, 
Proposition \ref{prop:HSdevGrad} (a), (iii)
and (b), (iii) are equivalent to $\curl\Dive T^{\top}=0$ and $\Curl S=0$, respectively.
\end{itemize}
\end{remark}

Arguing for each connected component separately 
(and using formulas \eqref{eq:chu} and \eqref{eq:chv} on every connected component), 
we obtain the following more condensed version of Proposition \ref{prop:HSdevGrad}.

\begin{corollary}
\label{cor:HSdevGrad} 
Let $S,T\in\Cittom$.
\begin{itemize}
\item[\bf(a)]
The following conditions are equivalent:
\begin{itemize}
\item[\bf(i)]
For all $\zeta\subset\om$ closed, piecewise $C^1$-curves 
$\int_{\zeta}\bscp{\Dive T^{\top}}{\intd\lambda}=0$.
\item[\bf(ii)]
There exists $u\in\Ciom$ such that $\grad u=\Dive T^{\top}$.
\end{itemize}
\item[\bf(b)]
The following conditions are equivalent:
\begin{itemize}
\item[\bf(i)]
For all $\zeta\subset\om$ closed, piecewise $C^1$-curves $\int_{\zeta}S\intd\lambda=0$.
\item[\bf(ii)]
There exists $v\in\Citom$ such that $\Grad v=S$. 
\end{itemize}
\item[\bf(c)]
Let $S=T+\frac{1}{2}\,u\id$ with $u\in\Ciom$ and $\grad u=\Dive T^{\top}$ as in (a), (ii).
Moreover, let $v\in\Citom$ with $\Grad v=S$ as in (b), (ii). Then
$\tr T=0$ in $\om$ if and only if $\devGrad v=T$ in $\om$.
\end{itemize}
\end{corollary}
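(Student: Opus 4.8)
The plan is to obtain Corollary \ref{cor:HSdevGrad} from Proposition \ref{prop:HSdevGrad} by arguing on each connected component of $\om$ separately and then patching. Recall $\om=\dot\bigcup_{k=1}^{n}\om_{k}$ with every $\om_{k}\in\cc(\om)$ open and pairwise disjoint; in particular, since every piecewise $C^{1}$-curve $\zeta\subset\om$ is connected, $\zeta$ lies entirely in a single component $\om_{k}$. Two elementary facts will be used repeatedly: the path-integral conditions (i) in (a) and (b) hold over all of $\om$ if and only if they hold over each $\om_{k}$ (by the curve-connectedness just noted); and a map which restricts to a $\Ci$ (resp.\ $\Cit$) function on each open component $\om_{k}$ is $\Ci$ (resp.\ $\Cit$) on all of $\om$, smoothness being a local property.

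For part (a), I would first note that condition (i) is equivalent to its componentwise version, and then apply Proposition \ref{prop:HSdevGrad}(a) with $\om_{0}:=\om_{k}$ for each $k$: this yields, for each $k$, a function $u_{k}\in\Ci(\om_{k})$ with $\grad u_{k}=\Dive T^{\top}$ in $\om_{k}$ (a concrete choice being \eqref{eq:chu} with a base point $x_{0}\in\om_{k}$). Defining $u$ by $u|_{\om_{k}}:=u_{k}$ gives $u\in\Ciom$ with $\grad u=\Dive T^{\top}$ in $\om$, which is (ii); the converse implication follows by restricting a global $u$ to each $\om_{k}$. Part (b) follows by the identical argument, with $S$, $\Grad v=S$, $v_{k}\in\Cit(\om_{k})$, and \eqref{eq:chv} in place of $\Dive T^{\top}$, $\grad u_{k}=\Dive T^{\top}$, $u_{k}\in\Ci(\om_{k})$, and \eqref{eq:chu}.

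For part (c), I would take the given data $u\in\Ciom$ with $\grad u=\Dive T^{\top}$, $S=T+\frac{1}{2}\,u\id$, and $v\in\Citom$ with $\Grad v=S$, and restrict everything to a fixed $\om_{k}$. The restrictions satisfy the hypotheses of Proposition \ref{prop:HSdevGrad}(c) (the restriction of $u$ is a valid choice in (a)(iii) on $\om_{k}$, and that of $v$ a valid choice in (b)(iii)), so $\tr T=0$ in $\om_{k}$ if and only if $\devGrad v=T$ in $\om_{k}$. Since $\om$ is the disjoint union of the $\om_{k}$, the equivalence $\tr T=0$ in $\om\Leftrightarrow\devGrad v=T$ in $\om$ follows; the auxiliary identities of \eqref{eq:remform} transfer componentwise in the same way if needed.

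I do not expect a genuine obstacle here: all substantive content is already contained in Proposition \ref{prop:HSdevGrad}, and the only points to check are the (trivial) localisation remarks above --- that connected curves stay inside one component and that componentwise smoothness is global smoothness. The proof is essentially bookkeeping.
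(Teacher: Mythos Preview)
Your proposal is correct and follows exactly the approach indicated in the paper: the corollary is obtained from Proposition \ref{prop:HSdevGrad} by arguing on each connected component separately, using the explicit formulas \eqref{eq:chu} and \eqref{eq:chv} to build $u$ and $v$ componentwise and then patching. The localisation remarks you make (that a connected curve lies in a single $\om_k$ and that smoothness is local) are precisely the trivial bookkeeping needed, and nothing more is required.
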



The construction of the harmonic Neumann tensor fields for the first \pz complex 
form a nontrivial adaptation of the rationale developed in the previous section
for the de Rham complex. 
We shortly recall that for $j\in\{1,\ldots,p\}$, by construction, 
$\theta_{j}=0$ outside of $\widetilde{\Upsilon}_{j,1}$ and that $\theta_{j}$ 
is constant on each connected component $\Upsilon_{j,0}$ and $\Upsilon_{j,1}$
of $\Upsilon_{j}\setminus F_{j}$. 
Let $\widehat{r}_{k}$ be the Raviart--Thomas fields from Section \ref{app:sec:DirTFbih2}
given by $\widehat{r}_{0}(x):=x$ and $\widehat{r}_{k}(x):=e^{k}$ for $k\in\{1,2,3\}$.
We define the vector fields $\theta_{j,k}:=\theta_{j}\widehat{r}_{k}$
and note $\devGrad\theta_{j,k}=0$
in $\dot\bigcup_{l\in\{1,\ldots,p\}}\Upsilon_{l}\setminus F_{l}$ 
for all $j\in\{1,\ldots,p\}$ and all $k\in\{1,2,3\}$.
Thus $\devGrad\theta_{j,k}$ can be continuously extended by zero 
to $\Theta_{j,k}\in\Cittom\cap\LtttTom$
with $\Theta_{j,k}=0$ in all the neighbourhoods 
$\Upsilon_{l}$ of all the surfaces $F_{l}$, $l\in\{1,\ldots,p\}$.

\begin{lemma}
\label{dimNTbih1:lem0}
Let Assumption \ref{ass:curvesandsurfaces} be satisfied. Then
$\Theta_{j,k}\in\ker(\symCT,\om)$.
\end{lemma}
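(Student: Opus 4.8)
The plan is to follow the proof of Lemma~\ref{dimNFrhm:lem0}. Since $\symCT=\Curl_{c}^{*}$ with $\dom\Curl_{c}=\CicttSom$, and since $\Theta_{j,k}\in\Cittom\cap\LtttTom$ by construction, it suffices to show that
$$\scp{\Theta_{j,k}}{\Curl\Phi}_{\LtttTom}=0\qquad\text{for all }\Phi\in\CicttSom;$$
this yields $\Theta_{j,k}\in\dom(\symCT)$ together with $\symCT\Theta_{j,k}=0$, i.e., $\Theta_{j,k}\in\ker(\symCT,\om)$.

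To prove this I would first localise to $\om_{F}$, exactly as in Lemma~\ref{dimNFrhm:lem0}. By construction $\Theta_{j,k}=0$ in all the neighbourhoods $\Upsilon_{l}$ of the surfaces $F_{l}$, hence $\supp\Theta_{j,k}\subset\om_{F}$, and $\Theta_{j,k}=\devGrad\theta_{j,k}$ on $\om_{F}$ with $\theta_{j,k}=\theta_{j}\widehat{r}_{k}\in\Cit(\om_{F})$. Given $\Phi\in\CicttSom$, the set $\supp\Theta_{j,k}\cap\supp\Phi$ is compact and contained in $\om_{F}$, so we may choose $\varphi\in\Cic(\om_{F})$ with $\varphi=1$ on it. Then
$$\scp{\Theta_{j,k}}{\Curl\Phi}_{\LtttTom}
=\scp{\Theta_{j,k}}{\Curl\Phi}_{\Lttt(\supp\Theta_{j,k}\cap\supp\Phi)}
=\scp{\devGrad\theta_{j,k}}{\Curl(\varphi\Phi)}_{\Lttt(\om_{F})},$$
where $\varphi\Phi\in\CicttSom$ has compact support in $\om_{F}$.

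The remaining step is an integration by parts followed by the complex property on the smooth level. Since $\varphi\Phi$ is compactly supported in $\om_{F}$ and $\curl$ is (row-wise) formally self-adjoint,
$$\scp{\devGrad\theta_{j,k}}{\Curl(\varphi\Phi)}_{\Lttt(\om_{F})}
=\scp{\Curl\devGrad\theta_{j,k}}{\varphi\Phi}_{\Lttt(\om_{F})}
=\scp{\sym\Curl\devGrad\theta_{j,k}}{\varphi\Phi}_{\Lttt(\om_{F})},$$
the last equality because $\varphi\Phi$ is symmetric. Finally, $\sym\Curl\devGrad v=0$ for any smooth vector field $v$: indeed $\Curl\Grad v=0$ by Schwarz's lemma and $\Curl\big((\dive v)\id\big)=-\spn\grad\dive v$ by Lemma~\ref{PZformulalem}, so $\Curl\devGrad v=\tfrac13\spn\grad\dive v\in\R^{3\times3}_{\skw}$, whence $\sym\Curl\devGrad v=0$. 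Applying this with $v=\theta_{j,k}$ on $\om_{F}$ gives $\scp{\Theta_{j,k}}{\Curl\Phi}_{\LtttTom}=0$. I do not expect a genuine obstacle here; the only delicate points are verifying $\supp\Theta_{j,k}\subset\om_{F}$ (immediate from the construction preceding the lemma) and the bookkeeping of transposes in the row-wise integration by parts, which is harmless since the test tensor $\varphi\Phi$ is symmetric.
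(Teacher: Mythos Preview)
Your proof is correct and follows essentially the same approach as the paper: localise to $\om_{F}$ via a cut-off $\varphi$ and then exploit the complex property at the smooth level. The only cosmetic difference is in the final step: the paper moves $\dev$ to the test side (using that $\Curl$ of a symmetric tensor is already deviatoric) to reduce to $\scp{\Grad\theta_{j,k}}{\Curl(\varphi\Phi)}=0$, whereas you move $\Curl$ via integration by parts and verify $\sym\Curl\devGrad=0$ directly; both are equivalent manifestations of $\symCT\devGrad=0$.
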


\begin{proof}
Let $\Phi\in\CicttSom$. As $\supp\Theta_{j,k}\subset\widetilde{\Upsilon}_{j}\setminus\Upsilon_{j}$
we can pick another cut-off function $\varphi\in\Cic(\om_{F})$ with $\varphi|_{\supp\Theta_{j,k}\cap\supp\Phi}=1$. Then
\begin{align*}
\scp{\Theta_{j,k}}{\CS\Phi}_{\LtttTom}
&=\scp{\Theta_{j,k}}{\CS\Phi}_{\LtttT(\supp\Theta_{j,k}\cap\supp\Phi)}\\
&=\bscp{\devGrad\theta_{j,k}}{\CS(\varphi\Phi)}_{\LtttT(\om_{F})}\\
&=\bscp{\Grad\theta_{j,k}}{\dev\CS(\varphi\Phi)}_{\LtttT(\om_{F})}\\
&=\bscp{\Grad\theta_{j,k}}{\Curl(\varphi\Phi)}_{\Lttt(\om_{F})}
=0
\end{align*}
as $\varphi\Phi\in\Cictt(\om_{F})$, 
where in the second to last equality sign, 
we used that the $\Curl$ applied to a symmetric tensor fields is trace-free, i.e., 
deviatoric, see Lemma \ref{PZformulalem}.
\end{proof}

Next, we note that for $l,j\in\{1,\dots,p\}$ and $k\in\{0,\dots,3\}$ 
and for the curves $\zeta_{x_{l,0},x_{l,1}}\subset\zeta_{l}$
with the chosen starting points $x_{l,0}\in\Upsilon_{l,0}$ 
and respective endpoints $x_{l,1}\in\Upsilon_{l,1}$
we can compute by Lemma \ref{devGradHSlemma}
\begin{align*}
\R\ni\beta_{l,0}(\Theta_{j,k})
&:=\frac{1}{2}\int_{\zeta_{l}}
\scp{\Dive\Theta_{j,k}^{\top}}{\intd\lambda}
=\frac{1}{2}\int_{\zeta_{x_{l,0},x_{l,1}}}\bscp{\Dive(\devGrad\theta_{j,k})^{\top}}{\intd\lambda}\\
&\phantom{:}=\frac{1}{3}\dive\theta_{j,k}(x_{l,1})-\frac{1}{3}\dive\theta_{j,k}(x_{l,0})
=\frac{1}{3}\dive\theta_{j,k}(x_{l,1})\\
&\phantom{:}=\frac{1}{3}\delta_{l,j}\dive\widehat{r}_{k}(x_{l,1})
=\delta_{l,j}\begin{cases}
1,&\text{if }k=0,\\
0,&\text{if }k\in\{1,2,3\},
\end{cases}
\intertext{and}
\R^{3}\ni b_{l}(\Theta_{j,k})
&:=\int_{\zeta_{l}}\Theta_{j,k}\intd\lambda
+\frac{1}{2}\int_{\zeta_{l}}
(x_{l,1}-y)\bscp{(\Dive\Theta_{j,k}^{\top})(y)}{\intd\lambda_{y}}\\
&\phantom{:}=\int_{\zeta_{x_{l,0},x_{l,1}}}\devGrad\theta_{j,k}\intd\lambda\\
&\qquad\qquad+\frac{1}{2}\int_{\zeta_{x_{l,0},x_{l,1}}}
(x_{l,1}-y)\Bscp{\big(\Dive(\devGrad\theta_{j,k})^{\top}\big)(y)}{\intd\lambda_{y}}\\
&\phantom{:}=\int_{\zeta_{x_{l,0},x_{l,1}}}\!\!\!\bigg(\devGrad\theta_{j,k}(y)\\
&\qquad\qquad+\frac{1}{2}\Big(\int_{\zeta_{x_{l,0},y}}
\!\!\bscp{\Dive(\devGrad\theta_{j,k})^{\top}}{\intd\lambda}\Big)\id\bigg)\intd\lambda_{y}\\
&\phantom{:}=\theta_{j,k}(x_{l,1})
-\theta_{j,k}(x_{l,0})-\frac{1}{3}\dive\theta_{j,k}(x_{l,0})(x_{l,1}-x_{l,0})
=\theta_{j,k}(x_{l,1})\\
&\phantom{:}=\delta_{l,j}\widehat{r}_{k}(x_{l,1})
=\delta_{l,j}\begin{cases}
x_{l,1},&\text{if }k=0,\\
e^{k},&\text{if }k\in\{1,2,3\}.
\end{cases}
\end{align*}
Thus, for $l\in\{1,\dots,p\}$ and $\ell\in\{0,\dots,3\}$
we have functionals $\beta_{l,\ell}$, given by
\begin{align*}
\beta_{l,0}(\Theta_{j,k})
&=\delta_{l,j}\delta_{0,k}
\intertext{for $l,j\in\{1,\dots,p\}$ and $k\in\{0,\dots,3\}$, as well as}
\beta_{l,\ell}(\Theta_{j,k})
&:=\bscp{b_{l}(\Theta_{j,k})}{e^{\ell}}
=\delta_{l,j}\begin{cases}
\scp{x_{l,1}}{e^{\ell}}=(x_{l,1})_{\ell},&\text{if }k=0,\\
\scp{e^{k}}{e^{\ell}}=\delta_{\ell,k},&\text{if }k\in\{1,2,3\},
\end{cases}
\end{align*}
for $l,j\in\{1,\dots,p\}$ and $\ell\in\{1,2,3\}$ and $k\in\{0,\dots,3\}$.
Therefore, we have 
\begin{align}
\label{pathint1-bih1}
\beta_{l,\ell}(\Theta_{j,k})
&=\delta_{l,j}\delta_{\ell,k}
+(1-\delta_{\ell,0})\delta_{0,k}\delta_{l,j}(x_{l,1})_{\ell},\quad
l,j\in\{1,\dots,p\},\quad k,\ell\in\{0,\ldots,3\}.
\end{align}

Let Assumption \ref{ass:stronglip} be satisfied.
For the first \pz complex, similar to \eqref{deco1}, \eqref{deco2}, 
we have the orthogonal decompositions 
\begin{align}
\label{app:deco2-bih1}
\begin{aligned}
\LtttTom&=\ran(\devGrad,\om)\oplus_{\LtttTom}\ker(\rDT,\om),\\
\ker(\symCT,\om)&=\ran(\devGrad,\om)\oplus_{\LtttTom}\harmbihoneNom.
\end{aligned}
\end{align}

\begin{remark}
\label{rem:Helmholtz-bih1N}
By Assumption \ref{ass:stronglip}, \cite[Lemma 3.2]{PZ2020a} yields $\dom(\devGrad,\om)=H^{1,3}(\om)$. 
As a consequence using Rellich's selection theorem, 
the range in \eqref{app:deco2-bih1} is closed and the Poincar\'e type estimate
$$\exists\,c>0\quad\forall\,\phi\in H^{1,3}(\om)\cap\RT_{\pw}^{\bot_{\Lttom}}\qquad
\norm{\phi}_{\Lttom}\leq c\norm{\devGrad\phi}_{\Ltttom},$$
holds, see also \cite[Lemma 3.2]{PZ2020a}.
\end{remark}

Let $\pi:\LtttTom\to\ker(\rDT,\om)$ denote the orthogonal projector 
onto $\ker(\rDT,\om)$ along $\ran(\devGrad,\om)$, 
see \eqref{app:deco2-bih1}.
We have $\pi\big(\ker(\symCT,\om)\big)=\harmbihoneNom$. 
By Lemma \ref{dimNTbih1:lem0} there exists 
some $\psi_{j,k}\in H^{1,3}(\om)$ such that 
$$\harmbihoneNom\ni\pi\Theta_{j,k}=\Theta_{j,k}-\devGrad\psi_{j,k},\quad
(\Theta_{j,k}-\devGrad\psi_{j,k})\big|_{\om_{F}}
=\devGrad(\theta_{j,k}-\psi_{j,k}).$$
As $\harmbihoneNom\subset\Cittom$, cf. \eqref{app:harmreg},
we conclude by $\pi\Theta_{j,k},\Theta_{j,k}\in\Cittom$
that also $\devGrad\psi_{j,k}\in\Cittom$ and hence
$\psi_{j,k}\in\Citom$.
Thus all path integrals over the closed curves $\zeta_{l}$ are well-defined. 
Furthermore, we observe by Lemma \ref{devGradHSlemma}
\begin{align*}
\beta_{l,0}(\devGrad\psi_{j,k})
&=\frac{1}{2}\int_{\zeta_{l}}
\bscp{\Dive(\devGrad\psi_{j,k})^{\top}}{\intd\lambda}\\
&=\frac{1}{3}\dive\psi_{j,k}(x_{l,1})-\frac{1}{3}\dive\psi_{j,k}(x_{l,1})
=0
\end{align*}
and
\begin{align*}
&\qquad b_{l}(\devGrad\psi_{j,k})\\
&=\int_{\zeta_{l}}\devGrad\psi_{j,k}\intd\lambda
+\frac{1}{2}\int_{\zeta_{l}}
(x_{l,1}-y)\Bscp{\big(\Dive(\devGrad\psi_{j,k})^{\top}\big)(y)}{\intd\lambda_{y}}\\
&=\int_{\zeta_{x_{l,1},x_{l,1}}}\bigg(\devGrad\psi_{j,k}(y)
+\frac{1}{2}\Big(\int_{\zeta_{x_{l,1},y}}
\bscp{\Dive(\devGrad\psi_{j,k})^{\top}}{\intd\lambda}\Big)\id\bigg)\intd\lambda_{y}\\
&=\psi_{j,k}(x_{l,1})
-\psi_{j,k}(x_{l,1})-\frac{1}{3}\dive\psi_{j,k}(x_{l,1})(x_{l,1}-x_{l,1})
=0.
\end{align*}
Therefore, $\beta_{l,\ell}(\devGrad\psi_{j,k})=0$ and by \eqref{pathint1-bih1} we have 
\begin{align}
\label{pathint2-bih1}
\beta_{l,\ell}(\pi\Theta_{j,k})
=\beta_{l,\ell}(\Theta_{j,k})
-\beta_{l,\ell}(\devGrad\psi_{j,k})
=\delta_{l,j}\delta_{\ell,k}
+(1-\delta_{\ell,0})\delta_{0,k}\delta_{l,j}(x_{l,1})_{\ell}
\end{align}
for all $l,j\in\{1,\dots,p\}$ and all $\ell,k\in\{0,1,2,3\}$. 
We shall show that 
\begin{align}
\label{basis:bih1N}
\calB^{\bihone}_{N}
:=\big\{\pi\Theta_{j,k}:j\in\{1,\dots,p\},\,k\in\{0,\dots,3\}\big\}
\subset\harmbihoneNom
\end{align}
defines a basis of $\harmbihoneNom$.

\begin{remark}[Characterisation by PDEs]
Note that $\psi_{j,k}\in H^{1,3}(\om)\cap\RT_{\pw}^{\bot_{\Lttom}}$ 
can be found by the variational formulation
$$\forall\,\phi\in H^{1,3}(\om)\qquad
\scp{\devGrad\psi_{j,k}}{\devGrad\phi}_{\Ltttom}
=\scp{\Theta_{j,k}}{\devGrad\phi}_{\Ltttom},$$
i.e., $\psi_{j,k}=\Delta_{\Tb,N}^{-1}\big(\DT\Theta_{j,k},\Theta_{j,k}\nu|_{\ga}\big)$,
where $\Delta_{\Tb,N}\subset\DT\devGrad$
denotes the `deviatoric' Laplacian with inhomogeneous Neumann boundary conditions 
restricted to a subset of $H^{1,3}(\om)\cap\RT_{\pw}^{\bot_{\Lttom}}$. 
Therefore, 
$$\pi\Theta_{j,k}
=\Theta_{j,k}-\devGrad\psi_{j,k}
=\Theta_{j,k}-\devGrad\Delta_{\Tb,N}^{-1}\big(\DT\Theta_{j,k},\Theta_{j,k}\nu|_{\ga}\big).$$
In classical terms, $\psi_{j,k}$
solves the Neumann elasticity type problem
\begin{align}
\label{NeuLap1-bih1}
\begin{aligned}
-\Delta_{\Tb}\psi_{j,k}&=-\DT\Theta_{j,k}
&&\text{in }\om,\\
(\Grad\psi_{j,k})\nu&=\Theta_{j,k}\nu
&&\text{on }\ga,\\
\int_{\om_{l}}(\psi_{j,k})_{\ell}&=0
&&\text{for }l\in\{1,\dots,n\},\quad\ell\in\{1,2,3\},\\
\int_{\om_{l}}x\cdot\psi_{j,k}(x)\intd\lambda_{x}&=0
&&\text{for }l\in\{1,\dots,n\},
\end{aligned}
\end{align}
which is uniquely solvable.
\end{remark}

\begin{lemma}
\label{dimNTbih1:lem1}
Let Assumption \ref{ass:stronglip} as well as Assumption \ref{ass:curvesandsurfaces} be satisfied. 
Then we have $\harmbihoneNom=\lin\calB^{\bihone}_{N}$.
\end{lemma}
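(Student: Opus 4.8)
The plan is to follow the blueprint of the de~Rham case, Lemma~\ref{dimNFrhm:lem1}, with $\grad$ replaced by $\devGrad$ and the single path functional $\beta_{l}$ replaced by the family $\beta_{l,\ell}$ introduced in \eqref{pathint1-bih1}. Since $\calB^{\bihone}_{N}\subset\harmbihoneNom$ holds by construction, only the inclusion $\harmbihoneNom\subset\lin\calB^{\bihone}_{N}$ needs to be shown. So let $T\in\harmbihoneNom=\ker(\symCT)\cap\ker(\rDT)$; by \eqref{app:harmreg} (Lemma~\ref{lem:harmreg}) we have $T\in\Cittom$, and $T$ is trace free. Exploiting the triangular structure of \eqref{pathint2-bih1}, I would pick coefficients $\gamma_{j,k}\in\R$, $j\in\{1,\dots,p\}$, $k\in\{0,\dots,3\}$ -- first fixing the $k=0$ coefficients and then the remaining ones -- so that
$$\harmbihoneNom\ni\widehat{T}:=T-\sum_{j=1}^{p}\sum_{k=0}^{3}\gamma_{j,k}\,\pi\Theta_{j,k}$$
satisfies $\beta_{l,\ell}(\widehat{T})=0$ for all $l\in\{1,\dots,p\}$ and all $\ell\in\{0,\dots,3\}$. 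It then suffices to exhibit $\widehat{v}\in H^{1,3}(\om)=\dom(\devGrad,\om)$ with $\devGrad\widehat{v}=\widehat{T}$ in $\om$, because then the orthogonality in \eqref{app:deco2-bih1} gives
$$\norm{\widehat{T}}_{\LtttTom}^{2}=\scp{\devGrad\widehat{v}}{\widehat{T}}_{\LtttTom}=0.$$

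To construct $\widehat{v}$ I would work on the cut domain $\om_{F}$, whose connected components are simply connected by Assumption~\ref{ass:curvesandsurfaces}~(A3), and use Assumption~\ref{ass:curvesandsurfaces}~(A1) twice to pass to all of $\om$. First, $\symCT\widehat{T}=0$ yields $\curl\Dive\widehat{T}^{\top}=2\Dive\symCT\widehat{T}=0$ by Remark~\ref{curlformula1-bih1}(i); since $\Dive\widehat{T}^{\top}\in\Citom$ is curl free, (A1) reduces the integral of $\Dive\widehat{T}^{\top}$ over any closed piecewise $C^{1}$-curve in $\om$ to an integer combination of the integrals over $\zeta_{1},\dots,\zeta_{p}$, and these equal $2\beta_{l,0}(\widehat{T})=0$; hence, by Corollary~\ref{cor:HSdevGrad}(a) (or Proposition~\ref{prop:HSdevGrad}(a) componentwise), there is $u\in\Ciom$ with $\grad u=\Dive\widehat{T}^{\top}$. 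Second, set $\widehat{S}:=\widehat{T}+\tfrac12\,u\,\id$; then $\Curl\widehat{S}=\symCT\widehat{T}=0$ by Remark~\ref{curlformula1-bih1}(ii), so every row of $\widehat{S}$ is a curl-free $C^{1}$-field. Using that $u$ is now globally single valued, an integration by parts in the defining formula for $b_{l}$ (cf.\ Lemma~\ref{devGradHSlemma}(ii)) identifies $\int_{\zeta_{l}}\widehat{S}\intd\lambda=b_{l}(\widehat{T})$, which vanishes since $\beta_{l,\ell}(\widehat{T})=0$ for $\ell\in\{1,2,3\}$; applying (A1) row-wise to $\widehat{S}$ then shows that the integral of each row of $\widehat{S}$ over any closed curve in $\om$ vanishes, so Corollary~\ref{cor:HSdevGrad}(b) provides $\widehat{v}\in\Citom$ with $\Grad\widehat{v}=\widehat{S}$. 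Since $\tr\widehat{T}=0$, Proposition~\ref{prop:HSdevGrad}(c) (resp.\ Corollary~\ref{cor:HSdevGrad}(c)) gives $\devGrad\widehat{v}=\dev\widehat{S}=\widehat{T}$ in $\om$.

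It remains to upgrade $\widehat{v}$ from $\Citom$ to $H^{1,3}(\om)$, and this is where Assumption~\ref{ass:stronglip} is used, through Ne\v{c}as' inequality (cf.\ Lemma~\ref{lem:regGg}): from $\grad u=\Dive\widehat{T}^{\top}\in H^{-1,3}(\om)$ one deduces $u\in\Ltom$, hence $\Grad\widehat{v}=\widehat{T}+\tfrac12\,u\,\id\in\Ltttom$, and then $\widehat{v}\in\Lttom$ by the same bootstrap (a bounded domain together with $\Grad\widehat{v}\in\Lt$ forces $\widehat{v}\in\Lt$) that concludes the proof of Lemma~\ref{dimNFrhm:lem1}; thus $\widehat{v}\in H^{1,3}(\om)$, which finishes the argument. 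I expect the second step above to be the main obstacle: one has to verify that the curl-free building blocks appearing in the path-integral representation of $\devGrad$-primitives (Lemma~\ref{devGradHSlemma}, Proposition~\ref{prop:HSdevGrad}) genuinely fall under the scope of Assumption~\ref{ass:curvesandsurfaces}~(A1), and to carry out the integration-by-parts bookkeeping matching the resulting $\zeta_{l}$-periods with the functionals $2\beta_{l,0}(\widehat{T})$ and $b_{l}(\widehat{T})$; once this identification is in place, the remaining steps are routine adaptations of the de~Rham arguments and of the FA-ToolBox regularity machinery.
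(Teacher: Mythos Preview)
Your proposal is correct and follows the same route as the paper: choose the $\gamma_{j,k}$ via the triangular structure of \eqref{pathint2-bih1} so that $\beta_{l,\ell}(\widehat{T})=0$, apply (A1) first to the curl-free field $\Dive\widehat{T}^{\top}$ to obtain $u$, then to $\widehat{S}=\widehat{T}+\tfrac12 u\id$ (curl-free by Remark~\ref{curlformula1-bih1}(ii)) to obtain $\widehat{v}$ with $\devGrad\widehat{v}=\widehat{T}$, and conclude by orthogonality in \eqref{app:deco2-bih1}. The identification $\int_{\zeta_l}\widehat{S}\,\intd\lambda=b_l(\widehat{T})$ is exactly the computation the paper carries out via Lemma~\ref{devGradHSlemma}(ii), so the ``main obstacle'' you anticipate is already handled there.

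The one place to be careful is the final regularity step. Your plan is to get $u\in\Ltom$ first via Ne\v{c}as and then bootstrap to $\widehat{v}\in H^{1,3}(\om)$, but Ne\v{c}as' inequality requires $u\in H^{-1}(\om)$ a priori, and $u\in\Ciom$ alone does not give that. The paper reverses the order: it argues directly that $\widehat{v}\in\Citom$ with $\devGrad\widehat{v}\in\LtttTom$ forces $\widehat{v}\in H^{1,3}(\om)$ (citing elliptic regularity together with \cite[Theorem 2.6 (1)]{L1986} or \cite[Theorem 3.2 (2)]{A1965}, as in Lemma~\ref{dimNFrhm:lem1}), and only then notes that $u\in\Ltom$ follows as a consequence. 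Swapping to this order removes the gap without changing anything else in your argument.
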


\begin{proof}
Let $H\in\harmbihoneNom=\ker(\rDT,\om)\cap\ker(\symCT,\om)\subset\Citt_{\Tb}(\om)$, 
see Lemma \ref{lem:harmreg}.
With the above introduced functionals $\beta_{l,0}$ and $b_{l}$, $l\in\{1,\dots,p\}$, we recall
\begin{align*}
\R\ni\beta_{l,0}(H)
&=\frac{1}{2}\int_{\zeta_{l}}\scp{\Dive H^{\top}}{\intd\lambda},\\
\R^{3}\ni b_{l}(H)
&=\int_{\zeta_{l}}H\intd\lambda
+\frac{1}{2}\int_{\zeta_{l}}
(x_{l,1}-y)\bscp{(\Dive H^{\top})(y)}{\intd\lambda_{y}},
\end{align*}
and define for $l\in\{1,\dots,p\}$ and $\ell\in\{1,2,3\}$ the numbers
\begin{align}
\label{eq:gl0}
\begin{aligned}
\gamma_{l,0}
:=\gamma_{l,0}(H)
&:=\beta_{l,0}(H),\\
\gamma_{l,\ell}
:=\gamma_{l,\ell}(H)
&:=\bscp{b_{l}(H)-\beta_{l,0}(H)x_{l,1}}{e^{\ell}}
=\beta_{l,\ell}(H)-\beta_{l,0}(H)(x_{l,1})_{\ell}.
\end{aligned}
\end{align}
We shall show that
$$\harmbihoneNom\ni\widehat{H}:=H-\sum_{j=1}^{p}\sum_{k=0}^{3}\gamma_{j,k}\pi\Theta_{j,k}=0\quad\text{in }\om.$$
Similar to the proof of Lemma \ref{dimNFrhm:lem1},
the aim is to prove that there exists $v\in H^{1,3}(\om)$
such that $\devGrad v=\widehat{H}$, since then
$$\norm{\widehat{H}}_{\LtttTom}^{2}
=\scp{\devGrad v}{\widehat{H}}_{\LtttTom}=0.$$
For finding $v$, we will apply Corollary \ref{cor:HSdevGrad}
and Remark \ref{curlformula1-bih1} to $T=\widehat{H}$. 
By \eqref{pathint2-bih1} we observe for all $l\in\{1,\ldots,p\}$
\begin{align*}
\frac{1}{2}\int_{\zeta_{l}}\scp{\Dive\widehat{H}^{\top}}{\intd\lambda}
=\beta_{l,0}(\widehat{H}) 
&=\beta_{l,0}(H)
-\sum_{j=1}^{p}\sum_{k=0}^{3}\gamma_{j,k}
\beta_{l,0}(\pi\Theta_{j,k})\\
&=\gamma_{l,0}
-\sum_{j=1}^{p}\sum_{k=0}^{3}\gamma_{j,k}
\delta_{l,j}\delta_{0,k}
=0.
\end{align*}
Note that it holds $\Dive\widehat{H}^{\top}\in\ker(\curl,\om)\cap\Citom$
by Remark \ref{curlformula1-bih1} (i) 
as $\widehat{H}$ belongs to $\ker(\symCT,\om)\cap\Cittom$.
Thus, by Assumption \ref{ass:curvesandsurfaces} (A.1)
for any closed piecewise $C^1$-curve $\zeta$ in $\om$
\begin{align}
\label{pathint3-bih1}
\int_{\zeta}\scp{\Dive\widehat{H}^{\top}}{\intd\lambda}=0.
\end{align}
Let $u\in\Ciom$ be as in Corollary \ref{cor:HSdevGrad} (a), (ii), i.e., 
$\grad u=\Dive\widehat{H}^{\top}$,
and define $S:\om\to\R^{3\times3}$ by
$$S:=\widehat{H}+\frac{1}{2}\,u\id.$$
Our next aim is to show condition (b), (ii) of Corollary \ref{cor:HSdevGrad}. 
For this, let $l\in\{1,\ldots,p\}$. 
Note that $\zeta_{x_{l,0},x_{l,1}}\subset\zeta_{l}\subset\om_{0}$ for some $\om_{0}\in\cc(\om)$. 
Then we have with $c:=u(x_{l,1})\in\R$ for all $x\in\zeta_{l}$
$$u(x)
=u(x)-u(x_{l,1})+c 
=\int_{\zeta_{x_{l,1},x}}\scp{\grad u}{\intd\lambda}+c 
=\int_{\zeta_{x_{l,1},x}}\scp{\Dive\widehat{H}^{\top}}{\intd\lambda}+c,$$
where $\zeta_{x_{l,1},x}$ denotes the path from $x_{l,1}$ to $x$ along $\zeta_{l}$. Moreover,
$$\int_{\zeta_{l}}(c\id)\intd\lambda
=c\int_{\zeta_{l}}\Grad x\intd\lambda_{x}
=0.$$
Next, we consider the closed curve $\zeta_{l}$ 
as the closed curve $\zeta_{x_{l,1},x_{l,1}}$ with circulation $1$ along $\zeta_{l}$. 
Then, using Lemma \ref{devGradHSlemma} and the definition of $b_{l}$, we compute
\begin{align*}
\int_{\zeta_{l}}S\intd\lambda
&=\int_{\zeta_{l}}\widehat{H}\intd\lambda
+\frac{1}{2}\int_{\zeta_{l}}(u\id)\intd\lambda\\
&=\int_{\zeta_{l}}\widehat{H}\intd\lambda
+\frac{1}{2}\int_{\zeta_{x_{l,1},x_{l,1}}}
\Big(\int_{\zeta_{x_{l,1},y}}\scp{\Dive\widehat{H}^{\top}}{\intd\lambda}\Big)\id\intd\lambda_{y}\\
&=\int_{\zeta_{l}}\widehat{H}\intd\lambda
+\frac{1}{2}\int_{\zeta_{l}}
(x_{l,1}-y)\bscp{(\Dive\widehat{H}^{\top})(y)}{\intd\lambda}\intd\lambda_{y}
=b_{l}(\widehat{H}).
\end{align*}
Hence, for $\ell\in\{1,2,3\}$ we get by \eqref{pathint2-bih1} recalling \eqref{eq:gl0}
\begin{align*}
\Big(\int_{\zeta_{l}}S\intd\lambda\Big)_{\ell}
&=\bscp{\int_{\zeta_{l}}S\intd\lambda}{e^{\ell}}
=\scp{b_{l}(\widehat{H})}{e^{\ell}}
=\beta_{l,\ell}(\widehat{H})\\
&=\beta_{l,\ell}(H)
-\sum_{j=1}^{p}\sum_{k=0}^{3}\gamma_{j,k}
\beta_{l,\ell}(\pi\Theta_{j,k})\\
&=\beta_{l,\ell}(H)
-\sum_{j=1}^{p}\sum_{k=0}^{3}\gamma_{j,k}
\big(\delta_{l,j}\delta_{\ell,k}
+(1-\delta_{\ell,0})\delta_{0,k}\delta_{l,j}(x_{l,1})_{\ell}\big)\\
&=\beta_{l,\ell}(H)
-\gamma_{l,0}(x_{l,1})_{\ell}
-\gamma_{l,\ell}=\beta_{l,\ell}(H)
-\beta_{l,0}(H)(x_{l,1})_{\ell}
-\gamma_{l,\ell}=0.
\end{align*}
Therefore, $\int_{\zeta_{l}}S\intd\lambda=0$ for all $l\in\{1,\ldots,p\}$.
Note that $S\in\ker(\Curl,\om)\cap\Cittom$
by Remark \ref{curlformula1-bih1} (ii) 
as $\widehat{H}\in\ker(\symCT,\om)\cap\Citt_{\Tb}(\om)$.
Thus, by Assumption \ref{ass:curvesandsurfaces} (A.1)
for any closed piecewise $C^1$-curve $\zeta$ in $\om$
\begin{align}
\label{pathint4-bih1}
\int_{\zeta}S\intd\lambda=0.
\end{align}
Hence, Corollary \ref{cor:HSdevGrad} (b) and (c) (note $\tr\widehat{H}=0$) imply the existence 
of a smooth vector field $v:\om\to\R^{3}$ such that $\devGrad v=\widehat{H}$.
Finally, similar to the end of the proof of Lemma \ref{dimNFrhm:lem1},
elliptic regularity and, e.g., \cite[Theorem 2.6 (1)]{L1986} or \cite[Theorem 3.2 (2)]{A1965},
show that $v\in\Cit(\om_{0})$ and $\devGrad v\in\LtttT(\om_{0})$ imply
$v\in H^{1,3}(\om_{0})$ for all $\om_{0}\in\cc(\om)$ and thus $v\in H^{1,3}(\om)$, completing the proof. 
(Let us note that $v\in H^{1,3}(\om)$
implies also $S\in\Ltttom$ and hence $u\in\Ltom$.)
\end{proof}

\begin{lemma}
\label{dimNTbih1:lem2}
Let Assumption \ref{ass:stronglip} and Assumption \ref{ass:curvesandsurfaces} be satisfied. Then
$\calB^{\bihone}_{N}$ is linearly independent.
\end{lemma}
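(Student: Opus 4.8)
The plan is to argue exactly as in the proof of Lemma~\ref{dimNFrhm:lem2}, now exploiting the explicit evaluation of the functionals $\beta_{l,\ell}$ recorded in \eqref{pathint2-bih1}. Suppose
$$\sum_{j=1}^{p}\sum_{k=0}^{3}\gamma_{j,k}\pi\Theta_{j,k}=0\quad\text{in }\om$$
for some coefficients $\gamma_{j,k}\in\R$. By \eqref{basis:bih1N} and \eqref{app:harmreg}, each $\pi\Theta_{j,k}$ lies in $\harmbihoneNom\subset\Cittom$, so all the curve integrals entering the definition of the linear functionals $\beta_{l,\ell}$ (and $b_{l}$) are well defined on this linear combination; one may therefore apply each $\beta_{l,\ell}$ to the displayed identity and interchange it with the finite sum.

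First I would apply $\beta_{l,0}$ for a fixed $l\in\{1,\dots,p\}$. Since the cross term $(1-\delta_{\ell,0})\delta_{0,k}\delta_{l,j}(x_{l,1})_{\ell}$ in \eqref{pathint2-bih1} vanishes for $\ell=0$, this gives
$$0=\sum_{j=1}^{p}\sum_{k=0}^{3}\gamma_{j,k}\,\beta_{l,0}(\pi\Theta_{j,k})=\sum_{j=1}^{p}\sum_{k=0}^{3}\gamma_{j,k}\,\delta_{l,j}\delta_{0,k}=\gamma_{l,0},$$
hence $\gamma_{l,0}=0$ for every $l$. Next, for fixed $l\in\{1,\dots,p\}$ and $\ell\in\{1,2,3\}$, applying $\beta_{l,\ell}$ and using \eqref{pathint2-bih1} again yields
$$0=\sum_{j=1}^{p}\sum_{k=0}^{3}\gamma_{j,k}\,\beta_{l,\ell}(\pi\Theta_{j,k})=\gamma_{l,\ell}+\gamma_{l,0}(x_{l,1})_{\ell}=\gamma_{l,\ell},$$
since $\gamma_{l,0}=0$. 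Thus $\gamma_{j,k}=0$ for all $j\in\{1,\dots,p\}$ and all $k\in\{0,\dots,3\}$, which is the asserted linear independence of $\calB^{\bihone}_{N}$.

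The computation is entirely routine once \eqref{pathint2-bih1} is available; the only point needing a little care is the order of the two steps — $\beta_{l,0}$ must be evaluated first, because the functionals $\beta_{l,\ell}$ with $\ell\geq1$ still detect the coefficient $\gamma_{l,0}$ through the cross term carrying $(x_{l,1})_{\ell}$. I do not expect a genuine obstacle here: the substantive work lies entirely in the already established local regularity \eqref{app:harmreg} and, above all, in the evaluation formula \eqref{pathint2-bih1}, which itself rests on the Poincar\'e-type representations of Lemma~\ref{devGradHSlemma} together with the matrix-algebraic identities of Lemma~\ref{PZformulalem}.
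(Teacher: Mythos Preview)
Your proof is correct and follows essentially the same approach as the paper's own proof: apply the functionals $\beta_{l,\ell}$ to the vanishing linear combination, use \eqref{pathint2-bih1} to read off first $\gamma_{l,0}=0$ from $\ell=0$, and then $\gamma_{l,\ell}=0$ for $\ell\in\{1,2,3\}$.
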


\begin{proof}
Let $\gamma_{j,k}\in\R$, $j\in\{1,\ldots,p\}$, $k\in\{0,\ldots,3\}$, 
be such that $\displaystyle\sum_{j=1}^{p}\sum_{k=0}^{3}\gamma_{j,k}\pi\Theta_{j,k}=0$. 
Then \eqref{pathint2-bih1} implies for $l\in\{1,\dots,p\}$
\begin{align*}
0&=\sum_{j=1}^{p}\sum_{k=0}^{3}\gamma_{j,k}
\beta_{l,\ell}(\pi\Theta_{j,k})
=\gamma_{l,0},
&\ell&=0,\\
0&=\sum_{j=1}^{p}\sum_{k=0}^{3}\gamma_{j,k}
\beta_{l,\ell}(\pi\Theta_{j,k})
=\gamma_{l,\ell}+\gamma_{l,0}(x_{l,1})_{\ell}=\gamma_{l,\ell},
&\ell&\in\{1,2,3\},
\end{align*}
finishing the proof.
\end{proof}

\begin{theorem}
\label{dimNTbih1}
Let Assumptions \ref{ass:stronglip} and \ref{ass:curvesandsurfaces} be satisfied.
Then $\dim\harmbihoneNom=4p$ and a basis of $\harmbihoneNom$ is given by \eqref{basis:bih1N}.
\end{theorem}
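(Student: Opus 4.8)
The plan is to obtain Theorem \ref{dimNTbih1} simply by combining the two preceding lemmas: Lemma \ref{dimNTbih1:lem1} shows $\harmbihoneNom=\lin\calB^{\bihone}_{N}$, Lemma \ref{dimNTbih1:lem2} shows that the $4p$ tensor fields $\pi\Theta_{j,k}$, $j\in\{1,\dots,p\}$, $k\in\{0,\dots,3\}$, are linearly independent, and together with $\#\calB^{\bihone}_{N}=4p$ this gives $\dim\harmbihoneNom=4p$ and identifies \eqref{basis:bih1N} as a basis. So the substance lies entirely in those two lemmas, whose proofs I would organise as follows. First I would recall from the construction preceding Lemma \ref{dimNTbih1:lem0} that, with the Raviart--Thomas fields $\widehat r_k$ from Section \ref{app:sec:DirTFbih2} and $\theta_{j,k}:=\theta_j\widehat r_k$, the field $\devGrad\theta_{j,k}$ vanishes near every surface $F_l$ and hence extends by zero to $\Theta_{j,k}\in\Cittom\cap\LtttTom$; Lemma \ref{dimNTbih1:lem0} puts $\Theta_{j,k}\in\ker(\symCT,\om)$, and the Helmholtz-type decomposition \eqref{app:deco2-bih1} (available by Remark \ref{rem:Helmholtz-bih1N} under Assumption \ref{ass:stronglip}) lets me set $\pi\Theta_{j,k}=\Theta_{j,k}-\devGrad\psi_{j,k}\in\harmbihoneNom$. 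The regularity $\harmbihoneNom\subset\Cittom$ from \eqref{app:harmreg} forces $\psi_{j,k}\in\Citom$, so all path integrals over the curves $\zeta_l$ are well defined, and the Poincar\'e-map identities of Lemma \ref{devGradHSlemma} compute the functionals $\beta_{l,0}$ and $b_l$ on $\pi\Theta_{j,k}$, yielding the "triangular" evaluation formula \eqref{pathint2-bih1}.

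Given \eqref{pathint2-bih1}, linear independence (Lemma \ref{dimNTbih1:lem2}) is then immediate: applying $\beta_{l,0}$ to a vanishing combination $\sum_{j,k}\gamma_{j,k}\pi\Theta_{j,k}=0$ gives $\gamma_{l,0}=0$ for every $l$, and then applying $\beta_{l,\ell}$ for $\ell\in\{1,2,3\}$ gives $\gamma_{l,\ell}+\gamma_{l,0}(x_{l,1})_\ell=\gamma_{l,\ell}=0$. For spanning (Lemma \ref{dimNTbih1:lem1}) I would take $H\in\harmbihoneNom\subset\Citt_{\Tb}(\om)$, define $\gamma_{l,0}:=\beta_{l,0}(H)$ and $\gamma_{l,\ell}:=\beta_{l,\ell}(H)-\beta_{l,0}(H)(x_{l,1})_\ell$, and form $\widehat H:=H-\sum_{j,k}\gamma_{j,k}\pi\Theta_{j,k}\in\harmbihoneNom$; by \eqref{pathint2-bih1} all functionals $\beta_{l,\ell}$ annihilate $\widehat H$. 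Since $\widehat H\in\ker(\symCT,\om)\cap\Cittom$, Remark \ref{curlformula1-bih1}(i) gives $\Dive\widehat H^{\top}\in\ker(\curl,\om)\cap\Citom$ with vanishing periods over the $\zeta_l$; Assumption \ref{ass:curvesandsurfaces}(A1) and Corollary \ref{cor:HSdevGrad}(a) then produce $u\in\Ciom$ with $\grad u=\Dive\widehat H^{\top}$. Setting $S:=\widehat H+\frac{1}{2}\,u\id$, Remark \ref{curlformula1-bih1}(ii) shows $S\in\ker(\Curl,\om)\cap\Cittom$, and a further period computation via Lemma \ref{devGradHSlemma} (using that $\int_{\zeta_l}(c\id)\intd\lambda=0$ and the definition of $b_l$) shows all periods of $S$ vanish, so Assumption \ref{ass:curvesandsurfaces}(A1) and Corollary \ref{cor:HSdevGrad}(b) give $v$ with $\Grad v=S$; since $\tr\widehat H=0$, Corollary \ref{cor:HSdevGrad}(c) upgrades this to $\devGrad v=\widehat H$. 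A local elliptic-regularity and Sobolev-embedding argument, as at the end of the proof of Lemma \ref{dimNFrhm:lem1}, promotes $v$ to $H^{1,3}(\om)$ on each connected component, so $\widehat H=\devGrad v$ and then $\norm{\widehat H}_{\LtttTom}^{2}=\scp{\devGrad v}{\widehat H}_{\LtttTom}=0$ by \eqref{app:deco2-bih1}, whence $H\in\lin\calB^{\bihone}_{N}$.

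The main obstacle is the spanning step, and within it the passage from "all periods vanish" to an honest potential $v$ with $\devGrad v=\widehat H$. This requires the delicate interplay between vector analysis and matrix calculus encoded in Lemma \ref{PZformulalem}: one must introduce exactly the correction $\frac{1}{2}u\id$ so that the trace-free field $\widehat H$ acquires a genuine $\Grad$-potential, verify that the two separate period conditions (for $\Dive\widehat H^{\top}$ and for $S$) are the right ones and are compatible, keep careful track of the Poincar\'e-map formulas of Lemma \ref{devGradHSlemma} on each connected component, and finally run the regularity bootstrap so that $v\in H^{1,3}(\om)$ and $u\in\Ltom$. By contrast, the linear-independence half and the concluding dimension count are routine once the functionals $\beta_{l,\ell}$ and the evaluation formula \eqref{pathint2-bih1} are available.
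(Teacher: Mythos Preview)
Your proposal is correct and matches the paper's approach exactly: the paper's proof of Theorem \ref{dimNTbih1} is the one-liner ``Use Lemma \ref{dimNTbih1:lem1} and Lemma \ref{dimNTbih1:lem2}'', and your outline of those two lemmas (functionals $\beta_{l,\ell}$ via the Poincar\'e maps of Lemma \ref{devGradHSlemma}, the triangular evaluation \eqref{pathint2-bih1}, the spanning argument through Corollary \ref{cor:HSdevGrad} and Remark \ref{curlformula1-bih1}, and the regularity bootstrap) reproduces the paper's arguments faithfully.
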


\begin{proof}
Use Lemma \ref{dimNTbih1:lem1} and Lemma \ref{dimNTbih1:lem2}.
\end{proof}

\subsection{\except{toc}{Neumann Tensor Fields of the }Second \PZ Complex}
\label{app:sec:NeuTFbih2}

The rationale for the second \pz complex in comparison to the first one has to be changed appropriately. 
For this we also use Lemma \ref{PZformulalem}, the Poincar\'e maps, however, 
differ from one another ones.

In Lemma \ref{GgHSlemma} below for a tensor field $S$ and a parametrisation 
$\varphi\in C^{1,3}_{\pw}\big([0,1]\big)$ of a curve $\zeta$ we define
$$\int_{\zeta}\bscp{x-y}{S(y)\intd\lambda_{y}}
:=\int_{0}^{1}\Bscp{x-\varphi(t)}{S\big(\varphi(t)\big)\varphi'(t)}\intd t.$$

\begin{lemma}
\label{GgHSlemma}
Let $x,x_{0}\in\om$ and let $\zeta_{x_{0},x}\subset\om$ 
be a piecewise $C^{1}$-curve connecting $x_{0}$ and $x$.
\begin{itemize}
\item[\bf(i)]
Let $u\in\Ciom$. Then $u$ and its gradient $\grad u$ can be represented by
\begin{align*}
u(x)-u(x_{0})-\bscp{\grad u(x_{0})}{x-x_{0}}
&=\int_{\zeta_{x_{0},x}}\bscp{\int_{\zeta_{x_{0},y}}\Gg u\intd\lambda}{\intd\lambda_{y}}
\intertext{and}
\grad u(x)-\grad u(x_{0})
&=\int_{\zeta_{x_{0},x}}\Gg u\intd\lambda.
\end{align*}
\item[\bf(ii)]
For all $S\in\Cittom$ it holds
$$\int_{\zeta_{x_{0},x}}\bscp{\int_{\zeta_{x_{0},y}}S\intd\lambda}{\intd\lambda_{y}}
=\int_{\zeta_{x_{0},x}}\bscp{x-y}{S(y)\intd\lambda_{y}}.$$
\end{itemize}
\end{lemma}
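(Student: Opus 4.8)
The plan is to follow the proof of Lemma~\ref{devGradHSlemma} almost verbatim, replacing the deviatoric gradient by the full Hessian $\Gg u=\Grad\grad u$ and the first-order Poincar\'e map by a second-order one. The entire argument will rest on the fundamental theorem of calculus for line integrals together with a single application of Fubini's theorem, so no deep input is needed.

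First I would prove the second identity in~(i). Since the $\ell$-th row of $\Grad\grad u$ equals $\grad\p_{\ell}u$, the row-wise line integral $\int_{\zeta_{x_{0},x}}\Gg u\intd\lambda$ has $\ell$-th component $\int_{\zeta_{x_{0},x}}\scp{\grad\p_{\ell}u}{\intd\lambda}$. Splitting $\zeta_{x_{0},x}$ into finitely many $C^{1}$-arcs and applying the fundamental theorem of calculus on each, this equals $\p_{\ell}u(x)-\p_{\ell}u(x_{0})$; hence $\int_{\zeta_{x_{0},x}}\Gg u\intd\lambda=\grad u(x)-\grad u(x_{0})$. For the first identity in~(i), I would substitute this formula, with the endpoint $x$ replaced by the running point $y$, into the right-hand side, obtaining
$$\int_{\zeta_{x_{0},x}}\Bscp{\grad u(y)-\grad u(x_{0})}{\intd\lambda_{y}}
=\int_{\zeta_{x_{0},x}}\scp{\grad u}{\intd\lambda}-\Bscp{\grad u(x_{0})}{\int_{\zeta_{x_{0},x}}\intd\lambda_{y}}.$$
The first term on the right equals $u(x)-u(x_{0})$ by the fundamental theorem of calculus, and $\int_{\zeta_{x_{0},x}}\intd\lambda_{y}=x-x_{0}$ exactly as recorded in the proof of Lemma~\ref{devGradHSlemma}; this gives~(i).

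For~(ii), I would fix a parametrisation $\varphi\in C^{1,3}_{\pw}\big([0,1]\big)$ of $\zeta_{x_{0},x}$ with $\varphi(0)=x_{0}$ and $\varphi(1)=x$. The inner line integral is the vector $\int_{\zeta_{x_{0},\varphi(s)}}S\intd\lambda=\int_{0}^{s}S\big(\varphi(t)\big)\varphi'(t)\intd t$, so the left-hand side of~(ii) reads $\int_{0}^{1}\Bscp{\int_{0}^{s}S(\varphi(t))\varphi'(t)\intd t}{\varphi'(s)}\intd s$. Interchanging the order of integration over the triangle $\{0\le t\le s\le1\}$, using $\int_{t}^{1}\varphi'(s)\intd s=x-\varphi(t)$, and invoking the symmetry of the real inner product, this becomes $\int_{0}^{1}\Bscp{x-\varphi(t)}{S(\varphi(t))\varphi'(t)}\intd t$, which is precisely the right-hand side of~(ii).

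I do not anticipate a genuine obstacle: the statement is elementary and structurally identical to Lemma~\ref{devGradHSlemma}. The only points that need care are keeping track of the two conventions for tensor-valued line integrals (row-wise integration versus the matrix acting on the tangent vector) and the routine reduction from piecewise $C^{1}$ curves to $C^{1}$ arcs.
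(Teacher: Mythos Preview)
Your proposal is correct and follows essentially the same approach as the paper: for (i) the fundamental theorem of calculus applied row-wise to $\Grad\grad u$ gives the second identity, and substitution plus $\int_{\zeta_{x_{0},x}}\intd\lambda_{y}=x-x_{0}$ yields the first; for (ii) the same parametrisation and Fubini argument over the triangle $\{0\le t\le s\le1\}$ with $\int_{t}^{1}\varphi'(s)\intd s=x-\varphi(t)$ is exactly what the paper does.
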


\begin{proof}
For (i), we have
\begin{align*}
u(x)-u(x_{0})
&=\int_{\zeta_{x_{0},x}}\scp{\grad u}{\intd\lambda},\\
\p_{k}u(x)-\p_{k}u(x_{0})
&=\int_{\zeta_{x_{0},x}}\scp{\grad\p_{k}u}{\intd\lambda},\qquad
k\in\{1,2,3\},
\intertext{i.e.,}
\grad u(x)-\grad u(x_{0})
&=\int_{\zeta_{x_{0},x}}\Grad\grad u\intd\lambda.
\end{align*}
Therefore, 
\begin{align*}
u(x)-u(x_{0})
&=\int_{\zeta_{x_{0},x}}\bscp{\grad u(y)}{\intd\lambda_{y}}\\
&=\int_{\zeta_{x_{0},x}}\bscp{\int_{\zeta_{x_{0},y}}\Grad\grad u\intd\lambda}{\intd\lambda_{y}}
+\int_{\zeta_{x_{0},x}}\bscp{\grad u(x_{0})}{\intd\lambda_{y}}.
\end{align*}
Using $\varphi\in C^{1,3}_{\pw}\big([0,1]\big)$ as a parametrisation of $\zeta_{x_{0},x}$, 
we conclude the proof of (i) by
$$\int_{\zeta_{x_{0},x}}\bscp{\grad u(x_{0})}{\intd\lambda_{y}}
=\int_{0}^{1}\bscp{\grad u(x_{0})}{\varphi'(t)}\intd t
=\bscp{\grad u(x_{0})}{x-x_{0}}.$$

For (ii), we compute
\begin{align*}
\int_{\zeta_{x_{0},x}}\bscp{\int_{\zeta_{x_{0},y}}S\intd\lambda}{\intd\lambda_{y}}
&=\int_{0}^{1}\bscp{\int_{\zeta_{x_{0},\varphi(s)}}S\intd\lambda}{\varphi'(s)}\intd s\\
&=\int_{0}^{1}\Bscp{\int_{0}^{s}S\big(\varphi(t)\big)\varphi'(t)\intd t}{\varphi'(s)}\intd s\\
&=\int_{0}^{1}\Bscp{S\big(\varphi(t)\big)\varphi'(t)}
{\int_{t}^{1}\varphi'(s)\intd s}\intd t\\
&=\int_{0}^{1}\Bscp{S\big(\varphi(t)\big)\varphi'(t)}{x-\varphi(t)}\intd t
=\int_{\zeta_{x_{0},x}}\bscp{x-y}{S(y)\intd\lambda_{y}}
\end{align*}
again with $\varphi$ parametrising $\zeta_{x_{0},x}$.
\end{proof}

\begin{proposition}
\label{prop:bih2}
Let $x_{0}\in\om_{0}\in\cc(\om)$ and let $w\in\Cit(\om_{0})$ and $S\in\Citt(\om_{0})$.
\begin{itemize}
\item[\bf(a)]
The following conditions are equivalent:
\begin{itemize}
\item[\bf(i)]
For all $\zeta\subset\om_{0}$ closed, piecewise $C^1$-curves
$$\int_{\zeta}S\intd\lambda=0.$$
\item[\bf(ii)]
For all $\zeta_{x_{0},x},\widetilde{\zeta}_{x_{0},x}\subset\om_{0}$ 
piecewise $C^1$-curves connecting $x_{0}$ with $x$
$$\int_{\zeta_{x_{0},x}}S\intd\lambda
=\int_{\widetilde{\zeta}_{x_{0},x}}S \intd\lambda.$$
\item[\bf(iii)]
There exists $v\in\Cit(\om_{0})$ such that $\Grad v=S$.
\end{itemize} 
In the case one of the above conditions is true the vector field
\begin{equation}
\label{eq:chvbih2}
x\mapsto v(x)=\int_{\zeta_{x_{0},x}}S\intd\lambda
\end{equation}
for some $\zeta_{x_{0},x}\subset \om_{0}$ piecewise $C^1$-curve 
connecting $x_{0}$ with $x$ is a (well-defined) possible choice for $v$ in (iii). 
\item[\bf(b)]
The following conditions are equivalent:
\begin{itemize}
\item[\bf(i)]
For all $\zeta\subset\om_{0}$ closed, piecewise $C^1$ curves
$$\int_{\zeta}\scp{w}{\intd\lambda}=0.$$
\item[\bf(ii)]
For all $\zeta_{x_{0},x},\widetilde{\zeta}_{x_{0},x}\subset\om_{0}$ 
piecewise $C^1$ curves connecting $x_{0}$ with $x$
$$\int_{\zeta_{x_{0},x}}\scp{w}{\intd\lambda}
=\int_{\widetilde{\zeta}_{x_{0},x}}\scp{w}{\intd\lambda}.$$
\item[\bf(iii)]
There exists $u\in\Ci(\om_{0})$ such that $\grad u=w$.
\end{itemize}
In the case one of the above conditions is true the function
\begin{equation}
\label{eq:chubih2}
x\mapsto u(x)=\int_{\zeta_{x_{0},x}}\bscp{w}{\intd\lambda}
\end{equation}
for some $\zeta_{x_{0},x}\subset\om_{0}$ piecewise $C^1$-curve 
connecting $x_{0}$ with $x$ is a (well-defined) possible choice for $u$ in (iii).
\item[\bf(c)]
Let $v\in\Cit(\om_{0})$ with $\Grad v=S$ as in (a), (iii)
and let $u\in\Ci(\om_{0})$ with $\grad u=v$ as in (b), (iii).
Then $\Gg u=S$, $\skw S=0$, and $\CS S=0$.
\end{itemize} 
\end{proposition}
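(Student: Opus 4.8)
The plan is to prove Proposition \ref{prop:bih2} (c) directly from the two representation lemmas together with the algebraic identities collected in Lemma \ref{PZformulalem}. Parts (a) and (b) are completely parallel to Proposition \ref{prop:HSdevGrad} (a), (b) — one checks the equivalences (i)$\Leftrightarrow$(ii)$\Leftrightarrow$(iii) using only the fundamental theorem of calculus on curves — so the substance lies in part (c). There, $S$, $v$, $u$ are linked by $\Grad v=S$ and $\grad u=v$, i.e.\ $v=\grad u$ and hence $S=\Grad\grad u=\Gg u$ by definition of $\Gg$. This is the first assertion and it is immediate once parts (a), (b) and their companion formulas \eqref{eq:chvbih2}, \eqref{eq:chubih2} are in place.

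The remaining two claims, $\skw S=0$ and $\CS S=0$, then follow from $S=\Gg u$ and the matrix/differential-operator identities. First, $\Gg u=\Grad\grad u$ is a Hessian, hence symmetric: indeed $(\Grad\grad u)_{ij}=\p_i\p_j u=\p_j\p_i u=(\Grad\grad u)_{ji}$ by Schwarz's lemma (valid in the distributional sense as noted at the end of Lemma \ref{PZformulalem}), so $\skw S=\skw\Gg u=0$. Second, $\CS S=\Curl_{\Sb} S$ applied to the symmetric tensor $S=\Gg u$: by the complex property of the first \pz complex (Proposition \ref{prop:cP}'s \pz analogue, i.e.\ $\ran(\rGg)\subset\ker(\rCS)$, equivalently $\Curl_{\Sb}\Gg=0$ as an identity on smooth fields, cf.\ Lemma \ref{PZformulalem} via $-2\Curl\sym\Grad v=-(\Grad\curl v)^{\top}$ with $v=\grad u$ and $\curl\grad u=0$), we get $\CS\Gg u=0$. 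So both follow once we observe $S=\Gg u$.

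Concretely, here is the order I would carry things out. (1) State that (a) and (b) are proved exactly as the corresponding parts of Proposition \ref{prop:HSdevGrad}, using Lemma \ref{GgHSlemma} in place of Lemma \ref{devGradHSlemma}; the well-definedness of the curve integrals \eqref{eq:chvbih2}, \eqref{eq:chubih2} is the content of the equivalence of (i) and (ii), and that these formulas give the desired antiderivatives is the fundamental theorem of calculus applied componentwise (for $v$) and then to $\grad u$. (2) Given the hypotheses of (c), from (b),(iii) we have $\grad u=v$, and from (a),(iii) we have $\Grad v=S$; substituting, $S=\Grad\grad u=\Gg u$. (3) Deduce $\skw S=\skw\Gg u=0$ since $\Gg u$ is a Hessian (Schwarz), which is listed among the properties in Lemma \ref{PZformulalem} (``$\sym\Curl(u\id)=0$'' type identities, or simply symmetry of $\Grad\grad u$). (4) Deduce $\CS S=\symCurl_{\Tb}\ldots$ — more precisely $\rCS$ restricted to smooth fields equals $\Curl_{\Sb}$, and the complex property $\rCS\rGg\subset 0$ (equivalently the smooth identity $\Curl\Gg u=\Curl\Grad\grad u=-(\Grad\curl\grad u)^{\top}=0$ by Lemma \ref{PZformulalem} using $\curl\grad u=0$) gives $\CS\Gg u=0$, hence $\CS S=0$.

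The only mild subtlety — and the place I would be most careful — is bookkeeping about \emph{which} $\Curl$ is meant: $\CS$ acts on symmetric tensor fields and its range lands in trace-free tensor fields, so one should note $S\in\Citt_{\Sb}(\om_0)$ (which holds since $S=\Gg u$ is symmetric by step (3)) before writing $\CS S$, and then invoke the smooth complex identity. There is no analytic obstacle here: everything is local on the connected component $\om_0$, all fields are $C^\infty$ by hypothesis, and every identity used is one of the pointwise matrix-calculus relations already assembled in Lemma \ref{PZformulalem} (in particular the consequence $2\Curl\sym\Grad v=(\Grad\curl v)^{\top}$, applied with $v=\grad u$ so that $\curl v=\curl\grad u=0$, yields $\Curl S = \Curl\Grad\grad u = \Curl\sym\Grad(\grad u)=0$). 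Thus the proof is essentially a two-line substitution $S=\Gg u$ followed by two one-line appeals to Lemma \ref{PZformulalem}; the bulk of the writing is in recording that (a) and (b) go exactly as before.
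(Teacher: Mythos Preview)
Your proposal is correct and follows essentially the same approach as the paper: parts (a) and (b) are handled by the fundamental theorem of calculus exactly as in Proposition \ref{prop:HSdevGrad}, and part (c) follows from the substitution $S=\Grad\grad u=\Gg u$ together with Schwarz's Lemma (for $\skw S=0$) and the complex property (for $\CS S=0$). The paper's own proof is just the two-sentence version of what you wrote.
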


\begin{proof}
The statements in (a) and (b) are straightforward consequences 
of the fundamental theorem of calculus 
and follow essentially the same lines as (a) and (b) of Proposition \ref{prop:HSdevGrad}. 
Schwarz's Lemma and the complex property show (c).
\end{proof}

\begin{remark}
\label{curlformula1-bih2}
Related to Proposition \ref{prop:bih2} we note with Lemma \ref{PZformulalem} the following:
\begin{itemize}
\item[\bf(i)]
For $v\in\Citom$ we have $\curl v=2\spn^{-1}\skw\Grad v$.
\item[\bf(ii)]
If $\om_{0}$ is simply connected, 
Proposition \ref{prop:bih2} (a), (iii)
and (b), (iii) are equivalent to $\Curl S=0$ and $\curl w=0$, respectively.
\end{itemize}
\end{remark}

Similar to the first \pz complex, there exists an analogous version 
of Proposition \ref{prop:bih2} irrespective of the components. 
We only formulate the following slightly weaker statement, 
which is an easy consequence of Proposition \ref{prop:bih2}.

\begin{corollary}
\label{cor:bih2}
Let $w\in\Citom$ and $S\in\Cittom$.
\begin{itemize}
\item[\bf(a)]
The following conditions are equivalent:
\begin{itemize}
\item[\bf(i)] 
For all $\zeta\subset\om$ closed, piecewise $C^1$-curves $\int_{\zeta}S\intd\lambda=0$.
\item[\bf(ii)]
There exists $v\in\Citom$ such that $\Grad v=S$.
\end{itemize} 
\item[\bf(b)]
The following conditions are equivalent:
\begin{itemize}
\item[\bf(i)] 
For all $\zeta\subset\om$ closed, piecewise $C^1$-curves $\int_{\zeta}\scp{w}{\intd\lambda}=0$.
\item[\bf(ii)]
There exists $u\in\Ciom$ such that $\grad u=w$.
\end{itemize} 
\item[\bf(c)] 
Let $v\in\Citom$ with $\Grad v=S$ as in (a), (ii)
and let $u\in\Ciom$ with $\grad u=v$ as in (b), (ii).
Then $\Gg u=S$, $\skw S=0$, and $\CS S=0$.
\end{itemize} 
\end{corollary}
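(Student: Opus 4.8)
The plan is to obtain parts (a) and (b) by localising to the connected components of $\om$ and invoking Proposition~\ref{prop:bih2}, and to read off (c) directly from Schwarz's lemma together with the complex property, exactly as in Proposition~\ref{prop:bih2}(c).

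For (a) I would first dispatch the easy implication (ii)$\Rightarrow$(i): if $\Grad v=S$ with $v\in\Citom$, then the $\ell$-th row of $S$ equals $\grad v_{\ell}$, so for every closed piecewise $C^{1}$-curve $\zeta\subset\om$ the fundamental theorem of calculus forces each component of $\int_{\zeta}S\intd\lambda$ to vanish. For (i)$\Rightarrow$(ii) I would use that a piecewise $C^{1}$-curve is connected, hence contained in a single connected component $\om_{0}\in\cc(\om)$; therefore hypothesis (i) for $\om$ restricts, for each $\om_{0}\in\cc(\om)$ (after fixing some base point $x_{0}\in\om_{0}$), to condition (i) of Proposition~\ref{prop:bih2}(a) for $S|_{\om_{0}}$. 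Proposition~\ref{prop:bih2}(a) then supplies $v_{\om_{0}}\in\Cit(\om_{0})$ with $\Grad v_{\om_{0}}=S|_{\om_{0}}$, and since smoothness is a local property, setting $v:=v_{\om_{0}}$ on each $\om_{0}$ glues these potentials into the desired $v\in\Citom$. Part (b) is the same argument with the scalar $u$, the vector field $w$, and Proposition~\ref{prop:bih2}(b) in place of (a).

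For (c), given $v\in\Citom$ with $\Grad v=S$ and $u\in\Ciom$ with $\grad u=v$ as in (a),(ii) and (b),(ii), I would simply compute $\Gg u=\Grad\grad u=\Grad v=S$, which is the first assertion. Symmetry of the Hessian (Schwarz's lemma) then gives that $\Grad\grad u$ is symmetric, so $\skw S=\skw\Gg u=0$; and the $\ell$-th row of $\Curl\Grad\grad u$ equals $\curl(\grad\p_{\ell}u)=0$ (Schwarz again), hence $\Curl\Gg u=0$ and therefore $\CS S=\CS\Gg u=0$. This last identity also follows abstractly from the complex property $\CS\Gg\subset0$ of the second \pz complex.

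I do not expect a genuine obstacle here: the only point requiring care is the bookkeeping in the component-wise step, namely checking that the curve-integral hypothesis really does localise to each connected component $\om_{0}\in\cc(\om)$ and that the locally constructed potentials patch together to a single globally smooth field. All the analytic content is the fundamental theorem of calculus and Schwarz's lemma, inherited from Proposition~\ref{prop:bih2}.
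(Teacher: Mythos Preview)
Your proposal is correct and matches the paper's own approach: the paper states that the corollary is ``an easy consequence of Proposition~\ref{prop:bih2}'' and, for the analogous Corollary~\ref{cor:HSdevGrad}, explicitly says one argues ``for each connected component separately''. Your component-wise localisation for (a) and (b) together with the direct computation via Schwarz's lemma and the complex property for (c) is exactly what is intended.
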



Next, we turn to the exact construction of the Neumann fields for the second \pz complex. 
Let $j\in\{1,\ldots,p\}$. For this, recall from the beginning of Section \ref{app:sec:NeuF} 
that $\theta_{j}$ is constant on each connected component $\Upsilon_{j,0}$ and $\Upsilon_{j,1}$
of $\Upsilon_{j}\setminus F_{j}$ and vanishes outside of $\widetilde{\Upsilon}_{j,1}$.
Moreover, let $\widehat{p}_{k}$ be the polynomials from Section \ref{app:sec:DirTFbih1}
given by $\widehat{p}_{0}(x):=1$ and $\widehat{p}_{k}(x):=x_{k}$ for $k\in\{1,2,3\}$.
We define the functions $\theta_{j,k}:=\theta_{j}\widehat{p}_{k}$
and note $\Gg\theta_{j,k}=0$
in $\dot\bigcup_{l\in\{1,\ldots,p\}}\Upsilon_{l}\setminus F_{l}$ for all $k\in\{1,2,3\}$.
Thus $\Gg\theta_{j,k}$ can be continuously extended by zero 
to $\Theta_{j,k}\in\Cittom\cap\LtttSom$
with $\Theta_{j,k}=0$ in all the neighbourhoods 
$\Upsilon_{l}$ of all the surfaces $F_{l}$, $l\in\{1,\ldots,p\}$.

\begin{lemma}
\label{dimNTbih2:lem0}
Let Assumption \ref{ass:curvesandsurfaces} be satisfied. Then
$\Theta_{j,k}\in\ker(\CS,\om)$.
\end{lemma}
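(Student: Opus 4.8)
The plan is to mimic the proof of Lemma \ref{dimNFrhm:lem0} (and its analogue Lemma \ref{dimNTbih1:lem0}) for the de Rham and first \pz complex. We must show that $\Theta_{j,k}$ annihilates $\ran(\CS_{c})$, i.e., for every test tensor field $\Phi\in\Cictt_{c,\Sb}(\om)$ we have $\scp{\Theta_{j,k}}{\CS\Phi}_{\LtttSom}=0$. Recall that $\CS=\symCurl_{c}^{*}$ was defined as the adjoint of $\symCurl_{c}:\CicttSom\subset\LtttSom\to\LtttTom$, so $\Theta_{j,k}\in\ker(\CS,\om)$ is by definition the statement that $\Theta_{j,k}$ is $\LtttT$-orthogonal to the range of $\symCurl$ acting on compactly supported symmetric test fields.

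First I would localise using the support condition: by construction $\supp\Theta_{j,k}\subset\widetilde{\Upsilon}_{j}\setminus\Upsilon_{j}\subset\widetilde{\Upsilon}_{j,1}\setminus F_{j}\subset\om_{F}=\om\setminus\bigcup_{l}F_{l}$, so on the support of $\Theta_{j,k}$ we have $\Theta_{j,k}=\Gg\theta_{j,k}$ with $\theta_{j,k}=\theta_{j}\widehat{p}_{k}\in\Ci(\om_{F})$. Given $\Phi\in\CicttSom$, pick a cut-off $\varphi\in\Cic(\om_{F})$ with $\varphi=1$ on $\supp\Theta_{j,k}\cap\supp\Phi$ (possible since this set is a compact subset of the open set $\om_{F}$, as $\supp\Theta_{j,k}$ avoids all the $F_{l}$). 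Then
$$\scp{\Theta_{j,k}}{\CS\Phi}_{\LtttSom}
=\scp{\Gg\theta_{j,k}}{\CS(\varphi\Phi)}_{\Ltttt(\om_{F})},$$
where I replaced $\Phi$ by $\varphi\Phi\in\Cictt(\om_{F})$ without changing the inner product because $\Gg\theta_{j,k}$ is supported where $\varphi=1$ and $\CS$ is a local (first-order differential) operator, so $\CS(\varphi\Phi)=\CS\Phi$ on $\supp\Theta_{j,k}\cap\supp\Phi$ and both integrands vanish off $\supp\Theta_{j,k}$.

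Next I would integrate by parts on the smooth (sub)manifold $\om_{F}$ (or simply use the complex property of the second \pz complex restricted to a topologically trivial neighbourhood): since $\varphi\Phi$ has compact support in $\om_{F}$, the adjoint relation gives $\scp{\Gg\theta_{j,k}}{\CS(\varphi\Phi)}=\scp{\rsymCT\Gg\theta_{j,k}}{\varphi\Phi}$, and by the complex property $\rsymCT\,\Gg=\rsymCT\,\rdevGrad\cdot(\text{something})$—more directly, $\CS\Gg\theta_{j,k}=0$ by Schwarz's lemma / the complex property $A_{1}A_{0}\subset 0$ for the second \pz complex, since $\Gg\theta_{j,k}\in\ran(\rdevGrad)$ locally (indeed $\Gg u=\devGrad(\grad u)$ on any topologically trivial piece, and $\rsymCT\rdevGrad\subset 0$). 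Hence the inner product is zero, and since $\Phi\in\CicttSom$ was arbitrary, $\Theta_{j,k}$ is orthogonal to $\ran(\symCurl_{c})$, i.e., $\Theta_{j,k}\in\ker(\symCurl_{c}^{*})=\ker(\CS,\om)$.

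The main obstacle, such as it is, is purely bookkeeping: making sure the cut-off $\varphi$ can be chosen in $\Cic(\om_{F})$ rather than merely $\Cic(\om)$ — this is exactly why the construction arranged $\supp\Theta_{j,k}$ to avoid every surface $F_{l}$ — and being careful that $\CS$ applied to a \emph{symmetric} test field lands in the trace-free tensors, which is the $\dev\Curl$ identity from Lemma \ref{PZformulalem}; this is what makes $\scp{\Gg\theta_{j,k}}{\CS(\varphi\Phi)}$ meaningful in $\Ltttt(\om_{F})$ in the first place and is the only place matrix algebra enters. Everything else is the same local integration-by-parts argument already used in Lemma \ref{dimNFrhm:lem0} and Lemma \ref{dimNTbih1:lem0}.
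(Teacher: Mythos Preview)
Your overall strategy---localise with a cut-off supported away from all the surfaces $F_{l}$, then use a complex/Schwarz identity on $\om_{F}$---is the paper's strategy, and it works. Two concrete corrections are needed, though. First, the test-field space is the wrong one: by the definition in Section~\ref{sec:pz2}, $\symCurl_{c}:\CicttTom\subset\LtttTom\to\LtttSom$ and $\CS=(\symCurl_{c})^{*}$, so showing $\Theta_{j,k}\in\ker(\CS)$ means checking $\scp{\Theta_{j,k}}{\symCT\Phi}_{\LtttSom}=0$ for all \emph{trace-free} $\Phi\in\CicttTom$, not symmetric ones. (Throughout your write-up you also interchange $\CS$ and $\symCT$; the operator you apply to the test field is $\symCT$.)

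Second, the identity ``$\Gg u=\devGrad(\grad u)$'' is false: $\devGrad(\grad u)=\Grad\grad u-\tfrac{1}{3}(\Delta u)\id$, which differs from $\Gg u$ unless $u$ is harmonic. The complex property you actually need is the dual one $\CS\,\Gg\subset 0$ (i.e.\ $A_{1}^{*}A_{2}^{*}\subset 0$), not $A_{1}A_{0}\subset 0$. Concretely, $\Curl\Grad(\grad\theta_{j,k})=0$ row-wise by Schwarz, and this is how the paper closes the argument in one line:
\[
\scp{\Gg\theta_{j,k}}{\symCT(\varphi\Phi)}_{\LtttS(\om_{F})}
=\scp{\Grad(\grad\theta_{j,k})}{\Curl(\varphi\Phi)}_{\Lttt(\om_{F})}
=0,
\]
where the first equality drops the $\sym$ because $\Gg\theta_{j,k}$ is already symmetric, and the second is row-wise integration by parts against $\varphi\Phi\in\Cictt_{c}(\om_{F})$.
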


\begin{proof}
Let $\Phi\in\CicttTom$. 
As $\supp\Theta_{j,k}\subset\widetilde{\Upsilon}_{j}\setminus\Upsilon_{j}$
we can pick another cut-off function $\varphi\in\Cic(\om_{F})$ with $\varphi|_{\supp\Theta_{j,k}\cap\supp\Phi}=1$. Then
\begin{align*}
&\qquad\scp{\Theta_{j,k}}{\symCT\Phi}_{\LtttSom}
=\scp{\Theta_{j,k}}{\symCT\Phi}_{\LtttS(\supp\Theta_{j,k}\cap\supp\Phi)}\\
&=\bscp{\Gg\theta_{j,k}}{\symCT(\varphi\Phi)}_{\LtttS(\om_{F})}
=\bscp{\Grad(\grad\theta_{j,k})}{\Curl(\varphi\Phi)}_{\Lttt(\om_{F})}
=0
\end{align*}
as $\varphi\Phi,\Curl(\varphi\Phi)\in\Cictt(\om_{F})$.
\end{proof}

Similar to the first \pz complex, we introduce a set of functionals. 

For $l,j\in\{1,\dots,p\}$ and $k\in\{0,\dots,3\}$ 
and for the curves $\zeta_{x_{l,0},x_{l,1}}\subset\zeta_{l}$
with the chosen starting points $x_{l,0}\in\Upsilon_{l,0}$ 
and respective endpoints $x_{l,1}\in\Upsilon_{l,1}$
we can compute by Lemma \ref{GgHSlemma}
\begin{align*}
\R^{3}\ni b_{l}(\Theta_{j,k})
&:=\int_{\zeta_{l}}\Theta_{j,k}\intd\lambda
=\int_{\zeta_{x_{l,0},x_{l,1}}}\Gg\theta_{j,k}\intd\lambda\\
&\phantom{:}=\grad\theta_{j,k}(x_{l,1})-\grad\theta_{j,k}(x_{l,0})
=\grad\theta_{j,k}(x_{l,1})\\
&\phantom{:}=\delta_{l,j}\grad\widehat{p}_{k}(x_{l,1})
=\delta_{l,j}\begin{cases}
0,&\text{if }k=0,\\
e^{k},&\text{if }k=1,2,3,
\end{cases}
\intertext{and}
\R\ni\beta_{l,0}(\Theta_{j,k})
&:=\int_{\zeta_{l}}
\bscp{x_{l,1}-y}{\Theta_{j,k}(y)\intd\lambda_{y}}\\
&\phantom{:}=\int_{\zeta_{x_{l,0},x_{l,1}}}
\bscp{x_{l,1}-y}{\Gg\theta_{j,k}(y)\intd\lambda_{y}}\\
&\phantom{:}=\int_{\zeta_{x_{l,0},x_{l,1}}}
\bscp{\int_{\zeta_{x_{l,0},y}}\Gg\theta_{j,k}\intd\lambda}{\intd\lambda_{y}}\\
&\phantom{:}=\theta_{j,k}(x_{l,1})
-\theta_{j,k}(x_{l,0})-\bscp{\grad\theta_{j,k}(x_{l,0})}{x_{l,1}-x_{l,0}}
=\theta_{j,k}(x_{l,1})\\
&\phantom{:}=\delta_{l,j}\widehat{p}_{k}(x_{l,1})
=\delta_{l,j}\begin{cases}
1,&\text{if }k=0,\\
(x_{l,1})_{k},&\text{if }k\in\{1,2,3\}.
\end{cases}
\end{align*}
Thus, for $l\in\{1,\dots,p\}$ and $\ell\in\{0,\dots,3\}$
we have functionals $\beta_{l,\ell}$ given by
\begin{align*}
\beta_{l,\ell}(\Theta_{j,k})
&:=\bscp{b_{l}(\Theta_{j,k})}{e^{\ell}}
=\delta_{l,j}\begin{cases}
0,&\text{if }k=0,\\
\delta_{\ell,k},&\text{if }k\in\{1,2,3\},
\end{cases}
\intertext{for $l,j\in\{1,\dots,p\}$ and $\ell\in\{1,2,3\}$ and $k\in\{0,1,2,3\}$, as well as}
\beta_{l,0}(\Theta_{j,k})
&=\delta_{l,j}\delta_{0,k}
+\delta_{l,j}(1-\delta_{0,k})(x_{l,1})_{k}
\end{align*}
for $l,j\in\{1,\dots,p\}$ and $k\in\{0,1,2,3\}$.
Therefore, we have 
\begin{align}
\label{pathint1-bih2}
\beta_{l,\ell}(\Theta_{j,k})
&=\delta_{l,j}\delta_{\ell,k}
+(1-\delta_{0,k})\delta_{\ell,0}\delta_{l,j}(x_{l,1})_{k},\quad
l,j\in\{1,\dots,p\},\;k,\ell\in\{0,1,2,3\}.
\end{align}

Let Assumption \ref{ass:stronglip} be satisfied.
For the second \pz complex, similar to \eqref{deco1}, \eqref{deco2}, 
we have the orthogonal decompositions 
\begin{align}
\label{app:deco2-bih2}
\begin{aligned}
\LtttSom&=\ran(\Gg,\om)\oplus_{\LtttSom}\ker(\rdDS,\om),\\
\ker(\CS,\om)&=\ran(\Gg,\om)\oplus_{\LtttSom}\harmbihtwoNom.
\end{aligned}
\end{align}

\begin{remark}
\label{rem:Helmholtz-bih2N}
Lemma \ref{lem:regGg} shows $\dom(\Gg,\om)=H^{2}(\om)$. 
Thus, employing a contradiction argument together with Rellich's selection theorem, 
we obtain the Poincar\'e type estimate
$$\exists\,c>0\quad\forall\,\phi\in H^{2}(\om)\cap(\Pol^{1}_{\pw})^{\bot_{\Ltom}}\qquad
\norm{\phi}_{\Ltom}\leq c\norm{\Grad\grad\phi}_{\Ltttom},$$
as Assumption \ref{ass:stronglip} holds. 
Thus, the range in \eqref{app:deco2-bih2} is closed.
\end{remark}

Let $\pi:\LtttSom \to \ker(\rdDS,\om)$ 
denote the orthogonal projector onto $\ker(\rdDS,\om)$ along $\ran(\Gg,\om)$, 
see \eqref{app:deco2-bih2}. 
In particular, $\pi\big(\ker(\CS,\om)\big)=\harmbihtwoNom$.
By Lemma \ref{dimNTbih2:lem0} there exists 
some $\psi_{j,k}\in H^{2}(\om)$ such that 
\begin{align*}
\harmbihtwoNom\ni\pi\Theta_{j,k}
&=\Theta_{j,k}-\Gg\psi_{j,k},\\
(\Theta_{j,k}-\Gg\psi_{j,k})\big|_{\om_{F}}
&=\Gg(\theta_{j,k}-\psi_{j,k}).
\end{align*}
As $\harmbihtwoNom\subset\Cittom$, see Lemma \ref{lem:harmreg},
we conclude by $\pi\Theta_{j,k},\Theta_{j,k}\in\Cittom$
that also $\Gg\psi_{j,k}\in\Cittom$ and hence
$\psi_{j,k}\in\Ciom$.
Hence all path integrals over the closed curves $\zeta_{l}$ are well-defined. 
Furthermore, we observe by Lemma \ref{GgHSlemma}
\begin{align*}
b_{l}(\Gg\psi_{j,k})
&=\int_{\zeta_{l}}\Gg\psi_{j,k}\intd\lambda
=\grad\psi_{j,k}(x_{l,1})-\grad\psi_{j,k}(x_{l,1})
=0
\intertext{and}
\beta_{l,0}(\Gg\psi_{j,k})
&=\int_{\zeta_{l}}
\bscp{x_{l,1}-y}{\Gg\psi_{j,k}(y)\intd\lambda_{y}}\\
&=\int_{\zeta_{x_{l,1},x_{l,1}}}
\bscp{\int_{\zeta_{x_{l,1},y}}\Gg\psi_{j,k}\intd\lambda}{\intd\lambda_{y}}\\
&=\psi_{j,k}(x_{l,1})
-\psi_{j,k}(x_{l,1})-\bscp{\grad\psi_{j,k}(x_{l,1})}{x_{l,1}-x_{l,1}}
=0.
\end{align*}
Therefore, $\beta_{l,\ell}(\Gg\psi_{j,k})=0$ and by \eqref{pathint1-bih2} we get
\begin{align}
\label{pathint2-bih2}
\beta_{l,\ell}(\pi\Theta_{j,k})
=\beta_{l,\ell}(\Theta_{j,k})
-\beta_{l,\ell}(\Gg\psi_{j,k})
=\delta_{l,j}\delta_{\ell,k}
+(1-\delta_{0,k})\delta_{\ell,0}\delta_{l,j}(x_{l,1})_{k}
\end{align}
for all $l,j\in\{1,\dots,p\}$ and all $\ell,k\in\{0,1,2,3\}$. 
We shall show that 
\begin{align}
\label{basis:bih2N}
\calB^{\bihtwo}_{N}:=\big\{\pi\Theta_{j,k}:j\in\{1,\dots,p\},\,k\in\{0,\ldots,3\}\big\}
\subset\harmbihtwoNom
\end{align}
defines a basis of $\harmbihtwoNom$.

\begin{remark}[Characterisation by PDEs]
Let $j\in\{1,\dots,p\}$ and $k\in\{0,\ldots,3\}$. 
Then $\psi_{j,k}\in H^{2}(\om)\cap(\Pol^{1}_{\pw})^{\bot_{\Ltom}}$ 
can be found by the variational formulation
$$\forall\,\phi\in H^{2}(\om)\qquad
\scp{\Gg\psi_{j,k}}{\Gg\phi}_{\Ltttom}
=\scp{\Theta_{j,k}}{\Gg\phi}_{\Ltttom},$$
i.e., $\psi_{j,k}=(\Delta_{NN}^{2})^{-1}\big(\dDS\Theta_{j,k},\Theta_{j,k}\nu|_{\ga},\nu\cdot\Dive\Theta_{j,k}|_{\ga}\big)$, 
where $\Delta_{NN}^{2}\subset\dDS\Gg$ is the bi-Laplacian 
with inhomogeneous Neumann type boundary conditions
restricted to a subset of $H^{2}(\om)\cap(\Pol^{1}_{\pw})^{\bot_{\Ltom}}$.
Therefore, 
\begin{align*}
\pi\Theta_{j,k}
&=\Theta_{j,k}-\Gg\psi_{j,k}\\
&=\Theta_{j,k}-\Gg(\Delta_{NN}^{2})^{-1}\big(\dDS\Theta_{j,k},\Theta_{j,k}\nu|_{\ga},\nu\cdot\Dive\Theta_{j,k}|_{\ga}\big).
\end{align*}
In classical terms, $\psi_{j,k}$
solves the \pz Neumann problem
\begin{align}
\label{NeuLap1-bih2}
\begin{aligned}
\Delta^{2}\psi_{j,k}&=\dDS\Theta_{j,k}
&&\text{in }\om,\\
(\Gg\psi_{j,k})\nu&=\Theta_{j,k}\nu
&&\text{on }\ga,\\
\nu\cdot\Dive\Gg\psi_{j,k}&=\nu\cdot\Dive\Theta_{j,k}
&&\text{on }\ga,\\
\int_{\om_{l}}\psi_{j,k}&=0
&&\text{for }l\in\{1,\dots,n\},\\
\int_{\om_{l}}x_{\ell}\psi_{j,k}(x)\intd\lambda_{x}&=0
&&\text{for }l\in\{1,\dots,n\},\quad\ell\in\{1,2,3\},
\end{aligned}
\end{align}
which is uniquely solvable.
\end{remark}

\begin{lemma}
\label{dimNTbih2:lem1}
Let Assumptions \ref{ass:stronglip} and \ref{ass:curvesandsurfaces} be satisfied. 
Then $\harmbihtwoNom=\lin\calB^{\bihtwo}_{N}$.
\end{lemma}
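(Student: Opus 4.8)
The plan is to run the proof of Lemma \ref{dimNTbih1:lem1} again, with the roles of the two Poincar\'e maps played now by Lemma \ref{GgHSlemma} and Corollary \ref{cor:bih2}. Let $H\in\harmbihtwoNom=\ker(\rdDS,\om)\cap\ker(\CS,\om)$, which by Lemma \ref{lem:harmreg} lies in $\Cittom\cap\LtttSom$; since $H$ is symmetric we have $\skw H=0$, and because the $\Curl$ of a symmetric tensor field is trace free (Lemma \ref{PZformulalem}), $H\in\ker(\CS,\om)$ translates into $\Curl H=0$. Using the functionals $b_{l}$ and $\beta_{l,0}$ introduced above, I would set, for $l\in\{1,\dots,p\}$,
\begin{align*}
\gamma_{l,\ell}&:=\scp{b_{l}(H)}{e^{\ell}}=\beta_{l,\ell}(H),\qquad\ell\in\{1,2,3\},\\
\gamma_{l,0}&:=\beta_{l,0}(H)-\scp{b_{l}(H)}{x_{l,1}},
\end{align*}
and $\widehat{H}:=H-\sum_{j=1}^{p}\sum_{k=0}^{3}\gamma_{j,k}\pi\Theta_{j,k}\in\harmbihtwoNom$. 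A direct computation with \eqref{pathint2-bih2} (these definitions are exactly the ones chosen so that the $(x_{l,1})_{k}$-cross terms in $\beta_{l,0}$ cancel) shows $\beta_{l,\ell}(\widehat{H})=0$ for all $l\in\{1,\dots,p\}$ and all $\ell\in\{0,1,2,3\}$; in particular $b_{l}(\widehat{H})=0$ and $\beta_{l,0}(\widehat{H})=0$.

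Next I would build a potential $u$ by two nested applications of Corollary \ref{cor:bih2}. Since $\Curl\widehat{H}=0$, each row of $\widehat{H}$ is curl free, so Assumption \ref{ass:curvesandsurfaces} (A1) applied row-wise gives $\int_{\zeta}\widehat{H}\intd\lambda=\sum_{l}\alpha_{l}\int_{\zeta_{l}}\widehat{H}\intd\lambda=\sum_{l}\alpha_{l}b_{l}(\widehat{H})=0$ for every closed piecewise $C^{1}$-curve $\zeta$ in $\om$; hence Corollary \ref{cor:bih2} (a) furnishes $v\in\Citom$ with $\Grad v=\widehat{H}$ (for instance componentwise via \eqref{eq:chvbih2}). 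Then $\curl v=2\spn^{-1}\skw\Grad v=0$ because $\widehat{H}$ is symmetric, so by (A1) again it suffices to check $\int_{\zeta_{l}}\scp{v}{\intd\lambda}=0$ for $l\in\{1,\dots,p\}$. Viewing $\zeta_{l}$ as the closed curve $\zeta_{x_{l,1},x_{l,1}}$ traversed once, using $\int_{\zeta_{l}}\Grad x\intd\lambda_{x}=0$ and Lemma \ref{GgHSlemma} (ii), one computes $\int_{\zeta_{l}}\scp{v}{\intd\lambda}=\int_{\zeta_{l}}\scp{x_{l,1}-y}{\widehat{H}(y)\intd\lambda_{y}}=\beta_{l,0}(\widehat{H})=0$. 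Thus Corollary \ref{cor:bih2} (b) yields $u\in\Ciom$ with $\grad u=v$, and Corollary \ref{cor:bih2} (c) gives $\Gg u=\widehat{H}$ in $\om$.

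Finally I would upgrade $u$ to $\dom(\Gg)=H^{2}(\om)$ (Lemma \ref{lem:regGg}) and close the argument. From $\Grad v=\widehat{H}\in\LtttSom$, local regularity together with the local criterion of \cite[Theorem 2.6 (1)]{L1986} (or \cite[Theorem 3.2 (2)]{A1965}) gives $v\in H^{1,3}(\om_{0})$, and then $\grad u=v\in\Ltt(\om_{0})$ yields $u\in H^{1}(\om_{0})$, hence $u\in H^{2}(\om_{0})$, for every $\om_{0}\in\cc(\om)$; so $u\in H^{2}(\om)$. Since $\widehat{H}\in\ker(\rdDS,\om)$ and $A_{2}^{*}=\Gg$,
$$\bnorm{\widehat{H}}_{\LtttSom}^{2}=\scp{\Gg u}{\widehat{H}}_{\LtttSom}=\scp{\rdDS\widehat{H}}{u}_{\Ltom}=0,$$
so $\widehat{H}=0$ and $H=\sum_{j,k}\gamma_{j,k}\pi\Theta_{j,k}\in\lin\calB^{\bihtwo}_{N}$; together with $\calB^{\bihtwo}_{N}\subset\harmbihtwoNom$ from \eqref{basis:bih2N} this proves $\harmbihtwoNom=\lin\calB^{\bihtwo}_{N}$. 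The main obstacle is the bookkeeping around the two Poincar\'e maps — especially verifying $\int_{\zeta_{l}}\scp{v}{\intd\lambda}=\beta_{l,0}(\widehat{H})$ with the correct base point on the closed curve $\zeta_{l}$, and checking that the definitions of $\gamma_{l,0}$ and $\gamma_{l,\ell}$ are precisely those annihilating all the functionals $\beta_{l,\ell}$ on $\widehat{H}$.
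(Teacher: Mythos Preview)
Your proof is correct and follows essentially the same approach as the paper's: the same choice of coefficients $\gamma_{l,\ell}$ (your $\gamma_{l,0}=\beta_{l,0}(H)-\scp{b_{l}(H)}{x_{l,1}}$ is exactly the paper's $\beta_{l,0}(H)-\sum_{k=1}^{3}\beta_{l,k}(H)(x_{l,1})_{k}$), the same two-step Poincar\'e construction via Corollary~\ref{cor:bih2} (a) then (b), and the same regularity upgrade to $u\in H^{2}(\om)$. The only cosmetic difference is that you verify $\beta_{l,\ell}(\widehat{H})=0$ for all $\ell\in\{0,1,2,3\}$ up front, whereas the paper handles $\ell\in\{1,2,3\}$ first and then recovers $\beta_{l,0}(\widehat{H})=0$ only after constructing $v$; your way is slightly more streamlined.
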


\begin{proof}
Let $H\in\harmbihtwoNom=\ker(\rdDS,\om)\cap\ker(\CS,\om)\subset\Citt_{\Sb}(\om)$, 
see Lemma \ref{lem:harmreg}.
With the above introduced functions $\beta_{l,0}$ and $b_{l}$, $l\in\{1,\dots,p\}$, we recall
\begin{align*}
\R^{3}\ni b_{l}(H)
&=\int_{\zeta_{l}}H\intd\lambda,\\
\R\ni\beta_{l,0}(H)
&=\int_{\zeta_{l}}
\bscp{x_{l,1}-y}{H(y)\intd\lambda_{y}},
\end{align*}
and define for $l\in\{1,\dots,p\}$ the numbers
\begin{align*}
\gamma_{l,\ell}
:=\gamma_{l,\ell}(H)
&:=\bscp{b_{l}(H)}{e^{\ell}}
=\beta_{l,\ell}(H),\qquad
\ell\in\{1,2,3\},\\
\gamma_{l,0}
:=\gamma_{l,0}(H)
&:=\beta_{l,0}(H)-\sum_{k=1}^{3}\beta_{l,k}(H)(x_{l,1})_{k}.
\end{align*}
We shall show that
$$\harmbihtwoNom\ni\widehat{H}
:=H-\sum_{j=1}^{p}\sum_{k=0}^{3}\gamma_{j,k}\pi\Theta_{j,k}=0\quad\text{in }\om.$$
Similar to the proof of Lemma \ref{dimNFrhm:lem1}, 
the aim is to prove that there exists $u\in H^{2}(\om)$
such that $\Gg u=\widehat{H}$, since then
$$\norm{\widehat{H}}_{\LtttSom}^{2}
=\scp{\Gg u}{\widehat{H}}_{\LtttSom}=0.$$ 
For this, we shall apply Corollary \ref{cor:bih2}
and Remark \ref{curlformula1-bih2} to $S=\widehat{H}$.
By \eqref{pathint2-bih2} we observe for $\ell\in\{1,2,3\}$ and $l\in\{1,\ldots,p\}$
\begin{align*}
\big(\int_{\zeta_{l}}\widehat{H}\intd\lambda\big)_{\ell}
&=\bscp{\int_{\zeta_{l}}\widehat{H}\intd\lambda}{e^{\ell}}
=\bscp{b_{l}(\widehat{H})}{e^{\ell}}
=\beta_{l,\ell}(\widehat{H})\\
&=\beta_{l,\ell}(H)
-\sum_{j=1}^{p}\sum_{k=0}^{3}\gamma_{j,k}
\beta_{l,\ell}(\pi\Theta_{j,k})
=\gamma_{l,\ell}
-\sum_{j=1}^{p}\sum_{k=0}^{3}\gamma_{j,k}
\delta_{l,j}\delta_{\ell,k}=0.
\end{align*}
Note that $\widehat{H}\in\ker(\Curl,\om)\cap\Cittom$.
Thus by Assumption \ref{ass:curvesandsurfaces} (A.1)
for any closed piecewise $C^1$-curve $\zeta$ in $\om$
\begin{align}
\label{pathint3-bih2}
\int_{\zeta}\widehat{H}\intd\lambda=0.
\end{align}
By Corollary \ref{cor:bih2} (a), we find $v\in\Citom$ such that $\Grad v=\widehat{H}$. 
Next, let $l\in\{1,\ldots,p\}$. 
Then, with $\zeta_{x_{l,0},x_{l,1}}\subset\zeta_{l}\subset\om_{0}$ for some $\om_{0}\in\cc(\om)$,
we obtain with $c:=v(x_{l,1})\in\R^{3}$ for all $x\in\zeta_{l}$
$$v(x)
=v(x)-v(x_{l,1})+c
=\int_{\zeta_{x_{l,1},x}}\Grad v\intd\lambda+c 
=\int_{\zeta_{x_{l,1},x}}\widehat{H}\intd\lambda+c$$
and
$$\int_{\zeta_{l}}\scp{c}{\intd\lambda}
=\sum_{\ell=1}^{3}c_{\ell}\int_{\zeta_{l}}\scp{\grad x_{\ell}}{\intd\lambda}
=0.$$
We consider the closed curve $\zeta_{l}$ 
as the closed curve $\zeta_{x_{l,1},x_{l,1}}$ with circulation $1$ along $\zeta_{l}$.
By Lemma \ref{GgHSlemma}, the definition of $\beta_{l,0}$,
and \eqref{pathint2-bih2} we have
\begin{align*}
\int_{\zeta_{l}}\scp{v}{\intd\lambda}
&=\int_{\zeta_{l}}\bscp{\int_{\zeta_{x_{l,1},y}}\widehat{H}\intd\lambda}{\intd\lambda_{y}}
=\int_{\zeta_{x_{l,1},x_{l,1}}}
\bscp{\int_{\zeta_{x_{l,1},y}}\widehat{H}\intd\lambda}{\intd\lambda_{y}}\\
&=\int_{\zeta_{l}}
\bscp{x_{l,1}-y}{\widehat{H}(y)\intd\lambda_{y}}
=\beta_{l,0}(\widehat{H})
=\beta_{l,0}(H)
-\sum_{j=1}^{p}\sum_{k=0}^{3}\gamma_{j,k}
\beta_{l,0}(\pi\Theta_{j,k})\\
&=\beta_{l,0}(H)
-\sum_{j=1}^{p}\sum_{k=0}^{3}\gamma_{j,k}
\big(\delta_{l,j}\delta_{0,k}
+(1-\delta_{0,k})\delta_{l,j}(x_{l,1})_{k}\big)\\
&=\beta_{l,0}(H)
-\gamma_{l,0}
-\sum_{k=1}^{3}\gamma_{l,k}(x_{l,1})_{k}
=\beta_{l,0}(H)
-\gamma_{l,0}
-\sum_{k=1}^{3}\beta_{l,k}(H)(x_{l,1})_{k}
=0.
\end{align*}
Note that $v\in\ker(\curl,\om)\cap\Citom$
by Remark \ref{curlformula1-bih2} (i) 
as $\Grad v=\widehat{H}\in\LtttSom$.
Therefore, by Assumption \ref{ass:curvesandsurfaces} (A.1)
for any closed piecewise $C^1$-curve $\zeta$ in $\om$
\begin{align}
\label{pathint4-bih2}
\int_{\zeta}\scp{v}{\intd\lambda}=0.
\end{align}
Hence, by Corollary \ref{cor:bih2} (b), we find $u\in\Ciom$ with $\grad u=v$ and thus
$$\Gg u=\Grad v=\widehat{H}\in\Cittom\cap\LtttSom.$$
Similar to the end of the proof of Lemma \ref{dimNFrhm:lem1},
elliptic regularity and, e.g., \cite[Theorem 2.6 (1)]{L1986} or \cite[Theorem 3.2 (2)]{A1965}
show that $v\in\Citom$ together with $\Grad v\in\Ltttom$ imply
$v\in H^{1,3}(\om)$.
Then, analogously, $u\in\Ciom$
and $\grad u=v\in\Lttom$ imply $u\in H^{1}(\om)$
and hence $u\in H^{2}(\om)$, completing the proof. 
\end{proof}

\begin{lemma}
\label{dimNTbih2:lem2}
Let Assumption \ref{ass:stronglip} and Assumption \ref{ass:curvesandsurfaces} be satisfied. Then
$\calB^{\bihtwo}_{N}$ is linearly independent.
\end{lemma}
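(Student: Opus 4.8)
The plan is to imitate, step for step, the linear-independence argument already carried out for the first \pz complex in Lemma \ref{dimNTbih1:lem2}, the only genuine input being the explicit evaluation of the testing functionals on the generators recorded in \eqref{pathint2-bih2}. So I would start by fixing reals $\gamma_{j,k}$ for $j\in\{1,\dots,p\}$, $k\in\{0,\dots,3\}$ with $\sum_{j=1}^{p}\sum_{k=0}^{3}\gamma_{j,k}\pi\Theta_{j,k}=0$ in $\om$. Since $\pi\Theta_{j,k}\in\harmbihtwoNom\subset\Cittom$ by Lemma \ref{lem:harmreg}, all the curve integrals defining $b_{l}$ and $\beta_{l,0}$, hence also the functionals $\beta_{l,\ell}$ for $\ell\in\{1,2,3\}$, are well defined and linear on this combination, so I may apply them term by term.

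First I would test with $\beta_{l,\ell}$ for a fixed $l\in\{1,\dots,p\}$ and $\ell\in\{1,2,3\}$. In this range the extra off-diagonal term in \eqref{pathint2-bih2} carries the factor $\delta_{\ell,0}$ and therefore drops, leaving $\beta_{l,\ell}(\pi\Theta_{j,k})=\delta_{l,j}\delta_{\ell,k}$, so that $0=\sum_{j,k}\gamma_{j,k}\beta_{l,\ell}(\pi\Theta_{j,k})=\gamma_{l,\ell}$. This disposes of all coefficients $\gamma_{l,\ell}$ with $\ell\in\{1,2,3\}$. Next I would test with $\beta_{l,0}$: by \eqref{pathint2-bih2}, $\beta_{l,0}(\pi\Theta_{j,k})=\delta_{l,j}\delta_{0,k}+(1-\delta_{0,k})\delta_{l,j}(x_{l,1})_{k}$, whence $0=\gamma_{l,0}+\sum_{k=1}^{3}\gamma_{l,k}(x_{l,1})_{k}$; the sum vanishes by the previous step, so $\gamma_{l,0}=0$. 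Since $l$ was arbitrary, every $\gamma_{j,k}$ is zero, which is the asserted linear independence of $\calB^{\bihtwo}_{N}$ from \eqref{basis:bih2N}.

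The computation is essentially routine once \eqref{pathint2-bih2} is in hand; the single point that needs care is the \emph{order} in which the functionals are evaluated, namely one must eliminate the coefficients $\gamma_{l,\ell}$ with $\ell\in\{1,2,3\}$ before extracting $\gamma_{l,0}$ from $\beta_{l,0}$, because the off-diagonal contribution in $\beta_{l,0}(\pi\Theta_{j,k})$ couples $\gamma_{l,0}$ to the $\gamma_{l,k}$, $k\in\{1,2,3\}$. The only analytic ingredient — continuity of the Neumann tensor fields, guaranteeing well-definedness of the path integrals — has already been secured in Lemma \ref{lem:harmreg}, so no further obstacle arises.

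\begin{proof}
Let $\gamma_{j,k}\in\R$ for $j\in\{1,\dots,p\}$ and $k\in\{0,\dots,3\}$ be such that
$\sum_{j=1}^{p}\sum_{k=0}^{3}\gamma_{j,k}\pi\Theta_{j,k}=0$ in $\om$.
By Lemma \ref{lem:harmreg} we have $\pi\Theta_{j,k}\in\harmbihtwoNom\subset\Cittom$,
so the functionals $\beta_{l,\ell}$, $l\in\{1,\dots,p\}$, $\ell\in\{0,1,2,3\}$, are well defined and linear on the combination above.
Applying $\beta_{l,\ell}$ with $\ell\in\{1,2,3\}$ and using \eqref{pathint2-bih2} gives
$$0=\sum_{j=1}^{p}\sum_{k=0}^{3}\gamma_{j,k}\beta_{l,\ell}(\pi\Theta_{j,k})
=\sum_{j=1}^{p}\sum_{k=0}^{3}\gamma_{j,k}\delta_{l,j}\delta_{\ell,k}
=\gamma_{l,\ell}.$$
Applying $\beta_{l,0}$ and using \eqref{pathint2-bih2} once more gives
$$0=\sum_{j=1}^{p}\sum_{k=0}^{3}\gamma_{j,k}\beta_{l,0}(\pi\Theta_{j,k})
=\gamma_{l,0}+\sum_{k=1}^{3}\gamma_{l,k}(x_{l,1})_{k}
=\gamma_{l,0},$$
where the last equality uses $\gamma_{l,k}=0$ for $k\in\{1,2,3\}$ from the previous step.
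Since $l\in\{1,\dots,p\}$ was arbitrary, $\gamma_{j,k}=0$ for all $j,k$, finishing the proof.
\end{proof}
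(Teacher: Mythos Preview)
Your proof is correct and follows essentially the same approach as the paper: apply the functionals $\beta_{l,\ell}$ and use \eqref{pathint2-bih2}, first with $\ell\in\{1,2,3\}$ to kill $\gamma_{l,\ell}$, then with $\ell=0$ to kill $\gamma_{l,0}$. The paper's proof is slightly terser but otherwise identical.
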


\begin{proof}
Let $\gamma_{j,k}\in\R$, $j\in\{1,\ldots,p\}$, and $k\in\{0,\ldots,3\}$
be such that $\displaystyle\sum_{j=1}^{p}\sum_{k=0}^{3}\gamma_{j,k}\pi\Theta_{j,k}=0$. Then
\eqref{pathint2-bih2} implies for $l\in\{1,\dots,p\}$
\begin{align*}
0&=\sum_{j=1}^{p}\sum_{k=0}^{3}\gamma_{j,k}
\beta_{l,\ell}(\pi\Theta_{j,k})
=\gamma_{l,\ell},
&\ell&\in\{1,2,3\},\\
0&=\sum_{j=1}^{p}\sum_{k=0}^{3}\gamma_{j,k}
\beta_{l,\ell}(\pi\Theta_{j,k})
=\gamma_{l,0}+\sum_{k=1}^{3}\gamma_{l,k}(x_{l,1})_{k}=\gamma_{l,0},
&\ell&=0,
\end{align*}
finishing the proof.
\end{proof}

\begin{theorem}
\label{dimNTbih2}
Let Assumptions \ref{ass:stronglip} and \ref{ass:curvesandsurfaces} be satisfied.
Then $\dim\harmbihtwoNom=4p$ and a basis of $\harmbihtwoNom$ is given by \eqref{basis:bih2N}.
\end{theorem}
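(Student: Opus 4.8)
The plan is to invoke the two lemmas just proved and cite them, exactly as in the parallel situations of the de Rham complex, the first \pz complex, and (for the Dirichlet side) the elasticity complex. Concretely, the proof reads:

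\begin{proof}
Combining Lemma \ref{dimNTbih2:lem1} and Lemma \ref{dimNTbih2:lem2} shows that $\calB^{\bihtwo}_{N}$ is a basis of $\harmbihtwoNom$, whence $\dim\harmbihtwoNom=\#\calB^{\bihtwo}_{N}=4p$ by the indexing $j\in\{1,\dots,p\}$, $k\in\{0,\dots,3\}$ in \eqref{basis:bih2N}.
\end{proof}

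The substance, of course, lies entirely in the two cited lemmas, both already established under Assumptions \ref{ass:stronglip} and \ref{ass:curvesandsurfaces}. Lemma \ref{dimNTbih2:lem1} provides the spanning property: starting from $H\in\harmbihtwoNom\subset\Citt_{\Sb}(\om)$ (using the local regularity from Lemma \ref{lem:harmreg}), one subtracts off the explicit combination $\sum_{j,k}\gamma_{j,k}\pi\Theta_{j,k}$ with coefficients read off from the path-integral functionals $b_{l}$ and $\beta_{l,0}$, and then shows the remainder $\widehat H$ is a Gradgrad-potential $\Gg u$ with $u\in H^{2}(\om)$ — this uses Corollary \ref{cor:bih2} twice (first to write $\widehat H=\Grad v$, then $v=\grad u$), together with Assumption \ref{ass:curvesandsurfaces} (A1) to kill all closed-curve integrals, and finally elliptic regularity plus the Sobolev-regularity criterion to upgrade $v,u$ to Sobolev functions. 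Lemma \ref{dimNTbih2:lem2} gives linear independence directly from \eqref{pathint2-bih2}: evaluating $\beta_{l,\ell}$ on $\sum\gamma_{j,k}\pi\Theta_{j,k}=0$ yields $\gamma_{l,\ell}=0$ first for $\ell\in\{1,2,3\}$ and then $\gamma_{l,0}=0$.

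Since all of this machinery is already in place in the excerpt, there is no genuine obstacle remaining in the final theorem itself — it is purely bookkeeping. Had one needed to prove Lemma \ref{dimNTbih2:lem1} from scratch, the main difficulty would be the correct choice of the pairing of Poincaré maps (Lemma \ref{GgHSlemma}) with the functionals $\beta_{l,\ell}$ so that the coefficient extraction is consistent across all $k\in\{0,\dots,3\}$; the delicate point is that $\beta_{l,0}$ on $\Theta_{j,k}$ picks up the mixed term $(1-\delta_{0,k})\delta_{\ell,0}\delta_{l,j}(x_{l,1})_{k}$, which must be compensated by the definition $\gamma_{l,0}:=\beta_{l,0}(H)-\sum_{k=1}^{3}\beta_{l,k}(H)(x_{l,1})_{k}$. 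But again, this has all been carried out already, so the proof of Theorem \ref{dimNTbih2} is the one-line deduction above.
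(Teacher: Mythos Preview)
Your proof is correct and follows exactly the same approach as the paper: the paper's proof consists of the single line ``Use Lemma \ref{dimNTbih2:lem1} and Lemma \ref{dimNTbih2:lem2}.'' Your additional commentary accurately summarises the content of those lemmas but adds nothing beyond what the paper already established.
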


\begin{proof}
Use Lemma \ref{dimNTbih2:lem1} and Lemma \ref{dimNTbih2:lem2}.
\end{proof}

\subsection{\except{toc}{Neumann Tensor Fields of the }Elasticity Complex}
\label{app:sec:NeuTFela}

The concluding example for our general construction principle is the elasticity complex. 
Again, we require some preparations regarding integration along curves 
and the operators involved in the elasticity complex. 
We need the formulas providing the interaction of matrix and vector analytic operations 
as outlined in Lemma \ref{PZformulalem} (in particular, note we defined $\spn$ there.)

In Lemma \ref{symGradHSlemma} below for a tensor field $S$ 
and a parametrisation $\varphi\in C^{1,3}_{\pw}\big([0,1]\big)$ of a curve $\zeta$ in $\om$ we define
$$\int_{\zeta}
\spn\big((\Curl S)^{\top}(y)\intd\lambda_{y}\big)(x-y)
:=\int_{0}^{1}\spn\Big((\Curl S)^{\top}\big(\varphi(t)\big)\varphi'(t)\Big)
\big(x-\varphi(t)\big)\intd t.$$

\begin{lemma}
\label{symGradHSlemma}
Let $x,x_{0}\in\om$ and let $\zeta_{x_{0},x}\subset\om$ be a piecewise $C^{1}$-curve 
connecting $x_{0}$ to $x$.
\begin{itemize}
\item[\bf(i)]
Let $v\in\Citom$. Then $v$ and its rotation $\curl v$ can be represented by
\begin{align*}
&\qquad v(x)-v(x_{0})-\frac{1}{2}\big(\curl v(x_{0})\big)\times(x-x_{0})\\
&=\int_{\zeta_{x_{0},x}}\symGrad v\intd\lambda
+\int_{\zeta_{x_{0},x}}\int_{\zeta_{x_{0},y}}
\spn\big((\Curl\symGrad v)^{\top}\intd\lambda\big)\intd\lambda_{y}
\end{align*}
and
$$\curl v(x)-\curl v(x_{0})
=2\int_{\zeta_{x_{0},x}}(\Curl\symGrad v)^{\top}\intd\lambda.$$
\item[\bf(ii)]
Let $S\in\Cittom$. Then
$$\int_{\zeta_{x_{0},x}}\int_{\zeta_{x_{0},y}}
\spn\big((\Curl S)^{\top}\intd\lambda\big)\intd\lambda_{y}
=\int_{\zeta_{x_{0},x}}
\spn\big((\Curl S)^{\top}(y)\intd\lambda_{y}\big)(x-y).$$
\end{itemize}
\end{lemma}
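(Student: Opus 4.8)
The plan is to follow the same strategy as in the proofs of Lemma~\ref{devGradHSlemma} and Lemma~\ref{GgHSlemma}, now exploiting the matrix-analytic identities for $\symGrad$ recorded in Lemma~\ref{PZformulalem}. The two identities that do the work are $2\skw\Grad v=\spn\curl v$ and $2\Curl\symGrad v=(\Grad\curl v)^{\top}$, i.e.\ $\Grad\curl v=2(\Curl\symGrad v)^{\top}$.

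First I would prove the second formula in (i): applying the componentwise fundamental theorem of calculus to the vector field $\curl v$ along a curve (so that $\int_{\zeta_{x_{0},x}}\Grad w\intd\lambda=w(x)-w(x_{0})$ row-wise) and inserting $\Grad\curl v=2(\Curl\symGrad v)^{\top}$ yields $\curl v(x)-\curl v(x_{0})=2\int_{\zeta_{x_{0},x}}(\Curl\symGrad v)^{\top}\intd\lambda$ at once. For the first formula in (i), decompose $\Grad v=\symGrad v+\skw\Grad v=\symGrad v+\frac{1}{2}\spn\curl v$ and integrate along $\zeta_{x_{0},x}$ to get
\[
v(x)-v(x_{0})=\int_{\zeta_{x_{0},x}}\symGrad v\intd\lambda+\frac{1}{2}\int_{\zeta_{x_{0},x}}\spn\big(\curl v(y)\big)\intd\lambda_{y}.
\]
Then substitute $\curl v(y)=\curl v(x_{0})+\big(\curl v(y)-\curl v(x_{0})\big)$ into the last integral, use the representation of $\curl v(y)-\curl v(x_{0})$ just obtained together with the linearity of the fixed map $\spn$ (which lets it pass through the inner curve integral), and treat the constant-vector term as in the footnote to Lemma~\ref{devGradHSlemma}: for a constant matrix $A$ one has $\int_{\zeta_{x_{0},x}}A\intd\lambda=A(x-x_{0})$, so with $A=\spn\curl v(x_{0})$ and $(\spn a)b=a\times b$ this term equals $\frac{1}{2}\curl v(x_{0})\times(x-x_{0})$. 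Collecting and rearranging gives the claimed identity.

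Part (ii) is a pure interchange-of-integration-order statement, exactly parallel to Lemma~\ref{devGradHSlemma}(ii) and Lemma~\ref{GgHSlemma}(ii). Choosing a parametrisation $\varphi\in C^{1,3}_{\pw}\big([0,1]\big)$ of $\zeta_{x_{0},x}$, the left-hand side equals $\int_{0}^{1}\big(\int_{0}^{s}\spn\big((\Curl S)^{\top}(\varphi(t))\varphi'(t)\big)\intd t\big)\varphi'(s)\intd s$; Fubini over the triangle $\{0\le t\le s\le 1\}$ together with $\int_{t}^{1}\varphi'(s)\intd s=x-\varphi(t)$ converts this into $\int_{0}^{1}\spn\big((\Curl S)^{\top}(\varphi(t))\varphi'(t)\big)(x-\varphi(t))\intd t$, which is by definition the right-hand side.

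I do not anticipate a real obstacle; the only delicate points are keeping the row-wise integral notation $T\intd\lambda$ straight, justifying that the (constant) linear map $\spn$ commutes with the inner curve integral, and accounting for the affine correction term $\frac{1}{2}\curl v(x_{0})\times(x-x_{0})$ forced by the fact that $\curl v$ is not constant --- entirely analogous to the lower-order correction terms appearing in Lemma~\ref{devGradHSlemma}(i) and Lemma~\ref{GgHSlemma}(i). All computations are carried out for smooth fields, the passage to distributions being the usual density argument.
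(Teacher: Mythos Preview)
Your proposal is correct and follows essentially the same approach as the paper: the same identities $2\skw\Grad v=\spn\curl v$ and $2\Curl\symGrad v=(\Grad\curl v)^{\top}$ from Lemma~\ref{PZformulalem}, the same decomposition $\Grad v=\symGrad v+\tfrac{1}{2}\spn\curl v$ followed by substitution of the curl representation, and the same Fubini-over-the-triangle argument for (ii). The only cosmetic difference is that the paper writes the constant-term computation via a parametrisation before noting the $\Grad y$ alternative in a footnote, whereas you invoke the constant-matrix identity directly.
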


\begin{proof}
For (i), let
$$S:=\symGrad v=\Grad v-\skw\Grad v$$
and observe $2\Curl S=-2\Curl\skw\Grad v=(\Grad\curl v)^{\top}$ 
by Lemma \ref{PZformulalem}. Thus 
\begin{align*}
v_{k}(x)-v_{k}(x_{0})
&=\int_{\zeta_{x_{0},x}}\scp{\grad v_{k}}{\intd\lambda},\qquad
k\in\{1,2,3\},\\
v(x)-v(x_{0})
&=\int_{\zeta_{x_{0},x}}\Grad v\intd\lambda,\\
\curl v(x)-\curl v(x_{0})
&=\int_{\zeta_{x_{0},x}}\Grad\curl v\intd\lambda
=2\int_{\zeta_{x_{0},x}}(\Curl S)^{\top}\intd\lambda.
\end{align*}
Therefore, by Lemma \ref{PZformulalem}
\begin{align*}
&\qquad v(x)-v(x_{0})\\
&=\int_{\zeta_{x_{0},x}}\Grad v\intd\lambda
=\int_{\zeta_{x_{0},x}}\symGrad v\intd\lambda
+\int_{\zeta_{x_{0},x}}\skw\Grad v\intd\lambda\\
&=\int_{\zeta_{x_{0},x}}\symGrad v\intd\lambda
+\frac{1}{2}\int_{\zeta_{x_{0},x}}\spn\curl v(y)\intd\lambda_{y}\\
&=\int_{\zeta_{x_{0},x}}S\intd\lambda
+\frac{1}{2}\int_{\zeta_{x_{0},x}}\spn\curl v(x_{0})\intd\lambda_{y}
+\int_{\zeta_{x_{0},x}}\spn\big(\int_{\zeta_{x_{0},y}}(\Curl S)^{\top}\intd\lambda\big)\intd\lambda_{y}\\
&=\int_{\zeta_{x_{0},x}}S\intd\lambda
+\frac{1}{2}\int_{\zeta_{x_{0},x}}\spn\curl v(x_{0})\intd\lambda_{y}+\int_{\zeta_{x_{0},x}}\int_{\zeta_{x_{0},y}}
\spn\big((\Curl S)^{\top}\intd\lambda\big)\intd\lambda_{y}.
\end{align*}
Moreover, with $\varphi\in C^{1,3}_{\pw}\big([0,1]\big)$ parametrising $\zeta_{x_{0},x}$
\footnote{Alternatively, we can compute with $\id=\Grad y$
\begin{align*}
\int_{\zeta_{x_{0},x}}\underbrace{\spn\curl v(x_{0})}_{=(\spn\curl v(x_{0}))\id}\intd\lambda_{y}
&=\spn\curl v(x_{0})\int_{\zeta_{x_{0},x}}\Grad y\intd\lambda_{y}
=\big(\spn\curl v(x_{0})\big)(x-x_{0}).
\end{align*}}
\begin{align*}
\int_{\zeta_{x_{0},x}}\spn\curl v(x_{0})\intd\lambda_{y}
&=\int_{0}^{1}\big(\spn\curl v(x_{0})\big)\varphi'(s)\intd s \\
&=\big(\spn\curl v(x_{0})\big)(x-x_{0})
=\big(\curl v(x_{0})\big)\times(x-x_{0}).
\end{align*}
For (ii), we compute
\begin{align*}
\int_{\zeta_{x_{0},x}}\int_{\zeta_{x_{0},y}}
\spn\big((\Curl S)^{\top}\intd\lambda\big)\intd\lambda_{y}
&=\int_{0}^{1}\Big(\int_{\zeta_{x_{0},\varphi(s)}}
\spn\big((\Curl S)^{\top}\intd\lambda\big)\Big)\varphi'(s)\intd s\\
&=\int_{0}^{1}\Big(\int_{0}^{s}
\spn\Big((\Curl S)^{\top}\big(\varphi(t)\big)\varphi'(t)\Big)\intd t\Big)\varphi'(s)\intd s\\
&=\int_{0}^{1}\spn\Big((\Curl S)^{\top}\big(\varphi(t)\big)\varphi'(t)\Big)
\int_{t}^{1}\varphi'(s)\intd s\intd t\\
&=\int_{0}^{1}\spn\Big((\Curl S)^{\top}\big(\varphi(t)\big)\varphi'(t)\Big)\big(x-\varphi(t)\big)\intd t\\
&=\int_{\zeta_{x_{0},x}}
\spn\big((\Curl S)^{\top}(y)\intd\lambda_{y}\big)(x-y)
\end{align*}
with $\varphi$ from above.
\end{proof}

\begin{proposition}
\label{prop:ela}
Let $x_{0}\in\om_{0}\in\cc(\om)$ and let $S,T\in\Citt(\om_{0})$.
\begin{itemize}
\item[\bf(a)]
The following conditions are equivalent:
\begin{itemize}
\item[\bf(i)]
For all $\zeta\subset\om_{0}$ closed, piecewise $C^1$-curves
$$\int_{\zeta}(\Curl S)^{\top}\intd\lambda=0.$$
\item[\bf(ii)]
For all $\zeta_{x_{0},x},\widetilde{\zeta}_{x_{0},x}\subset\om_{0}$ 
piecewise $C^1$-curves connecting $x_{0}$ with $x$
$$\int_{\zeta_{x_{0},x}}(\Curl S)^{\top}\intd\lambda
=\int_{\widetilde{\zeta}_{x_{0},x}}(\Curl S)^{\top}\intd\lambda.$$
\item[\bf(iii)]
There exists $w\in\Cit(\om_{0})$ such that $\Grad w=(\Curl S)^{\top}$.
\end{itemize} 
In the case one of the above conditions is true
\begin{equation}
\label{eq:chwela}
x\mapsto w(x)=\int_{\zeta_{x_{0},x}}(\Curl S)^{\top}\intd\lambda
\end{equation}
for some $\zeta_{x_{0},x}\subset\om_{0}$ piecewise $C^1$-curve 
connecting $x_{0}$ with $x$ is a (well-defined) possible choice for $w$ in (iii).
\item[\bf(b)]
The following conditions are equivalent:
\begin{itemize}
\item[\bf(i)]
For all $\zeta\subset\om_{0}$ closed, piecewise $C^1$-curves
$$\int_{\zeta}T\intd\lambda=0.$$
\item[\bf(ii)]
For all $\zeta_{x_{0},x},\widetilde{\zeta}_{x_{0},x}\subset\om_{0}$ 
piecewise $C^1$-curves connecting $x_{0}$ with $x$
$$\int_{\zeta_{x_{0},x}}T\intd\lambda
=\int_{\widetilde{\zeta}_{x_{0},x}}T\intd\lambda.$$
\item[\bf(iii)]
There exists $v\in\Cit(\om_{0})$ such that $\Grad v=T$.
\end{itemize}
 In the case one of the above conditions is true
\begin{equation}
\label{eq:chvela}
x\mapsto v(x)=\int_{\zeta_{x_{0},x}}T\intd\lambda
\end{equation}
for some $\zeta_{x_{0},x}\subset\om_{0}$ piecewise $C^1$-curve 
connecting $x_{0}$ with $x$ is a (well-defined) possible choice for $v$ in (iii). 
\item[\bf(c)]
Let $T:=S+\spn w$ with $w\in\Cit(\om_{0})$ and $\Grad w=(\Curl S)^{\top}$ as in (a), (iii).
Moreover, let $v\in\Cit(\om_{0})$ with $\Grad v=T$ as in (b), (iii). Then 
\begin{itemize}
\item[\bf(i)]
$\skw S=0$,
\item[\bf(ii)]
$\symGrad v=S$
\end{itemize}
are equivalent. In either case, we have 
\begin{equation}\label{eq:remformela}
\CCtS S=0,\qquad
\Grad w=\frac{1}{2}\Grad\curl v.
\end{equation}
\end{itemize}
\end{proposition}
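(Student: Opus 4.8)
The plan is to prove Proposition \ref{prop:ela} by following the same template as Proposition \ref{prop:HSdevGrad} and Proposition \ref{prop:bih2}: parts (a) and (b) are immediate from the fundamental theorem of calculus, and part (c) is a computation using Schwarz's lemma, the complex property, and the matrix-analytic identities of Lemma \ref{PZformulalem}. First I would dispatch (a): the equivalence of (i) and (ii) is the standard observation that path-independence of $\int(\Curl S)^{\top}\intd\lambda$ over all closed curves in $\om_{0}$ is the same as path-independence between fixed endpoints; assuming (ii), the row-wise definition \eqref{eq:chwela} gives a well-defined $w$, and the fundamental theorem of calculus shows $\Grad w=(\Curl S)^{\top}$ componentwise, whence $w\in\Cit(\om_{0})$ and (iii) holds; conversely (iii) implies (ii) again by the fundamental theorem of calculus. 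Part (b) is verbatim the same argument applied to $T$ in place of $(\Curl S)^{\top}$, yielding $v$ from \eqref{eq:chvela}.

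For part (c), the key point is the identity $\dev\Grad$ versus $\skw$ bookkeeping: with $T:=S+\spn w$ I compute $\skw\Grad v=\skw T=\skw S+\skw\spn w=\skw S+\spn w$, since $\spn w$ is already skew-symmetric. On the other hand $\Grad v=T$ gives $\symGrad v=\sym T=\sym S$ and $\skw\Grad v=\skw T$. Now $\skw S=0$ forces $\sym S=S$, hence $\symGrad v=S$, which is (ii); conversely $\symGrad v=S$ together with $\symGrad v=\sym S$ forces $\sym S=S$, i.e. $\skw S=0$, which is (i). (One should check that the $\spn w$ term is consistent: from $\Grad v=T=S+\spn w$ and $2\skw\Grad v=\spn\curl v$ by Lemma \ref{PZformulalem}, one reads off $\spn\curl v=2\skw S+2\spn w$, so when $\skw S=0$ we get $\curl v=2w$, matching the second formula in \eqref{eq:remformela} up to the factor, since then $\Grad w=\tfrac12\Grad\curl v$.)

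Finally, assuming (i) or (ii), I would verify \eqref{eq:remformela}. For the first identity, by the complex property of the elasticity complex (i.e. $\rCCtS\rsymGrad\subset 0$, extended to smooth fields, which is exactly $\CCt\symGrad=0$ on $\Cit$) we get $\CCtS S=\CCtS\symGrad v=\Curl(\Curl\symGrad v)^{\top}=0$. For the second identity, I combine $\Grad w=(\Curl S)^{\top}$ from (a),(iii) with $S=\symGrad v$ and Lemma \ref{PZformulalem}, which gives $2\Curl\symGrad v=(\Grad\curl v)^{\top}$, hence $(\Curl S)^{\top}=(\Curl\symGrad v)^{\top}=\tfrac12\Grad\curl v$, i.e. $\Grad w=\tfrac12\Grad\curl v$, as claimed.

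The routine parts are the fundamental-theorem-of-calculus arguments in (a) and (b), which are literally copies of the corresponding parts of the two earlier propositions. The only place demanding care is the consistency check in (c): one must make sure the correction term $\spn w$ is the \emph{right} skew-symmetric correction, i.e. that $(\Curl S)^{\top}$ is exactly what integrates up (via $\spn$) to the skew part of $\Grad v$; this is where Lemma \ref{symGradHSlemma}(i) and the identity $2\Curl\skw\Grad v=-(\Grad\curl v)^{\top}$ from Lemma \ref{PZformulalem} are used to pin down the constants. I expect this constant-tracking — the interplay of the factors $2$, the transposes, and the $\spn$/$\spn^{-1}$ conventions — to be the main (though still elementary) obstacle; everything else is formal.
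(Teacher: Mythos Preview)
Your proposal is correct and follows essentially the same approach as the paper: parts (a) and (b) are dispatched by the fundamental theorem of calculus exactly as in Propositions \ref{prop:HSdevGrad} and \ref{prop:bih2}, and for (c) the paper likewise computes $\symGrad v=\sym T=\sym S$ to obtain the equivalence of (i) and (ii), then invokes the complex property for $\CCtS S=0$ and the identity $2\Curl\symGrad v=(\Grad\curl v)^{\top}$ from Lemma \ref{PZformulalem} for $\Grad w=\tfrac12\Grad\curl v$. Your additional consistency check deriving $\curl v=2w$ from $2\skw\Grad v=\spn\curl v$ is a nice sanity check but not needed for the argument.
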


\begin{proof}
The proofs of (a) and (b) are easy (fundamental theorem of calculus)
and follow in a similar way to Propositions \ref{prop:bih2}. 
For (c), we compute $\symGrad v=\sym T=\sym S$.
Hence (i) and (ii) are equivalent.
Finally, if (i) or (ii) is true, then by the complex property
$$\CCt S=\CCt\symGrad v=0,$$
and we conclude
$\Grad w=(\Curl\sym\Grad v)^{\top}=\frac{1}{2}\Grad\curl v$
by Lemma \ref{PZformulalem}.
\end{proof}

The respective result for the whole of $\om$ reads as follows. 

\begin{corollary}
\label{cor:ela}
Let $S,T\in\Cittom$.
\begin{itemize}
\item[\bf(a)]
The following conditions are equivalent:
\begin{itemize}
\item[\bf(i)]
For all $\zeta\subset\om$ closed, piecewise $C^1$-curves
$\int_{\zeta}(\Curl S)^{\top}\intd\lambda=0$.
\item[\bf(ii)]
There exists $w\in\Citom$ such that $\Grad w=(\Curl S)^{\top}$.
\end{itemize}
\item[\bf(b)]
The following conditions are equivalent:
\begin{itemize}
\item[\bf(i)]
For all $\zeta\subset\om$ closed, piecewise $C^1$-curves
$\int_{\zeta}T\intd\lambda=0$.
\item[\bf(ii)]
There exists $v\in\Cit(\om)$ such that $\Grad v=T$.
\end{itemize}
\item[\bf(c)]
Let $T=S+\spn w$ with $w\in\Cit(\om)$ and $\Grad w=(\Curl S)^{\top}$ as in (a), (ii).
Moreover, let $v\in\Cit(\om)$ with $\Grad v=T$ as in (b), (ii). Then 
$\skw S=0$ in $\om$ if and only if $\symGrad v=S$ in $\om$.
\end{itemize}
\end{corollary}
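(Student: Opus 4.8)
The plan is to reduce Corollary \ref{cor:ela} entirely to Proposition \ref{prop:ela} by arguing on the connected components of $\om$, mirroring the passage from Proposition \ref{prop:HSdevGrad} to Corollary \ref{cor:HSdevGrad} and from Proposition \ref{prop:bih2} to Corollary \ref{cor:bih2}. The one elementary observation I would record first is that every closed, piecewise $C^{1}$-curve $\zeta$ in $\om$ lies in exactly one component $\om_{0}\in\cc(\om)$; consequently condition (i) in part (a) (and likewise in part (b)) holds on $\om$ precisely when the corresponding integral condition, with $\om$ replaced by $\om_{0}$, holds for every $\om_{0}\in\cc(\om)$.

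For part (a) I would proceed as follows. Assuming (i), fix $\om_{0}\in\cc(\om)$; then (i) holds on $\om_{0}$, so Proposition \ref{prop:ela}(a), applied on $\om_{0}$ with an arbitrary base point $x_{0}\in\om_{0}$ (and using the curve-integral formula \eqref{eq:chwela}), provides $w_{0}\in\Cit(\om_{0})$ with $\Grad w_{0}=(\Curl S)^{\top}$ on $\om_{0}$. Since the components are open and pairwise disjoint, the field defined by $w|_{\om_{0}}:=w_{0}$ belongs to $\Citom$ and satisfies $\Grad w=(\Curl S)^{\top}$ on all of $\om$, which is (ii); the reverse implication (ii)$\Rightarrow$(i) is immediate (and is already part of Proposition \ref{prop:ela}(a)). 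Part (b) I would treat in exactly the same way, now calling on Proposition \ref{prop:ela}(b) and \eqref{eq:chvela} on each component.

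For part (c) the argument is again purely componentwise. Given $w\in\Cit(\om)$ with $\Grad w=(\Curl S)^{\top}$, $T=S+\spn w$, and $v\in\Cit(\om)$ with $\Grad v=T$ as in the hypotheses, I would restrict to a fixed $\om_{0}\in\cc(\om)$: the restrictions $w|_{\om_{0}}$ and $v|_{\om_{0}}$ then satisfy the hypotheses of Proposition \ref{prop:ela}(c) on $\om_{0}$, since the defining identities $\Grad w=(\Curl S)^{\top}$, $T=S+\spn w$ and $\Grad v=T$ all restrict componentwise. Hence $\skw S=0$ on $\om_{0}$ if and only if $\symGrad v=S$ on $\om_{0}$, and since each of the two identities $\skw S=0$ and $\symGrad v=S$ holds on $\om$ exactly when it holds on every $\om_{0}\in\cc(\om)$, taking the conjunction over all components yields the desired equivalence.

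I do not expect any genuine difficulty here; the only thing to be slightly careful about is the bookkeeping with the components of $\om$ — in particular, checking that the locally constructed potentials $w_{0}$ and $v_{0}$ glue together into globally smooth fields, which is automatic because the components are open.
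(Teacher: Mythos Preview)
Your proposal is correct and follows exactly the approach the paper intends: the corollary is stated immediately after Proposition \ref{prop:ela} with the remark that it is ``the respective result for the whole of $\om$,'' and the paper treats it as a routine componentwise consequence (just as Corollaries \ref{cor:HSdevGrad} and \ref{cor:bih2} were obtained from their respective propositions), without giving a separate proof.
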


\begin{remark}
\label{curlformula1-ela}
Related to Proposition \ref{prop:ela} and Corollary \ref{cor:ela} we note with Lemma \ref{PZformulalem} the following:
\begin{itemize}
\item[\bf(i)]
For $S\in\Citt_{\Sb}(\om)$
and $T:=S+\spn w$
with $\Grad w=(\Curl S)^{\top}$ it holds
$$\Curl T
=\Curl S+(\dive w)\id-((\Curl S)^{\top})^{\top}
=\tr\Grad w
=\tr\Curl S
=\tr\Curl\skw S
=0.$$
\item[\bf(ii)]
If $\om_{0}$ is simply connected, 
Proposition \ref{prop:ela} (a), (iii)
and (b), (iii) are equivalent to $\Curl(\Curl S)^{\top}=0$ and $\Curl T=0$, respectively.
\end{itemize}
\end{remark}

Next, we provide the construction of the basis tensor fields for the Neumann fields for the elasticity complex.
Let $j\in\{1,\ldots,p\}$. 
From the beginning of Section \ref{app:sec:NeuF} recall that $\theta_{j}$ 
is constant on each connected component $\Upsilon_{j,0}$ and $\Upsilon_{j,1}$
of $\Upsilon_{j}\setminus F_{j}$
and vanishes outside of $\widetilde{\Upsilon}_{j,1}$.
Let $\widehat{r}_{k}$ be the rigid motions (Nedelec fields) 
from Section \ref{app:sec:DirTFela}
given by $\widehat{r}_{k}(x):=e^{k}\times x=\spn(e^{k})\,x$
and $\widehat{r}_{k+3}(x):=e^{k}$ for $k\in\{1,2,3\}$.
We define the vector fields $\theta_{j,k}:=\theta_{j}\widehat{r}_{k}$ and note $\symGrad\theta_{j,k}=0$
in $\dot\bigcup_{l\in\{1,\ldots,p\}}\Upsilon_{l}\setminus F_{l}$ for all $k\in\{1,2,3\}$.
Thus $\symGrad\theta_{j,k}$ can be continuously extended by zero 
to $\Theta_{j,k}\in\Cittom\cap\LtttSom$ with $\Theta_{j,k}=0$ 
in all the neighbourhoods $\Upsilon_{l}$ of all surfaces
$F_{l}$, $l\in\{1,\ldots,p\}$.

\begin{lemma}
\label{dimNTela:lem0}
Let Assumption \ref{ass:curvesandsurfaces} be satisfied. Then
$\Theta_{j,k}\in\ker(\CCtS,\om)$.
\end{lemma}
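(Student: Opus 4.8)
The plan is to follow verbatim the pattern of Lemma \ref{dimNTbih1:lem0} and Lemma \ref{dimNTbih2:lem0}. Since $\CCtS=(\CCt_{c})^{*}$ with $\CCt_{c}\colon\CicttSom\to\LtttSom$, it suffices to show
$$\scp{\Theta_{j,k}}{\CCt\Phi}_{\LtttSom}=0\qquad\text{for every }\Phi\in\CicttSom;$$
this one identity simultaneously gives $\Theta_{j,k}\in\dom(\CCtS)$ and $\CCtS\Theta_{j,k}=0$, i.e.\ $\Theta_{j,k}\in\ker(\CCtS,\om)$.

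First I would record the support properties coming from the construction at the beginning of Section \ref{app:sec:NeuF} (see Remark \ref{rem:constrtheta1}): $\theta_{j}$ is constant on each of $\Upsilon_{j,0},\Upsilon_{j,1}$ and vanishes on $\Upsilon_{l}\setminus F_{l}$ for $l\neq j$, while $\widehat{r}_{k}$ is a rigid motion, so $\symGrad\widehat{r}_{k}=0$. Hence $\symGrad\theta_{j,k}=0$ on $\dot\bigcup_{l\in\{1,\ldots,p\}}\Upsilon_{l}\setminus F_{l}$, which is exactly why $\Theta_{j,k}$ extends continuously by zero and, crucially, why $\supp\Theta_{j,k}$ stays away from all the cutting surfaces $F_{l}$; in particular $\supp\Theta_{j,k}\subset\widetilde{\Upsilon}_{j}\setminus\Upsilon_{j}\subset\om_{F}$ and $\Theta_{j,k}|_{\om_{F}}=\symGrad\theta_{j,k}$ with $\theta_{j,k}\in\Ci(\om_{F})$.

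Given $\Phi\in\CicttSom$, I would then choose, exactly as in the two cited lemmas, a cut-off $\varphi\in\Cic(\om_{F})$ with $\varphi=1$ on the compact set $\supp\Theta_{j,k}\cap\supp\Phi$, so that $\varphi\Phi\in\CicttS(\om_{F})$ is compactly supported in $\om_{F}$ and
$$\scp{\Theta_{j,k}}{\CCt\Phi}_{\LtttSom}
=\bscp{\symGrad\theta_{j,k}}{\CCt(\varphi\Phi)}_{\LtttS(\om_{F})}
=\bscp{\CCt\symGrad\theta_{j,k}}{\varphi\Phi}_{\LtttS(\om_{F})}=0.$$
Here the middle equality is integration by parts on $\om_{F}$: writing $\CCt=\Curl(\Curl\,\cdot\,)^{\top}$, two successive $\Curl$-integrations by parts produce no boundary terms since $\varphi\Phi$ has compact support in $\om_{F}$ (equivalently, $\CCt$ is formally self-adjoint on smooth symmetric tensor fields), and the final equality is the complex property $\CCt\circ\symGrad=0$, which follows from Lemma \ref{PZformulalem} (namely $(\Curl\symGrad\theta_{j,k})^{\top}=\tfrac12\Grad\curl\theta_{j,k}$ together with $\Curl\Grad=0$).

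The computation itself is essentially the same formal integration-by-parts argument already carried out for the first and second \pz complexes, so I expect no real obstacle here; the only point requiring care is the geometric bookkeeping ensuring that $\varphi$ can indeed be taken compactly supported inside $\om_{F}$ (so that no boundary contribution survives), which is precisely guaranteed by the placement of $\supp\Theta_{j,k}$ away from the surfaces $F_{l}$ established in the first step.
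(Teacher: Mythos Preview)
Your proof is correct and follows essentially the same route as the paper: pick $\Phi\in\CicttSom$, use the support inclusion $\supp\Theta_{j,k}\subset\widetilde{\Upsilon}_{j}\setminus\Upsilon_{j}\subset\om_{F}$ to insert a cut-off $\varphi\in\Cic(\om_{F})$, and reduce to an integral over $\om_{F}$ where $\Theta_{j,k}=\symGrad\theta_{j,k}$ with $\theta_{j,k}$ smooth. The only cosmetic difference is in the final vanishing step: the paper replaces $\symGrad\theta_{j,k}$ by $\Grad\theta_{j,k}$ (using that $\CCtS(\varphi\Phi)$ is symmetric) and then uses a single $\Grad$/$\Curl$ integration by parts, whereas you move the full $\CCt$ across via its formal self-adjointness and invoke $\CCt\,\symGrad=0$; both are equally valid one-line conclusions.
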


\begin{proof}
Let $\Phi\in\CicttSom$. 
As $\supp\Theta_{j,k}\subset\widetilde{\Upsilon}_{j}\setminus\Upsilon_{j}$
we can pick another cut-off function $\varphi\in\Cic(\om_{F})$ 
with $\varphi|_{\supp\Theta_{j,k}\cap\supp\Phi}=1$. Then
\begin{align*}
&\qquad\scp{\Theta_{j,k}}{\CCtS\Phi}_{\LtttSom}
=\scp{\Theta_{j,k}}{\CCtS\Phi}_{\LtttS(\supp\Theta_{j,k}\cap\supp\Phi)}\\
&=\bscp{\symGrad\theta_{j,k}}{\CCtS(\varphi\Phi)}_{\LtttS(\om_{F})}
=\bscp{\Grad\theta_{j,k}}{\CCtS(\varphi\Phi)}_{\LtttS(\om_{F})}\\
&=\Bscp{\Grad\theta_{j,k}}{\Curl\big(\Curl(\varphi\Phi)\big)^{\top}}_{\Lttt(\om_{F})}
=0
\end{align*}
as $\varphi\Phi,\CCtS(\varphi\Phi)\in\CicttS(\om_{F})$
by Lemma \ref{PZformulalem}.
\end{proof}

Next, we construct functionals similar to the previous sections. 
Here, however, due to the complex structure, 
we need six times as many (instead of four) as for the de Rham complex. 
For starters, note that for $l,j\in\{1,\dots,p\}$ and $k\in\{1,\dots,6\}$ 
and for the curves $\zeta_{x_{l,0},x_{l,1}}\subset\zeta_{l}$
with the chosen starting points $x_{l,0}\in\Upsilon_{l,0}$ 
and respective endpoints $x_{l,1}\in\Upsilon_{l,1}$
we can compute\footnote{Note that 
$\curl\widehat{r}_{k}=2e^{k}$
for $k\in\{1,2,3\}$, since, e.g.,
\begin{align*}
\curl\widehat{r}_{1}(x)
&=\curl\,(e^{1}\times x)
=\curl\,(x_{2}\,e^{1}\times e^{2}+x_{3}\,e^{1}\times e^{3})
=\curl\,(x_{2}e^{3}-x_{3}e^{2})\\
&=\grad\,(x_{2})\times e^{3}-\grad\,(x_{3})\times e^{2}
=e^{2}\times e^{3}-e^{3}\times e^{2}
=2e^{1}.
\end{align*}} 
by Lemma \ref{symGradHSlemma}
\begin{align*}
\R^{3}\ni a_{l}(\Theta_{j,k})
&:=\int_{\zeta_{l}}
(\Curl\Theta_{j,k})^{\top}\intd\lambda
=\int_{\zeta_{x_{l,0},x_{l,1}}}(\Curl\symGrad\theta_{j,k})^{\top}\intd\lambda\\
&\phantom{:}=\frac{1}{2}\curl\theta_{j,k}(x_{l,1})-\frac{1}{2}\curl\theta_{j,k}(x_{l,0})
=\frac{1}{2}\curl\theta_{j,k}(x_{l,1})\\
&\phantom{:}=\frac{1}{2}\delta_{l,j}\curl\widehat{r}_{k}(x_{l,1})
=\delta_{l,j}\begin{cases}
e^{k},&\text{if }k\in\{1,2,3\},\\
0,&\text{if }k\in\{4,5,6\},
\end{cases}
\intertext{and}
\R^{3}\ni b_{l}(\Theta_{j,k})
&:=\int_{\zeta_{l}}\Theta_{j,k}\intd\lambda
+\int_{\zeta_{l}}\spn\big((\Curl\Theta_{j,k})^{\top}(y)\intd\lambda_{y}\big)(x_{l,1}-y)\\
&\phantom{:}=\int_{\zeta_{x_{l,0},x_{l,1}}}\symGrad\theta_{j,k}\intd\lambda\\
&\qquad\qquad+\int_{\zeta_{x_{l,0},x_{l,1}}}\spn\big((\Curl\symGrad\theta_{j,k})^{\top}(y)\intd\lambda_{y}\big)(x_{l,1}-y)\\
&\phantom{:}=\int_{\zeta_{x_{l,0},x_{l,1}}}\Big(\symGrad\theta_{j,k}(y)\\
&\qquad\qquad+\int_{\zeta_{x_{l,0},y}}
\spn\big((\Curl\symGrad\theta_{j,k})^{\top}\intd\lambda\big)\Big)\intd\lambda_{y}\\
&\phantom{:}=\theta_{j,k}(x_{l,1})-\theta_{j,k}(x_{l,0})-\frac{1}{2}\curl\theta_{j,k}(x_{l,0})\times(x_{l,1}-x_{l,0})
=\theta_{j,k}(x_{l,1})\\
&\phantom{:}=\delta_{l,j}\widehat{r}_{k}(x_{l,1})
=\delta_{l,j}\begin{cases}
e^{k}\times x_{l,1},&\text{if }k\in\{1,2,3\},\\
e^{k-3},&\text{if }k\in\{4,5,6\}.
\end{cases}
\end{align*}
Thus, for $l\in\{1,\dots,p\}$ and $\ell\in\{1,\dots,6\}$ we have functionals $\beta_{l,\ell}$ given by
$$\beta_{l,\ell}(\Theta_{j,k}):=
\begin{cases}
\bscp{a_{l}(\Theta_{j,k})}{e^{\ell}},&\text{if }\ell\in\{1,2,3\},\\
\bscp{b_{l}(\Theta_{j,k})}{e^{\ell-3}},&\text{if }\ell\in\{4,5,6\},
\end{cases}\qquad
j\in\{1,\dots,p\},\quad k\in\{1,\dots,6\}.$$
Then for $l,j\in\{1,\dots,p\}$ and for $\ell\in\{1,2,3\}$
\begin{align*}
\beta_{l,\ell}(\Theta_{j,k})
&=\bscp{a_{l}(\Theta_{j,k})}{e^{\ell}}
=\delta_{l,j}
\begin{cases}
\scp{e^{k}}{e^{\ell}}=\delta_{\ell,k},&\text{if }k\in\{1,2,3\},\\
\scp{0}{e^{\ell}}=0,&\text{if }k\in\{4,5,6\},
\end{cases}
\intertext{i.e.,}
\beta_{l,\ell}(\Theta_{j,k})
&=\delta_{l,j}\delta_{\ell,k},\qquad
k\in\{1,\dots,6\},
\intertext{and for $\ell\in\{4,5,6\}$}
\beta_{l,\ell}(\Theta_{j,k})
&=\bscp{b_{l}(\Theta_{j,k})}{e^{\ell-3}}
=\delta_{l,j}\begin{cases}
\scp{e^{k}\times x_{l,1}}{e^{\ell-3}}=\scp{e^{\ell-3}\times e^{k}}{x_{l,1}},&\text{if }k\in\{1,2,3\},\\
\scp{e^{k-3}}{e^{\ell-3}}=\delta_{\ell,k},&\text{if }k\in\{4,5,6\},
\end{cases}
\intertext{i.e.,}
\beta_{l,\ell}(\Theta_{j,k})
&=\delta_{l,j}\delta_{\ell,k}
+\delta_{l,j}(\delta_{1,k}+\delta_{2,k}+\delta_{3,k})(x_{l,1})_{\widehat{\ell-3,k}},\qquad
k\in\{1,\dots,6\},
\end{align*}
where we define
\begin{align*}(x_{l,1})_{\widehat{\ell-3,k}}
&:=\scp{e^{\ell-3}\times e^{k}}{x_{l,1}}
=\sum_{i=1}^3\scp{e^{\ell-3}\times e^{k}}{e^{i}}(x_{l,1})_{i}\\
&\phantom{:}=
\begin{cases}(x_{l,1})_{i},&\exists i\in\{1,2,3\}:(\ell-3,k,i)\text{ even permutation of }(1,2,3),\\
-(x_{l,1})_{i},&\exists i\in\{1,2,3\}:(\ell-3,k,i)\text{ odd permutation of }(1,2,3),\\
0,&\forall i\in\{1,2,3\}:(\ell-3,k,i)\text{ no permutation of }(1,2,3).
\end{cases}
\end{align*}
In particular, $(x_{l,1})_{\widehat{\ell-3,k}}=0$
if $\ell-3=k$ or $\ell\in\{1,2,3\}$ or $k\in\{4,5,6\}$.
Therefore, we have for $l,j\in\{1,\dots,p\}$ and $k,\ell\in\{1,\dots,6\}$
\begin{align}
\label{pathint1-ela}
\begin{aligned}
\beta_{l,\ell}(\Theta_{j,k})
&=\delta_{l,j}\delta_{\ell,k}
+\delta_{l,j}(x_{l,1})_{\widehat{\ell-3,k}}\\
&=\delta_{l,j}\delta_{\ell,k}
+\delta_{l,j}(\delta_{\ell,4}+\delta_{\ell,5}+\delta_{\ell,6})
(\delta_{1,k}+\delta_{2,k}+\delta_{3,k})
(1-\delta_{\ell-3,k})
(x_{l,1})_{\widehat{\ell-3,k}}.
\end{aligned}
\end{align}

Let Assumption \ref{ass:stronglip} be satisfied.
For the elasticity complex, similar to \eqref{deco1}, \eqref{deco2},
see also \eqref{app:deco2-derham}, \eqref{app:deco2-bih1}, \eqref{app:deco2-bih2},
we have the orthogonal decompositions 
\begin{align}
\label{app:deco2-ela}
\begin{aligned}
\LtttSom&=\ran(\symGrad,\om)\oplus_{\LtttSom}\ker(\rDS,\om),\\
\ker(\CCtS,\om)&=\ran(\symGrad,\om)\oplus_{\LtttSom}\harmelaNom.
\end{aligned}
\end{align}

\begin{remark}
\label{rem:Helmholtz-elaN}
\cite[Lemma 3.2]{PZ2020b} yields $\dom(\symGrad,\om)=H^{1,3}(\om)$. 
Thus, as before, a combination of Rellich's selection theorem and a contradiction argument 
implies the Poincar\'e type estimate
$$\exists\,c>0\quad\forall\,\phi\in H^{1,3}(\om)\cap\RM_{\pw}^{\bot_{\Lttom}}\qquad
\norm{\phi}_{\Lttom}\leq c\norm{\symGrad\phi}_{\Ltttom}.$$
Thus, the range in \eqref{app:deco2-ela} is closed.
\end{remark}

Let $\pi:\LtttSom\to \ker(\rDS,\om)$ denote the orthogonal projector 
along $\ran(\symGrad,\om)$ according to \eqref{app:deco2-ela}.
We infer $\pi\big(\ker(\CCtS,\om)\big)=\harmelaNom$.
Thus, using Lemma \ref{dimNTela:lem0}, 
for all $j\in\{1,\ldots,p\}$ and $k\in\{1,\ldots,6\}$ 
we find $\psi_{j,k}\in H^{1,3}(\om)$ such that 
$$\harmelaNom\ni\pi\Theta_{j,k}=\Theta_{j,k}-\symGrad\psi_{j,k},\quad
(\Theta_{j,k}-\symGrad\psi_{j,k})\big|_{\om_{F}}
=\symGrad(\theta_{j,k}-\psi_{j,k}).$$
Next, Lemma \ref{lem:harmreg} implies $\harmelaNom\subset\Cittom$. 
Thus, $\pi\Theta_{j,k},\Theta_{j,k}\in\Cittom$ implies $\symGrad\psi_{j,k}\in\Cittom$ 
and hence $\psi_{j,k}\in\Citom$.
Therefore, all path integrals over the closed curves $\zeta_{l}$ are well-defined. 
Furthermore, we observe by Lemma \ref{symGradHSlemma}
\begin{align*}
a_{l}(\symGrad\psi_{j,k})
&=\int_{\zeta_{l}}
(\Curl\symGrad\psi_{j,k})^{\top}\intd\lambda\\
&=\frac{1}{2}\big(\curl\psi_{j,k}(x_{l,1})-\curl\psi_{j,k}(x_{l,1})\big)
=0,
\intertext{and}
b_{l}(\symGrad\psi_{j,k})
&=\int_{\zeta_{l}}\symGrad\psi_{j,k}\intd\lambda\\
&\qquad\qquad+\int_{\zeta_{l}}\spn\big((\Curl\symGrad\psi_{j,k})^{\top}(y)\intd\lambda_{y}\big)(x_{l,1}-y)\\
&=\int_{\zeta_{x_{l,1},x_{l,1}}}\Big(\symGrad\psi_{j,k}(y)\\
&\qquad\qquad+\int_{\zeta_{x_{l,1},y}}
\spn\big((\Curl\symGrad\psi_{j,k})^{\top}\intd\lambda\big)\Big)\intd\lambda_{y}\\
&=\psi_{j,k}(x_{l,1})
-\psi_{j,k}(x_{l,1})-\frac{1}{2}\curl\psi_{j,k}(x_{l,1})\times(x_{l,1}-x_{l,1})
=0.
\end{align*}
Hence, $\beta_{l,\ell}(\symGrad\psi_{j,k})=0$
and by \eqref{pathint1-ela}
\begin{align}
\label{pathint2-ela}
\begin{aligned}
\beta_{l,\ell}(\pi\Theta_{j,k})
&=\beta_{l,\ell}(\Theta_{j,k})
-\beta_{l,\ell}(\symGrad\psi_{j,k})
=\beta_{l,\ell}(\Theta_{j,k})
=\delta_{l,j}\delta_{\ell,k}
+\delta_{l,j}(x_{l,1})_{\widehat{\ell-3,k}}\\
&=\delta_{l,j}\delta_{\ell,k}
+\delta_{l,j}(\delta_{\ell,4}+\delta_{\ell,5}+\delta_{\ell,6})
(\delta_{1,k}+\delta_{2,k}+\delta_{3,k})
(1-\delta_{\ell-3,k})
(x_{l,1})_{\widehat{\ell-3,k}}
\end{aligned}
\end{align}
for all $l,j\in\{1,\dots,p\}$ and all $\ell,k\in\{1,\dots,6\}$. 
We shall show that 
\begin{align}
\label{basis:elaN}
\calB^{\ela}_{N}:=\big\{\pi\Theta_{j,k}:j\in\{1,\dots,p\},\,k\in\{1,\dots,6\}\big\}
\subset\harmelaNom
\end{align}
defines a basis of $\harmelaNom$.

The tensor fields $\Theta_{j,k}$ being constructed explicitly, 
we provide a way of finding $\psi_{j,k}$ by means of solutions of PDEs.

\begin{remark}[Characterisation by PDEs]
\label{rem:charPDEela}
Let $j\in\{1,\dots,p\}$ and $k\in\{1,\dots,6\}$. 
Then $\psi_{j,k}\in H^{1,3}(\om)\cap\RM_{\pw}^{\bot_{\Lttom}}$ 
can be found by solving the following elasticity PDE 
formulated in the standard variational formulation
$$\forall\,\phi\in H^{1,3}(\om)\qquad
\scp{\symGrad\psi_{j,k}}{\symGrad\phi}_{\Ltttom}
=\scp{\Theta_{j,k}}{\symGrad\phi}_{\Ltttom},$$
i.e., $\psi_{j,k}=\Delta_{\Sb,N}^{-1}\big(\DS\Theta_{j,k},\Theta_{j,k}\nu|_{\ga}\big)$, 
where $\Delta_{\Sb,N}\subset\DS\symGrad$ is the `symmetric' Laplacian 
with inhomogeneous Neumann boundary conditions 
restricted to a subset of $H^{1,3}(\om)\cap\RM_{\pw}^{\bot_{\Lttom}}$. Therefore, 
$$\pi\Theta_{j,k}
=\Theta_{j,k}-\symGrad\psi_{j,k}
=\Theta_{j,k}-\symGrad\Delta_{\Sb,N}^{-1}\big(\DS\Theta_{j,k},\Theta_{j,k}\nu|_{\ga}\big).$$
In classical terms, $\psi_{j,k}$
solves the Neumann elasticity problem
\begin{align}
\label{NeuLap1-ela}
\begin{aligned}
-\Delta_{\Sb}\psi_{j,k}&=-\DS\Theta_{j,k}
&&\text{in }\om,\\
(\Grad\psi_{j,k})\nu&=\Theta_{j,k}\nu
&&\text{on }\ga,\\
\int_{\om_{l}}(\psi_{j,k})_{\ell}&=0
&&\text{for }l\in\{1,\dots,n\},\quad\ell\in\{1,2,3\},\\
\int_{\om_{l}}\big(x\times\psi_{j,k}(x)\big)_{\ell}\intd\lambda_{x}&=0
&&\text{for }l\in\{1,\dots,n\},\quad\ell\in\{1,2,3\},
\end{aligned}
\end{align}
which is uniquely solvable.
\end{remark}

\begin{lemma}
\label{dimNTela:lem1}
Let Assumptions \ref{ass:stronglip} and \ref{ass:curvesandsurfaces} be satisfied. 
Then $\harmelaNom=\lin\calB^{\ela}_{N}$.
\end{lemma}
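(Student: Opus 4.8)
The plan is to follow the blueprint of Lemma \ref{dimNFrhm:lem1}, Lemma \ref{dimNTbih1:lem1}, and Lemma \ref{dimNTbih2:lem1}. Let $H\in\harmelaNom=\ker(\rDS,\om)\cap\ker(\CCtS,\om)$, which by Lemma \ref{lem:harmreg} lies in $\Cittom\cap\LtttSom$, so that all path integrals along the curves $\zeta_{l}$ are well defined. Recalling the functionals $a_{l},b_{l}$ and $\beta_{l,\ell}$, I would set, for $l\in\{1,\dots,p\}$,
\begin{align*}
\gamma_{l,\ell}&:=\beta_{l,\ell}(H),&\ell&\in\{1,2,3\},\\
\gamma_{l,\ell}&:=\beta_{l,\ell}(H)-\sum_{k=1}^{3}\beta_{l,k}(H)(x_{l,1})_{\widehat{\ell-3,k}},&\ell&\in\{4,5,6\},
\end{align*}
and put $\widehat{H}:=H-\sum_{j=1}^{p}\sum_{k=1}^{6}\gamma_{j,k}\pi\Theta_{j,k}\in\harmelaNom$. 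A direct computation with \eqref{pathint2-ela} and the combinatorial identity $(x_{l,1})_{\widehat{\ell-3,k}}=0$ for $\ell\in\{1,2,3\}$ or $k\in\{4,5,6\}$ from \eqref{pathint1-ela} then gives $\beta_{l,\ell}(\widehat{H})=0$ for all $l\in\{1,\dots,p\}$ and all $\ell\in\{1,\dots,6\}$. As in the previous cases it suffices to produce $v\in H^{1,3}(\om)$ with $\symGrad v=\widehat{H}$, since then the orthogonality of $\widehat{H}\in\harmelaNom$ to $\ran(\symGrad,\om)$ in \eqref{app:deco2-ela} forces $\norm{\widehat{H}}_{\LtttSom}^{2}=\scp{\symGrad v}{\widehat{H}}_{\LtttSom}=0$.

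To construct $v$, I would apply the Poincar\'e-map machinery of Corollary \ref{cor:ela} and Remark \ref{curlformula1-ela} to $S=\widehat{H}$ in two steps. First, since $\CCtS\widehat{H}=0$ and $\widehat{H}$ is smooth, the tensor field $(\Curl\widehat{H})^{\top}$ lies in $\ker(\Curl,\om)\cap\Cittom$; as its $\zeta_{l}$-integrals equal $a_{l}(\widehat{H})$, whose three components are the vanishing numbers $\beta_{l,\ell}(\widehat{H})$, $\ell\in\{1,2,3\}$, Assumption \ref{ass:curvesandsurfaces} (A1) applied row-wise yields $\int_{\zeta}(\Curl\widehat{H})^{\top}\intd\lambda=0$ for every closed piecewise $C^1$-curve $\zeta$ in $\om$, so Corollary \ref{cor:ela} (a) provides $w\in\Citom$ with $\Grad w=(\Curl\widehat{H})^{\top}$. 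Setting $T:=\widehat{H}+\spn w$ gives $\Curl T=0$ by Remark \ref{curlformula1-ela} (i) (using $\skw\widehat{H}=0$), and a telescoping computation shows $\int_{\zeta_{l}}T\intd\lambda=b_{l}(\widehat{H})$, whose components are the vanishing numbers $\beta_{l,\ell}(\widehat{H})$, $\ell\in\{4,5,6\}$; a second application of (A1) then gives $\int_{\zeta}T\intd\lambda=0$ for all closed $\zeta$, whence Corollary \ref{cor:ela} (b) yields $v\in\Cit(\om)$ with $\Grad v=T$, and Corollary \ref{cor:ela} (c) (note $\skw\widehat{H}=0$) gives $\symGrad v=\widehat{H}$. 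Finally, exactly as at the end of the proof of Lemma \ref{dimNFrhm:lem1}, elliptic regularity together with \cite[Theorem 2.6 (1)]{L1986} or \cite[Theorem 3.2 (2)]{A1965} upgrades $v\in\Cit(\om_{0})$ with $\symGrad v\in\LtttS(\om_{0})$ to $v\in H^{1,3}(\om_{0})$ for each $\om_{0}\in\cc(\om)$, hence $v\in H^{1,3}(\om)$.

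I expect the main obstacle to be the bookkeeping in the identity $\int_{\zeta_{l}}T\intd\lambda=b_{l}(\widehat{H})$: one must carefully track the $\spn w$-contribution, rewriting $w(y)$ for $y\in\zeta_{l}$ as $\int_{\zeta_{x_{l,1},y}}(\Curl\widehat{H})^{\top}\intd\lambda+w(x_{l,1})$, observing that the constant part integrates to zero along the closed curve $\zeta_{l}$ because $\int_{\zeta_{l}}M\intd\lambda=0$ for any constant matrix $M$, and then matching the remaining nested integral with the second term of $b_{l}$ via Lemma \ref{symGradHSlemma} (ii) applied with $x_{0}=x=x_{l,1}$ and circulation one around $\zeta_{l}$. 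The verification that the chosen $\gamma_{l,\ell}$ annihilate all $\beta$-functionals on $\widehat{H}$, while routine, likewise rests on the permutation structure of $(x_{l,1})_{\widehat{\ell-3,k}}$ recorded in \eqref{pathint1-ela}.
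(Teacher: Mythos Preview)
Your proposal is correct and follows essentially the same approach as the paper's proof: the same choice of coefficients $\gamma_{l,\ell}$, the same two-step application of Corollary \ref{cor:ela} via the intermediate field $w$ with $\Grad w=(\Curl\widehat{H})^{\top}$ and the tensor $T=\widehat{H}+\spn w$, the same telescoping computation $\int_{\zeta_{l}}T\intd\lambda=b_{l}(\widehat{H})$ using Lemma \ref{symGradHSlemma} (ii), and the same closing regularity argument. The bookkeeping obstacle you flag is exactly the step the paper spells out in detail.
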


\begin{proof}
Let $H\in\harmelaNom=\ker(\rDS,\om)\cap\ker(\CCtS,\om)\subset\Citt_{\Sb}(\om)$, 
see Lemma \ref{lem:harmreg}.
With the above introduced functionals $a_{l}$ and $b_{l}$, $l\in\{1,\dots,p\}$, we recall
\begin{align*}
\R^{3}\ni a_{l}(H)
&=\int_{\zeta_{l}}
(\Curl H)^{\top}\intd\lambda,\\
\R^{3}\ni b_{l}(H)
&=\int_{\zeta_{l}}H\intd\lambda
+\int_{\zeta_{l}}\spn\big((\Curl H)^{\top}(y)\intd\lambda_{y}\big)(x_{l,1}-y),
\end{align*}
and define for $l\in\{1,\dots,p\}$ the numbers
\begin{align*}
\gamma_{l,\ell}
:=\gamma_{l,\ell}(H)
&:=\bscp{a_{l}(H)}{e^{\ell}}
=\beta_{l,\ell}(H),
&
\text{for }\ell&\in\{1,2,3\},\\
\gamma_{l,\ell}
:=\gamma_{l,\ell}(H)
&:=\Bscp{b_{l}(H)-\sum_{k=1}^{3}\beta_{l,k}(H)e^{k}\times x_{l,1}}{e^{\ell-3}}\\
&\phantom{:}=\beta_{l,\ell}(H)
-\sum_{k=1}^{3}\beta_{l,k}(H)(x_{l,1})_{\widehat{\ell-3,k}},
&
\text{for }\ell&\in\{4,5,6\},
\end{align*}
where we recall
$(x_{l,1})_{\widehat{\ell-3,k}}=(\delta_{\ell,4}+\delta_{\ell,5}+\delta_{\ell,6})
(\delta_{1,k}+\delta_{2,k}+\delta_{3,k})(1-\delta_{\ell-3,k})(x_{l,1})_{\widehat{\ell-3,k}}$
by definition, see \eqref{pathint1-ela}.
We shall show that
$$\harmelaNom\ni\widehat{H}
:=H-\sum_{j=1}^{p}\sum_{k=1}^{6}\gamma_{j,k}\pi\Theta_{j,k}=0\quad\text{in }\om.$$
Similar to the proof of Lemma \ref{dimNFrhm:lem1} (or \ref{dimNTbih1:lem1}, \ref{dimNTbih2:lem1})
the aim is to prove the existence of $v\in H^{1,3}(\om)$
such that $\symGrad v=\widehat{H}$, since then 
$$\norm{\widehat{H}}_{\LtttSom}^{2}
=\scp{\symGrad v}{\widehat{H}}_{\LtttSom}=0.$$ 
For the construction of $v$, 
we apply Corollary \ref{cor:ela} and Remark \ref{curlformula1-ela} to $S=\widehat{H}$. 
In order to show condition Corollary \ref{cor:ela} (a), (i), 
we observe for $l\in\{1,\dots,p\}$ and $\ell\in\{1,2,3\}$ by \eqref{pathint2-ela} 
\begin{align*}
\Big(\int_{\zeta_{l}}(\Curl\widehat{H})^{\top}\intd\lambda\Big)_{\ell}
&=\big(a_{l}(\widehat{H})\big)_{\ell}=\beta_{l,\ell}(\widehat{H})
=\beta_{l,\ell}(H)
-\sum_{j=1}^{p}\sum_{k=1}^{6}\gamma_{j,k}
\beta_{l,\ell}(\pi\Theta_{j,k})\\
&=\gamma_{l,\ell}
-\sum_{j=1}^{p}\sum_{k=1}^{6}\gamma_{j,k}
\delta_{l,j}\delta_{\ell,k}
=0.
\end{align*}
Note that $\Curl(\Curl\widehat{H})^{\top}=\CCtS\widehat{H}=0$,
i.e., $(\Curl\widehat{H})^{\top}\in\ker(\Curl)\cap\Cittom$.
Thus, by Assumption \ref{ass:curvesandsurfaces} (A.1)
for any closed piecewise $C^1$-curve $\zeta$ in $\om$
\begin{align}
\label{pathint3-ela}
\int_{\zeta}(\Curl\widehat{H})^{\top}\intd\lambda=0.
\end{align}
Hence, by Corollary \ref{cor:ela} (a), (ii), there exists $w\in\Citom$ such that
$$\Grad w=(\Curl \widehat{H})^{\top}.$$
We define $T:=\widehat{H}+\spn w$.
Let $l\in\{1,\ldots,p\}$. 
Then $\zeta_{x_{l,0},x_{l,1}}\subset\zeta_{l}\subset\om_{0}$
for some $\om_{0}\in\cc(\om)$.
With $c:=w(x_{l,1})\in\R^{3}$ we compute for all $x\in\zeta_{l}$
$$w(x)
=w(x)-w(x_{l,1})+c
=\int_{\zeta_{x_{l,1},x}}\Grad w\intd\lambda+c
=\int_{\zeta_{x_{l,1},x}}(\Curl\widehat{H})^{\top}\intd\lambda+c,$$
and
$$\int_{\zeta_{l}}(\spn c)\intd\lambda
=(\spn c)\int_{\zeta_{l}}\id\intd\lambda
=(\spn c)\int_{\zeta_{l}}\Grad x\intd\lambda_{x}
=0.$$
Again, we consider the curve $\zeta_{l}$ 
as the closed curve $\zeta_{x_{l,1},x_{l,1}}$ with circulation $1$ along $\zeta_{l}$.
By Lemma \ref{symGradHSlemma} and by the definition of $b_{l}$ we have for $l\in\{1,\dots,p\}$
\begin{align*}
\int_{\zeta_{l}}T\intd\lambda
&=\int_{\zeta_{l}}\widehat{H}\intd\lambda
+\int_{\zeta_{l}}(\spn w)\intd\lambda\\
&=\int_{\zeta_{l}}\widehat{H}\intd\lambda
+\int_{\zeta_{x_{l,1},x_{l,1}}}
\spn\Big(\int_{\zeta_{x_{l,1},y}}(\Curl\widehat{H})^{\top}\intd\lambda\Big)\intd\lambda_{y}\\
&=\int_{\zeta_{l}}\widehat{H}\intd\lambda
+\int_{\zeta_{l}}\spn\big((\Curl\widehat{H})^{\top}(y)\intd\lambda_{y}\big)(x_{l,1}-y)
=b_{l}(\widehat{H}).
\end{align*}
Hence, for $\ell\in\{4,5,6\}$ we get by \eqref{pathint2-ela}
\begin{align*}
\Big(\int_{\zeta_{l}}T\intd\lambda\Big)_{\ell-3}
&=\bscp{\int_{\zeta_{l}}T\intd\lambda}{e^{\ell-3}}
=\scp{b_{l}(\widehat{H})}{e^{\ell-3}}
=\beta_{l,\ell}(\widehat{H})\\
&=\beta_{l,\ell}(H)
-\sum_{j=1}^{p}\sum_{k=1}^{6}\gamma_{j,k}
\beta_{l,\ell}(\pi\Theta_{j,k})\\
&=\beta_{l,\ell}(H)
-\sum_{j=1}^{p}\sum_{k=1}^{6}\gamma_{j,k}
(\delta_{l,j}\delta_{\ell,k}
+\delta_{l,j}(x_{l,1})_{\widehat{\ell-3,k}})\\
&=\beta_{l,\ell}(H)
-\gamma_{l,\ell}
-\sum_{k=1}^{3}\gamma_{l,k}(x_{l,1})_{\widehat{\ell-3,k}}\\
&=\beta_{l,\ell}(H)
-\gamma_{l,\ell}
-\sum_{k=1}^{3}\beta_{l,k}(H)(x_{l,1})_{\widehat{\ell-3,k}}
=0.
\end{align*}
Therefore, $\int_{\zeta_{l}}T\intd\lambda=0$
for all $l\in\{1,\dots,p\}$.
Note that $T\in\ker(\Curl)\cap\Cittom$ by Remark \ref{curlformula1-ela}
as $S=\widehat{H}\in\Citt_{\Sb}(\om)$ and $T=S+\spn w$ with $\Grad w=(\Curl\widehat{H})^{\top}$.
Thus, by Assumption \ref{ass:curvesandsurfaces} (A.1)
for any closed piecewise $C^1$-curve $\zeta$ in $\om$
\begin{align}
\label{pathint4-ela}
\int_{\zeta}T\intd\lambda=0.
\end{align}
Hence, by the symmetry of $\widehat{H}$ 
and Corollary \ref{cor:ela} (b), (c), there exists $v\in\Citom$ 
such that $\Grad v=T$ as well as $\symGrad v=\widehat{H}$.
Similar to the end of the proof of Lemma \ref{dimNFrhm:lem1},
elliptic regularity and, e.g., \cite[Theorem 2.6 (1)]{L1986} or \cite[Theorem 3.2 (2)]{A1965}
show that $v\in\Citom$ with $\symGrad v\in\LtttSom$ implies
$v\in H^{1,3}(\om)$, completing the proof. 
(Let us note that $v\in H^{1,3}(\om)$
implies also $T\in\Ltttom$ and hence $w\in\Lttom$.)
\end{proof}

\begin{lemma}
\label{dimNTela:lem2}
Let Assumption \ref{ass:stronglip} and Assumption \ref{ass:curvesandsurfaces} be satisfied. Then
$\calB^{\ela}_{N}$ is linearly independent.
\end{lemma}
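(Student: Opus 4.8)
The plan is to mimic exactly the linear-independence arguments already carried out for the de Rham, first, and second \pz complexes (Lemma \ref{dimNFrhm:lem2}, Lemma \ref{dimNTbih1:lem2}, Lemma \ref{dimNTbih2:lem2}), now using the functionals $\beta_{l,\ell}$ and the values $\beta_{l,\ell}(\pi\Theta_{j,k})$ recorded in \eqref{pathint2-ela}. First I would assume a vanishing linear combination $\sum_{j=1}^{p}\sum_{k=1}^{6}\gamma_{j,k}\pi\Theta_{j,k}=0$ in $\om$ with coefficients $\gamma_{j,k}\in\R$. Applying the functional $\beta_{l,\ell}$ for fixed $l\in\{1,\dots,p\}$ and exploiting linearity gives $0=\sum_{j,k}\gamma_{j,k}\beta_{l,\ell}(\pi\Theta_{j,k})$.

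The key step is to untangle the triangular structure of the array \eqref{pathint2-ela}. For $\ell\in\{1,2,3\}$ the formula reduces to $\beta_{l,\ell}(\pi\Theta_{j,k})=\delta_{l,j}\delta_{\ell,k}$, so the sum collapses immediately to $\gamma_{l,\ell}=0$ for all $l\in\{1,\dots,p\}$ and $\ell\in\{1,2,3\}$. Having disposed of these, for $\ell\in\{4,5,6\}$ the remaining relation reads
\begin{align*}
0
&=\sum_{j=1}^{p}\sum_{k=1}^{6}\gamma_{j,k}\beta_{l,\ell}(\pi\Theta_{j,k})
=\sum_{j=1}^{p}\sum_{k=1}^{6}\gamma_{j,k}
\big(\delta_{l,j}\delta_{\ell,k}+\delta_{l,j}(x_{l,1})_{\widehat{\ell-3,k}}\big)\\
&=\gamma_{l,\ell}+\sum_{k=1}^{3}\gamma_{l,k}(x_{l,1})_{\widehat{\ell-3,k}}
=\gamma_{l,\ell},
\end{align*}
where the last equality uses $\gamma_{l,1}=\gamma_{l,2}=\gamma_{l,3}=0$ already established, together with the fact that $(x_{l,1})_{\widehat{\ell-3,k}}=0$ unless $k\in\{1,2,3\}$. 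Hence $\gamma_{l,\ell}=0$ for all $l\in\{1,\dots,p\}$ and all $\ell\in\{4,5,6\}$ as well, so $\gamma_{j,k}=0$ throughout, which is precisely linear independence of $\calB^{\ela}_{N}$.

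There is really no hard obstacle here: all the work has been front-loaded into the explicit evaluation \eqref{pathint2-ela} of the functionals on the $\pi\Theta_{j,k}$, and the only thing that remains is the bookkeeping of the block-triangular system — first solving for the ``rotational'' coefficients indexed by $\ell\in\{1,2,3\}$, then back-substituting into the ``translational'' part indexed by $\ell\in\{4,5,6\}$. The one point that warrants a sentence of care is noting that the correction terms $(x_{l,1})_{\widehat{\ell-3,k}}$ only couple $\ell\in\{4,5,6\}$ to $k\in\{1,2,3\}$, never to $k\in\{4,5,6\}$, so that the back-substitution genuinely closes. The lemma then follows immediately, and combined with Lemma \ref{dimNTela:lem1} it yields $\dim\harmelaNom=6p$ with basis \eqref{basis:elaN}, completing the proof of Theorem \ref{thm:DNTela}.
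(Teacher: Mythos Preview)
Your proposal is correct and follows essentially the same approach as the paper's proof: assume a vanishing linear combination, apply the functionals $\beta_{l,\ell}$ using \eqref{pathint2-ela}, first read off $\gamma_{l,\ell}=0$ for $\ell\in\{1,2,3\}$ from the diagonal structure, and then back-substitute to obtain $\gamma_{l,\ell}=0$ for $\ell\in\{4,5,6\}$. The paper's proof is just a terser version of exactly this computation.
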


\begin{proof}
Let $\gamma_{j,k}\in\R$, $j\in\{1,\ldots,p\}$, $k\in\{1,\ldots,6\}$, 
be such that $\displaystyle\sum_{j=1}^{p}\sum_{k=1}^{6}\gamma_{j,k}\pi\Theta_{j,k}=0$.
Then \eqref{pathint2-ela} implies for $l\in\{1,\dots,p\}$
\begin{align*}
0&=\sum_{j=1}^{p}\sum_{k=1}^{6}\gamma_{j,k}
\beta_{l,\ell}(\pi\Theta_{j,k})
=\gamma_{l,\ell},
&\ell&\in\{1,2,3\},\\
0&=\sum_{j=1}^{p}\sum_{k=1}^{6}\gamma_{j,k}
\beta_{l,\ell}(\pi\Theta_{j,k})
=\gamma_{l,\ell}+\sum_{k=1}^{3}\gamma_{l,k}(x_{l,1})_{\widehat{\ell-3,k}}=\gamma_{l,\ell},
&\ell&\in\{4,5,6\},
\end{align*}
finishing the proof.
\end{proof}

\begin{theorem}
\label{dimNTela}
Let Assumptions \ref{ass:stronglip} and \ref{ass:curvesandsurfaces} be satisfied.
Then $\dim\harmelaNom=6p$ and a basis of $\harmelaNom$ is given by \eqref{basis:elaN}.
\end{theorem}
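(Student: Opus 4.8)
The plan is to combine the two preceding lemmas in exactly the same bookkeeping fashion as for the de Rham, first, and second \pz complexes, since Theorem \ref{dimNTela} is the assertion that $\calB^{\ela}_{N}$ from \eqref{basis:elaN} is a basis of $\harmelaNom$. By Lemma \ref{dimNTela:lem1} we have $\harmelaNom=\lin\calB^{\ela}_{N}$, so every element of the cohomology group is a linear combination of the $6p$ fields $\pi\Theta_{j,k}$, $j\in\{1,\dots,p\}$, $k\in\{1,\dots,6\}$. By Lemma \ref{dimNTela:lem2} these $6p$ fields are linearly independent. Hence $\calB^{\ela}_{N}$ is a spanning, linearly independent set, i.e.\ a basis, and in particular $\dim\harmelaNom=\#\calB^{\ela}_{N}=6p$.

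Concretely, first I would invoke Lemma \ref{dimNTela:lem1}, whose hypotheses (Assumption \ref{ass:stronglip} and Assumption \ref{ass:curvesandsurfaces}) are exactly those of the theorem, to conclude $\harmelaNom\subset\lin\calB^{\ela}_{N}$; the reverse inclusion $\lin\calB^{\ela}_{N}\subset\harmelaNom$ is already recorded in \eqref{basis:elaN} together with $\pi\big(\ker(\CCtS,\om)\big)=\harmelaNom$ and Lemma \ref{dimNTela:lem0}. Second, I would invoke Lemma \ref{dimNTela:lem2} for linear independence. The one genuinely non-routine point is making sure the indexing is consistent: the rigid-motion families $\widehat r_k$ run over $k\in\{1,\dots,6\}$ (three rotational Nedelec fields and three translational ones), so there are $6$ fields per handle and $p$ handles, giving $6p$, which matches $\dim\harmelaNom=6p=6(m-1)+6(p-m+1)$ only after one recalls that this count is for the Neumann side alone. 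No obstacle beyond this accounting arises, because all the analytic work — the Poincar\'e-map representations of Lemma \ref{symGradHSlemma}, the algebraic identities of Lemma \ref{PZformulalem}, the Helmholtz decomposition \eqref{app:deco2-ela} with closed range (Remark \ref{rem:Helmholtz-elaN}), and the local regularity $\harmelaNom\subset\Cittom$ (Lemma \ref{lem:harmreg}) — has been front-loaded into the two lemmas.

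\begin{proof}
By Lemma \ref{dimNTela:lem1}, under Assumptions \ref{ass:stronglip} and \ref{ass:curvesandsurfaces} we have $\harmelaNom=\lin\calB^{\ela}_{N}$ with $\calB^{\ela}_{N}$ from \eqref{basis:elaN}. By Lemma \ref{dimNTela:lem2}, the set $\calB^{\ela}_{N}$ is linearly independent. Hence $\calB^{\ela}_{N}$ is a basis of $\harmelaNom$, and consequently
$$\dim\harmelaNom
=\#\calB^{\ela}_{N}
=\#\big\{(j,k):j\in\{1,\dots,p\},\,k\in\{1,\dots,6\}\big\}
=6p,$$
which completes the proof.
\end{proof}
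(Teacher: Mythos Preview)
Your proof is correct and follows exactly the paper's approach: the paper's proof is the single line ``Use Lemma \ref{dimNTela:lem1} and Lemma \ref{dimNTela:lem2},'' and your formal proof block does precisely that. The aside about $6(m-1)+6(p-m+1)$ in your preamble is unnecessary and a bit confusing, but the proof itself is clean.
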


\begin{proof}
Use Lemma \ref{dimNTela:lem1} and Lemma \ref{dimNTela:lem2}.
\end{proof}

\section{Conclusion}
\label{sec:con}

The index theorems presented rest on the abstract construction principle provided in \cite{BL1992}
and the results on the newly found \pz complex from \cite{PZ2016,PZ2020a}
and the elasticity complex from \cite{PZ2020b,PZ2020c}. 
With this insight it is possible to construct basis fields
for the generalised harmonic Dirichlet and Neumann tensor fields. 
The number of basis fields of the considered cohomology groups 
provide additional topological invariants. 
The construction of the generalised Dirichlet fields being somewhat similar to the de Rham complex, 
the same for the generalised Neumann fields requires some more machinery particularly relying 
on the introduction of Poincar\'e maps defining the functionals.

Furthermore, we have outlined numerical strategies 
to compute the generalised Neumann and Dirichlet fields in practice. 
In passing we have also provided a set of Friedrichs--Poincar\'e type estimates 
and included variable coefficients relevant for numerical studies.

All these constructions heavily rely on the choice of boundary conditions 
and we emphasise that the considered mixed order operators \emph{cannot} be viewed 
as leading order plus relatively compact perturbation, when it comes to computation of the Fredholm index. 
In particular, techniques from pseudo-differential calculus successfully applied 
to obtain index formulas for operators defined on non-compact manifolds or compact manifolds 
without boundary, see e.g.~\cite{H1979,I1977}, are likely to be very difficult 
to be applicable in the present situation. It would be interesting to see, 
whether the operators considered above defined on an unbounded domain enjoy
similar index formulas (maybe a comparable Witten index of some sort) 
even though the operator itself might not be of Fredholm type anymore.


\bibliographystyle{abbrv}


\vspace*{5mm}
\hrule
\vspace*{3mm}


\end{document}